\title{A large deviation principle for block models}
\author[1]{Christian Borgs \thanks{Berkeley AI Research Group, Department of Electrical Engineering and Computer Science. Email: borgs@berkeley.edu.}}
\author[1]{Jennifer Chayes \thanks{Department of Electrical Engineering and Computer Science, Department of Mathematics, Department of Statistics, School of Information. Email: jchayes@berkeley.edu.}}
\author[2]{Julia Gaudio \thanks{Department of Industrial Engineering and Management Sciences. Email: julia.gaudio@northwestern.edu.}}
\author[3]{Samantha Petti \thanks{Department of Mathematics, Tufts University. Email: spetti@tufts.edu.}}
\author[4]{Subhabrata Sen \thanks{Department of Statistics. Email: subhabratasen@fas.harvard.edu.}} 
\affil[1]{University of California, Berkeley}
\affil[2]{Northwestern University}
\affil[3]{Tufts University}
\affil[4]{Harvard University}
\begin{document}

\maketitle
\abstract{We initiate a study of large deviations for  block model random graphs in the dense regime. Following \cite{Chatterjee2011}, we establish an LDP for dense block models, viewed as random graphons.  As an application of our result, we study upper tail large deviations for homomorphism densities of regular graphs. We identify the existence of a ``symmetric'' phase, where the graph, conditioned on the rare event, looks like a block model with the same block sizes as the generating graphon. In specific examples, we also identify the existence of a ``symmetry breaking'' regime, where the conditional structure is not a block model with compatible dimensions. This identifies a ``reentrant phase transition'' phenomenon for this problem---analogous to one established for \ER random graphs \cite{chatterjee2010applications, Chatterjee2011}. Finally, extending the analysis of \cite{Lubetzky2015}, we identify the precise boundary between the symmetry and symmetry breaking regimes for homomorphism densities of regular graphs and the operator norm on \ER bipartite graphs.}

\vspace{5pt}
\noindent 
{\bf Keywords:}  large deviation, stochastic block model, symmetry/symmetry breaking, bipartite \ER graph.

\tableofcontents

\section{Introduction}
The study of large deviation problems on random graphs has a long and rich history in Probability and Combinatorics. Research in this area is motivated by the following fundamental question:
\emph{What is the structure of a random graph, conditioned on a rare event?}

In a seminal paper, Chatterjee and Varadhan  \cite{Chatterjee2011} formalized this question by combining the theory of graph limits \cite{Lovasz2012} with classical Large Deviations theory \cite{amirdembooferzeitouni2010}, and established a Large Deviation Principle (LDP) for the Erd\H{o}s-R\'{e}nyi binomial random graph $G(n,p)$. This is the simplest random graph model, constructed by adding edges independently among $n$ vertices with probability $p$.
As an application of this LDP, Chatterjee and Varadhan \cite{Chatterjee2011} examined upper tail large deviations for regular subgraph counts. The homomorphism density $t(H,G)$ of a graph $H$ on $v$ vertices measures the probability that $H$ appears on $v$ randomly chosen vertices of a graph $G$ (see \Cref{hom-density}). Let $H$ be a $d$-regular graph, and for notational convenience, define the event $\mathcal{E}_{\delta}=\{t(H,G) > (1+ \delta) \mathbb{E}[t(H, G)]\}$. Chatterjee and Varadhan \cite{Chatterjee2011} established the existence of $0<\delta_{\min}(H) < \delta_{\max}(H)$ such that if $\delta<\delta_{\min}(H)$ or $\delta> \delta_{\max}(H)$, conditioned on
$\mathcal{E}_\delta$,
$G(n,p)$ ``looks like'' an \ER random graph, albeit with a higher edge density. They call this the ``replica symmetric'' phase. On the contrary, \cite{Chatterjee2011} also establishes that for $p$ sufficiently small, there exists $\delta \in [\delta_{\min}(H), \delta_{\max}(H)]$ such that conditioned on
$\mathcal{E}_\delta$,
 the graph is not distributed as an Erd\H{o}s-R\'{e}nyi random graph---this regime was termed as the ``replica symmetry breaking'' regime. Using the framework of \cite{Chatterjee2011}, Lubetzky and Zhao \cite{Lubetzky2015} characterized the precise boundary between the symmetry and the symmetry breaking regimes, in terms of $\delta$ and $p$. We defer an in-depth survey of large deviations on random graphs to Section \ref{sec:history}.

Random graphs are simple stochastic models for large networks observed in a myriad of scientific applications, and in this context, it is often natural to study graph models with \emph{inhomogeneities} or \emph{constraints}. Large deviation phenomena are of natural interest in this general setting, although progress in this direction requires several new ideas.  The study of large deviations for constrained random graphs has been initiated in the recent literature---\cite{dembo2018large} studies large deviations for the uniform random graph with a given number of edges, while in \cite{Dhara2019}, in joint work with Souvik Dhara, S.S. studied large deviations for random graphs with given degrees. Finally, \cite{bhattacharya2019upper} focuses on large deviations for random regular graphs in the sparse regime. In contrast, large deviations for inhomogeneous random graphs is relatively unexplored (see \cite{bhattacharya2019upper} for some preliminary results on sparse graphs). This paper seeks to fill this gap by initiating the study of large deviations for block model random graphs.

We study large deviations for block model graphs drawn from a ``base graphon'' with $k$ blocks that is specified by a set of values $\{p_{ij}\}_{1 \leq i \leq j \leq k}$, $p_{ij} \in [0,1]$.
A graph on $kn$ vertices is sampled from this model as follows:
(i) collect the vertices into $k$ groups of size $n$, indexed by $1, \dots, k$, and (ii) connect two vertices from groups $i$ and $j$ with probability $p_{ij}$.
(See Section \ref{subsec:basics} for a formal definition.)
Our contributions in this article can be summarized as follows:

\begin{enumerate}
    \item We adopt the framework of \cite{Chatterjee2011}, and establish an LDP for block model random graphs, viewed as random graphons. The induced law of the random graph satisfies an LDP with speed $n^2$---the rate function in this case is the lower semicontinuous envelope of an appropriate relative entropy functional (see formal statement as Theorem \ref{ldp}).
    Perhaps surprisingly, although the block model is quite similar to the \ER random graph, our derivation of the LDP requires going substantially beyond the ideas introduced in \cite{Chatterjee2011}.

    In particular, the derivation of the LDP in \cite{Chatterjee2011} relies heavily on the fact that an \ER random graph remains invariant in law under permutations of the vertices, a fact that is no longer true for general block models. To overcome this barrier, we rely on a two-step approach.
    (a) Using Szemer\'{e}di's Regularity Lemma, we construct a Szemer\'edi net of block graphons and \emph{cover} an event by a finite union of open balls centered on the elements of this net. Thus it suffices to characterize the limiting probability of each open ball.
    (b) To this end, we employ a ``method of types''-style argument, similar to the classical proof of Sanov's Theorem. A similar two-step strategy was employed earlier in \cite{Dhara2019} while deriving an LDP for random graphs with given degrees. A crucial technical difference between the two settings is that in \cite{Dhara2019}, the graphon being sampled from was bounded away from zero and one, whereas our results include block models that take value zero or one. As an immediate application of this general result, we obtain an LDP for the \ER bipartite graph.
 \item Our general LDP, in turn, directly implies an LDP for graph parameters continuous with respect to the cut topology (see Theorem \ref{theorem:variational-problem-nice}), e.g. homomorphism density, largest eigenvalue, etc. For such graph parameters, the rate function is expressed as a variational problem on the space of graphons.
    \item Next, we turn our attention to the variational problem for upper tail large deviations of regular subgraphs. In Theorem \ref{sym-regime}, we establish that close to the expected value, this problem exhibits a symmetric phase---where the variational problem admits solutions which exhibit the same block structure as the base graphon.  We also demonstrate that for large target values of the homomorphism density, the variational principle admits symmetric solutions.
 \item In some specific block graphons, we exhibit the existence of a non-symmetric phase---where there does not exist a symmetric minimizer (see Section \ref{exists non-sym} for the specific examples). This establishes an analogue of the reentrant phase transition\footnote{In statistical physics and chemistry, a reentrant phase transition describes a phenomenon where while walking on a straight line in parameter space, one leaves one phase, enters a new one, and then reenters into the first phase; we prefer to use this standard notion to the term ``double'' phase transition used in  \cite{chatterjee2010applications,Chatterjee2011}.}
     phenomenon, noted earlier for the upper tail problem on Erd\H{o}s--R\'{e}nyi random graphs \cite{chatterjee2010applications,Chatterjee2011}.
\item Finally, we turn to the bipartite \ER random graph in Section \ref{b-er-results} and study the variational problems corresponding to the upper tails of regular  subgraphs and largest eigenvalue. We extend the analysis of Lubetzky and Zhao \cite{Lubetzky2015} and determine the precise transition boundary between the symmetric and the symmetry breaking regimes.
\end{enumerate}

We present a brief review of the relevant facts from graph limit theory \cite{Borgs2008,Borgs2012,Lovasz2007} and detail our main results in the rest of this section.

\subsection{Graph limit theory: a brief review}
\label{subsec:basics}

In this section, we collect some facts from the theory of dense graph limits  \cite{Borgs2008,Borgs2012,Lovasz2007} which will be relevant for the subsequent discussion. We refer the interested reader to \cite{Lovasz2012} for an in-depth survey of this area.
Define the function
$\vartheta_n : [0,1] \to [n]
$ as
\begin{align}
\vartheta_n(x) &= \begin{cases} 1 & 0 \leq x \leq \frac{1}{n},\\
i & \frac{i-1}{n} < x \leq \frac{i}{n}, \,\, 1<  i \leq n.
\end{cases}\label{eq:k-n}
\end{align}
\begin{definition}[Graphon] Let $\mathcal{W}$ be the space of all measurable functions $f : [0,1]^2 \mapsto [0,1]$ such that $f(x,y) = f(y,x)$ for all $(x,y) \in [0,1]^2$. We call $f \in \mathcal{W}$ a \emph{graphon}.
\end{definition}

\begin{definition}[Empirical Graphon] Let $G$ be a simple graph on
$[n]= \{ 1, \dots, n\}$.
The empirical graphon $f^G:[0,1]^2 \to [0,1]$ is defined as follows
$$f^G(x,y)=
\begin{cases}
1 & \text{if}\quad (\vartheta_n(x), \vartheta_n(y)) \text{ is an edge in $G$},\\
0 &\text{otherwise}.
\end{cases}
$$
\end{definition}

\noindent
Next, we recall the notions of the cut distance and cut metric.
\begin{definition}[Cut Distance]
The \emph{cut distance} between two graphons $f, g\in \mathcal{W}$ is defined as
\[d_{\square}(f, g) = \sup_{S,T \subseteq [0,1]} \left| \int_{S \times T} \left(f(x,y) - g(x,y) \right) dx dy \right|,\]
where $S,T$ are measurable subsets of $[0,1]$.
\end{definition}
\begin{definition}[Cut Metric]
For $f, g \in \mathcal{W}$, the cut metric is defined as
\[\delta_{\square}(f, g) = \inf_{\phi \in \mathcal{M}} d_{\square}(f, g^{\phi})= \inf_{\phi, \psi \in \mathcal{M}} d_{\square}(f^{\psi}, g^{\phi})=\inf_{\psi \in \mathcal{M}} d_{\square}(f^{\psi}, g),\]
where $\mathcal{M}$ denotes the set of bijective, Lebesgue measure-preserving maps $\phi: [0,1] \mapsto [0,1]$.
\end{definition}

\noindent
We will establish our large deviation principle in the natural quotient space associated with $\delta_\square$.
For $f,g \in \mathcal{W}$, write $f \sim g$ if $\delta_\square(f,g) = 0$ and $\tilde f$ for the equivalence class of all $f'\sim f$. For convenience of  notation, we write ${\ti f}^G$ rather than $\wt{f^G}$ for the equivalence class containing the step function $f^G$.

Set $\tW= \W /_{\sim}$.
For $f,g \in \mathcal{W}$, we set \[\delta_{\square}(\tilde{f}, g) = \delta_{\square}(f, g)
\qquad\text{and}\qquad
\delta_{\square}(\tilde{f}, \tilde{g}) = \delta_{\square}(f, g).\]
The above are well-defined, as for all $f_1, f_2$ such that $f_1 \sim f_2$, it holds that $\delta_{\square}(f_1, g) = \delta_{\square}(f_2, g)$.
In \cite{Lovasz2007}, Lov\'{a}sz and Szegedy prove one of the central results in graph limit theory---the metric space $(\tW, \delta_\square)$ is compact.
 In particular, it is separable, which implies  that the Borel $\sigma$-algebra  over $\tW$ is generated by the open balls.  We use $\tilde{\mathcal A}$ to denote this $\sigma$-algebra.

Throughout the paper we use $V(G)$ and $E(G)$ to denote the sets of vertices and edges of the graph $G$ respectively. 
\begin{definition}[Homomorphism Density]\label{hom-density}
 Let $H = (V(H), E(H))$ be a simple graph, where the vertices are labeled as
 $[v]=\{1,\dots,v\}$, where $v=|V(H)|$. Define the homomorphism density of $H$ in $f \in \W$ as
\begin{align*}
t(H,f) &= \int_{[0,1]^v} \prod_{(i,j) \in E(H)} f(x_i, x_j) dx_1 \dots dx_v.
\end{align*}
\end{definition}
\noindent Since $t(H,f)=t(H,g)$ whenever $f \sim g$, $t(H, \cdot)$ is well-defined on $\tW$. With a slight abuse of notation, we use
the same symbol for the function $t(H, \cdot):\tW\to [0,1]:\tilde f\mapsto t(H,\tilde f)$.
As shown in \cite{Borgs2008,Lovasz2007}, this function is continuous for any finite graph $H$.

In this article, we study large deviations for block model random graphs. To this end, we denote as $\mathcal{B}^\gamma$ the set of block graphons where the width of the blocks are given by the values in the vector $\gamma$, which we assume to be rational. Let $\Delta_m= \{\gamma \in [0,1]^m \cap \mathbb{Q}^m: \sum_{i=1}^m \gamma_i=1 \}$ denote the $(m-1)$-dimensional simplex restricted to rational points.

\begin{definition}\label{block-graphon}
Given $\gamma \in \Delta_m$,
we define
$I_1=[0,\gamma_1]$ and
$$I_j=\left(\sum_{k=0}^{j-1} \gamma_k ,\sum_{k=0}^{j} \gamma_{k} \right]\qquad 1<j\leq m.$$
From these intervals, define the interval membership function
\begin{align}
\label{eq:k-gamma}
\vartheta_{\gamma}(x) = \sum_{j=1}^m j \mathbbm{1}\{x \in I_j\}.
\end{align}
When $\gamma$ is clear from context, we write $\vartheta(x)$.
Let $\mathcal{B}^\gamma$ be the set of graphons $f \in \mathcal{W}$ of the form
\[f(x,y) = p_{\vartheta(x), \vartheta(y)},\]
where $p_{ij}=p_{ji}\in [0,1]$. We call such a graphon an $m$-block graphon.
When $\gamma$ is clear from context, we write $f \in \mathcal{B}^\gamma$ as $f=(p_{ij})_{i,j \in [m]}$. When $\gamma =(1/m, \dots 1/m)$, we say $f \in \mathcal{B}^\gamma$ is a uniform size (or simply uniform) $m$-block graphon. When $m = 2$, we write $\mathcal{B}^{(\gamma, 1-\gamma)}$ for $\gamma \in (0,1) \cap \mathbb{Q}$ to denote the set of graphons with blocks of size $\gamma$ and $1-\gamma$.

Let $$\mathcal{B}^{\gamma,*}= \left\{ f \in \mathcal{B}^\gamma: f \not \in \mathcal{B}^\eta \text{ for all } \eta \in \Delta_{m-1} \right\}.$$
In other words, $\mathcal{B}^{\gamma, *}$ is the subset of graphons in $\mathcal{B}^{\gamma}$ that cannot be described by a smaller number of blocks.
Let
$$\ti{\mathcal{B}}^\gamma= \{ \ti{f} \in \tW: \delta_\square(\ti f, g) =0 \text{ for some }  g \in \mathcal{B}^\gamma\}.$$
\end{definition}
Finally we define the sampling distribution for dense block model random graphs.
We recall that $\tilde{\mathcal A}$ denotes the Borel $\sigma$-algebra over the metric space $(\tW,\delta_{\square})$.
\begin{definition}[Sampling from a block model]
Let $W_0 =(p_{ij})_{i,j\in [k]}$ be a uniform $k$-block graphon. Let $\pr_{kn,W_0}$ denote the probability distribution over $\mathcal{W}$ obtained by sampling from $W_0$ as follows. Construct a simple graph $G$ on $kn$ vertices with unique labels in $[kn]$. Independently, add an edge between vertex $i$ and vertex $j \neq i$ with probability
$W_0(i/kn,j/kn)=p_{\lceil i/{n}  \rceil, \lceil  j/{n} \rceil}$.
Return the empirical graphon $f^G$. Let $\ti{\pr}_{kn,W_0}$ denote the probability distribution induced on $\tW$ by the measure $\pr_{kn,W_0}$, i.e.,
$
\ti{\pr}_{kn,W_0}(\tilde A)={\pr}_{kn,W_0}({\ti f^G}\in \tilde A)
$
for all $A\in \tilde{\mathcal A}$.
\end{definition}

\begin{remark}
Note that any graphon with rational-length blocks is a uniform $k$-block graphon for some $k$, and thus the above scheme can be used to sample from such graphons.
\end{remark}

\subsection{A large deviation principle for block models}
First we define the relative entropy function, both pointwise and for entire graphons. These definitions will be used to define the rate function for the LDP. Throughout we use the conventions
$0\log 0=0$ and $0\log(0/0)=0$.  

\begin{definition}[Relative entropy]\label{rel-entropy-defn}
Define $I_{W_0}: \W \to \mathbb{R} \cup \{\infty\}$ as
$$
I_{W_0}(f) = \frac{1}{2}\int_{[0,1]^2} h_{W_0(x,y)} \left(f(x,y)\right) dx dy,
$$
where $h_p(u)$ is the usual relative entropy,
\[h_p(u) =  u \log \frac{u}{p} + (1-u) \log \frac{1-u}{1-p}. \]
\end{definition}

\noindent While the previous definition applies to all graphons, we now specialize to block graphons.
Given $W_0$, let $\Omega= \{(x,y) : W_0(x,y)\in (0,1)\}$. Define
\begin{equation}\label{defn:W-Omega}
    \mathcal{W}_{\Omega} = \{f \in \mathcal{W} : \lambda\left(\{(x,y) \in \Omega ^c : f(x,y) \neq W_0(x,y) \}\right) = 0 \}
\end{equation}
and
 \begin{equation}
    \tW_\Omega=\{ \ti{f} \in \tW :\delta_\square(f, g)=0 \text{ for some $g \in \W_\Omega$}\},
 \end{equation}
where $\lambda(\cdot)$ is the Lebesgue measure on $[0,1]^2$.
In other words, $\mathcal{W}_{\Omega}$ is the set of graphons that agree with $W_0$ wherever $W_0$ takes value $0$ or $1$, except possibly on a measure-zero set. Note that $\mathbb{P}_{kn, W_0}$ and $\tilde{\mathbb{P}}_{kn, W_0}$ are supported on $\W_\Omega$ and $\tW_\Omega$ respectively.
Lemma~\ref{closed tilde} states that $\tW_\Omega$ is closed (and hence compact), and
\Cref{finite-or-not} states that
$I_{W_0}$  is bounded on $\W_\Omega$, and infinite on $\W\setminus \W_\Omega$.

Note that \ER random graphs correspond to the constant base graphon $W_0 = p$---this model satisfies an LDP with speed $n^2$, and rate function $I_{p}$ \cite{Chatterjee2011}. However, in the general case, the function $I_{W_0}(\cdot)$ is not well-defined on the quotient space $\tW$, and thus cannot be the rate function for our LDP. We introduce our candidate rate function $J_{W_0}$  on $\tW$ as follows. To this end, we will use the symbols $B$ and $S$ to denote the
 closed balls in $\W$ and $\tW$:
\begin{align*}
B(\tilde{f}, \epsilon) &= \{ g \in \W : \delta_\square( \ti{f}, g) \leq \ve\}\\
S(\tilde{f}, \epsilon) &= \{\tilde{g} \in\tW: \delta_{\square}(\tilde{f}, \tilde{g}) \leq \epsilon\}.
\end{align*}

\begin{definition}[Rate function]\label{def:rate-function}
The rate function is defined as
$$J_{W_0}(\tilde{f}) =
\begin{cases}
\sup_{\eta > 0} \inf_{ h \in B( \tilde{f}, \eta)} I_{W_0}(h) & \ti{f}\in \tW_\Omega\\
\infty & \ti{f}\not\in \tW_\Omega.
\end{cases} $$
\end{definition}
\noindent
In \Cref{prelim} we prove that $J_{W_0}$ is lower semi-continuous on $\tW$ (\Cref{lemma:semi-continuity-J}),
and that it is bounded by some constant
$C(W_0)<\infty$ on $\tW_\Omega$  (\Cref{finite-or-not}). Follow-up work by Grebik and Pikhurko \cite{Grebik2021} simplified the expression for the rate function, and established that one can instead work with 
$$J_{W_0}(\tilde{f}) =
 \inf_{ h : \delta_{\square}(h, \tilde{f}) = 0} I_{W_0}(h).
$$
It would be interesting to see if this alternate expression can simplify our subsequent analysis. Additional follow-up work by Markering \cite{Markering2020} showed that the same rate function applies when $\log(W_0), \log(\mathbf{1} - W_0) \in L^1([0,1]^2)$.
Note that a block graphon with blocks containing zeros or ones does not satisfy such an integrability condition.

\begin{theorem}\label{ldp}
Let $W_0$ be a uniform $k$-block graphon. The sequence $\tilde{\mathbb{P}}_{kn, W_0}$ obeys a large deviation principle in the space $\tW$ (equipped with the cut metric $\delta_\square$) with speed $(kn)^2$ and rate function $J_{W_0}$. Explicitly,
\begin{enumerate}
    \item  For any open set $\tilde{U} \subseteq\tW$,
$\liminf_{n \to \infty} \frac{1}{(kn)^2} \log \tilde{\mathbb{P}}_{kn, W_0}(\tilde{U}) \geq - \inf_{\tilde{h} \in \tilde{U}} J_{W_0}(\tilde{h})$,
\item For any closed set $\tilde{F} \subseteq\tW$,
$\limsup_{n \to \infty} \frac{1}{(kn)^2} \log \tilde{\mathbb{P}}_{kn, W_0}(\tilde{F}) \leq - \inf_{\tilde{h} \in \tilde{F}} J_{W_0}(\tilde{h})$,
\end{enumerate}
(where we define the $\inf$ over the empty set to be $\infty$.)
\end{theorem}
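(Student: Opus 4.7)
The plan is to reduce the LDP to a tight two-sided estimate on the probability of small cut-metric balls, and then combine these estimates with compactness of $(\tW,\delta_\square)$ to derive the statements for open and closed sets. Concretely, the central estimate to establish is that for each $\tilde f\in \tW_\Omega$,
\[
\lim_{\eta\downarrow 0}\limsup_{n\to\infty} -\frac{1}{(kn)^2}\log \tilde{\mathbb{P}}_{kn,W_0}(S(\tilde f,\eta))
\;=\;
\lim_{\eta\downarrow 0}\liminf_{n\to\infty} -\frac{1}{(kn)^2}\log \tilde{\mathbb{P}}_{kn,W_0}(S(\tilde f,\eta))
\;=\; J_{W_0}(\tilde f),
\]
while for $\tilde f\notin \tW_\Omega$ such balls have super-exponentially small probability (so the rate is $\infty$, matching the definition of $J_{W_0}$). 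Recall here that $\tW_\Omega$ is compact by Lemma~\ref{closed tilde} and that $J_{W_0}$ is lower semi-continuous and bounded by $C(W_0)$ on $\tW_\Omega$.

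For the lower bound on an open set $\tilde U$, fix $\tilde f\in\tilde U$ with $J_{W_0}(\tilde f)<\infty$, choose $\eta>0$ so that $S(\tilde f, 2\eta)\subseteq \tilde U$, and pick a representative $f\in\W_\Omega$ whose $I_{W_0}(f)$ approximates $\inf_{h\in B(\tilde f,\eta)}I_{W_0}(h)$. Approximate $f$ in cut distance by a block graphon $u$ whose block partition refines the $k$-block partition of $W_0$. On such a refined grid, for the identity labeling the number of edges in each sub-block is an independent Binomial whose exponential rate of producing a specified density is exactly the pointwise relative entropy $h_{p_{ij}}(u_{ab})$; summing these gives a ``type-class'' of empirical graphons close to $u$ with $\tilde{\mathbb{P}}_{kn,W_0}$-mass at least $\exp(-(kn)^2(I_{W_0}(u)+o(1)))$. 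Since $\delta_\square(\tilde u,\tilde f)\leq \eta$, this type class lies in $S(\tilde f, 2\eta)\subseteq\tilde U$, giving the desired lower bound after taking the refinement fine and $\eta\downarrow 0$.

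For the upper bound on a closed set $\tilde F$, compactness lets us cover $\tilde F$ by finitely many balls $S(\tilde u_i,\epsilon)$, and by Szemer\'edi's regularity lemma we may further choose each center $\tilde u_i$ to be represented by a block graphon whose block partition refines the $k$-block partition of $W_0$ (and lies in $\W_\Omega$, else the ball has negligible probability for small $\epsilon$). For each such center, we bound $\tilde{\mathbb{P}}_{kn,W_0}(S(\tilde u_i,\epsilon))$ by a method-of-types expansion: sum over all ways of assigning the $kn$ vertices to refined sub-blocks (respecting the fixed partition of $[kn]$ into the $k$ blocks of $W_0$) and all admissible edge counts. The assignment count contributes only $\exp(o(n^2))$, while the conditional probability of any given edge profile is at most $\exp(-(kn)^2(I_{W_0}(u_i)-o_\epsilon(1)))$ by the standard Binomial large-deviation inequality. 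A union bound over the finite cover, followed by $\epsilon\downarrow 0$ and the lower semi-continuity of $J_{W_0}$ from Lemma~\ref{lemma:semi-continuity-J}, produces the claimed upper bound.

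The main obstacle—and the reason the argument of \cite{Chatterjee2011} does not transfer verbatim—is that $\tilde{\mathbb{P}}_{kn,W_0}$ is not invariant under the full permutation group of $[kn]$, only under permutations that preserve the $k$-block partition. Consequently, when one passes from a labeled type class to a neighborhood in the quotient space $\tW$, one cannot recover all of $S(\tilde u,\epsilon)$ by permuting a single labeled profile; instead one must verify that summing over refinements of the $k$-block structure already exhausts the ball up to exponentially negligible error, which is exactly what Szemer\'edi's regularity lemma supplies at the graphon level. A secondary obstacle is the presence of entries where $W_0\in\{0,1\}$: on such entries the Binomial estimate degenerates (the entry is deterministic), so one must track these regions separately and ensure the approximating block graphon $u$ agrees with $W_0$ there, which is precisely the role played by $\W_\Omega$ and $\tW_\Omega$ in the statement of the rate function.
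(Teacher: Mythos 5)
Your reduction to two-sided ball estimates matches the paper's Lemma~\ref{rem:LDP-compact}, and your lower-bound strategy—constructing an explicit type class of labeled graphs whose edge densities match a refined block approximant $u$ of $f$—is a legitimate alternative to the paper's tilting argument (change of measure to $\pr_{kn,g_n}$ plus Jensen's inequality). These are genuinely different but both viable routes to the same lower bound; the type-class version is perhaps more elementary, the tilting version avoids any counting.

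The upper bound, however, contains a real gap. After covering $\tilde F$ by balls $S(\tilde u_i,\epsilon)$, you propose to bound $\tilde\pr_{kn,W_0}(S(\tilde u_i,\epsilon))$ by summing over vertex-to-sub-block assignments \emph{respecting the $k$-block partition of $W_0$}. But the event $\delta_\square(\ti f^{G_{kn}},\ti u_i)\leq\epsilon$ is witnessed by \emph{arbitrary} measure-preserving rearrangements, including ones that move vertices between $W_0$-blocks—and such rearrangements can decrease the relative entropy. Concretely, if $W_0$ has blocks with densities $(0.1,0.5,0.9)$ and $u_i$ is the same structure with densities $(0.9,0.5,0.1)$, then $\ti u_i=\ti W_0$, so $\tilde\pr_{kn,W_0}(S(\tilde u_i,\epsilon))\to 1$, yet $I_{W_0}(u_i)>0$; restricting to $W_0$-respecting assignments would falsely bound this probability by $\exp\{-(kn)^2 I_{W_0}(u_i)\}\to 0$. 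Your closing remark that ``summing over refinements of the $k$-block structure already exhausts the ball...which is exactly what Szemer\'edi's regularity lemma supplies'' does not hold: the regularity lemma provides a finite net of block graphons to \emph{cover} $\tW$, but says nothing about which rearrangements of a sampled graph witness membership in a given cut-metric ball. What you actually need—and what the paper supplies with Lemma~\ref{approx}—is a way to replace the $(kn)!$ rearrangements $\sigma$ by a \emph{finite, $n$-independent} set $T$ of measure-preserving transformations while only inflating the ball radius by $\eta$, so that a single $n\to\infty$ limit (the weak-topology LDP of Lemma~\ref{bound on one}, whose infimum is taken over $\{f:\delta_\square(f,h)\leq\ve\}$, not $\{f:d_\square(f,h)\leq\ve\}$, thereby already accounting for all rearrangements) can be applied uniformly across the permutation group. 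Without this reduction your union bound has no uniform exponential rate, and your stated rate $I_{W_0}(u_i)$ is in general strictly larger than the correct rate $J_{W_0}(\ti u_i)=\sup_\eta\inf_{h\in B(\ti u_i,\eta)}I_{W_0}(h)$, so the claimed upper bound fails.
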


\begin{remark}
Note that any graphon with rational-length blocks is a uniform $k$-block graphon for some $k$. Therefore, our result also describes large deviations events for any base graphon with rational-length blocks.
\end{remark}

The proof of the LDP requires several new ideas, beyond those introduced in \cite{Chatterjee2011}. To explain the main additional difficulties, note that for the \ER random graph, $W_0$ is the constant graphon taking a value $p$, and thus the cut-distance $\delta_\square(W_0,f)$ to an arbitrary graphon $f\in \W$ is equal to the distance $d_\square(W_0,f)$. Somewhat related, the relative entropy $I_{W_0}$ is a well-defined rate function on equivalence classes $\ti f=\{g:\delta_{\square}(f,g)=0\}$.  Neither of these holds if $W_0$ is a block model with more than one block. To some extent, similar issues were faced in \cite{Dhara2019} in the context of large deviations for dense random graphs with given degrees. Our proof follows their general proof outline.  However, the graphons $W_0$ considered in \cite{Dhara2019} are bounded away from zero and one, thus making the distinction between $\ti{\W}$ and $\ti{\W}_{\Omega}$ unnecessary. In contrast, the base graphon $W_0$ in our setting can have zero or one blocks---this creates many new analytic and probabilistic hurdles, and makes our analysis substantially more challenging.

\subsection{LDP for graph parameters and the associated variational problem}
In this section, we turn our attention to upper tail large deviations for continuous graph parameters.

\begin{definition}\label{defn:cont-parameter}
A \emph{graph parameter} is a function $\tau : \tW \to \mathbb{R}$. We extend such a function $\tau$ to $\mathcal{W}$ by setting $\tau(f)=\tau(\ti f)$, where as before, $\tilde f$ is the equivalence class containing $f$.
We further write $\tau(G) = \tau(f^{G})$ for any graph $G$. 
We set $t_{\max}^{\tau}(\tW)=\max_{\ti f\in \tW}\tau(\ti f)$ and $t_{\max}^{\tau}(\tW_\Omega)=\max_{\ti  f\in \tW_\Omega}\tau(\ti f)$. When $\tau$ and $\tW$ or $\tW_{\Omega}$ are clear from context, we simply write $t_{\max}$. We similarly define $t^{\tau}_{\min}(\tW)$, $t^{\tau}_{\min}(\tW_{\Omega})$, and $t_{\min}$ as the corresponding minimal graph parameter values.

A graph parameter $\tau$ is \emph{continuous} if it is continuous with respect to $\delta_{\square}$. %Similarly, a graph parameter $\tau$ is \emph{uniformly continuous} if it is uniformly continuous with respect to $\delta_{\square}$.
\end{definition}

\begin{remark}
Note that the metric space $(\tW,\delta_{\square})$ is compact, and thus every continuous graph parameter is, in fact, uniformly continuous with respect to $\delta_{\square}$. 
\end{remark}

\noindent
Note that by the compactness  of $\tW$ and $\tW_\Omega$, the maxima in the above expressions are actually maxima and not suprema. Also, note that $t_{\max}^{\tau}(\tW) = \max_{f \in \mathcal{W}} \tau(f)$.

\begin{definition}\label{definition:nice-graph-parameter}
Let $\tau$ be a continuous graph parameter. For $W_0 \in \mathcal{W}$ and
$t \leq t^{\tau}_{\max}(\tW)$
we set
\begin{align}
\phi_{\tau}(W_0,t) = \min \{J_{W_0}(\tilde{f}) : \tilde{f} \in\tW, \tau(\tilde f) \geq t\}.
\label{phi-defn}
\end{align}
For $t>t^{\tau}_{\max}(\tW)$, we set $\phi_\tau(W_0,t)=\infty$.
\end{definition}

\noindent Note that continuity of $\tau$ and compactness of $(\tW, \delta_{\square})$ imply that $\{ \ti{f} \in \tW: \tau(\ti{f}) \geq t\}$ is compact. Since the lower semi-continuous function $J_{W_0}$ (\Cref{lemma:semi-continuity-J}) attains its minimum on any compact set, it follows that $\phi_{\tau}(W_0,t)$ is well defined.
 Note also that $\{ \ti{f} \in \tW: \tau(\ti{f}) \geq t\}$ has non-empty intersection with $\tW_\Omega$
 if $t\leq t^{\tau}_{\max}(\tW_\Omega)$, in which case
 $\phi_{\tau}(W_0,t)$ is bounded above by a constant depending on $W_0$, and that
$\{ \ti{f} \in \tW: \tau(\ti{f}) \geq t\}\cap \tW_\Omega=\emptyset$ and $\phi_{\tau}(W_0,t)=\infty$ if $t>t_{\max}$.  So in particular, $\phi_{\tau}(W_0,t)$ is discontinuous at $t=t_{\max}$. In addition,  $\phi_{\tau}(W_0,t)=0$ if $t\leq \tau(W_0)$, and $\phi_{\tau}(W_0,t)>0$ on $( \tau(W_0), t_{\max} ]$. To see this, observe that if $t\leq \tau(W_0)$, then $\{ \ti{f} \in \tW: \tau(\ti{f}) \geq t\}$ contains the equivalence class $\widetilde{W}_0$, and thus $\phi_{\tau}(W_0,t)=0$. On the other hand, $J_{W_0}(\ti{f})=0$ if and only if $\delta_{\square}(\tilde{f},W_0)=0$ (\Cref{prop:j-same}), and thus $\phi_{\tau}(W_0, t) >0$ for $t\in (\tau(W_0), t_{\max}]$.

Our next result establishes $\phi_{\tau}$ as the rate function for the upper tail large deviation of the graph parameter $\tau$.
Moreover, this result proves that conditioned on the rare event, the random graph \emph{concentrates} on the minimizers of \eqref{phi-defn}. This result is a direct adaptation of \cite[Theorem 2.7]{Lubetzky2015} to general $k$-block graphons $W_0$.

\begin{theorem}\label{theorem:variational-problem-nice}
Let $W_0$ be a uniform $k$-block graphon. Let $\tau$ be a continuous graph parameter, $t \leq t^{\tau}_{\max}(\tW)$, and let $G_{kn}$ be the graph on $kn$ vertices sampled from $W_0$ according to the probability distribution $\pr_{kn,W_0}$.
Recall $\phi_{\tau}(W_0, t)$ from \eqref{phi-defn}, and assume that  $\phi_{\tau}(W_0, \cdot)$ is continuous at $t$.  Then
\[\lim_{n \to \infty} \frac{1}{(kn)^2} \log \pr_{kn,W_0}\left(\tau(G_{kn}) \geq t \right) = - \phi_{\tau}(W_0, t). \]
Set $\ti{F}^{\star}$ to  be the set of minimizers of \eqref{phi-defn}. Then $\tilde{F}^{\star}$ is a non-empty compact subset of $\tW$.   If $t>t_{\max}$, then
$\pr_{kn,W_0} \left( \tau(G_{kn}) \geq  t\right)=0$, and if  $t<t_{\max}$, then for $n$ sufficiently large and
each $\epsilon > 0$, there exists $C = C(\tau, \epsilon, W_0, t) > 0$ such that
\[
\pr_{kn,W_0} \left(\delta_{\square}(G_{kn}, \tilde{F}^{\star}) < \epsilon \Big| \tau(G_{kn}) \geq  t\right) \geq 1 - \exp{-Cn^2}.
\]
In particular, if $\tilde{F}^{\star} = \{\tilde{f}^{\star}\}$ for some $\tilde{f}^{\star} \in \tW$, then as $n \to \infty$, the conditional distribution of  ${\ti f}^{G_{kn}}$ given the event $\tau(G_{kn}) \geq t$ converges to the point mass at $\tilde{f}^{\star}$.
\end{theorem}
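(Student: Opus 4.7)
All three assertions follow from Theorem~\ref{ldp} by a standard contraction-style argument that exploits the continuity of $\tau$ and the lower semicontinuity of $J_{W_0}$ (Lemma~\ref{lemma:semi-continuity-J}). The plan is organized around the closed set $F_t = \{\tilde f \in \tW : \tau(\tilde f) \geq t\}$ and the open set $U_t = \{\tilde f \in \tW : \tau(\tilde f) > t\}$; by continuity of $\tau$, $F_t$ is closed and $U_t$ is open, and $\tilde{\mathbb{P}}_{kn,W_0}(F_t) = \mathbb{P}_{kn,W_0}(\tau(G_{kn})\geq t)$.

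For the log-asymptotic, Theorem~\ref{ldp} applied to $F_t$ and $U_t$ respectively gives
\[ \limsup_{n\to\infty} (kn)^{-2}\log \tilde{\mathbb{P}}_{kn,W_0}(F_t) \leq -\phi_\tau(W_0,t), \qquad \liminf_{n\to\infty} (kn)^{-2}\log \tilde{\mathbb{P}}_{kn,W_0}(U_t) \geq -\inf_{U_t} J_{W_0}. \]
Because $\phi_\tau(W_0,\cdot)$ is monotone nondecreasing and $U_t = \bigcup_{s>t}\{\tau\geq s\}$, one has $\inf_{U_t} J_{W_0} = \lim_{s\downarrow t}\phi_\tau(W_0,s)$; the continuity hypothesis at $t$ identifies this limit with $\phi_\tau(W_0,t)$. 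Since $U_t\subset F_t$, the two bounds together pin the limit of $(kn)^{-2}\log \tilde{\mathbb{P}}_{kn,W_0}(F_t)$ to $-\phi_\tau(W_0,t)$. Meanwhile, $F_t$ is compact as a closed subset of the compact space $(\tW,\delta_\square)$, so lower semicontinuity of $J_{W_0}$ ensures $\tilde{F}^{\star}$ is nonempty; continuity of $\tau$ and lsc of $J_{W_0}$ further make $\tilde{F}^{\star}$ closed in $F_t$, hence compact.

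For the conditional concentration, fix $\epsilon > 0$ and set $K_\epsilon = F_t \cap \{\tilde f \in \tW : \delta_\square(\tilde f, \tilde{F}^{\star}) \geq \epsilon\}$, which is closed in $\tW$ and disjoint from $\tilde{F}^{\star}$. The crux is the claim $\alpha := \inf_{K_\epsilon} J_{W_0} - \phi_\tau(W_0,t) > 0$: otherwise, compactness of $K_\epsilon$ and lsc of $J_{W_0}$ would yield a $\tilde g \in K_\epsilon$ with $J_{W_0}(\tilde g) = \phi_\tau(W_0,t)$, forcing $\tilde g \in \tilde{F}^{\star}$ and contradicting $\delta_\square(\tilde g,\tilde{F}^{\star}) \geq \epsilon$. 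Applying the upper-bound half of Theorem~\ref{ldp} to the closed set $K_\epsilon$ and combining with the lower bound on $\tilde{\mathbb{P}}_{kn,W_0}(F_t)$ from the previous paragraph yields $\tilde{\mathbb{P}}_{kn,W_0}(K_\epsilon \mid F_t) \leq e^{-Cn^2}$ with $C = k^2\alpha/3$, say, for all $n$ large. When $\tilde{F}^{\star}=\{\tilde f^{\star}\}$, this immediately delivers convergence of the conditional law to a point mass at $\tilde f^{\star}$. The degenerate case $t>t_{\max}$ is trivial, since $\tilde{\mathbb{P}}_{kn,W_0}$ is supported on $\tW_\Omega$ and $\tau\leq t_{\max}$ there.

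The only nontrivial step is the strict positivity of $\alpha$, and it rests squarely on the compactness of $(\tW,\delta_\square)$ and the lower semicontinuity of $J_{W_0}$, both of which are already available. The continuity hypothesis on $\phi_\tau$ at $t$ is what reconciles the closed event $F_t$ with the open sets required by the LDP lower bound; without it one obtains only the upper-bound asymptotic. Beyond these two ingredients the argument is routine once Theorem~\ref{ldp} is in hand, which is why the proof is essentially a direct adaptation of \cite[Theorem~2.7]{Lubetzky2015}.
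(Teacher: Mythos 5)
Your proof is correct and follows essentially the same approach as the paper. The paper factors the argument through an intermediate result (Theorem~\ref{theorem:conditional}, an adaptation of Chatterjee--Varadhan's Theorem~3.1) and then verifies the hypothesis \eqref{interior_assump} for it using the continuity of $\phi_\tau$ at $t$, whereas you unfold that lemma inline; the key steps — squeezing the closed set between its interior and itself via the LDP and the continuity of $\phi_\tau$, and the contradiction argument via compactness plus lower semicontinuity of $J_{W_0}$ to get $\inf_{K_\epsilon} J_{W_0} > \phi_\tau(W_0,t)$ — are identical.
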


\begin{remark}
Note that, in general,\Cref{theorem:variational-problem-nice} holds only at the continuity points of $\phi_\tau(W_0, t)$. \Cref{countable} explains that $\phi_\tau$ has at most countably many points of discontinuity when $\tau$ is a continuous graph parameter. Moreover, we establish  (see \Cref{p-star}) that  $\phi_\tau$ is continuous
on $\R\setminus\{t_{\max}\}$ if $\tau$ satisfies the ``sufficient increase property'' (\Cref{defn:p-star}).
In turn, the proof of
\Cref{homomorphism-cont} establishes that homomorphism densities
$t(H,\cdot)$
have the sufficient increase property for all finite graphs $H$ and all step functions $W_0$, and the proof of \Cref{operator-cont} establishes that the operator norm has the sufficient increase property for a specific family of graphons $W_0$, namely those which generate bipartite \ER graphs.

So in particular, we know that
for these graph parameters, the conclusions of  \Cref{theorem:variational-problem-nice} hold for all
$t$, with the possible exception of $t=t_{\max}$.
As we will see in \Cref{thm:tmax_behavior}, for the graph paramater
$\tau = t(H, \cdot)$, where $H$ is a finite $d$-regular graph, they also hold
 at $t = t_{\max}$, in spite of the fact that  $\phi_\tau(W_0, t)$ is not continuous at this point.
\end{remark}

 \Cref{theorem:variational-problem-nice}
 establishes that typical behavior under the upper tail large deviation event is governed by the solutions of the variational problem
  \eqref{phi-defn}. This directly motivates our subsequent investigations into the properties of this problem.

\begin{definition}[Symmetric Regime] \label{j-sym} Let $m \in \mathbb{Z}^+$, $\gamma \in \Delta_m$, $W_0 \in \mathcal{B}^{\gamma, \ast}$, and let $\tau$ be a continuous graph parameter. We say that $t \leq t_{\max}^{\tau}(\tW_{\Omega})$ is in the symmetric regime for $W_0$ and $\tau$ if all
minimizers
$\ti{g}$
of
\begin{align}\label{min_eq}
\min_{\ti f \in \tW}\{ J_{W_0}(\ti{f}) \colon  \tau(\ti f) \geq t\},\end{align}
 satisfy  $\ti{g} \in \ti{\mathcal{B}}^\gamma$. We call the symmetric solution unique if a unique element of $\tW_\Omega$ minimizes \eqref{min_eq}.
\end{definition}

\Cref{theorem:variational-problem-nice} implies that in the symmetric regime, the conditional distribution of the random graph concentrates on a set of graphons with block structure agreeing with $W_0$. In addition, if  there is a unique symmetric solution, the graph concentrates on the point mass corresponding to this solution. Our subsequent results explore the existence of a  symmetric regime for specific graph parameters and establish uniqueness for a class of bipartite graphons $W_0$.

Next we specialize to the graph parameter defined by $d$-regular subgraph densities, i.e., to the graph parameter
$\tau:\tilde f\mapsto t(H,\tilde f)$ for a $d$ regular graph $H$.  In Section~\ref{sec:symm}, we first show that for $\delta$ sufficiently small, $t=(1+\delta)t(H, W_0)$ is in the symmetric regime of $W_0$ and this graph parameter.  Then we show that when $t$ is sufficiently close to the maximum homomorphism density, $t$ is also in the symmetric regime.  In \Cref{exists non-sym}, we study examples of two-block graphons $W_0$ that have a non-symmetric regime---this exhibits that in these examples, these two symmetric regimes are separated by a non-symmetric regime, establishing a ``reentrant'' phase transition phenomenon for large deviations in stochastic block models, analogous to the one established
in \cite{chatterjee2010applications, Chatterjee2011}
for large deviations in \ER random graphs.

\subsection{The existence of a symmetric regime for $d$-regular graphs}
\label{sec:symm}

The next theorem establishes the existence of a symmetric regime for $\delta$ sufficiently small.
\begin{theorem}\label{sym-regime} Let $m \in \mathbb{Z}^+$, $\gamma \in \Delta_m$, $W_0 \in \mathcal{B}^{\gamma, \ast}$.
Let $H$ be a $d$-regular graph, 
and let $\tau = t(H, \cdot)$.
If $t(H, W_0)<t_{\max}^{\tau}(\tW_{\Omega})$, then  there
 exists $\delta >0$ sufficiently small such that for all $t \in [t(H, W_0), (1+\delta) t(H, W_0))$, $t$ is in the symmetric regime for $W_0$ and $t(H, \cdot)$. 
 If $h$ is a minimizer, then
$$J_{W_0}(\ti{h})=I_{W_0}(h)=\min\{I_{W_0}(g): g \in \mathcal{B}^\gamma, t(H, g) \geq (1+\delta)t(H, W_0)\}.$$

\end{theorem}
Next, we explore the variational problem near the maximum homomorphism density, and establish the existence of a symmetric regime in this setting.

\begin{theorem}\label{sym-regime-near-one} Let $m \in \mathbb{Z}^+$, $\gamma \in \Delta_m$, $W_0 \in \mathcal{B}^{\gamma, \ast}$.
Let $H$ be a $d$-regular graph, and let $\tau = t(H,\cdot)$.
If $t(H, W_0)<t^{\tau}_{\max}(\tW_{\Omega})$, then  exists $\eta>0$ such that for all $t \in  ((1-\eta)t_{\max},t_{\max} ]$, $t$ is in the symmetric regime for $W_0$ and $t(H, \cdot)$. In addition, if $h$ is a minimizer, then
$$J_{W_0}(\ti{h})=I_{W_0}(h)=\min\{I_{W_0}(g): g \in \mathcal{B}^\gamma, t(H, g) \geq (1+\delta)t(H, W_0)\}.$$
\end{theorem}

\noindent
Theorems \ref{sym-regime} and \ref{sym-regime-near-one} establish the existence of a symmetric regime for the homomorphism density of regular graphs.  This is challenging due to the form of the rate function $J_{W_0}(\cdot)$, and is one of the main technical contributions of this paper. To this end, our first contribution is to establish that 
\begin{align} \label{J-to-I}
\min \{J_{W_0}(\ti{f}) : \ti{f} \in \mathcal{W}, \tau(\ti f) \geq t\}= \inf \{I_{W_0}(f) : f \in \mathcal{W}, \tau(f) \geq t\}
\end{align}
under mild assumptions on the graph parameter $\tau$, which are satisfied for homomorphism densities and the operator norm (\Cref{phi is h}).  This insight facilitates our subsequent analysis, and allows us to work with the relatively entropy functional $I_{W_0}$, instead of the complicated rate function $J_{W_0}$.

Even with this simplification, our proof is quite involved. To exhibit the existence of a symmetric phase, we will establish that for certain ranges of $t$ (depending on $W_0$), any minimizer of \eqref{min_eq} is in $\tilde{\mathcal{B}}^\gamma$. To this end, we will establish that if $\tilde{f}$ is a minimizer of \eqref{min_eq}, there exists a sequence of block constant graphons $\{f_n: n \geq 1\} \subseteq \mathcal{B}^\gamma$  such that $\delta_\square(\ti{f},f_n) \to 0$. This will imply that $\ti{f} \in \tilde{\mathcal{B}}^\gamma$, as $\ti{\mathcal{B}}^\gamma$ is closed in $(\tW, \delta_\square)$. We refer the reader to Section \ref{sec:symmetry} for details on the construction of this sequence $\{f_n : n \geq 1\}$.

%\textcolor{red}{(This paragraph to be removed)} Next, we move to the issue of uniqueness. For the regimes of $t$ (depending on $W_0$) covered in Theorems \ref{sym-regime} and \ref{sym-regime-near-one}, for any symmetric minimizer $\ti{f}$,   
%we establish that $J_{W_0}(\ti{f})=I_{W_0}(g)$ for some $g \in \mathcal{B}^\gamma$ with $\ti{g}=\ti{f}$. It follows that
%$$J_{W_0}(\ti{f})=\min\{I_{W_0}(h): h \in \mathcal{B}^\gamma, t(H, h) \geq t\}.$$
%Thus the uniqueness of the above minimum implies the uniqueness of the symmetric minimizer. To establish this uniqueness, we first show that the minimizer must satisfy the constraint with equality. Note that the problem of minimizing $I_{W_0}(h)$ subject to $t(H, h) = t$ on $\mathcal{B}^{\gamma}$ is a finite dimensional optimization problem with a  convex objective and a single polynomial equality constraint.  We convert this constraint into an implicit equation for one of the finite dimensional coordinates in terms of the others, and then show that the existence of two distinct minimizers can be used to construct a function $h$ which has lower relative entropy than the supposed minimizers, giving a contradiction.

Combined with
Theorem \ref{theorem:variational-problem-nice}, these two theorems  characterize the ``typical'' structure of the graph, conditioned on an upper tail large deviation event for the graph parameter $\tau = t(H, \cdot)$  in the vicinity of the endpoints of $[t(H, W_0), t_{\max}]$.
  However, as stated,
Theorem \ref{theorem:variational-problem-nice} applies only for  $t \in [t(H, W_0), t_{\max})$.
 It is natural to wonder what happens when $t=t_{\max}$. This is the content of the next theorem.

To state it, we recall the notation $W_0=(p_{ij})_{i,j\in [m]}$ for a graphon with blocks $I_{i}\times I_{j}$, $i,j\in [m]$, and define a block $I_{a}\times I_{b}$ to be relevant
if $p_{ab}>0$ and $t(H,W_0)$ strictly decreases if $p_{ab}$ is lowered. Note that by definition, all blocks where $p_{ab}=0$ are not relevant, while the blocks where $p_{ab}=1$ may or may not be relevant. Note that the  maximum homomorphism density of a fixed subgraph $H$ in a random graph drawn from $\pr_{kn,W_0}$ is $\max_{f \in \W_\Omega} t(H, f)$.

\begin{theorem}
\label{thm:tmax_behavior}
Let $W_0$ be a uniform $k$-block graphon.
 Let $H$
 be a finite $d$-regular graph, let $\tau = t(H, \cdot)$, 
 and let $f_{\max}$ be the step function which is equal to $1$ on all relevant blocks, and equal to $W_0$ on all irrelevant blocks.
 Then $\ti f_{\max}$ is the unique minimizer
of \eqref{min_eq} at $t^{\tau}_{\max}(\tW_{\Omega})$.
Moreover, for any $\epsilon>0$, there exists a constant $C>0$ such that
\begin{align*}
\mathbb{P}_{kn, W_0} \left( \delta_{\square}( f^{G_{kn}} , \ti{f}_{\max} ) < \epsilon  \Big| t(H, G_{kn}) \geq t_{\max}   \right)
\geq 1- \exp{-Cn^2},
\end{align*}
implying that as $n \to \infty$, the conditional distribution of  ${\ti f}^{G_{kn}}$ given the event $\tau(G_{kn}) \geq t_{\max}$ converges to the point mass at $\tilde{f}_{\max}$.
\end{theorem}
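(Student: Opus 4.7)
The argument splits into two parts: (i) identifying $\tilde{f}_{\max}$ as the unique minimizer of \eqref{min_eq} at $t=t_{\max}$, and (ii) upgrading this to the exponential conditional concentration statement by means of the LDP.

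For step (i), I would use the identity \eqref{J-to-I}, which holds for $\tau=t(H,\cdot)$ via \Cref{phi is h}, to reduce the variational problem to finding the $L^1$-minimizers of $I_{W_0}(f)$ subject to $f\in\mathcal{W}$ and $t(H,f)\geq t_{\max}$. Any candidate $f$ must lie in $\mathcal{W}_\Omega$ (otherwise $I_{W_0}(f)=\infty$), and thus $f\leq f^\ast$ almost everywhere, where $f^\ast$ takes value $1$ on all blocks with $p_{ab}>0$ and $0$ elsewhere. Monotonicity of homomorphism densities gives $t(H,f)\leq t(H,f^\ast)=t_{\max}$, so the constraint is saturated and the non-negative integrand $\prod_{(u,v)\in E(H)}f^\ast(x_u,x_v)-\prod_{(u,v)\in E(H)}f(x_u,x_v)$ must vanish almost everywhere. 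Expanding $\partial t(H,W_0)/\partial p_{ab}$ as a sum over block-indexed maps $\phi:V(H)\to [k]$ shows that $(a,b)$ is relevant precisely when $p_{ab}>0$ and some ``valid'' homomorphism (one whose every edge avoids the $p=0$ blocks) uses an edge mapping to $\{a,b\}$. Combined with a Fubini argument, the pointwise identity of products then forces $f=1$ a.e.\ on every relevant block. On the irrelevant blocks $t(H,f)$ is independent of $f$, so minimizing $I_{W_0}$ reduces to minimizing each $\int_{I_a\times I_b}h_{p_{ab}}(f)\,dx\,dy$ separately; strict convexity of $h_{p_{ab}}$ with unique zero at $p_{ab}$ pins down $f=p_{ab}$ a.e. Thus $f_{\max}$ is the unique $L^1$-minimizer, and uniqueness of $\tilde{f}_{\max}$ in $\tilde{\mathcal{W}}$ follows because $f_{\max}\in\mathcal{B}^\gamma$ and the cut-metric lower-semicontinuous envelope $J_{W_0}$ of $I_{W_0}$ cannot attain the same minimum at a different cut-equivalence class lying in $\tilde{\mathcal{W}}_\Omega$.

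For step (ii), I would apply the upper-bound half of \Cref{ldp} to the $\delta_\square$-closed set
\[F_\delta=\{\tilde f\in\tilde{\mathcal{W}}:\delta_\square(\tilde f,\tilde f_{\max})\geq\delta,\;t(H,\tilde f)\geq t_{\max}\}.\]
By step (i), $F_\delta\cap\tilde{\mathcal{W}}_\Omega=\emptyset$, so $J_{W_0}\equiv+\infty$ on $F_\delta$; hence $\mathbb{P}_{kn,W_0}(F_\delta)\leq e^{-Mn^2}$ for every $M>0$ once $n$ is large. A direct combinatorial estimate lower-bounds the conditioning probability $\mathbb{P}_{kn,W_0}(t(H,G_{kn})\geq t_{\max})$ by the probability of sampling a specific near-optimal graph, namely the one including every admissible edge in relevant blocks, which factorizes as $\prod_{(a,b)\text{ relevant}}p_{ab}^{\Theta(n^2)}=e^{-C_0 n^2}$ for a constant $C_0=C_0(W_0,H)$. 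Dividing the two estimates and choosing $M$ larger than $C_0+C$ yields the desired conditional exponential decay with rate $C$.

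The principal technical obstacle is the structural analysis in step (i): reconciling the derivative-based definition of ``relevant'' with the combinatorial valid-homomorphism picture, and then propagating the a.e.\ vanishing of the $H$-edge-product difference into blockwise pointwise information about $f$. Step (ii) is then a soft consequence of the LDP upper bound together with a modest direct lower bound on the conditioning event, so the discontinuity of $\phi_\tau$ at $t_{\max}$ is sidestepped rather than confronted head-on.
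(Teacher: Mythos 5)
Your identification of $f_{\max}$ as the unique minimizer of $I_{W_0}$ over $\{f \in \mathcal{W} : t(H,f) \geq t_{\max}\}$ is a clean observation, and more elementary than the route the paper takes; the constraint-saturation argument, the a.e.\ vanishing of the non-negative integrand, and strict convexity of $h_{p_{ab}}$ on irrelevant blocks all look sound. But two steps in your proposal rest on claims that are either inapplicable or false, and both sit exactly where $t = t_{\max}$ causes trouble.

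First, step (i) invokes \eqref{J-to-I} via \Cref{phi is h} to reduce the $J_{W_0}$-variational problem to the $I_{W_0}$-variational problem. But \Cref{phi is h} requires continuity of $\phi_\tau(W_0,\cdot)$ at the target $t$, and \Cref{homomorphism-cont} only gives continuity on $\R \setminus \{t_{\max}\}$; the paper explicitly points out the discontinuity at $t_{\max}$. So the identity you need does not follow from the cited lemma at this endpoint --- it is in fact one of the things \Cref{thm:tmax_behavior} establishes, and assuming it is circular. Your closing assertion that the lsc envelope ``cannot attain the same minimum at a different cut-equivalence class'' is precisely where the hard work lives: $J_{W_0}(\tilde f)$ is computed via sequences $g_n$ with $\delta_\square(g_n, \tilde f) \to 0$ and $I_{W_0}(g_n) \to J_{W_0}(\tilde f)$, and these $g_n$ need not satisfy $t(H, g_n) \geq t_{\max}$, so your $I_{W_0}$-level structural analysis does not transfer to them. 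The paper handles this with \Cref{nice-seq}, \Cref{big-delta-cmc} and \Cref{sym-condition}, replacing the optimizing sequence with one that is eventually comparable to a block-constant graphon in $\mathcal{B}^\gamma$; that machinery cannot be bypassed.

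Second, step (ii) claims $F_\delta \cap \tilde{\mathcal{W}}_\Omega = \emptyset$, hence $J_{W_0} \equiv +\infty$ on $F_\delta$. This is false whenever $W_0$ has an irrelevant block taking a value in $(0,1)$ (see Figure~\ref{irr} for such an example): any $g \in \mathcal{W}_\Omega$ equal to $1$ on all relevant blocks achieves $t(H,g) = t_{\max}$, and one can pick such a $g$ differing from $W_0$ on irrelevant $(0,1)$-blocks so that $\delta_\square(\tilde g, \tilde f_{\max}) \geq \delta$ while $J_{W_0}(\tilde g) \leq I_{W_0}(g) < \infty$. Uniqueness of the $I_{W_0}$-minimizer only tells you $I_{W_0}(g) > I_{W_0}(f_{\max})$, not that $g$ fails the constraint. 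What you actually need is $\inf_{F_\delta} J_{W_0} > J_{W_0}(\tilde f_{\max})$, which follows from uniqueness of the $J_{W_0}$-minimizer plus compactness of $F_\delta$ and lower semicontinuity of $J_{W_0}$ --- exactly the paper's argument. Your direct lower bound on $\mathbb{P}_{kn,W_0}(t(H,G_{kn}) \geq t_{\max})$ by forcing all relevant edges does match the paper's computation and is fine.
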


\begin{remark}\label{remark:tmax}
If $\tau(\ti{f}) =  t^{\tau}_{\max}(\widetilde{\W}_{\Omega}) $ has a unique solution $\ti{f}_{\max}$, it is immediately clear that 
	\begin{align*}
		\mathbb{P}_{kn,W_0}\left(\delta_{\square}(f^{G_{kn} },\ti{f}_{\max}  )=0
		\Big| \tau(\ti{f}^{G_{kn}})  \geq  t_{\max} \right) =1,
	\end{align*}implying that the conditional distribution of $\ti{f}^{G_{kn}}$ given the event $\{ \tau(\ti{f}^{G_{kn}}) \geq t_{\max}  \}$
	is  the point mass at  $\ti{f}_{\max}$.
The equation $\tau(\ti{f}) =  t^{\tau}_{\max}(\tW_{\Omega})$ has a unique solution, for example, if $\tau=t(H, \cdot)$, where $H$ is a finite $d$-regular graph, and all blocks of $W_0$ that are subsets of $\Omega$ are relevant.
 In this case $f_{\max} = \mathbf{1}_{W_0>0}$. This also holds if $W_0$ is a bipartite graphon with two blocks and $\tau(\ti{f}) = \| f\|_{\text{op}}$ (see Theorem \ref{theorem:eigenvalue}), again with $f_{\max} = \mathbf{1}_{W_0 >0}$. See the Appendix for additional details. 
\end{remark}

The proof of \Cref{thm:tmax_behavior} is relatively straightforward given the proofs of  \Cref{theorem:variational-problem-nice,sym-regime,sym-regime-near-one}, and is deferred to the Appendix.

\subsection{A non-symmetric regime in special cases} \label{exists non-sym}

Next, we establish the existence of a non-symmetric regime in some specific families of two-block graphons. Let
\begin{align*}
f_{p,q,r}^{\gamma}(x,y) &= \begin{cases}
p & \text{if } (x,y) \in [0,\gamma ]^2\\
r &\text{if } (x,y) \in (\gamma, 1]^2\\
q & \text{otherwise.}
\end{cases}
\end{align*}

\tikzset{blockr/.style= {rectangle, draw=black!50, fill=black!20, thick}}
\tikzset{blockr/.style= {rectangle, draw=black!50, fill=black!20, thick}}
\tikzset{blockr/.style= {rectangle, draw=black!50, fill=black!20, thick}}
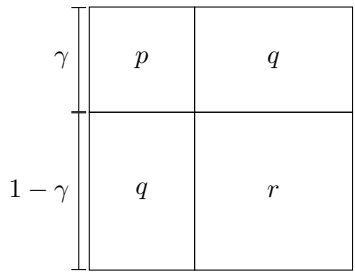
\begin{figure}[h]
\centering
\begin{tikzpicture}[scale=0.7]
\draw [] (0,0) rectangle node {$p$} (2,-2);
\draw [] (2,-2) rectangle node {$r$} (5,-5);
\draw [] (2,0) rectangle node {$q$} (5,-2);
\draw [] (0,-2) rectangle node {$q$} (2,-5);
%%% length labels
\draw [|-|] (-0.2,0) -- (-0.2,-2) node[midway,left] {$\gamma$};
\draw [|-|] (-0.2,-2) -- (-0.2,-5) node[midway,left] {$1-\gamma$};
\end{tikzpicture}
\label{fig:special-graphon}
\caption{Illustration of the graphon $f_{p,q,r}^{\gamma}$.}
\end{figure}

\noindent
We show the existence of a non-symmetric regime for base graphons of the form $f_{0,p,p}^\gamma,f_{1,p,p}^\gamma, f_{1,p,0}^\gamma$ when $p$ is sufficiently small. The first model corresponds to an \ER random graph with a planted independent set, while the second example covers \ER graphs with a planted clique. Finally, the third graphon leads to a bipartite \ER random graph with a planted clique in one of the partitions.

\begin{theorem}\label{nonsym opt}
Let $\gamma \in (0,1) \cap \mathbb{Q}$ and $H$ be a $d$-regular graph.  Assume that
\begin{enumerate}
    \item  $0 < t< t(H, f^\gamma_{0,0,1})$ and denote $W_p=f_{0,p,p}^\gamma$,  or
\item $t(H, f^\gamma_{1,0,0}) < t< 1$ and denote $W_p=f_{1,p,p}^\gamma$,  or
\item  $t(H,f^\gamma_{1,0,0}) < t< t(H, f^\gamma_{1,1,0})$ and denote $W_p=f_{1,p,0}^\gamma$.
\end{enumerate}
Separately, under each of these assumptions, there exists $p_0 > 0$ such that if $p < p_0$,
\[\min \{ J_{W_p}(\ti{g}) : t(H,\ti{g}) \geq t\}<\min \{ J_{W_p}(\ti{g}) : \ti{g} \in \ti{\mathcal{B}}^{\left(\gamma, 1-\gamma\right)}, t(H,\ti{g}) \geq t\}.\]
These statements imply that for $p$ small enough, the minimizer of the variational problem
\eqref{min_eq} is non-symmetric.
\end{theorem}

\noindent
In \Cref{b-gamma-compact}, we show that $\{\ti{g} \in \ti{\mathcal{B}}^{\left(\gamma, 1-\gamma\right)}: t(H,\ti{g}) \geq t\}$ is compact, which justifies the minimum on the right hand side in \Cref{nonsym opt}. Note that in the second and third case, we cover all $t\in (t_{\min},t_{\max})$, 
where $t_{\min},t_{\max}$ are the minimal and maximal values of $t(H, W_p)$ ranging over $p \in (0,1)$. On the other hand, in the first case, we do not consider the full range, since we exclude $t \in [t(H, f^\gamma_{0,0,1}), t(H, f^\gamma_{0,1,1}))$.

To establish this result, we recall that
$I_{W_0}$ is significantly more tractable than the rate function $J_{W_0}$. Our first step (see \Cref{nice-i-min}) is to show that if $W_0$ is a graphon of the form $f_{0,p,p}^{\gamma}$ or $f_{1,p,p}^{\gamma}$, then
\begin{align}\label{goal-now-now}\min\{ J_{W_0}(\ti{f}): \ti{f} \in \ti{\mathcal{B}}^{(\gamma,1-\gamma)},\, \tau(f) \geq t\}= \min\{ I_{W_0}(f): f \in \mathcal{B}^{(\gamma,1-\gamma)} \cup \mathcal{B}^{(1-\gamma, \gamma)}, \, \tau(f) \geq t\}.
\end{align}
A similar simplification occurs for $W_0$ of the form $f_{z_1,p,z_2}^\gamma$, where $z_1, z_2 \in \{0,1\}$.

Next we show that for graphons $W_p$ of the form $W_p=f_{0,p,p}^\gamma$ and $W_p=f_{1,p,p}^\gamma$, there exists $p_0 > 0$ such that if $p < p_0$,
\begin{align}\label{last-piece}\inf\{I_{W_p}(f): t(H,f) \geq t\} < \min\{ I_{W_p}(f): f \in \mathcal{B}^{(\gamma,1-\gamma)} \cup \mathcal{B}^{(1-\gamma, \gamma)}, \, t(H,f) \geq t\}\end{align} for some range of $t$
(\Cref{construct nonsym opt}). We establish this by constructing explicit graphons with lower entropy than that of all graphons in $\mathcal{B}^{(\gamma,1-\gamma)} \cup \mathcal{B}^{(1-\gamma, \gamma)}$. A similar simplification occurs for $W_p=f_{1,p,0}^\gamma$.
Together with \eqref{J-to-I} and  \eqref{goal-now-now}, \eqref{last-piece} implies the desired conclusion.

\subsection{Bipartite \ER graphs: symmetry vs. symmetry  breaking}\label{b-er-results}
Lubetzky and Zhao \cite{Lubetzky2015} characterize the symmetric regimes for $d$-regular subgraph counts and the largest eigenvalue in the \ER model. We extend these results to bipartite \ER random graphs.
Let $f_p^{\gamma}$ denote the graphon $f_{0,p,0}^\gamma$, i.e.,
\begin{align*}
f_p^{\gamma}(x,y) &= \begin{cases}
0 & (x,y) \in [0,\gamma]^2 \cup (\gamma, 1]^2\\
p & \text{otherwise}.
\end{cases}
\end{align*}

\tikzset{blockr/.style= {rectangle, draw=black!50, fill=black!20, thick}}
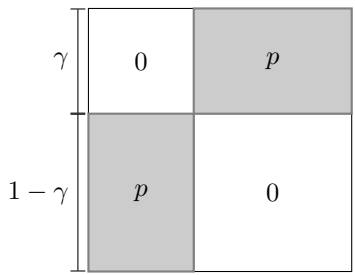
\begin{figure}[h]
\centering
\begin{tikzpicture}[scale=0.7]
\draw [] (0,0) rectangle node {$0$} (2,-2);
\draw [] (2,-2) rectangle node {$0$} (5,-5);
\draw [blockr] (2,0) rectangle node {$p$} (5,-2);
\draw [blockr] (0,-2) rectangle node {$p$} (2,-5);
%%% length labels
\draw [|-|] (-0.2,0) -- (-0.2,-2) node[midway,left] {$\gamma$};
\draw [|-|] (-0.2,-2) -- (-0.2,-5) node[midway,left] {$1-\gamma$};
\end{tikzpicture}
\caption{Illustration of the graphon $f_p^{\gamma}$.}\label{fig:bipartite-graphon}
\end{figure}
\noindent For $W_0$ of the form $f_p^\gamma$, the following theorem completely characterizes the symmetric and non-symmetric regimes for $t(H, \cdot)$, where $H$ is a regular graph.

\begin{theorem}\label{theorem:variational-problem-bipartite}
Fix $0 < p < 1$, $\gamma \in (0,1) \cap \mathbb{Q}$, and $H$ a $d$-regular graph with $d\geq 1$. Let $W_0 = f_p^\gamma$. Let $r \in [p,1]$ and define $t_r^\gamma=t(H, f_r^\gamma)$.
\begin{enumerate}
    \item If $(r^d, h_p(r))$ lies on the convex minorant of $x \mapsto h_p(x^{1/d})$,  then $t_r^\gamma$
is in the symmetric regime for $W_0$ and $t(H, \cdot)$. Moreover, $\ti{f}_r^\gamma$ is the unique symmetric solution.
\item If $(r^d, h_p(r))$ does not lie on the convex minorant of $x \mapsto h_p(x^{1/d})$, then $t_r^\gamma$ is not in the symmetric regime of $W_0$ and $t(H, \cdot)$.
\end{enumerate}
\end{theorem}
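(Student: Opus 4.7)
My plan is to reduce the variational problem to a bipartite analogue of the Erd\H{o}s--R\'enyi problem treated by Lubetzky and Zhao \cite{Lubetzky2015}, and then transport their convex-minorant criterion through the reduction.

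By \eqref{J-to-I}, it suffices to analyze $\inf\{I_{W_0}(f):f\in\mathcal{W}_\Omega,\,t(H,f)\geq t_r^\gamma\}$. For $W_0=f_p^\gamma$, $\Omega$ consists of the two off-diagonal blocks, so any competitor $f\in\mathcal{W}_\Omega$ vanishes on the diagonal blocks and is determined (up to the symmetry $f(x,y)=f(y,x)$) by its restriction to $[0,\gamma]\times(\gamma,1]$, which I rescale to a kernel $g:[0,1]^2\to[0,1]$ via $g(u,v)=f(\gamma u,\,\gamma+(1-\gamma)v)$. Direct computation yields $I_{W_0}(f)=\gamma(1-\gamma)\int h_p(g)$. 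If $H$ is not bipartite then $t(H,f)\equiv 0$ on $\mathcal{W}_\Omega$ and $t_r^\gamma=0$, making the theorem trivial; assume $H$ is bipartite, with parts $A,B$ forced to have equal size $v/2$ by $d$-regularity. Summing over the two proper $2$-colorings of $H$ gives $t(H,f)=(\gamma(1-\gamma))^{v/2}\bigl[t_{\rm bip}(H,g)+t_{\rm bip}(H,g^T)\bigr]$, where $t_{\rm bip}(H,g)=\int\prod_{(i,j)\in E(H)}g(u_i,v_j)\,du\,dv$ with $u_i$ ranging over $A$-coordinates, $v_j$ over $B$-coordinates, and $g^T(u,v)=g(v,u)$. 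The constraint $t(H,f)\geq t_r^\gamma=2(\gamma(1-\gamma))^{v/2}r^{|E(H)|}$ becomes $t_{\rm bip}(H,g)+t_{\rm bip}(H,g^T)\geq 2r^{|E(H)|}$, and symmetric solutions $\ti f\in\ti{\mathcal{B}}^\gamma$ correspond exactly to constant kernels $g\equiv s$ for some $s\in[p,1]$.

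I then apply the Lubetzky--Zhao machinery to the reduced problem $\inf\{\int h_p(g):t_{\rm bip}(H,g)+t_{\rm bip}(H,g^T)\geq 2r^{|E(H)|}\}$. Their analysis for the Erd\H{o}s--R\'enyi problem uses a H\"older/Finner-type bound to control $t(H,\cdot)$ by one-dimensional degree statistics, and then Jensen's inequality to reduce to a scalar convex-minorant problem on $x\mapsto h_p(x^{1/d})$. Both steps transfer to $t_{\rm bip}$: the H\"older bound controls $t_{\rm bip}(H,g)$ by the $d$-th moments of the marginal degree functions $g_1(u)=\int g(u,\cdot)$ and $g_2(v)=\int g(\cdot,v)$, and Jensen reduces to the same scalar problem as before. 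Hence the convex-minorant criterion of \cite{Lubetzky2015} carries over: when $(r^d,h_p(r))$ lies on the minorant of $h_p(x^{1/d})$, the unique minimizer of the reduced problem is $g\equiv r$, equivalently $f=f_r^\gamma$, giving Part~1; when $(r^d,h_p(r))$ lies strictly above the minorant, the Lubetzky--Zhao construction produces a non-constant $g$ strictly beating the constant, yielding an $f\in\mathcal{W}_\Omega$ with lower entropy than every $f_s^\gamma$, giving Part~2.

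\emph{Main obstacle.} The main technical point is verifying that the H\"older/Finner inequalities and their equality conditions from \cite{Lubetzky2015} extend to the bipartite kernel $g$, so that strict inequality forces $g\not\equiv r$ precisely when the minorant criterion fails. This requires simultaneously tracking degree functions on both sides of the bipartition; since $H$ is $d$-regular with equal-sized parts, the combinatorial structure remains symmetric enough for the original arguments to adapt. A secondary subtlety is that $g$ is not itself a symmetric graphon on $[0,1]^2$, so the symmetrization/averaging steps in \cite{Lubetzky2015} must be replaced by separate symmetrizations along each axis of $g$, together with an a priori symmetrization $g\mapsto \tfrac12(g+g^T)$ made possible by the $g\leftrightarrow g^T$ invariance of the constraint and the convexity of $h_p$.
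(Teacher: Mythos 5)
Your Part~1 is in substance the paper's own argument, just with a cosmetic change of coordinates. The paper works directly with the symmetric graphon $f$, orders the $2m$ vertices of $H$ to alternate between the two sides of the bipartition, and applies the Finner--H\"older inequality to bound $t(H,f)\le 2^c\bigl(\tfrac12\Vert f\Vert_d^d\bigr)^m$ (\Cref{lemma:norm}); Jensen against the convex minorant of $\psi_p$ then gives \Cref{lemma:symmetric-general}. Unwinding your rescaling of the off-diagonal block to a kernel $g$ produces exactly the same two scalar inequalities, so the two derivations buy the same thing.

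For Part~2 there is a genuine gap: you cannot just "let the Lubetzky--Zhao construction produce a non-constant $g$." Consider what actually happens if you perturb the constant $r$ to a two-level kernel with values $r_1<r<r_2$ on sets of measure $s$ and $1-s$ with $s r_1^d+(1-s)r_2^d=r^d$. The entropy does drop strictly, but by the same H\"older bound from Part~1 the homomorphism density satisfies $t_{\rm bip}(H,g)+t_{\rm bip}(H,g^T)\le 2\Vert g\Vert_d^{e(H)}=2r^{e(H)}$, and for a genuinely non-constant $g$ the inequality is strict, so the constraint $\ge 2r^{e(H)}$ is \emph{violated}. The paper itself flags that its construction (\Cref{lemma:non-symmetric}, \Cref{fig:bipartite-construction}) "is more complicated than the one in \cite{Lubetzky2015}, due to the bipartite nature of the underlying graph'': the levels $r_1,r_2$ are placed on four matched strips of width $O(\epsilon^2)$ in both intervals $[0,\gamma]$ and $(\gamma,1]$, with a deliberate $\epsilon^3$ imbalance so that the change in $t(H,\cdot)$ is $+\,c\,\epsilon^3+O(\epsilon^4)$ while the change in $I_{W_0}$ is $-\,c'\,\epsilon^2+O(\epsilon^3)$, the latter coming from the convex-minorant gap. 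Your proposal does not supply this construction or an equivalent, so Part~2 remains unproved.

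Finally, the a priori symmetrization $g\mapsto\tfrac12(g+g^T)$ you invoke does decrease $\int h_p(g)$ by convexity, but it does not preserve the constraint: $t_{\rm bip}(H,\cdot)$ is a polynomial functional that is not concave along the segment joining $g$ and $g^T$, so you cannot assume a minimizer has $g=g^T$. This step is also unnecessary for Part~1 (the H\"older bound and Jensen do not require $g=g^T$), so it is a harmless detour there, but it should be dropped rather than relied upon.
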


\begin{remark}
The symmetric regime for subgraph counts in \ER graphs \cite{Lubetzky2015} takes a similar form, with $t_r^{\gamma}$ replaced by $t(H, r)$, where $r$ denotes the constant graphon with value $r$. %We can recover this result by setting $\gamma = 0$.
\end{remark}

Finally, we characterize the symmetric regime for the largest eigenvalue. Similar to \ER graphs, the boundary for the symmetric regime for the largest eigenvalue coincides with that of the density of two-regular graphs.

\begin{definition}
For a graphon $f \in \mathcal{W}$, define the Hilbert--Schmidt kernel operator $T_f$ on $L^2([0,1])$ by
\[(T_f u)(x) = \int_0^1 f(x,y) u(y) dy \] for any $u \in L^2([0,1])$. The operator norm is given by
\[\Vert f \Vert_{\emph{op}} = \min \{ c \geq 0 : \Vert T_f u \Vert_2 \leq c \Vert u \Vert_2 \text{ for all } u \in L^2([0,1])\}. \]
\end{definition}

\begin{lemma}[\cite{Lubetzky2015}, Lemma 3.6]\label{lemma:continuous-extension}
The function $\Vert \cdot \Vert_{\emph{op}}$ is a continuous extension of the normalized graph spectral norm, i.e., $\lambda_1(G)/n$ for a graph $G$ on $n$ vertices, to $(\tW, \delta)$. Moreover, $\Vert \cdot \Vert_{\emph{op}}$ is a continuous graph parameter.
\end{lemma}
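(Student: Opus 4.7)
Since this lemma is stated as \cite[Lemma 3.6]{Lubetzky2015}, my plan would be to either cite it or reproduce the short argument, which splits into three phases. \textbf{Phase 1 (extension to empirical graphons).} I would first verify the pointwise identity $\Vert f^G\Vert_{\text{op}} = \lambda_1(G)/n$ by direct computation: since $f^G$ is constant on each block $I_i\times I_j$ of the partition $\{I_i = ((i-1)/n,i/n]\}_{i=1}^n$, the integral operator $T_{f^G}$ sends $L^2([0,1])$ into the $n$-dimensional subspace $V\subset L^2([0,1])$ of functions constant on each $I_i$, and annihilates $V^\perp$. Writing $T_{f^G}|_V$ in the orthonormal basis $\{\sqrt n\,\mathbf{1}_{I_i}\}_{i\in[n]}$ produces the matrix $\tfrac1n A_G$, whence $\Vert T_{f^G}\Vert_{\text{op}}=\lambda_1(G)/n$.

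\textbf{Phase 2 (invariance under $\sim$).} For any measure-preserving bijection $\phi$, the pullback $U_\phi:u\mapsto u\circ\phi$ is unitary on $L^2([0,1])$ and satisfies $U_\phi^{-1} T_f U_\phi = T_{f^\phi}$, so $\Vert f^\phi\Vert_{\text{op}}=\Vert f\Vert_{\text{op}}$; combined with the $d_\square$-continuity established next, this yields that $\Vert\cdot\Vert_{\text{op}}$ descends to a well-defined function on $(\tW,\delta_\square)$.

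\textbf{Phase 3 (continuity).} My plan here is to realize $\Vert\cdot\Vert_{\text{op}}$ as a uniform limit of homomorphism densities. Since $T_f$ is a self-adjoint Hilbert--Schmidt operator with $\sum_i\lambda_i(f)^2=\Vert f\Vert_2^2\leq 1$, we have $t(C_{2k},f) = \operatorname{tr}(T_f^{2k}) = \sum_i\lambda_i(f)^{2k}$, and the elementary sandwich
\[\Vert T_f\Vert_{\text{op}} \;\leq\; t(C_{2k},f)^{1/2k} \;\leq\; \Vert T_f\Vert_{\text{op}}^{\,1-1/k}\qquad(k\geq 1),\]
whose right inequality uses $\sum_i\lambda_i(f)^{2k}\leq\Vert T_f\Vert_{\text{op}}^{2k-2}\sum_i\lambda_i(f)^2$, pins $\Vert T_f\Vert_{\text{op}}$ down as the $k\to\infty$ limit of $t(C_{2k},f)^{1/2k}$, uniformly in $f\in\W$ because $\Vert T_f\Vert_{\text{op}}\leq 1$. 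As each $t(C_{2k},\cdot)$ is continuous on $(\tW,\delta_\square)$ by the standard continuity of homomorphism densities, the uniform limit $\Vert\cdot\Vert_{\text{op}}$ is too. The main obstacle is Phase 3: one must identify $T_f$ as a compact self-adjoint operator so the trace identity $t(C_{2k},f)=\sum_i\lambda_i(f)^{2k}$ is valid, and then exploit the graphon bound $\Vert f\Vert_2\leq 1$ to obtain the crucial uniformity of the limit; once this is in place the rest reduces to the well-known continuity of finite-graph homomorphism densities.
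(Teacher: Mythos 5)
The paper cites this result from \cite{Lubetzky2015} without reproving it, so there is no internal proof to compare against; what matters is whether your argument stands on its own, and it does.

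Phase 1 is a clean verification: $T_{f^G}$ maps onto the $n$-dimensional subspace of step functions, annihilates its orthogonal complement, and in the orthonormal basis $\{\sqrt n\,\mathbf{1}_{I_i}\}$ its matrix is $\tfrac1n A_G$; since $A_G$ has nonnegative entries, the Perron--Frobenius eigenvalue dominates in absolute value, so $\Vert T_{f^G}\Vert_{\text{op}}=\lambda_1(G)/n$. Phase 3 is the crux, and your sandwich is correct: $t(C_{2k},f)=\operatorname{tr}(T_f^{2k})=\sum_i\lambda_i^{2k}$ holds because $T_f$ is self-adjoint Hilbert--Schmidt (so $T_f^{2k}$ is trace class for $k\geq 1$), the left inequality is immediate from picking out the extremal eigenvalue, and the right inequality from $\sum\lambda_i^{2k}\leq\Vert T_f\Vert_{\text{op}}^{2k-2}\Vert f\Vert_2^2\leq\Vert T_f\Vert_{\text{op}}^{2k-2}$ since $0\leq f\leq 1$. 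The uniformity follows because $\sup_{x\in[0,1]}\bigl(x^{1-1/k}-x\bigr)=O(1/k)\to 0$, so $t(C_{2k},\cdot)^{1/2k}\to\Vert\cdot\Vert_{\text{op}}$ uniformly on $\W$; combined with the continuity of each $t(C_{2k},\cdot)$ this gives continuity of $\Vert\cdot\Vert_{\text{op}}$ on $(\tW,\delta_\square)$. Phase 2 then closes the loop: unitary conjugation gives invariance under measure-preserving bijections, and continuity with respect to $d_\square$ upgrades this to well-definedness on equivalence classes (since $\delta_\square(f,g)=0$ means $d_\square(f^{\phi_n},g)\to 0$ along some sequence of bijections, not necessarily $f=g^\phi$ outright).

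One remark: your argument establishes the qualitative continuity claimed in the lemma, but the paper later (in the proof of \Cref{operator helper}) invokes the quantitative estimate $|\Vert f\Vert_{\text{op}}-\Vert g\Vert_{\text{op}}|^4/4\leq\delta_\square(f,g)$, also attributed to \cite[Lemma 3.6]{Lubetzky2015}. That explicit modulus of continuity does not fall out of the uniform-limit argument directly; it requires the counting-lemma bound on $|t(C_4,f)-t(C_4,g)|$ together with the $k=2$ endpoints of your sandwich. This is not a gap in your proof of the stated lemma, but worth keeping in mind that the cited reference carries slightly more information than what you reprove here.
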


\begin{theorem}\label{theorem:eigenvalue}
Fix $0 < p < 1$, $\gamma \in (0,1) \cap \mathbb{Q}$, and let $W_0 = f_p^\gamma$. Let $r \in [p,1]$ and define $t_r^\gamma=\Vert f_r^{\gamma} \Vert_{\emph{op}}$.
\begin{enumerate}
    \item If $(r^2, h_p(r))$ lies on the convex minorant of $x \mapsto h_p(x^{1/2})$,  then $t_r^\gamma$
is in the symmetric regime for $W_0$ and $\Vert \cdot \Vert_{\emph{op}}$. Moreover, $\ti{f}_r^\gamma$ is the unique symmetric solution.
\item If $(r^2, h_p(r))$ does not lie on the convex minorant of $x \mapsto h_p(x^{1/2})$, then
 $t_r^\gamma$ is not in the symmetric regime for $W_0$ and $\Vert \cdot \Vert_{\emph{op}}$.
\end{enumerate}
\end{theorem}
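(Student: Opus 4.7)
The plan is to reduce the variational problem to one for a kernel on the unit square and then apply a Lubetzky--Zhao \cite{Lubetzky2015} style convex-minorant argument for the operator norm. By \Cref{operator-cont} and \eqref{J-to-I}, the problem $\min\{J_{W_0}(\tilde f):\|f\|_{\mathrm{op}}\geq t_r^\gamma\}$ equals $\inf\{I_{W_0}(f):\|f\|_{\mathrm{op}}\geq t_r^\gamma\}$. Every $f\in\mathcal{W}_{\Omega}$ vanishes a.e.\ on the diagonal blocks $[0,\gamma]^2\cup(\gamma,1]^2$, so $f$ is determined by a kernel on $[0,\gamma]\times(\gamma,1]$; linearly rescaling this rectangle to $[0,1]^2$ yields $\tilde g:[0,1]^2\to[0,1]$ with
\[
I_{W_0}(f)=\gamma(1-\gamma)\int_{[0,1]^2}h_p(\tilde g)\,ds\,dt
\quad\text{and}\quad
\|f\|_{\mathrm{op}}=\sqrt{\gamma(1-\gamma)}\,\sigma_1(\tilde g),
\]
where $\sigma_1(\tilde g)$ is the top singular value of the integral operator on $L^2([0,1])$ with kernel $\tilde g$. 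A direct computation of the leading Perron eigenvector, piecewise constant on $[0,\gamma]$ and $(\gamma,1]$, gives $t_r^\gamma=r\sqrt{\gamma(1-\gamma)}$, so the problem reduces to $\min\{\int h_p(\tilde g)\,ds\,dt:\sigma_1(\tilde g)\geq r\}$.

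For the lower bound, Cauchy--Schwarz applied to the Rayleigh quotient gives $\sigma_1(\tilde g)^2\leq\int\tilde g^2$ for $\tilde g\geq 0$, so any admissible $\tilde g$ satisfies $\int\tilde g^2\geq r^2$. Write $\psi(y):=h_p(\sqrt y)$ and let $\bar\psi$ denote its convex minorant on $[0,1]$. Since $\psi\geq 0$ attains its minimum $0$ at $p^2$ and is strictly increasing on $[p^2,1]$, one checks that $\bar\psi(p^2)=0$ and $\bar\psi$ is strictly increasing on $[p^2,1]$. Chaining $\bar\psi\leq\psi$, Jensen's inequality for the convex $\bar\psi$, and monotonicity,
\[
\int h_p(\tilde g)=\int\psi(\tilde g^2)\ \geq\ \int\bar\psi(\tilde g^2)\ \geq\ \bar\psi\!\left(\textstyle\int\tilde g^2\right)\ \geq\ \bar\psi(r^2).
\]
For Part~1: if $(r^2,h_p(r))$ lies on the convex minorant, $\bar\psi(r^2)=h_p(r)$ and equality propagates for $\tilde g\equiv r$, giving the symmetric minimizer $f_r^\gamma\in\tilde{\mathcal{B}}^\gamma$. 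Uniqueness is established by tracing equality cases: since $\psi=h_p(\sqrt\cdot)$ is real-analytic with positive second derivative at $p^2$, it has no affine pieces, so every affine segment of $\bar\psi$ touches the graph of $\psi$ only at its endpoints. Jensen equality then forces $\tilde g^2$ either to be constant (when $r^2$ is an interior point of strict convexity of $\psi$) or to take values only at the endpoints of an affine piece whose interval contains $r^2$; the mean constraint $\int\tilde g^2=r^2$ combined with the strict inequalities at the endpoints pins all the mass at $r^2$, so $\tilde g\equiv r$ a.e.

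For Part~2: if $(r^2,h_p(r))$ lies strictly above $\bar\psi$, pick $y_1<r^2<y_2$ with $\psi=\bar\psi$ at both endpoints, $\bar\psi$ affine in between, and $\mu\in(0,1)$ with $r^2=(1-\mu)y_1+\mu y_2$. Define $\tilde g:[0,1]^2\to[0,1]$ depending only on $t$ by $\tilde g(s,t)=\sqrt{y_2}$ for $t\in[0,\mu]$ and $\tilde g(s,t)=\sqrt{y_1}$ for $t\in(\mu,1]$. Being rank one, $\sigma_1(\tilde g)=\|\tilde g\|_{L^2}=\sqrt{(1-\mu)y_1+\mu y_2}=r$, and $\int h_p(\tilde g)=(1-\mu)\psi(y_1)+\mu\psi(y_2)=\bar\psi(r^2)<h_p(r)$. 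Unrescaling yields $f^\star\in\mathcal{W}_\Omega$ with $I_{W_0}(f^\star)<\gamma(1-\gamma)h_p(r)$. Any symmetric candidate has the form $\tilde f_s^\gamma$ with $s\geq r$ and entropy $\gamma(1-\gamma)h_p(s)\geq\gamma(1-\gamma)h_p(r)$ (by monotonicity of $h_p$ on $[p,1]$), so none can be a minimizer, establishing Part~2.

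The most delicate step is the uniqueness in Part~1, where equality cases of the convex-minorant inequality and of Jensen's inequality must be controlled simultaneously. The essential analytic input is that $h_p(\sqrt\cdot)$ has no affine pieces because it is real-analytic and not affine; this confines affine pieces of $\bar\psi$ to touch $\psi$ only at endpoints, narrowing candidate minimizers to constant kernels via the integral constraint. The bipartite rescaling and the rank-one construction used in Parts~1 and 2 are transparent analogs of the constructions in \cite{Lubetzky2015}, so those steps transfer with only notational changes.
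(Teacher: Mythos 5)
Your proof is correct and, for Part~1, follows essentially the paper's route: after rescaling $[0,\gamma]\times(\gamma,1]$ to $[0,1]^2$, your Hilbert--Schmidt bound $\sigma_1(\tilde g)^2 \le \int \tilde g^2$ is exactly the paper's \Cref{lemma:norm-inequalities} ($\|f\|_{\mathrm{op}} \le \tfrac{1}{\sqrt 2}\|f\|_2$), and your chain through $\hat\psi$ and Jensen is \Cref{lemma:symmetric-general} with $d=2$. One expository point to tighten: to go from ``$f_r^\gamma$ is the unique $I_{W_0}$-minimizer'' to ``$\tilde f_r^\gamma$ is the unique $J_{W_0}$-minimizer,'' you need the identity $J_{W_0}(\tilde g)=I_{W_0}(g)$ for $g\in\mathcal{W}_\Omega$, which is the first assertion of \Cref{nice-i-min} (valid because $\operatorname{Im}(W_0)\subseteq\{0,p\}$); without it the lower-semicontinuous envelope $J_{W_0}$ could a priori be strictly below $I_{W_0}$ on some equivalence class. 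Similarly, your Part~2 sentence ``Any symmetric candidate has the form $\tilde f_s^\gamma$ \ldots'' is really the chain in the paper's displayed computation \eqref{min comp}, invoking both parts of \Cref{nice-i-min} and \Cref{good int}; worth citing explicitly.

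For Part~2 you depart from the paper in a genuinely useful way. The paper reuses the perturbative construction $g^\varepsilon$ from \Cref{lemma:non-symmetric}, with six carefully sized sub-blocks, and then verifies $\|g^\varepsilon\|_{\mathrm{op}}>r\sqrt{\gamma(1-\gamma)}$ through a page-long trial-vector calculation in \Cref{lemma:improving-eigenvalue}. Your construction is a rank-one kernel $\tilde g(s,t)=\sqrt{y_2}\,\mathbf 1_{t\le\mu}+\sqrt{y_1}\,\mathbf 1_{t>\mu}$, for which $\sigma_1(\tilde g)=\|\tilde g\|_{L^2}=r$ is immediate (rank one makes the operator norm equal the Hilbert--Schmidt norm) and $\int h_p(\tilde g)=\bar\psi(r^2)<h_p(r)$ is just the definition of lying strictly above the convex minorant. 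After unrescaling, $f^\star$ is a three-block graphon in $\mathcal{W}_\Omega$ that meets the operator-norm constraint with equality and has strictly smaller relative entropy than any admissible element of $\mathcal{B}^{(\gamma,1-\gamma)}\cup\mathcal{B}^{(1-\gamma,\gamma)}$. This replaces the delicate eigenvector estimates with a one-line singular-value computation, exploiting the fact that the operator norm (unlike a general homomorphism density) is exactly computable on rank-one kernels; for the operator norm specifically, your construction is cleaner than the one the paper imports from the subgraph-count setting.

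The one place where I would push back mildly is the uniqueness paragraph of Part~1. Your appeal to real-analyticity of $\psi=h_p(\sqrt\cdot)$ is not needed and not the cleanest route: \Cref{all-about-psi} already tells you exactly where $\psi''>0$ and where $\hat\psi$ is affine, and you should run the equality analysis of the three inequalities (contact set, Jensen, monotonicity) directly from that structure, as in the proof of \Cref{averaged-is-better}. Concretely: strict monotonicity of $\hat\psi$ on $[p^2,1]$ forces $\int\tilde g^2=r^2$; equality in $\psi\ge\hat\psi$ forces $\tilde g^2$ into the contact set a.e.; Jensen equality forces either constancy or essential range inside the affine interval of $\hat\psi$, and because $(r^2,h_p(r))$ on the minorant excludes $r^2$ from the open affine interval, the mean constraint pins $\tilde g^2\equiv r^2$. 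This is what you mean, but it is cleaner to anchor it to \Cref{all-about-psi} rather than to an analyticity claim that the paper never states.
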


\begin{remark}
It is not hard to see that  for  $\tau=t(H,\cdot)$ and $\tau=\|\cdot\|_{\text{op}}$, the function $r\mapsto \tau(f_r^\gamma)$ is a continuous and non-decreasing function on  $ [p,1]$ and that $ \tau(f_1^\gamma) =t^{\tau}_{\max}(\tW_{\Omega})$. Thus Theorems~\ref{theorem:variational-problem-bipartite} and \ref{theorem:eigenvalue} cover the full range $[\tau(W_0),t_{\max}]$.
\end{remark}

To establish Theorems \ref{theorem:variational-problem-bipartite} and \ref{theorem:eigenvalue}, we follow the general approach introduced in \cite{Lubetzky2015}. \Cref{nice-i-min} implies that \eqref{goal-now-now} holds for $\tau(g) = t(H, g)$ where $H$ is a $d$-regular graph or $\tau(g) = \Vert g \Vert_{\text{op}}$, meaning that we can again reason about symmetry through the function $I_{W_0}$ rather than $J_{W_0}$. For $r \in (0,1]$, let $f_r^{\gamma}$ be the bipartite graphon with value $r$, and  $t_r^{\gamma} = t(H, f_r^{\gamma})$ be the corresponding homomorphism density of a $d$-regular graph $H$. We apply a  generalized H\"older inequality to show that whenever $f \in \mathcal{W}_{\Omega}$ satisfies $t(H,f) \geq t(H, f_r^{\gamma})$, it holds that $\Vert f \Vert_d^d \geq 2 \gamma (1-\gamma) r^d$ (\Cref{lemma:norm}). Finally, we show that if $(r^d, h_p(r))$ lies on the convex minorant of $x \mapsto  h_p(x^{1/d})$ and $\Vert f \Vert_d^d \geq 2 \gamma (1-\gamma) r^d,$
then $I_{W_0}(f) \geq I_{W_0}(f_r^\gamma)$, with equality occurring if and only if $f = f_r^\gamma$ (\Cref{lemma:symmetric-general}). To establish the non-symmetric regime, we show that whenever $(r^d, h_p(r))$ is not on the convex minorant, we can construct a graphon $g$ with $t(H,g) > t(H,f_r^\gamma)$ and $I_{W_0}(g) < I_{W_0}(f_r^\gamma)$ (\Cref{lemma:non-symmetric}). This construction is more complicated than the one in \cite{Lubetzky2015}, due to the bipartite nature of the underlying graph (see \Cref{fig:bipartite-construction} for the construction). The proof for the spectral norm $\tau(g) = \Vert g \Vert_{\text{op}}$ follows using similar arguments.

\subsection{History and related work}
\label{sec:history}
The upper tail large deviation problem for subgraphs of $G(n,p)$ has attracted considerable attention in Probability and Combinatorics. By applying the theory of graph limits,
Chatterjee and Varadhan found the precise constant in the large deviation probability in the dense case \cite{Chatterjee2011}. This approach does not work in the sparse regime where $p\to 0$, as graphon theory only applies to dense graphs.

The challenge of deriving an LDP for sparse graphs has attracted considerable attention in recent years.  In the sparse regime, even determining the right order of this probability on the exponential scale proved to be considerably challenging. Following partial advances \cite{janson2004upper,janson2002infamous,janson2004deletion,kim2004divide,vu2001large}, this was finally resolved for $H=K_3$ in  \cite{chatterjee2012missing,demarco2012upper}. Subsequently, \cite{demarco2012tight} identified the right order of this probability for  $H= K_r$, $r\geq 4$, and formulated a conjecture regarding the correct order for general subgraphs. See \cite{vsileikis2019counterexample} for a recent counterexample to this general conjecture.
Recently, the development of general theory \cite{chatterjee2016nonlinear,eldan2018gaussian,austin2019structure} and problem-specific ideas \cite{cook2018large,augeri2018nonlinear,harel2019upper,basak2019upper} have contributed to rapid progress on large deviations in the sparse setting.
These results relate the large deviation probability to an entropic variational problem. In turn, some of these variational problems have also been solved \cite{bhattacharya2016upper,bhattacharya2017upper,bhattacharya2018upper}, leading to deep insights regarding the structure of the random graph, conditioned on the rare event.

We emphasize that these remarkable results are mostly applicable for sparse random graphs or hypergraphs \cite{liu2019upper}, and do not shed any direct insight on the problem considered in this paper. Instead, our work is the first step towards a full generalization of the work of \cite{Chatterjee2011} and \cite{Lubetzky2015} to block models. As  \cite{Chatterjee2011} did for \ER graphs, we establish an LDP for block models and demonstrate the existence of a reentrant phase transition for the upper tail of $d$-regular subgraph counts. While we exhibit a reentrant phase transition for a limited class of block models, we show the existence of a symmetric regime for arbitrary block models. Our methods are inspired by the work of \cite{Lubetzky2015}, which completely characterizes the symmetric and non-symmetric regimes for \ER graphs. Moreover, analogous to \cite{Lubetzky2015}, we fully characterize the symmetric and non-symmetric regimes for bipartite \ER graphs. As discussed in the introduction, our work fits into the broader theme of large deviations for dense random graphs with inhomogeneities or constraints, and provides the first rigorous analysis of the large deviations problem for dense block models.

Following the posting of this paper to arXiv, there has been some follow-up work. Grebik and Pikhurko \cite{Grebik2021} simplified our function given by \Cref{def:rate-function} showing that taking the lower-semicontinuous regularization is not necessary. Building on our work, Grebik and Pikhurko \cite{Grebik2021} derived an LDP for graphs sampled from step graphons whose blocks are not necessarily of rational length.  Markering \cite{Markering2020} derived a large deviation principle for inhomogeneous \ER random graphs, showing that the rate function takes a simple form under certain integrability assumptions. 

\noindent\textbf{Outline:} The rest of the paper is structured as follows. We establish our main LDP results, Theorem \ref{ldp} and Theorem \ref{theorem:variational-problem-nice}, in Section \ref{sec:ldp_proof}. In Section \ref{sec:properties}, we derive some analytic properties of $\phi_{\tau}$ which are crucial in the analysis of the variational problem. Section \ref{sec:symmetry} establishes the existence of a symmetric regime in the upper tail, while Section \ref{sec:non-sym} establishes the existence of a non-symmetric regime in specific examples. Finally, we characterize the symmetric regime in \ER bipartite models in Section \ref{sec:bipartite}. We finish with some open problems in Section \ref{sec:open}.\\

\noindent
\textbf{Acknowledgments:}
This work was initiated when CB, JC, JG, and SP were affiliated with Microsoft Research New England (MSR NE). The authors all
thank MSR NE for the vibrant research environment. SP was supported by the NSF Graduate Research Fellowship DGE-1650044. JG was supported by a Microsoft Research PhD fellowship. The work was continued while SP was at the School of Mathematics at the Georgia Institute of Technology and then at the NSF-Simons Center for the Mathematical and Statistical Analysis of Biology at Harvard, and JG was at the Operations Research Center and then the Mathematics Department, Massachusetts Institute of Technology. The authors thank Yufei Zhao and Souvik Dhara for helpful discussions during the early part of the project. They also thank Oleg Pikhurko and Jan Grebik for generously sharing their results on LDPs for $W$-random graphs. Finally, the authors thank the anonymous referee for pointing out an error regarding the uniqueness of the optimizer in the symmetric phase in an earlier version of this manuscript.

\section{Large deviation principle}
\label{sec:ldp_proof}
In this section we establish the LDP. Since the space $(\tW,\delta_\square)$ is compact, it will be enough to prove the bounds in Theorem \ref{ldp} for balls in the metric $\delta_\square$; the precise statement is given in the following lemma.
\begin{lemma}\label{rem:LDP-compact}
Since the space $(\tW,\delta_\square)$ is compact, the bounds in Theorem~\ref{ldp} are equivalent to
\begin{enumerate}
\item
For all $\epsilon > 0$ and $\ti{h} \in\tW$,
$
\liminf_{n \to \infty}\frac{1}{(kn)^2} \log \ti{\mathbb{P}}_{kn, W_0}\brac{S (\ti{h}, \epsilon)} \geq - J_{W_0}(\ti{h});
$
\item
For all $\ti{g} \in\tW$,
$
\lim_{\alpha \to 0} \limsup_{n \to \infty} \frac{1}{(kn)^2} \log \ti{\pr}_{kn,W_0}\brac{S(\ti{g}, \alpha)} \leq -J_{W_0}(\ti g).
$
\end{enumerate}
\end{lemma}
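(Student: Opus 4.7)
The plan is a standard open-cover reduction that uses two ingredients already in hand: compactness of every closed subset of $(\tW,\delta_\square)$ (inherited from compactness of the whole space) and the lower semi-continuity of $J_{W_0}$ recorded in Lemma~\ref{lemma:semi-continuity-J}. I would prove both directions of the equivalence.

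\emph{Theorem~\ref{ldp} $\Rightarrow$ (1),(2).} For (1), the open ball of radius $\epsilon$ centered at $\ti h$ is open and contained in $S(\ti h,\epsilon)$, so the open-set bound of Theorem~\ref{ldp} applied to it yields $\liminf_n(kn)^{-2}\log\ti{\pr}_{kn,W_0}(S(\ti h,\epsilon))\geq -J_{W_0}(\ti h)$. For (2), since $S(\ti g,\alpha)$ is closed, the closed-set bound gives $\limsup_n(kn)^{-2}\log\ti{\pr}_{kn,W_0}(S(\ti g,\alpha))\leq -\inf_{\ti h\in S(\ti g,\alpha)}J_{W_0}(\ti h)$. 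Letting $\alpha\to 0$, the right-hand side tends to $-\sup_{\alpha>0}\inf_{\ti h\in S(\ti g,\alpha)}J_{W_0}(\ti h)=-J_{W_0}(\ti g)$, where the last equality is the statement that $J_{W_0}$ coincides with its own lower semi-continuous envelope.

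\emph{(1),(2) $\Rightarrow$ Theorem~\ref{ldp}.} For any open $\tilde U\subseteq\tW$ and any $\ti h\in\tilde U$, I can choose $\epsilon>0$ with $S(\ti h,\epsilon)\subseteq\tilde U$; then (1) gives $\liminf_n(kn)^{-2}\log\ti{\pr}_{kn,W_0}(\tilde U)\geq -J_{W_0}(\ti h)$, and taking the supremum over $\ti h\in\tilde U$ produces the open-set bound. For the closed-set bound, fix a closed $\tilde F\subseteq\tW$ and $\epsilon>0$, and set $c=\inf_{\ti h\in\tilde F}J_{W_0}(\ti h)$. If $c=\infty$ then $\tilde F\cap\tW_\Omega=\emptyset$, and since $\ti{\pr}_{kn,W_0}$ is supported on $\tW_\Omega$ the probability is identically zero. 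Otherwise, for every $\ti g\in\tilde F$ clause (2) lets me pick $\alpha(\ti g)>0$ with $\limsup_n(kn)^{-2}\log\ti{\pr}_{kn,W_0}(S(\ti g,\alpha(\ti g)))\leq -c+\epsilon$ (this target exceeds $-J_{W_0}(\ti g)+\epsilon$ because $J_{W_0}(\ti g)\geq c$, and when $J_{W_0}(\ti g)=\infty$ any sufficiently small $\alpha(\ti g)$ works by (2)). The open balls of radius $\alpha(\ti g)$ around the points of $\tilde F$ form an open cover of the compact set $\tilde F$; a finite subcover indexed by $\ti g_1,\dots,\ti g_N$ yields $\ti{\pr}_{kn,W_0}(\tilde F)\leq \sum_{i=1}^N \ti{\pr}_{kn,W_0}(S(\ti g_i,\alpha(\ti g_i)))$. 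Since $N$ does not depend on $n$, the logarithmic asymptotic of the finite sum equals the maximum of the summand asymptotics, giving $\limsup_n(kn)^{-2}\log\ti{\pr}_{kn,W_0}(\tilde F)\leq -c+\epsilon$; sending $\epsilon\to 0$ completes the proof.

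The only mild obstacle is that the radius in the finite cover has to be chosen point-by-point via (2) before invoking compactness, so as to simultaneously handle points of $\tilde F$ where $J_{W_0}$ is finite and those where it is infinite. Beyond this bookkeeping, the argument is the textbook compactness reduction used to pass between ball-level and set-level bounds for LDPs on compact metric spaces.
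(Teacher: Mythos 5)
Your argument is correct and is precisely the standard compactness reduction that the paper declines to write out (citing Dembo--Zeitouni, Theorems 4.1.11 and 4.1.18): ball-level bounds give the open-set bound by inclusion and the closed-set bound via a finite subcover plus the "log of a finite sum is the max of the logs" asymptotic, with lower semicontinuity of $J_{W_0}$ (Lemma~\ref{lemma:semi-continuity-J}) supplying $\sup_{\alpha>0}\inf_{S(\ti g,\alpha)}J_{W_0}=J_{W_0}(\ti g)$, and $\tW_\Omega$-support handling the $c=\infty$ case. Nothing is missing; the point-by-point choice of radii before extracting the finite subcover is exactly the right bookkeeping.
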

\noindent
The proof is standard (see e.g., \cite[Theorems 4.1.11, 4.1.18]{amirdembooferzeitouni2010}), and is thus omitted.

 In \Cref{prelim}, we begin by establishing several useful facts about the rate function and the space $\W_\Omega$. We establish the LDP lower and upper bounds in \Cref{sec:lower,sec:upper} respectively. Finally, in \Cref{sec:pf-vp-nice}, we prove \Cref{theorem:variational-problem-nice}, which establishes upper tail large deviations for continuous graph parameters.

\subsection{Preliminaries}\label{prelim}
In this section, we establish a variety of useful analytical properties. Of particular interest are \Cref{closed tilde} and \Cref{lemma:semi-continuity-J}, which state that $\tW_\Omega$ is closed and that $J_{W_0}$  is lower semi-continuous on $(\tW, \delta_\square)$.

\begin{lemma}\label{closed tilde} For any $m \in \mathbb{Z}^+$, $\gamma \in \Delta_m$, and $W_0\in  \mathcal{B}^\gamma$, the set
 $\tW_\Omega$ is closed in $\tW$ with respect to the cut metric topology $(\tW, \delta_\square)$.
 \end{lemma}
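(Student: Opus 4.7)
The plan is to take $\ti{g}_n\in\tW_\Omega$ converging to $\ti{f}$ in $\delta_\square$, with representatives $g_n\in\W_\Omega\cap\ti{g}_n$, and to produce a representative of $\ti{f}$ in $\W_\Omega$ in two steps: first modify rearrangements of $f$ to land in $\W_\Omega$ while remaining $\delta_\square$-close to $f$, then use joint compactness in a pair-graphon space to extract a single element of $\W_\Omega\cap\ti{f}$.

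For the first step, by the definition of $\delta_\square$ choose $\phi_n\in\mathcal{M}$ with $d_\square(g_n,h_n)\to 0$, where $h_n:=f^{\phi_n}$. Since $\Omega^c$ is a finite disjoint union of product blocks $I_a\times I_b$ on which $g_n$ equals the constant $p_{ab}\in\{0,1\}$, and since restricting the cut norm to a product set is dominated by the global cut norm, $|\int_{I_a\times I_b}(g_n-h_n)|\le d_\square(g_n,h_n)\to 0$; combined with $h_n\in[0,1]$ and $p_{ab}\in\{0,1\}$, this forces $h_n\to W_0$ in $L^1(\Omega^c)$. Defining $\tilde h_n:=h_n\mathbf{1}_\Omega+W_0\mathbf{1}_{\Omega^c}$ yields $\tilde h_n\in\W_\Omega$ with $\|\tilde h_n-h_n\|_1\to 0$, hence $\delta_\square(\tilde h_n,f)\le d_\square(\tilde h_n,h_n)+\delta_\square(h_n,f)\to 0$ since $h_n\sim f$.

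For the second step I view each pair $(\tilde h_n,W_0)$ in the quotient of $\W\times\W$ under joint measure-preserving bijections equipped with the joint cut metric $\delta_\square^{\mathrm{pair}}((g,W),(g',W')):=\inf_{\phi\in\mathcal{M}}[d_\square(g,{g'}^\phi)+d_\square(W,{W'}^\phi)]$; this quotient is compact by the standard two-coordinate extension of Lov\'{a}sz--Szegedy compactness. Along a subsequence $(\tilde h_n,W_0)\to(g^\star,W^\star)$ jointly; since $W_0$ is a step function, its $\delta_\square$-class consists of measure-preserving rearrangements, so after an initial relabeling we may take $W^\star=W_0$, and forgetting the second coordinate gives $\ti{g^\star}=\ti{f}$. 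The $\W_\Omega$-constraint $\int_{I_a\times I_b}\tilde h_n=p_{ab}\gamma_a\gamma_b$ (for $p_{ab}\in\{0,1\}$) passes to this limit because with $W^\star$ pinned to $W_0$ the admissible measure-preserving bijections are asymptotically block-preserving, so the block-average functionals are continuous; hence $g^\star=W_0$ almost everywhere on $\Omega^c$, i.e.\ $g^\star\in\W_\Omega\cap\ti f$, proving $\ti f\in\tW_\Omega$. The main obstacle is this preservation of the block constraint in the joint limit, which genuinely uses that $W_0$ is a step function; a purely weak-$\star$ compactness argument on $\W_\Omega$ alone does not suffice because weak-$\star$ convergence is strictly weaker than $\delta_\square$ (randomizing rearrangements of a non-constant graphon converge weak-$\star$ to its mean while remaining in the same $\delta_\square$-class), and coupling with the fixed second coordinate $W_0$ is what pins down both the equivalence class and the block structure in the limit.
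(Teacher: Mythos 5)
Your proposal takes a genuinely different route from the paper, and the general plan (produce a sequence in $\W_\Omega$ that is $\delta_\square$-close to $f$, then use a compactness device to extract a single representative in $\W_\Omega\cap\ti f$) is sound. Your first step is correct: on each block of $\Omega^c$ the difference $g_n-h_n$ has constant sign (since $g_n=p_{ab}\in\{0,1\}$ and $h_n\in[0,1]$), so bounding $|\int_{I_a\times I_b}(g_n-h_n)|$ by $d_\square(g_n,h_n)$ genuinely yields $L^1(\Omega^c)$-convergence of $h_n$ to $W_0$, and $\tilde h_n\in\W_\Omega$ with $\delta_\square(\tilde h_n,f)\to 0$. (That said, this step only reproduces the data you already had, namely a sequence in $\W_\Omega$ converging to $\ti f$, so it is not clear it buys anything beyond starting from $g_n$.)

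The gap is in the second step, and it is genuine. You assert that because the second coordinate is pinned to $W_0$, ``the admissible measure-preserving bijections are asymptotically block-preserving, so the block-average functionals are continuous.'' This is false as stated. If $W_0$ has a nontrivial automorphism --- for instance the two-block bipartite graphon $f_p^{1/2}$, for which swapping the two intervals is a measure-preserving symmetry --- then the bijections $\phi_n$ witnessing the joint convergence may asymptotically realize that automorphism rather than the identity, so they are emphatically not asymptotically block-preserving, and the functionals $g\mapsto\int_{I_a\times I_b}g$ are not continuous along such a sequence. What \emph{is} true, and what the argument actually needs, is that $d_\square(W_0, W_0^{\phi_n})\to 0$ forces $\phi_n$ to be asymptotically close to an automorphism of the merged-twins reduction of $W_0$, and that every such automorphism preserves the set $\Omega^c$ (hence $\W_\Omega$). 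But proving the first of these --- essentially a quantitative version of \Cref{step-functions}, or of \cite[Theorem 8.6(vi)]{jansongraphonbook} --- is a nontrivial rigidity lemma for step graphons: one must test the cut norm against the sets $\phi_n^{-1}(I_c)\cap I_a$, pass to a limiting transport matrix $\mu$, use twin-freeness of the reduction to force $\mu$ to be a permutation matrix realizing an automorphism, and then argue across all subsequential limits. None of this appears in your proposal, and you flag it as ``the main obstacle'' without resolving it.

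The paper's proof avoids this issue entirely. It applies \Cref{intervalpartition} to obtain refining interval partitions $\mathcal{P}_k$ that are already adapted to the $\gamma$-blocks, so that the block-averaged approximants $W_{n,k}$ automatically lie in $\W_\Omega$. Then membership in $\W_\Omega$ is stable under the much easier $d_\square$-convergence (\Cref{closed}) and under the a.e.\ limit produced by the martingale convergence theorem. This stays entirely in the $d_\square$ world and never needs to control how close a measure-preserving bijection is to an automorphism, which is exactly the delicate step your argument leaves open.
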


 \begin{lemma}\label{lemma:semi-continuity-J}
For any $m \in \mathbb{Z}^+$, $\gamma \in \Delta_m$, and $W_0\in  \mathcal{B}^\gamma$, the function $J_{W_0}(\cdot)$ is
 lower semi-continuous on $(\tW, \delta_{\square})$.
\end{lemma}

We start by stating some elementary properties of the relative entropy $h_p(\cdot)$.
\begin{lemma}\label{lem:h_p}
Let $\beta\in(0,1/2]$ and let $G_p(a,q)=aq-\log(pe^a+1-p)$. Then the following holds

(i) For all $p\in [\beta,1-\beta]$, $\|h_p\|_\infty\leq \log(2/\beta)$.

(ii) The family of functions $(h_p)_{p\in [\beta,1-\beta]}$ is equicontinuous on $[0,1]$.

(iii) For all $p\notin\{0,1\}$, $h_p(q)=\sup_{a\in \R}G_p(a,q)$

(iv) For $q\notin\{0,1\}$, the $\sup$ in (iii) is achieved by
$a=\log\left(\frac q{1-q}\frac {1-p}p\right)$.
\end{lemma}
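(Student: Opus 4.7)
The plan is to handle the four parts in order using elementary facts about the Bernoulli relative entropy $h_p$ and convex duality. For (i), I would split
\[
h_p(u) = \bigl(u\log u + (1-u)\log(1-u)\bigr) - u\log p - (1-u)\log(1-p).
\]
The first bracketed piece is the negative binary entropy, which is nonpositive on $[0,1]$, so
$h_p(u) \leq -u\log p - (1-u)\log(1-p) \leq -\log\min(p,1-p) \leq \log(1/\beta) \leq \log(2/\beta)$. Combined with the nonnegativity $h_p \geq 0$ of the relative entropy (which holds under the stated conventions $0\log 0 = 0$), this gives the sup-norm bound uniformly in $p \in [\beta, 1-\beta]$.

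For (ii), I would reuse the same decomposition. The pieces $u\log u$ and $(1-u)\log(1-u)$ are independent of $p$, continuous on the compact interval $[0,1]$, and hence uniformly continuous there. The $p$-dependent contribution is affine in $u$ with slope $\log((1-p)/p)$, bounded in absolute value by $\log((1-\beta)/\beta) \leq \log(1/\beta)$ for $p \in [\beta, 1-\beta]$. Summing the contributions yields a modulus of continuity independent of $p$, i.e.\ equicontinuity. The only potential subtlety, namely the blow-up of $h_p'$ at $u \in \{0, 1\}$, originates entirely in the $p$-free pieces and therefore causes no uniformity issue across $p$; this is the only place where I might expect trouble, but the decomposition dissolves it.

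For (iii) and (iv), I would recognize $a \mapsto \log(p e^a + 1 - p)$ as the cumulant generating function of the Bernoulli$(p)$ law, so $G_p(\cdot, q)$ is strictly concave in $a$ on $\mathbb{R}$. Differentiating in $a$ gives $\partial_a G_p(a, q) = q - p e^a/(p e^a + 1 - p)$, and for $q \in (0, 1)$ the unique stationary point is $a^\ast = \log\!\left(\frac{q}{1-q}\cdot \frac{1-p}{p}\right)$, which is the global maximizer by concavity; this is (iv). Substituting $a^\ast$ back into $G_p$, one uses the identity $p e^{a^\ast} + 1 - p = (1-p)/(1-q)$ to simplify and recover exactly $q\log(q/p) + (1-q)\log((1-q)/(1-p)) = h_p(q)$, establishing (iii) on the interior. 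The boundary cases $q \in \{0, 1\}$ are then obtained by evaluating $G_p(a, q)$ at $q = 0$ (sending $a \to -\infty$) and at $q = 1$ (sending $a \to +\infty$), which give $-\log(1-p) = h_p(0)$ and $-\log p = h_p(1)$ respectively, in agreement with the conventions. No step is more involved than routine calculus and convexity, so I do not expect any serious obstacle beyond the equicontinuity bookkeeping in (ii).
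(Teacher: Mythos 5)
Your proof is correct and follows essentially the same approach as the paper: the same decomposition of $h_p$ into the $p$-free entropy piece and the $p$-dependent affine piece for (i)--(ii), and the standard convex-duality/stationary-point computation for (iii)--(iv), which the paper leaves as an exercise. Your route through nonnegativity of $h_p$ and nonpositivity of the entropy piece in (i) even yields the slightly sharper bound $\log(1/\beta)$, and in (ii) you correctly supply the uniform Lipschitz bound on the affine part that the paper's one-line justification glosses over.
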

\begin{proof}
 (i) Follows by observing that
 $|x\log x+(1-x)\log(1-x)|\leq \log 2$
 and
 $|x\log p|+|(1-x)\log p|\leq x| \log\beta|+(1-x)|\log\beta|=-\log\beta$.

\noindent (ii) Follows from uniform continuity of the  function $x\mapsto x\log x+(1-x)\log (1-x)$.

\noindent (iii) and (iv) are elementary exercises left to the reader.
\end{proof}

 The function $I_{W_0}$ also comes up naturally in \cite{Dhara2019}---however,
 in \cite{Dhara2019} it is assumed that the base graphon $W_0$ is bounded away from zero and one, and thus the function $I_{W_0}$ is necessarily finite. This is not the case in our context.  We use $Im(W_0)$ to denote the image of $W_0$ in $[0,1]$.

\begin{proposition}
 \label{finite-or-not} Let $W_0 \in \W$.  If $f \not \in \W_\Omega$, then $I_{W_0}(f)=\infty$.  If $f \in \W_\Omega$
 and $W_0$ obeys the assumption
 \begin{equation}\label{Im-gap}
 \beta= \inf\{w>0 : w \in Im(W_0) \text{ or } 1-w \in Im(W_0)\}>0,
 \end{equation}
then $I_{W_0}(f) \leq \frac 12 \log (2/\beta)$.
\end{proposition}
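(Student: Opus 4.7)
The plan is to decompose $[0,1]^2$ into $\Omega = \{W_0 \in (0,1)\}$ and $\Omega^c = \{W_0 \in \{0,1\}\}$, and analyze the integrand $h_{W_0(x,y)}(f(x,y))$ separately on each piece, using Lemma~\ref{lem:h_p}(i) on $\Omega$ and the conventions $0\log 0 = 0$, $0\log(0/0) = 0$ on $\Omega^c$.

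For the first claim, I would suppose $f \notin \mathcal{W}_\Omega$, so by \eqref{defn:W-Omega} the set $A = \{(x,y) \in \Omega^c : f(x,y) \neq W_0(x,y)\}$ has positive Lebesgue measure. On $A$, either $W_0(x,y) = 0$ and $f(x,y) > 0$, in which case $h_0(f(x,y)) = f(x,y)\log\frac{f(x,y)}{0} + (1-f(x,y))\log(1-f(x,y)) = +\infty$; or $W_0(x,y) = 1$ and $f(x,y) < 1$, giving $h_1(f(x,y)) = +\infty$ by the symmetric argument. Since $h_{W_0}(f) \geq 0$ everywhere (as $h_p \geq 0$) and equals $+\infty$ on a set of positive measure, the integral defining $I_{W_0}(f)$ diverges.

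For the second claim, suppose $f \in \mathcal{W}_\Omega$. On $\Omega^c$, the definition of $\mathcal{W}_\Omega$ gives $f(x,y) = W_0(x,y)$ a.e., so $h_{W_0(x,y)}(f(x,y))$ equals either $h_0(0) = 0\log(0/0) + 1\cdot \log 1 = 0$ or $h_1(1) = 0$ by the stated conventions; hence this part of the integral contributes zero. On $\Omega$, by the assumption \eqref{Im-gap}, every value $W_0(x,y)$ in $(0,1)$ satisfies $W_0(x,y) \geq \beta$ and $1 - W_0(x,y) \geq \beta$, i.e., $W_0(x,y) \in [\beta, 1-\beta]$. Applying Lemma~\ref{lem:h_p}(i) pointwise gives $h_{W_0(x,y)}(f(x,y)) \leq \log(2/\beta)$ on $\Omega$. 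Therefore
\[
I_{W_0}(f) = \frac{1}{2}\int_{\Omega} h_{W_0(x,y)}(f(x,y))\,dx\,dy \leq \frac{1}{2}\lambda(\Omega)\log(2/\beta) \leq \frac{1}{2}\log(2/\beta),
\]
as desired.

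The proof is essentially a bookkeeping exercise with no serious obstacle; the only subtlety is parsing the boundary conventions $0\log 0 = 0$ and $0\log(0/0) = 0$ consistently so that the integrand is unambiguously zero on $\Omega^c$ when $f = W_0$, while diverging on the bad set when $f \neq W_0$ there. Note also that the hypothesis \eqref{Im-gap} is automatic for block graphons, since the image $Im(W_0)$ is then finite.
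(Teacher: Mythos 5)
Your proof is correct and takes essentially the same approach as the paper's: decompose $[0,1]^2$ into $\Omega$ and $\Omega^c$, note that the integrand vanishes on $\Omega^c$ when $f\in\W_\Omega$ (reducing $I_{W_0}(f)$ to an integral over $\Omega$), and bound that integral using Lemma~\ref{lem:h_p}(i). The only difference is that you spell out the divergence argument in slightly more detail than the paper, which simply cites $h_0(x)=\infty$ for $x\ne0$ and $h_1(x)=\infty$ for $x\ne1$.
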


\begin{proof}
Since $h_0(x) =\infty$ for $x \not=0$ and $h_1(x)=\infty$ for $x \not =1$, it follows that $I_{W_0}(f)=\infty$ when
$f \not \in \W_\Omega$.
To bound $I_{W_0}(f)$ for $f\in \W_\Omega$,  observe that
\begin{equation} \label{int-over-omega} I_{W_0}(f) =
 \frac{1}{2}\int_{\Omega} h_{W_0(x,y)} \left(f(x,y)\right) dx dy,
\end{equation}
since $f$ and $W_0$ agree on $ \Omega^c$.
The proof is completed by invoking Lemma~\ref{lem:h_p} (i).
\end{proof}

\begin{proposition}
 \label{uniform-cont} Let $\ve>0$ and
 assume that $W_0\in \W$ obeys the condition \eqref{Im-gap}.  Then there
 exists $\eta > 0$ such that if $f, g \in \W_\Omega$ and $\Vert f-g \Vert_\infty \leq \eta$, then $|I_{W_0}(f)- I_{W_0}(g)| \leq \ve$.
\end{proposition}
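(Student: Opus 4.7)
The plan is to reduce this to the equicontinuity statement from Lemma~\ref{lem:h_p}(ii). First, I would observe that on the set $\Omega = \{(x,y) : W_0(x,y) \in (0,1)\}$, the condition \eqref{Im-gap} forces $W_0(x,y) \in [\beta, 1-\beta]$, because any value of $W_0$ strictly between $0$ and $1$ must be at distance at least $\beta$ from both endpoints. So the integrand defining $I_{W_0}$ on $\Omega$ only involves relative entropies $h_p$ with $p$ in the compact interval $[\beta, 1-\beta]$.

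Next, I would use the fact that $f, g \in \mathcal{W}_\Omega$ agree with $W_0$ (and hence with each other) almost everywhere on $\Omega^c$. Combined with the representation \eqref{int-over-omega} used in the proof of Proposition~\ref{finite-or-not}, this gives
\begin{equation*}
I_{W_0}(f) - I_{W_0}(g) = \frac{1}{2}\int_{\Omega} \bigl[ h_{W_0(x,y)}(f(x,y)) - h_{W_0(x,y)}(g(x,y)) \bigr]\, dx\, dy.
\end{equation*}

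Now I invoke Lemma~\ref{lem:h_p}(ii): the family $(h_p)_{p \in [\beta, 1-\beta]}$ is equicontinuous on $[0,1]$. Given $\varepsilon > 0$, pick $\eta > 0$ such that for every $p \in [\beta, 1-\beta]$ and every $u, v \in [0,1]$ with $|u - v| \leq \eta$, one has $|h_p(u) - h_p(v)| \leq \varepsilon$. If $\|f - g\|_\infty \leq \eta$, then pointwise on $\Omega$ we get $|h_{W_0(x,y)}(f(x,y)) - h_{W_0(x,y)}(g(x,y))| \leq \varepsilon$, and integrating over $\Omega \subseteq [0,1]^2$ yields $|I_{W_0}(f) - I_{W_0}(g)| \leq \varepsilon/2 \leq \varepsilon$.

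There is essentially no obstacle here; the only subtle point is verifying that \eqref{Im-gap} really does confine $W_0$ to $[\beta, 1-\beta]$ on $\Omega$, which is immediate from the definition of $\beta$ as an infimum over both $w$ and $1-w$ appearing in $\mathrm{Im}(W_0)$. The argument is a clean transfer of Lemma~\ref{lem:h_p}(ii) through the integral, made possible by the fact that $f$ and $g$ cannot differ on $\Omega^c$ when both lie in $\mathcal{W}_\Omega$.
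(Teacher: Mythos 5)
Your proof is correct and follows essentially the same route as the paper: both restrict the integral to $\Omega$ via \eqref{int-over-omega}, note that \eqref{Im-gap} confines $W_0$ to $[\beta,1-\beta]$ there, and invoke the equicontinuity in Lemma~\ref{lem:h_p}(ii). The only difference is that you spell out the details (including the small check that $1-p\geq\beta$ whenever $p\in\mathrm{Im}(W_0)\cap(0,1)$) that the paper leaves implicit.
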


\begin{proof}
Fix $\ve>0$.  By Lemma~\ref{lem:h_p} (ii) there exists an $\eta>0$ such that
$|h_p(u)- h_p(v)| \leq \ve$ whenever $|u-v| \leq \eta$ and $p\in [\beta,1-\beta]$.
Inserted into \eqref{int-over-omega}, this
completes the proof.
\end{proof}

\noindent
We derive a variational representation for $I_{W_0}$ using convex duality.
\begin{proposition}\label{proposition:rate-function-equivalent}
Let $W_0 \in \W$ and let $S$ be the set of all symmetric functions in $ L_2([0,1]^2)$.
For $a\in S$ and $f\in\W$, define
\begin{align}
K_{W_0}(f,a) &= \int_{[0,1]^2} \left[a(x,y) f(x,y)  -  \log \Big(W_0(x,y) e^{a(x,y)} + 1-W_0(x,y)\Big) \right]dx dy .\label{eq:supremum-I-function}
\end{align}
Then $ I_{W_0}(f)=\frac 12 \sup_{a\in S} K_{W_0}(f,a)$.
\end{proposition}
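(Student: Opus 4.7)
The plan is to prove both $2 I_{W_0}(f) \geq \sup_{a \in S} K_{W_0}(f,a)$ and its reverse, via pointwise Fenchel duality and an explicit construction of approximate maximizers. For the upper bound, I would apply \Cref{lem:h_p}(iii) pointwise: for every $(x,y)$ with $W_0(x,y) \in (0,1)$, the integrand $a(x,y)f(x,y) - \log(W_0 e^{a} + 1-W_0)$ is bounded above by $h_{W_0(x,y)}(f(x,y))$ for every real $a(x,y)$. The two boundary cases $W_0(x,y) \in \{0,1\}$ are checked directly using the conventions $0 \log 0 = 0$ and $0 \log(0/0) = 0$: when $W_0 = 0$ the integrand reduces to $af$, which is at most $h_0(f)$ (equal to $0$ if $f=0$ and $+\infty$ if $f>0$); the case $W_0 = 1$ is symmetric. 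Integrating over $[0,1]^2$ gives $K_{W_0}(f,a) \leq 2 I_{W_0}(f)$ for every $a \in S$.

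For the matching lower bound I would split into two cases. If $I_{W_0}(f) = \infty$, \Cref{finite-or-not} gives $f \notin \W_\Omega$, so the symmetric measurable set $B = \{(x,y) \in \Omega^c : f(x,y) \neq W_0(x,y)\}$ has positive measure; at least one of $B \cap \{W_0 = 0\}$ and $B \cap \{W_0 = 1\}$ then also has positive measure. In the first case, by continuity of measure there is a $\delta > 0$ such that $A = B \cap \{W_0 = 0\} \cap \{f \geq \delta\}$ has positive measure; $A$ is symmetric since each intersectand is. Setting $a_c = c \mathbf{1}_A \in S$, the integrand of $K_{W_0}(f, a_c)$ reduces to $cf \geq c\delta$ on $A$ (since $W_0 e^a + 1 - W_0 = 1$ when $W_0 = 0$) and to $0$ elsewhere, yielding $K_{W_0}(f, a_c) \geq c \delta \lambda(A) \to \infty$ as $c \to \infty$. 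The case where $B \cap \{W_0 = 1\}$ has positive measure is analogous with $a_c = -c\mathbf{1}_A$ and integrand $c(1-f)$ on $A$.

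If $I_{W_0}(f) < \infty$, then $f \in \W_\Omega$ and $f = W_0 \in \{0,1\}$ almost everywhere on $\Omega^c$. Since $W_0$ is a block graphon taking only finitely many values, there is $\beta > 0$ with $W_0 \in [\beta, 1-\beta]$ on $\Omega$. Following \Cref{lem:h_p}(iv) the pointwise maximizer of $G_{W_0(x,y)}(\cdot, f(x,y))$ is $\log\frac{f(1-W_0)}{(1-f)W_0}$, which need not lie in $L^2$ when $f$ approaches $\{0,1\}$; I would therefore truncate. Set $f_\epsilon = \max(\epsilon, \min(1-\epsilon, f))$ and define
\[ a_\epsilon(x,y) = \mathbf{1}_\Omega(x,y) \log\frac{f_\epsilon(x,y)(1-W_0(x,y))}{(1-f_\epsilon(x,y))W_0(x,y)}. \]
This $a_\epsilon$ is bounded, symmetric, and hence in $S$. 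A direct algebraic simplification using $W_0 e^{a_\epsilon} + 1 - W_0 = (1-W_0)/(1-f_\epsilon)$ on $\Omega$ shows the integrand of $K_{W_0}(f,a_\epsilon)$ equals $h_{W_0}(f) + f\log(f_\epsilon/f) + (1-f)\log((1-f_\epsilon)/(1-f))$ on $\Omega$ and vanishes on $\Omega^c$ (since $f = W_0 \in \{0,1\}$ and $a_\epsilon = 0$ there). The correction terms vanish wherever $f \in [\epsilon, 1-\epsilon]$ and, on the complement, are uniformly bounded by $O(\epsilon)$ via the elementary bound $|t\log(\epsilon/t)| \leq \epsilon/e$ on $[0,\epsilon]$; dominated convergence then gives $K_{W_0}(f, a_\epsilon) \to 2 I_{W_0}(f)$ as $\epsilon \to 0$.

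The main technical subtlety is handling $\Omega^c$ in the finite-entropy case: forcing $a_\epsilon$ to vanish on $\Omega^c$ is essential both to keep $a_\epsilon$ bounded (the pointwise optimizer is ill-defined where $W_0 \in \{0,1\}$) and to make the $\Omega^c$-contribution to $K_{W_0}(f, a_\epsilon)$ vanish; this second point relies on $f = W_0$ on $\Omega^c$ provided by $I_{W_0}(f) < \infty$.
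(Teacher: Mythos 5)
Your argument is essentially the paper's argument (pointwise Fenchel duality via \Cref{lem:h_p}), with a different truncation in the finite-entropy case: you truncate $f$ to $f_\epsilon=\max(\epsilon,\min(1-\epsilon,f))$ and use the exact optimizer in $f_\epsilon$, whereas the paper keeps the optimizer $a_0(x,y)=\log\bigl(\tfrac{f}{1-f}\tfrac{1-W_0}{W_0}\bigr)$ and truncates the \emph{domain of integration} to $\{|a_0|\leq M\}$, concluding by monotone convergence. Your algebra and your bound $|t\log(\epsilon/t)|\le \epsilon/e$ on $[0,\epsilon]$ are correct, and your treatment of $\Omega^c$ and the $I_{W_0}(f)=\infty$ case matches the paper's.

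There is, however, one genuine gap. The proposition is stated for general $W_0 \in \W$, but you write ``Since $W_0$ is a block graphon taking only finitely many values, there is $\beta>0$ with $W_0\in[\beta,1-\beta]$ on $\Omega$.'' That hypothesis is not available: nothing in the statement restricts $W_0$ to $\mathcal{B}^\gamma$. Without it, the factor $\log\frac{1-W_0}{W_0}$ in your $a_\epsilon$ need not be bounded (nor, a priori, in $L^2$) on $\Omega$, so $a_\epsilon$ might fail to lie in $S$. Your truncation controls $\log\frac{f_\epsilon}{1-f_\epsilon}$ but leaves the $W_0$-dependent factor untouched. The paper's truncation of the set $\{|a_0|\le M\}$ handles this uniformly, which is precisely why it is phrased that way. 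Your proof can be repaired by additionally restricting the indicator to $\Omega_\epsilon := \{(x,y)\in\Omega : W_0(x,y)\in[\epsilon,1-\epsilon]\}$ (so $a_\epsilon$ is genuinely bounded) and then sending $\epsilon\to 0$, using monotone convergence for the nonnegative integrand $h_{W_0}(f)\mathbf{1}_{\Omega_\epsilon}$ together with your $O(\epsilon)$ estimate on the correction terms; but as written the argument only covers the special case $W_0$ bounded away from $\{0,1\}$ on $\Omega$.
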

\begin{proof}
First, we consider the case $f \not \in \W_\Omega$ (in which case  $ I_{W_0}(f)=\infty$ by \Cref{finite-or-not}).  Then there exists $\Gamma \subseteq [0,1]^2$ with positive measure such that  $W_0(x,y)\in\{0,1\}$  and $W_0(x,y) \not = f(x,y)$ for $(x,y) \in \Gamma$. Choosing
 $$
 a_M(x,y)=
\begin{cases}
0 & (x,y) \not \in \Gamma\\
M & W_0(x,y)= 0\\
-M & W_0(x,y)= 1.
\end{cases}
$$
and taking $M \to \infty$, we see that $\sup_{a\in S} K_{W_0}(f,a)=\infty$ in this case.

Next we consider the case $f \in \W_\Omega$.  Recalling the definition of $G_p$ from Lemma~\ref{lem:h_p} and noting that the integrand in \eqref{eq:supremum-I-function} is zero if $W_0(x,y)=f(x,y) \in \{0,1\}$,
we then have $K_{W_0}(f,a)=\int_\Omega G_{W_0(x,y)}(a(x,y),f(x,y))dxdy$.  Combined with \eqref{int-over-omega} and Lemma~\ref{lem:h_p} (iii),
this shows that
$$
 I_{W_0}(f)=\frac 12 \int_\Omega h_{W_0(x,y)}(f(x,y))dxdy\geq
 \frac 12 \sup_{a\in S}\int_\Omega G_{W_0(x,y)}(a(x,y),f(x,y))dxdy
 =\frac 12 \sup_{a\in S} K_{W_0}(f,a).
$$
To prove equality, we may w.l.o.g. assume that the right hand side is finite.  We may further restrict the integrals on both sides to the subset $\tilde\Omega\subset\Omega$ where $f(x,y)\not\in\{0,1\}$, since the contributions of both sides to the complement can easily be seen to be equal.
Finally, on $\tilde\Omega$, we may use Lemma~\ref{lem:h_p} (iv) to conclude that
$$
\int_{\tilde\Omega} h_{W_0(x,y)}(f(x,y))dxdy=\int_{\tilde \Omega} G_{W_0(x,y)}(a_0(x,y),f(x,y))dxdy
$$
where $a_0(x,y)=\log\left(\frac {f(x,y)}{1-f(x,y)}\frac {1-W_0(x,y)}{W_0(x,y)}\right)$.  If $a_0\in L^2(\tilde\Omega)$, the right hand side is bounded
by the $\sup$ over all $a\in S$, giving the desired upper bound.  If it is not, we replace $\tilde\Omega$ on both sides by its intersection with the set of points for which $|a_0(x,y)|\leq M$ before bounding the right hand side by a $\sup$ over all square integrable $a$.  The proof is concluded by using the monotone convergence theorem.
\end{proof}

\noindent
The following  propositions will be used to establish the lower semi-continuity of $I_{W_0}$.

\begin{proposition}\label{closed} Let $m \in \mathbb{Z}^+$ and $\gamma \in \Delta_m$. For $W_0\in  \mathcal{B}^\gamma$, the set
 $\W_\Omega$ is a closed subset of $\W$ with respect to the topology $(\W,d_\square)$.
 \end{proposition}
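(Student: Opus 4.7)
The plan is to exploit the block structure of $W_0$: since $W_0\in \mathcal{B}^\gamma$ is an $m$-block graphon with blocks $I_a\times I_b$, the set $\Omega^c$ is a \emph{finite} union of rectangles $R=I_a\times I_b$, each of which carries $W_0\equiv 0$ or $W_0\equiv 1$. Thus the condition ``$f=W_0$ a.e.\ on $\Omega^c$'' breaks into finitely many conditions, one per rectangle, each of the form ``$f\equiv 0$ a.e.\ on $R$'' or ``$f\equiv 1$ a.e.\ on $R$''. These conditions are exactly the right shape to be preserved by the cut distance, since $d_\square$ controls integrals over products of measurable sets.

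Concretely, I would take a sequence $(f_n)_{n\geq 1}\subseteq \W_\Omega$ with $d_\square(f_n,f)\to 0$ for some $f\in \W$, and argue that $f\in \W_\Omega$. Fix a rectangle $R=I_a\times I_b\subseteq \Omega^c$. Applying the defining sup in $d_\square$ with $S=I_a$ and $T=I_b$ gives
\begin{equation*}
\left|\int_R (f_n-f)\,dx\,dy\right|\leq d_\square(f_n,f)\xrightarrow{n\to\infty}0,
\end{equation*}
so $\int_R f_n\,dx\,dy\to \int_R f\,dx\,dy$.

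Now split into cases. If $W_0\equiv 0$ on $R$, then $f_n\equiv 0$ a.e.\ on $R$ for every $n$, so $\int_R f_n\,dx\,dy=0$ for all $n$, forcing $\int_R f\,dx\,dy=0$. Since $f\geq 0$, this yields $f=0$ a.e.\ on $R$, i.e., $f=W_0$ a.e.\ on $R$. Symmetrically, if $W_0\equiv 1$ on $R$, applying the same argument to $1-f_n$ and $1-f$ (or simply using $\int_R(1-f_n)\,dx\,dy=0$) gives $f=1$ a.e.\ on $R$. Taking the union over the finitely many rectangles that make up $\Omega^c$, we conclude that $f=W_0$ a.e.\ on $\Omega^c$, i.e., $f\in \W_\Omega$, so $\W_\Omega$ is closed in $(\W,d_\square)$.

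There is no real obstacle here: the argument is essentially routine once one observes that (a) $\Omega^c$ is a finite union of rectangles, which lets us plug $S,T$ directly into the definition of $d_\square$, and (b) the constraints defining $\W_\Omega$ are one-sided ($f\geq 0$ or $f\leq 1$), so vanishing integrals force pointwise a.e.\ equality. The one thing worth flagging is that this proof uses the block structure of $W_0$ in an essential way; the analogous statement for a general $W_0\in \W$ would require approximating $\Omega^c$ by rectangles, and the corresponding claim for the quotient metric $\delta_\square$ (namely Lemma~\ref{closed tilde}) will need an additional argument to handle measure-preserving rearrangements, presumably by reducing to Proposition~\ref{closed} after choosing suitable representatives.
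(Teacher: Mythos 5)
Your proof is correct and follows essentially the same approach as the paper: pick the sets $S=I_a$, $T=I_b$ in the definition of $d_\square$ to get $\int_R f_n \to \int_R f$ on each zero/one block $R$, then use the one-sided bounds $0\le f\le 1$ to upgrade equality of integrals to a.e.\ equality. The only difference is that you spell out the final step (vanishing integral plus nonnegativity implies vanishing a.e.), which the paper leaves implicit.
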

\begin{proof}
Let $\{f_n: n \geq 1\} \subseteq \W_\Omega$ be a convergent sequence of graphons satisfying $d_\square(f_n, f) \to 0$  for some $f \in \mathcal{W}$. Then on each block $I \times J$ of $W_0$ that takes value $0$ or $1$, $f_n= W_0$ for all $n \geq 1$. Since $|\int_{I \times J} f_n - f | \leq d_\square(f_n, f)$ and $d_\square(f_n, f) \to 0$, it follows that $f= W_0$  on $I \times J$. Thus $f \in \W_\Omega$.
\end{proof}

\begin{proposition} \label{extension}
Let $(S,d)$ be either $(\W, d_\square)$ or $(\ti{\W}, \delta_\square)$, 
and let $F \subseteq S$ be a closed subset of $S$.
Let $f: F \to \R$ be a lower semi-continuous function on $(F, d)$. The extension $f^*: S \to \R \cup \{ \infty \}$  where $$f^*(x)= \begin{cases} f(x) & x \in F\\ \infty & x \in S \setminus F\end{cases}$$ is lower semi-continuous on $(S, d)$.
\end{proposition}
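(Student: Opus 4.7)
The plan is to use the standard characterization of lower semicontinuity by closed sublevel sets: a function $g: S \to \R \cup \{\infty\}$ on the topological space induced by a pseudometric is lower semicontinuous if and only if, for every $c \in \R$, the sublevel set $\{x \in S : g(x) \leq c\}$ is closed in $(S,d)$. Granted this equivalence, the statement reduces to checking closedness of each sublevel set of $f^*$.

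Fix $c \in \R$. Because $f^*(x) = \infty > c$ for every $x \in S \setminus F$, the sublevel set satisfies
\[
L_c := \{x \in S : f^*(x) \leq c\} = \{x \in F : f(x) \leq c\}.
\]
Lower semicontinuity of $f$ on $(F,d)$ means that $L_c$ is closed in the subspace $(F,d)$; equivalently, there is some $d$-closed set $C \subseteq S$ with $L_c = F \cap C$. Since $F$ is assumed to be $d$-closed in $S$, $L_c$ is the intersection of two $d$-closed sets and is therefore $d$-closed in $S$. As $c$ was arbitrary, this establishes that $f^*$ is lower semicontinuous on $(S,d)$.

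If one prefers a sequential argument (which suffices since the topology generated by a pseudometric is first countable), one proceeds by cases on the limit point. Let $x_n \to x$ in $(S,d)$. If $x \in S \setminus F$, then closedness of $F$ forces $x_n \notin F$ eventually, for otherwise a subsequence in $F$ would converge to $x$, contradicting $x \notin F$; thus $f^*(x_n) = \infty$ eventually and $\liminf_n f^*(x_n) = \infty = f^*(x)$. If $x \in F$, split the sequence into the indices with $x_n \in F$ and those with $x_n \notin F$; on the latter subsequence $f^*(x_n) = \infty$, and on the former lower semicontinuity of $f$ on $(F,d)$ gives $\liminf f(x_n) \geq f(x)$. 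Taking the infimum over both subsequences yields $\liminf_n f^*(x_n) \geq f(x) = f^*(x)$.

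There is essentially no obstacle here; the only conceptual subtlety is that $d$ is a pseudometric rather than a metric, but the proof uses nothing beyond the fact that pseudometrics generate a first-countable topology in which closed sets and sequential limits interact in the usual way, and that closedness of a subset of $F$ in the subspace topology is equivalent to being the trace on $F$ of a closed set in $S$.
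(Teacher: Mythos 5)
Your sublevel-set argument is correct and is essentially the same as the paper's, which works with the complementary open superlevel sets $\{x : f^*(x) > \alpha\}$ and reaches the same conclusion via closedness of $F$ and lower semicontinuity of $f$ on $(F,d)$. The additional sequential argument is a valid alternative but not needed.
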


\begin{proof}
We show that $f^*$ is lower semi-continuous on $S$ by demonstrating that for all
$\alpha\in\R$,
the set $\{x \in S |f^{*}(x) > \alpha\}$ is open. Observe
$$ \{x \in S |f^{*}(x) > \alpha\}=\{x \in F | f(x) >\alpha \} \cup \brac{ S \setminus F}.$$
By lower semi-continuity of $f$ on $F$, $A=\{x \in F | f(x) >\alpha \} $ is open in $F$, and so $A^c$ is relatively closed with respect to $F$ and therefore closed in $S$ (since $F$ is closed). It follows that
$$\{x \in S | f^*(x) >\alpha \}^c= F \cap A^c$$ is closed, and so we conclude that $\{x \in S | f^*(x) >\alpha \}$ is open.
\end{proof}

\begin{lemma}\label{lemma:semi-continuity-I}
Let $m \in \mathbb{Z}^+$ and $\gamma \in \Delta_m$. For $W_0\in  \mathcal{B}^\gamma$,
the function $I_{W_0}(\cdot)$ is lower semi-continuous on $(\mathcal{W}, d_{\square})$.
\end{lemma}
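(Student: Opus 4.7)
The plan is to leverage the variational representation from \Cref{proposition:rate-function-equivalent},
\[
I_{W_0}(f)=\tfrac12\sup_{a\in S}K_{W_0}(f,a),
\]
and to verify that for each fixed $a\in S$ the map $f\mapsto K_{W_0}(f,a)$ is continuous with respect to $d_\square$. A pointwise supremum of $d_\square$-continuous functions is automatically $d_\square$-lower semi-continuous, so this immediately yields the claim on all of $\mathcal{W}$---including at points $f\notin\mathcal{W}_\Omega$, where the variational formula itself returns $+\infty$.

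For fixed $a\in S$, inspection of \eqref{eq:supremum-I-function} shows that $K_{W_0}(\cdot,a)$ is affine in $f$: the $\log(W_0e^a+1-W_0)$ integrand does not depend on $f$, and the remaining contribution is the linear functional $L_a\colon f\mapsto\int_{[0,1]^2}a(x,y)f(x,y)\,dx\,dy$. So the task reduces to proving $d_\square$-continuity of $L_a$ on $\mathcal{W}$ for every $a\in L^2([0,1]^2)$. When $a$ has the special ``rectangle'' form $a=\sum_{i=1}^N c_i\mathbf{1}_{S_i\times T_i}$, continuity is immediate from the very definition of the cut distance, since
\[
|L_a(f_n)-L_a(f)|\leq \sum_{i=1}^N|c_i|\,\Bigl|\int_{S_i\times T_i}(f_n-f)\Bigr|\leq \Bigl(\sum_i|c_i|\Bigr)\, d_\square(f_n,f).
\]

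For general $a\in L^2\subseteq L^1$, I would approximate $a$ in $L^1$ by rectangle step functions $a_k$. This relies on the standard fact that such functions are dense in $L^1([0,1]^2)$: any measurable subset of $[0,1]^2$ can be approximated in Lebesgue measure by a finite union of products of intervals, so the usual density of simple functions transfers. The uniform bound $|f_n-f|\leq 2$ then gives $|L_{a-a_k}(f_n)-L_{a-a_k}(f)|\leq 2\|a-a_k\|_1$; combined with the rectangle case above, a routine $\varepsilon/2$ argument (fix $k$ large to make $\|a-a_k\|_1$ small, then let $n\to\infty$) yields $L_a(f_n)\to L_a(f)$. This approximation is the only step that requires any work, and I would expect it to be the main---though quite mild---obstacle. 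Granting it, $I_{W_0}$ is a supremum of $d_\square$-continuous functions and therefore $d_\square$-lower semi-continuous, as required.
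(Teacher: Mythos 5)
Your proof is correct and follows essentially the same route as the paper's: both pass through the variational representation of \Cref{proposition:rate-function-equivalent}, observe that for fixed $a$ the map $f\mapsto K_{W_0}(f,a)$ is affine, and prove continuity of the linear part $L_a(f)=\int af$ with respect to $d_\square$ by approximating $a$ by step functions and using the trivial bound for the step-function pieces.

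The one genuine (if minor) departure is that you avoid the paper's preliminary reduction to $\mathcal{W}_\Omega$: the paper first invokes \Cref{closed} and \Cref{extension} to pass from $\mathcal{W}$ to the closed subset $\mathcal{W}_\Omega$ before arguing continuity of $K_{W_0}(\cdot,a)$ there, whereas you note that the continuity of $L_a$ holds on all of $\mathcal{W}$ with the same estimate, and that the variational identity returns $+\infty$ off $\mathcal{W}_\Omega$, so the supremum-of-continuous-functions argument gives lower semi-continuity globally in one step. This is a clean streamlining; the price is essentially nothing. Two cosmetic differences are worth flagging for completeness but do not affect correctness: the paper approximates $a$ in $L^2$ and bounds $\|(a-a_k)(f-g)\|_1\leq\|a-a_k\|_2\|f-g\|_2$, while you approximate in $L^1$ and use $|f-g|\leq 1$, which is equivalent; and since the supremum in \Cref{proposition:rate-function-equivalent} runs over \emph{symmetric} $a\in S$, the approximating rectangle step functions $a_k$ should be symmetrized (e.g.\ replace $a_k(x,y)$ by $\tfrac12(a_k(x,y)+a_k(y,x))$), which preserves both the $L^1$ approximation and the cut-distance bound.
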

\begin{proof}

 First, note that  by \Cref{closed,extension}, and the observation that $I_{W_0}(f) =\infty$ for all $f \in \W \setminus \W_\Omega$,
it is enough
to establish that $I_{W_0}$ is lower semi-continuous on $(\W_\Omega, d_\square)$.  Second, by
 \Cref{proposition:rate-function-equivalent}, $I_{W_0}$ can be written as supremum over the functions $K_{W_0}(\cdot,a)$, so it will be
enough to show that for all $a\in S$, the function $K_{W_0}(\cdot,a)$ is continuous on $(\W_\Omega, d_\square)$.

Consider two functions $f,g\in \W_\Omega$, and observe that every  $a\in S$ can be approximated by step functions in $L^2$.  Given $\ve$ and $a$ we can therefore find $k<\infty$ and a
$k$-step function $a_k$ such that $\|a-a_k\|_2\leq \frac \ve 2$.
As a consequence
$$
\Big|K_{W_0}(f,a)-K_{W_0}(g,a)\Big|=
\Big|\int a(f-g)\Big|\leq \|a(f-g)\|_1\leq \|a_k(f-g)\|_1+\frac \ve 2
\leq \|a_k\|_\infty k^2 d_{\square}(f,g)+ \frac\ve 2.
$$
This is smaller than $\ve$ if $d_{\square}(f,g)<\ve/( \|a_k\|_\infty k^2)$, which proves that $K_{W_0}( \cdot, a)$
is continuous on $(\W_\Omega, d_\square)$, as required.
\end{proof}

\noindent
The following proposition will be used to prove that for any continuous graph parameter $\tau$ and graphon $W_0$, the function $\phi_{\tau}(W_0, \cdot)$ is strictly positive on $(\tau(W_0), t_{\max}]$.

\begin{proposition}
 \label{prop:j-same} For $m \in \mathbb{Z}^+$, $\gamma \in \Delta_m$, $W_0\in  \mathcal{B}^{\gamma}$, and  $\ti{f} \in \tW$, it holds that
$$J_{W_0}(\ti{f})=0 \text{ if and only if } \delta_\square(\ti f, W_0)=0.$$
\end{proposition}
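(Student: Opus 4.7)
The plan is to prove both implications directly from the definitions of $J_{W_0}$ and $I_{W_0}$, with Pinsker's inequality as the key analytic tool.

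The backward direction is immediate: if $\delta_\square(\tilde f, W_0)=0$, then $W_0$ is a representative of the class $\tilde f$, so in particular $\tilde f\in\tW_\Omega$ and $W_0\in B(\tilde f,\eta)$ for every $\eta>0$. Since $h_p(p)=0$ for every $p\in[0,1]$, one has $I_{W_0}(W_0)=0$, and hence the inner infimum in the definition of $J_{W_0}(\tilde f)$ equals $0$ at every scale $\eta$. Taking the sup over $\eta$ gives $J_{W_0}(\tilde f)=0$.

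For the forward direction, suppose $J_{W_0}(\tilde f)=0$. Then $\tilde f\in\tW_\Omega$, since otherwise $J_{W_0}(\tilde f)=\infty$. Because $I_{W_0}\ge 0$, the inner infimum $\inf_{h\in B(\tilde f,\eta)}I_{W_0}(h)$ is nonnegative for every $\eta$ and bounded above by $J_{W_0}(\tilde f)=0$, so it equals $0$. For each $n\ge 1$ I would therefore choose $h_n\in B(\tilde f,1/n)$ with $I_{W_0}(h_n)\le 1/n$. Applying the pointwise Pinsker bound $h_p(u)\ge 2(u-p)^2$ (valid on $[0,1]^2$ under the $0\log 0$ conventions) inside the integral defining $I_{W_0}$ then yields
\[
\int_{[0,1]^2}\bigl(h_n(x,y)-W_0(x,y)\bigr)^2\,dx\,dy\le I_{W_0}(h_n)\le \frac{1}{n},
\]
so $\|h_n-W_0\|_2\to 0$. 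By Cauchy--Schwarz, $\|h_n-W_0\|_1\to 0$, and since $d_\square\le\|\cdot\|_1$ and $\delta_\square\le d_\square$, this gives $\delta_\square(h_n,W_0)\to 0$. The triangle inequality in $\delta_\square$ then closes the argument:
\[
\delta_\square(\tilde f,W_0)\le\delta_\square(\tilde f,h_n)+\delta_\square(h_n,W_0)\le \frac{1}{n}+\delta_\square(h_n,W_0)\to 0.
\]

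The proof presents no serious obstacle: Pinsker cleanly converts entropic control into $L^2$ (hence $L^1$, hence cut-norm) control. The main subtlety worth flagging is keeping track of the distinction between $d_\square$ and $\delta_\square$; the approximating graphons $h_n$ are pulled from $\delta_\square$-balls while the Pinsker step initially produces only $d_\square$-closeness, and the bound $\delta_\square\le d_\square$ together with the triangle inequality in $\delta_\square$ reconciles the two.
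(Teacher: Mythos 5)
Your proof is correct, and it takes a genuinely different and more self-contained route than the paper. The paper's own proof is a two-line citation: it observes that on $\tW_\Omega$ the definition of $J_{W_0}$ coincides with the one used by Dhara and Sen (who assume $W_0$ is bounded away from $0$ and $1$), notes that $\delta_\square(\tilde f,W_0)>0$ and $J_{W_0}(\tilde f)=\infty$ whenever $\tilde f\notin\tW_\Omega$, and then invokes their Lemma 2.2. You instead prove the substantive direction from scratch: the monotonicity in $\eta$ of the inner infimum lets you extract graphons $h_n\in B(\tilde f,1/n)$ with $I_{W_0}(h_n)\to 0$, and the pointwise Pinsker bound $h_p(u)\ge 2(u-p)^2$ (whose constant exactly cancels the $\tfrac12$ in the definition of $I_{W_0}$) converts this into $\|h_n-W_0\|_2\to 0$, hence $\delta_\square(h_n,W_0)\to 0$, and the triangle inequality in $\delta_\square$ finishes. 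This is essentially what Dhara and Sen do inside their Lemma 2.2; the advantage of your version is that it is self-contained, avoids the case split between $\tW_\Omega$ and its complement, and works uniformly even when $W_0$ takes values $0$ or $1$ (since both sides of Pinsker are consistent with the $0\log 0$ conventions). The paper's version is shorter but opaque without the external reference. One small point worth making explicit in a final write-up: the choice of $h_n$ relies on the infimum over $B(\tilde f,\eta)$ being $0$ for \emph{every} $\eta>0$, which you justify by combining $J_{W_0}(\tilde f)=0$ with the sign of $I_{W_0}$; state this as "each inner infimum is sandwiched between $0$ and $\sup_\eta(\cdot)=0$" so the reader doesn't have to reconstruct the monotonicity argument themselves.
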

\begin{proof}
Noting that our
definition of $J_{W_0}$ agrees with the definition of $J_{W_0}$ given in \cite{Dhara2019} whenever $f \in \W_\Omega$,
and that $\delta_{\square}(f,W_0)>0$ if $f \notin \W_\Omega$, the proposition follows from the analogous statement in
 \cite[Lemma 2.2]{Dhara2019}.
\end{proof}

Our next result establishes that $\tW_\Omega$ is closed in the cut metric. For a partition $\mathcal{P}$ of $[0,1]$, we define $W_\mathcal{P}$ as the step function graphon that is obtained by averaging over all blocks induced by the partition classes. Setting $\Gamma(x) \subseteq [0,1]$ to be the partition class in $\mathcal{P}$ that contains $x$, we obtain
$$
W_\mathcal{P}(x,y)=\frac{1}{|\Gamma(x)|\cdot |\Gamma(y)|} \int_{\Gamma(x) \times \Gamma(y)} W(u,v) du dv.$$
We call  $\mathcal{P}$ an equipartition if all classes have the same measure, and use
$|\mathcal P|$ to denote the number of classes in $\mathcal P$.  Note that up to sets of measure zero, there is just one
equipartition of $[0,1]$ into $n$ intervals; for definiteness and consistency with our previous conventions, we use the partition $([0,1/n],(1/n,2/n],\dots,((n-1)/n,1])$.

\begin{lemma}[Corollary 3.4 of  \cite{Borgs2008}]\label{equipartition}
Let $f \in \W$ and $s$ be a positive integer. For every equipartition $\mathcal{Q}$ of $[0,1]$ , there is  an equipartition $\mathcal{P}$ with ${s|\mathcal Q|}$ classes such that  $\mathcal{P}$ refines $\mathcal{Q}$ and
$$d_\square( f, f_\mathcal{P}) \leq \sqrt{\frac{20}{\log_2 s}}.$$
\end{lemma}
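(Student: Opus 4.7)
The plan is to prove this by the standard energy-increment strategy underlying the weak regularity lemma for graphons, while carefully maintaining the equipartition structure and the refinement of $\mathcal{Q}$. Starting from $\mathcal{P}_0 = \mathcal{Q}$, I would iteratively refine each $\mathcal{P}_k$ into a new equipartition $\mathcal{P}_{k+1}$ that still refines $\mathcal{Q}$, with a controlled increase in the number of parts at each step, and stop as soon as $d_\square(f, f_{\mathcal{P}_k}) \leq \epsilon$ for $\epsilon = \sqrt{20/\log_2 s}$.

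The key ingredient is an energy-increment lemma: if $d_\square(f, f_\mathcal{P}) > \epsilon$, then there exist measurable $S, T \subseteq [0,1]$ with $\left|\int_{S \times T}(f - f_\mathcal{P})\right| > \epsilon$, and the common refinement $\mathcal{P}'$ of $\mathcal{P}$ with the partitions $\{S, S^c\}$ and $\{T, T^c\}$ satisfies $|\mathcal{P}'| \leq 4|\mathcal{P}|$ and $\Vert f_{\mathcal{P}'} \Vert_2^2 \geq \Vert f_\mathcal{P} \Vert_2^2 + \epsilon^2$. This is because $f_\mathcal{P}$ is the $L^2$ projection of $f$ onto step functions measurable with respect to $\mathcal{P}$, so a Pythagorean identity together with the fact that the cut-norm witness $(S,T)$ forces an $L^2$ increment of at least $\epsilon^2$ gives the bound. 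Since $\Vert f_\mathcal{P} \Vert_2^2 \leq \Vert f \Vert_2^2 \leq 1$ and increases by $\epsilon^2$ per round, the procedure terminates after at most $1/\epsilon^2$ steps, producing a (not yet equipartitioned) refinement of $\mathcal{Q}$ with at most $4^{1/\epsilon^2}\,|\mathcal{Q}|$ classes.

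To convert the resulting partition into an equipartition with exactly $s|\mathcal{Q}|$ classes refining $\mathcal{Q}$, I would perform a rounding step: within each class of $\mathcal{Q}$, replace the induced sub-partition by an aligned equipartition into $s$ sub-pieces of equal measure $1/(s|\mathcal{Q}|)$. The cut-distance error introduced by this alignment is bounded by the total measure of the region where the two partitions disagree times $\Vert f \Vert_\infty \leq 1$, and by choosing $s$ slightly larger than $4^{1/\epsilon^2}$ this contribution is dominated by $\epsilon$. Tracking the constants through the combined iteration and rounding procedure produces the stated bound, with 20 in place of the naive constant one would get without the equipartition/alignment overhead.

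The main obstacle I expect is the simultaneous bookkeeping of three constraints: (i) $\mathcal{P}$ is an equipartition, (ii) $\mathcal{P}$ refines $\mathcal{Q}$, and (iii) $|\mathcal{P}|$ equals exactly $s|\mathcal{Q}|$ rather than merely at most that many classes. In the vanilla energy-increment argument the refining sets $S$ and $T$ are arbitrary measurable, so the resulting sub-blocks have heterogeneous sizes; forcing equal-measure classes aligned with $\mathcal{Q}$ typically costs multiplicative constants in the number of parts and an additive error in the cut distance, and the main technical work is showing that both of these losses can be absorbed into the final constant 20.
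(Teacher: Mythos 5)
The paper does not give a proof of this lemma; it is imported verbatim as Corollary 3.4 of \cite{Borgs2008}, and the cited reference is the only ``paper's proof'' available. So this is a comparison against the standard proof in the literature rather than anything in the present manuscript.

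Your sketch is the standard energy-increment proof of the weak regularity lemma together with an equipartition rounding step, and it is essentially the argument in \cite{Borgs2008}. The increment lemma is correctly stated (Pythagoras for the $L^2$ projection $f\mapsto f_\mathcal{P}$, plus $\|f_{\mathcal{P}'}-f_{\mathcal{P}}\|_2\geq |\int_{S\times T}(f_{\mathcal{P}'}-f_{\mathcal{P}})|=|\int_{S\times T}(f-f_{\mathcal{P}})|>\epsilon$ since $\{S,S^c\},\{T,T^c\}$ are measurable with respect to $\mathcal{P}'$), the step-count and part-count bounds are right, and you correctly recognize that the refining sets $S,T$ produce unequal blocks so that the equipartition structure must be restored in a separate step. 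Two small points worth fixing. First, the opening sentence promising that each $\mathcal{P}_{k+1}$ is already an equipartition is inconsistent with what follows; as you later say, the iteration produces a non-equipartitioned refinement of $\mathcal{Q}$ and the rounding happens once at the end. Second, the rounding step should be made precise: you should construct, inside each $Q_i$, an equipartition into $s$ pieces of measure $1/(s|\mathcal{Q}|)$ that refines the induced sub-partition of $\mathcal{P}'$ except on a leftover set of total measure at most $4^{1/\epsilon^2}/s$, and then pass through the common refinement $\mathcal{R}=\mathcal{P}\vee\mathcal{P}'$ using the standard facts that $d_\square(f,f_{\mathcal{R}})\leq 2\,d_\square(f,f_{\mathcal{P}'})$ (averaging onto a refinement at most doubles the cut error) and that $f_\mathcal{P}$ and $f_\mathcal{R}$ differ only on the strips through the leftover, which bounds $\|f_\mathcal{P}-f_\mathcal{R}\|_\square$ by twice the leftover measure. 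Assembling these with a choice of $\epsilon$ of the form $c/\sqrt{\log_2 s}$ gives a bound of the stated shape; that the numerator can be taken to be $20$ uniformly in $s\geq 2$ is exactly the constant-chasing you flag, and it is the only genuinely remaining work.
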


\noindent
The next lemma follows from Lemma~\ref{equipartition}.
\begin{lemma}\label{intervalpartition}
Let $m \in \mathbb{Z}^+$, $\gamma \in \Delta_m$, and $W_0\in  \mathcal{B}^\gamma$. Then there exists a sequence of refining partitions $\mathcal P_k$
of $[0,1]$ into equal length intervals such that
for all
$f\in \W_\Omega$ there exists a sequence of step functions $f_k\in  \W_\Omega$ with steps in $\mathcal P_k$
such that (i) $(f_{k+1})_{\mathcal P_{k}}=f_{k}$ and
(ii) $\delta_\square(f,f_k)\leq 1/k$  for all $k\geq 1$.
\end{lemma}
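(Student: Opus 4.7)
The plan is to apply the weak regularity lemma (Lemma \ref{equipartition}) iteratively to construct, for each $f \in \W_\Omega$, a refining sequence of measurable equipartitions of $[0,1]$ that approximates $f$ in cut distance, and then to transport these to the fixed interval equipartitions $\mathcal{P}_k$ via a compatible family of measure-preserving bijections. To set up $\mathcal{P}_k$ independently of $f$, write $W_0$ as a uniform $m$-block graphon with block partition $I_1,\dots,I_m$ (possible since $\gamma$ is rational), pick $s_k = 2^{20 k^2}$, set $N_k = m\,s_1\cdots s_k$, and let $\mathcal{P}_k$ be the equipartition of $[0,1]$ into $N_k$ equal-length intervals. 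Since $m\mid N_k\mid N_{k+1}$, the sequence $\{\mathcal{P}_k\}$ is refining and each $\mathcal{P}_k$ refines the block partition of $W_0$.

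Now fix $f \in \W_\Omega$. Starting from $\mathcal{Q}_0 = \{I_1, \ldots, I_m\}$, recursively apply Lemma \ref{equipartition} with $\mathcal{Q} = \mathcal{Q}_{k-1}$ and $s = s_k$ to obtain an equipartition $\mathcal{Q}_k$ into $N_k$ measurable classes, refining $\mathcal{Q}_{k-1}$, and satisfying $d_\square(f, f_{\mathcal{Q}_k}) \leq \sqrt{20/\log_2 s_k} \leq 1/k$; in particular every class of $\mathcal{Q}_k$ is contained in some $I_a$. Next, inductively build measure-preserving bijections $\phi_k:[0,1]\to[0,1]$ such that $\phi_k$ sends each class of $\mathcal{Q}_k$ measure-preservingly onto a distinct interval of $\mathcal{P}_k$, and the sub-classes of $\mathcal{Q}_{k+1}$ inside a class $Q$ of $\mathcal{Q}_k$ are mapped by $\phi_{k+1}$ onto the sub-intervals of $\mathcal{P}_{k+1}$ inside $\phi_k(Q)$. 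Such $\phi_k$ exist since any two measurable sets of equal positive measure are related by a measure-preserving bijection. Finally, define $f_k = (f_{\mathcal{Q}_k})^{\phi_k^{-1}}$, i.e., $f_k(x,y) = f_{\mathcal{Q}_k}(\phi_k^{-1}(x),\phi_k^{-1}(y))$.

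The required properties then follow: $f_k$ is a step function on $\mathcal{P}_k$ by construction; $f_k \in \W_\Omega$, since on any 0/1 block $I_a \times I_b$ of $W_0$ the function $f_{\mathcal{Q}_k}$ coincides with $W_0$ (as $f = W_0$ there and each $\mathcal{Q}_k$-class lies in some $I_a$), and $\phi_k$ preserves the block partition of $W_0$ setwise; $\delta_\square(f, f_k) \leq d_\square(f^{\phi_k^{-1}}, f_k) = d_\square(f, f_{\mathcal{Q}_k}) \leq 1/k$ by the invariance of $d_\square$ under measure-preserving maps applied to both arguments; and the averaging identity $(f_{k+1})_{\mathcal{P}_k} = f_k$ holds because the compatibility of $\phi_{k+1}$ with $\phi_k$ implies that averaging $f_{k+1}$ over a $\mathcal{P}_k$-block $P \times P'$ pulls back to averaging $f_{\mathcal{Q}_{k+1}}$ over the corresponding $\mathcal{Q}_k$-block $Q \times Q'$, which yields $f_{\mathcal{Q}_k}(Q,Q')$ since $\mathcal{Q}_{k+1}$ refines $\mathcal{Q}_k$. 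The main technical hurdle is the combinatorial orchestration of the bijections $\phi_k$ so that they simultaneously respect the block structure of $W_0$ and cohere across levels; this is essentially bookkeeping but must be carried out carefully to guarantee the averaging identity.
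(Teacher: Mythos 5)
Your proof is correct and mirrors the paper's strategy: iterate Lemma~\ref{equipartition} to build a refining chain of measurable equipartitions approximating $f$ in cut distance, and transport them onto a fixed refining sequence of interval equipartitions via compatible measure-preserving bijections that respect the block structure of $W_0$. The only difference is organizational---the paper applies the regularity lemma to the already-transported graphon $g_{k-1}=f^{\Phi_{k-1}}$ so that $\mathcal{Q}_k$ automatically refines the \emph{interval} partition $\mathcal{P}_{k-1}$, whereas you apply it directly to $f$ (so $\mathcal{Q}_k$ refines the abstract $\mathcal{Q}_{k-1}$) and then construct a compatible family of bijections $\phi_k$ carrying $\mathcal{Q}_k$ onto $\mathcal{P}_k$; both are valid, and the verification of the averaging identity $(f_{k+1})_{\mathcal{P}_k}=f_k$ and of $f_k\in\W_\Omega$ proceeds by the same considerations.
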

\begin{proof}
Let $s_k$ be such that $\sqrt{{20}/{\log_2s_k}}\leq 1/k$, let
$q_1$ be such that the lengths of the intervals described by $\gamma$ are integer multiples of $1/q_1$,  define $q_k$ inductively by $q_k=s_kq_{k-1}$, and
let $\mathcal P_k$ be the partition of $[0,1]$ into intervals of lengths $1/q_k$.
We will define $f_k$ as $f_k=(g_k)_{\mathcal P_k}$ where $g_k\in \W_\Omega$ will be inductively defined in such a way
that (a) $(g_{k})_{\mathcal P_{k-1}}=(g_{k-1})_{\mathcal P_{k-1}}$ for all $k\geq 2$,  (b) $\delta_\square(f,g_k)=0$ for all $k\geq 1$ and
(c) $d_\square(g_{k},(g_{k})_{\mathcal P_k})\leq 1/k$ for all $k\geq 1$.  This clearly implies the statement
of the lemma, since  $g_k\in \W_\Omega$ implies $(g_k)_{\mathcal P_k}\in  \W_\Omega$ by the fact that $\mathcal P_k$ is a refinement of $\mathcal P_1$,
$$
	(f_{k+1})_{\mathcal P_k}=((g_{k+1})_{\mathcal P_{k+1}})_{\mathcal P_k}=(g_{k+1})_{\mathcal P_k}=(g_k)_{\mathcal P_k}=f_k
	$$
by (a) and the fact that $\mathcal P_{k+1}$ is a refinement of $\mathcal P_k$,  and the two statements (b) and (c) imply (ii).

We start our inductive construction by setting $g_1=f$.  Noting that $d_\square(h,h')\leq 1$ for all $h,h'\in \W$, this shows that $g_1$ satisfies the inductive assumptions.

Let $k\geq 2$ and assume that $g_{k-1}$ satisfies the inductive assumption.  By Lemma~\ref{equipartition}, we can find an equipartition $\mathcal Q_{k}$ of $[0,1]$ into $q_k=s_k|\mathcal P_{k-1}|$ classes
such that $\mathcal Q_{k}$ refines $\mathcal P_{k-1}$ and $d_\square(g_{k-1},(g_{k-1})_{\mathcal Q_k})\leq 1/k$.
 We now define a measure preserving bijection $\phi:[0,1]\to[0,1]$ as follows: Let $I$ be an interval in $\mathcal P_{k-1}$, and let
$Y_1,\dots Y_{s_k}$ those elements of ${\mathcal Q_k}$ subdividing $I$.  By Theorem A.7 in \cite{jansongraphonbook}, we can find a measure preserving bijection from $I$ to itself such that the image of $Y_1,\dots Y_{s_k}$ are the $s_k$ intervals in $\mathcal P_k$ that subdivide $I$. Doing this for all intervals in $\mathcal Q_k$
we obtain a measure preserving bijection $\phi$ such that the images of the partition classes of $\mathcal Q_k$ are the partition classes of $\mathcal P_k$, and such that $\phi$ maps each interval in $\mathcal P_{k-1}$ onto itself.  Applying this bijection to $g_{k-1}$ gives a graphon $g_k\in \W_\Omega$ such that $\delta_\square(g_k,g_{k-1})=0$ and
$d_\square(g_{k},(g_{k})_{\mathcal P_k})=d_\square(g_{k-1},(g_{k-1})_{\mathcal Q_k})\leq 1/k$.  By the inductive assumption (b)
we have that $\delta_\square(g_k,f)=0$, and by the fact that $\phi$ maps each interval in $\mathcal P_{k-1}$ onto itself we have that
$(g_k)_{\mathcal P_{k-1}}=(g_{k-1})_{\mathcal P_{k-1}}$.  This completes the inductive proof.
\end{proof}

\noindent
 We show that $\tW_\Omega$ is closed, using ideas from the proof that $(\tW , \delta_\square)$ is compact \cite[Theorem 5.1]{Lovasz2007}.

\begin{proof}[Proof of \Cref{closed tilde}]
We establish the lemma by showing that  $\tW_\Omega$ contains its limit points.
Let $(\wt{W}_n)_{n \geq 0}$ be a sequence of graphons in $\tW_\Omega$ that converges  to $\wt{W}\in\tW$. Since $\wt{W}_n \in \tW_\Omega$, we may  chose a sequence $W_n \in \mathcal{W}_\Omega$ such that $\delta_\square(W_n,\wt W)\to 0$.
 We claim that $\wt{W} \in \tW_\Omega$.

 By \Cref{intervalpartition}, we can find a sequence of refining partitions $\mathcal P_k$ of $[0,1]$ into intervals of length $1/|\mathcal P_k|$ and sequences $W_{n,k}\in \W_\Omega$ such that
\begin{enumerate}[(i)]
    \item  $\delta_\square( W_n, W_{n,k}) \leq 1/k$
    \item $(W_{n,k+1})_{\mathcal P_k}=W_{n,k}$.
\end{enumerate}
Next we claim that it is possible to replace $(W_n)$ with a subsequence such that for all $k$, $W_{n,k}$ converges almost everywhere to a step function $U_k$
with steps made out of the intervals in $\mathcal P_k$.
Indeed, select a subsequence of $(W_n)$ such that
the  value of $W_{n,1}$ converges on the product of   all intervals $I,I'\in \mathcal P_1$.
We obtain $W_{n,1} \to U_1$ almost everywhere for $U_1$ a step function on $s_1$ intervals of $[0,1]$. Taking further subsequences for $k=2,3 \dots$, we obtain a subsequence of $(W_n)$ such that $W_{n,k} \to U_k$ almost everywhere for all $k$.  By the Dominated Convergence Theorem, $\Vert W_{n,k}- U_k \Vert_1 \to 0$, and so $d_\square( W_{n,k}, U_k) \to 0$. Each $U_k$ is a step function on $s_k$ intervals of $[0,1]$. Note that since  $W_{n,k} \in \W_\Omega$ and $d_\square(W_{n,k}, U_k) \to 0$, \Cref{closed} implies that $U_k \in \mathcal{W}_\Omega$. For the remainder of this proof, we replace $(W_n)$ with this subsequence; doing so does not change the limit of the corresponding sequence in $\tW$.

Next we claim that the sequence $(U_k)_{k\geq 1}$ has a limit $U$ in $\mathcal{W}_\Omega$.
 It follows from (ii) that $U_k=(U_\ell)_{\mathcal{P}_k}$ for all $\ell > k$. Let $(x,y)$ be a uniform random point in $[0,1]$. Since $U_k=(U_\ell)_{\mathcal{P}_k}$, the sequence $(U_1(x,y), U_2(x,y), \dots)$ is a martingale with respect to the canonical filtration. The random variables $U_i(x,y)$ are bounded, and so the Martingale Convergence Theorem \cite[Theorem 4.2.11]{durrett2019} implies that the sequence $(U_1(x,y), U_2(x,y), \dots)$ converges with probability one. Thus there exists $U \in \W$ such that $U_k \to U$ almost everywhere. By the Dominated Convergence Theorem, $\Vert U_k -U \Vert_1 \to 0$ and therefore $d_\square(U, U_k) \to 0$. Since $U_k \in \W_\Omega$ for all $k$, and $\W_\Omega$ is closed, it follows that $U \in \W_\Omega$. Moreover $\ti{U} \in \tW_\Omega$.

It remains to show that $\delta_\square(W_n,U) \to 0$ (as this implies that $\delta_\square( {\wt{W}}_n, \wt{U}) \to 0$, which establishes that the limit of the sequence is in $\tW_\Omega$). Let $\ve>0$. Choose $k > 3/\ve$ sufficiently large such that $\Vert U - U_k \Vert_1 < \ve/3$. For this fixed $k$, there exists $n_0$ such that $\Vert U_k - W_{n,k} \Vert_1 < \ve/3$ for all $n \geq n_0$. Observe that
\begin{align*}
\delta_\square(U, W_n) &\leq  d_\square(U, U_k) +  d_\square(U_k, W_{n,k}) +  \delta_\square(W_{n,k}, W_n)\\
&\leq \Vert U- U_k\Vert_1 +  \Vert U_k- W_{n,k}\Vert_1 +  \delta_\square(W_{n,k}, W_n) \leq \ve.
\end{align*}
\end{proof}

\begin{proof}[Proof of \Cref{lemma:semi-continuity-J}.]
 We modify the proof of \cite[Lemma 2.1]{Dhara2019} to allow for $W_0$ with values in $\{0,1\}$.
For $\ti{f} \in\tW$, let
\[
H(\ti{f}) = \inf_{g \in \W : \delta_\square(g, \ti{f}) =0} I_{W_0}(g).
\]
If $\ti f\notin \tW_\Omega$, then $g\notin \W_\Omega$ for all $g$ contributing to the infimum and by \Cref{finite-or-not},  $H ( \ti{f})= \infty$.
On the other hand, if $\ti f\in  \tW_\Omega$, there exists a $g\in \W_\Omega$ contributing to the infimum, so with the help of \Cref{finite-or-not}
we conclude that  $H(\cdot)$ is bounded  on $\tW_\Omega$.
Combined with the fact that
 $$
h\in B(\ti{f}, \delta)\iff\delta_\square(\ti{g},h)=0 \text{ for some } \ti{g} \in S(\ti{f}, \delta),
 $$
we obtain that for $\ti{f} \in \tW_\Omega$
\begin{align*}
J_{W_0}(\ti{f}) = \sup_{\delta > 0} \inf_{ h \in B( \ti{f}, \delta)} I_{W_0}(h)
= \sup_{\delta > 0} \inf_{ \ti{g} \in S(\ti{f}, \delta)} H(\ti{g})
= \sup_{\delta > 0} \inf_{ \ti{h} \in S( \ti{f}, \delta) \cap \tW_\Omega} H(\ti{h})
= \liminf_{\ti{h} \to \ti{f}} H(\ti{h}).
\end{align*}
Therefore, $J_{W_0}(\cdot)$ is the pointwise $\liminf$ of a bounded function. This implies that $J_{W_0}(\cdot)$ is lower semi-continuous on $\tW_\Omega$.

Note that $J_{W_0}(f)=\infty$ for all $f \in \tW \setminus \tW_\Omega$. Therefore the lower semi-continuity of $J_{W_0}(\cdot)$ on $\tW$ follows by \Cref{extension} and \Cref{closed tilde}.
\end{proof}

\noindent
We close this preliminary section with a proposition and a lemma which will be used  in the proofs of Theorems~\ref{sym-regime}, \ref{sym-regime-near-one} and
 \ref{nonsym opt}.

\begin{proposition}\label{b-gamma-compact}
Let $m \in \mathbb{Z}^+$ and $\gamma \in \Delta_m$. Let $\tau$ be a continuous graph parameter.  Then the following holds:

(i) The set $\{\ti{g} \in \ti{\mathcal{B}}^\gamma: \tau(\ti g) \geq t\}$ is a compact set in $(\tW, \delta_\square)$.

(ii) The set $\{{g} \in {\mathcal{B}}^\gamma: \tau( g) \geq t\}$ is a closed set in $(\W, d_\square)$.

\end{proposition}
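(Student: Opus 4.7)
The plan for both parts is to parameterize $\mathcal{B}^\gamma$ by the compact cube $K = [0,1]^N$ with $N = \binom{m+1}{2}$, via the bijection $\Phi : K \to \mathcal{B}^\gamma$ sending a symmetric matrix $P = (p_{ij})$ to the graphon $\Phi(P)(x,y) = p_{k(x),k(y)}$. The key analytic input will be the two-sided estimate
\[
\max_{i,j} |p_{ij} - q_{ij}|\gamma_i\gamma_j \;\leq\; d_\square(\Phi(P), \Phi(Q)) \;\leq\; \sum_{i,j} |p_{ij} - q_{ij}|\gamma_i\gamma_j,
\]
where the lower bound comes from testing $d_\square$ against $S = I_i$ and $T = I_j$ in its definition, and the upper bound follows from $d_\square \leq \Vert\cdot\Vert_1$. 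This shows that $\Phi$ is bicontinuous between $K$ and $\mathcal{B}^\gamma \subseteq (\W, d_\square)$; composing with the projection $f \mapsto \ti f$ and using $\delta_\square \leq d_\square$ then yields a continuous surjection $\ti{\Phi} : K \to \ti{\mathcal{B}}^\gamma \subseteq (\tW, \delta_\square)$.

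For part (i), I next observe that the continuity of $\tau$ on $(\tW, \delta_\square)$ forces $K_t := \{P \in K : \tau(\ti{\Phi}(P)) \geq t\}$ to be a closed, hence compact, subset of $K$. Then the set appearing in the statement equals $\ti{\Phi}(K_t)$, a continuous image of a compact set in the metric space $(\tW, \delta_\square)$, and is therefore compact.

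For part (ii), the inequality $\delta_\square \leq d_\square$ combined with the fact that $\tau$ factors through $\tW$ gives continuity of $\tau$ on $(\W, d_\square)$, so that $\{P \in K : \tau(\Phi(P)) \geq t\} = K_t$ remains closed. By the homeomorphism property of $\Phi$, $\Phi(K_t) = \{g \in \mathcal{B}^\gamma : \tau(g) \geq t\}$ is then closed in the subspace $d_\square$-topology on $\mathcal{B}^\gamma$. The only (mild) obstacle is that $d_\square$ is merely a pseudo-metric on $\W$, but since $\Phi$ sends distinct parameters to graphons differing on positive-measure blocks, any $d_\square$-limit of a sequence in $\mathcal{B}^\gamma$ agrees almost everywhere with a unique element of $\Phi(K_t)$, which suffices for closedness in $(\W, d_\square)$.
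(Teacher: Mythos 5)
Your proof is correct and takes a genuinely different — and arguably cleaner — route than the paper. The paper proves closedness directly via a sequential argument: take a convergent sequence $\ti f_n$ in the set, write $f_n=(\alpha_{ij}^n)$, extract a subsequence with convergent coefficients (exploiting compactness of $[0,1]^{m^2}$), and pass to the limit via dominated convergence to produce a block graphon $g$ with $\delta_\square(\ti g,\ti f)=0$. You package the same core observation — that $\mathcal B^\gamma$ is parameterized by a compact cube of coefficients — into an explicit parameterization map $\Phi$, establish its (bi)continuity from the two-sided $d_\square$ estimate, and then simply invoke that continuous images of compact sets are compact. This buys you a more structural argument with no subsequence extraction or dominated convergence for part (i). One small caveat: if some $\gamma_i=0$ (which the definition of $\Delta_m$ in the paper technically permits), $\Phi$ fails to be injective and the lower $d_\square$-bound degenerates, but the continuous-image argument does not actually need injectivity, so the compactness conclusion survives.

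For part (ii), both your argument and the paper's gloss over the same technical point. Since $d_\square$ is only a pseudo-metric on $\W$, showing that a $d_\square$-limit $f$ is at zero distance from some $g\in\mathcal B^\gamma$ (equivalently, $f=g$ a.e.) does not literally place $f$ in the pointwise-defined set $\mathcal B^\gamma$. Your closing sentence — that agreeing a.e.\ with an element of $\Phi(K_t)$ ``suffices for closedness'' — is therefore not quite what the definition of closed in a pseudo-metric topology requires. But this is not a defect of your proof relative to the paper: the paper's proof of (ii) ends at exactly the same place ($d_\square(f,g)=0$ with $g\in\mathcal B^\gamma$) and implicitly treats graphons modulo a.e.\ equivalence, which is standard in this literature. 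So the statement is correct under the intended reading, and both proofs share the same convention.
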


\begin{proof}
Since $\tW$ is compact, it suffices to show that
$\{\ti{g} \in \ti{\mathcal{B}}^\gamma: \tau(\ti g) \geq t\}$
is closed.
 Let $\ti{f}_n \in  \ti{\mathcal{B}}^\gamma$ be such that
 $\tau( \tilde{f}_n) \geq t$ and $\ti{f_n}$ converges to some graphon $\ti{f}$. Since
 $\tau$
 is continuous,
 $\lim_{n \to \infty}
 \tau( \ti{f}_n) = \tau(  \ti{f}) \geq t$.

 It remains to show that $\ti{f} \in \ti{\mathcal{B}}^\gamma$.
Without loss of generality, we may assume $f_n \in \mathcal{B}^\gamma$, and write $f_n= (\alpha_{ij}^n)_{i,j \in [m]}$, where each $\alpha_{ij}^n \in [0,1]$. By the compactness of $[0,1]^{m^2}$, there exists a subsequence such that  $$\alpha_{ij}^{n_k} \to \beta_{ij} \text{ for all $i,j \in [m]$}.$$
Let $g=(\beta_{ij})_{i,j \in [m]} \in \mathcal{B}^\gamma$. Since $f_{n_k} \to g$ pointwise and $d_\square(f_{n_k}, g) \leq \Vert f_{n_k}-g\Vert_1$, the Dominated Convergence Theorem implies that $d_\square(f_{n_k}, g) \to 0$. Since $\delta_\square(\ti{f}_n, \ti{f}) \to 0$, we have $\delta_\square(\ti{g}, \ti{f})=0$ and thus $\ti{f} \in \ti{\mathcal{B}}^\gamma$.

The proof of the second statement is identical, except that it starts from a sequence
  ${f}_n \in  {\mathcal{B}}^\gamma$  such that
 $\tau({f}_n) \geq t$ and ${f_n}$ converges to some ${f}\in \W$ in the metric $d_\square$.
\end{proof}

\begin{lemma}\label{step-functions}
Suppose $f \in \W$  is of the form $f=\sum_{i,j\in [k]}\beta_{ij}1_{Y_i}1_{Y_j}$ where $\beta_{ij}=\beta_{ji}\in [0,1]$
and $Y_1,\dots,Y_k$ form a partition of $[0,1]$ into measurable sets. Let $g\in\W$.  Then $\delta_\square(f,g)=0$ if and only if
there exists a partition of $[0,1]$ into measurable subsets $Y'_1,\dots Y_k'$ such that
$g=\sum_{i,j\in [k]}\beta_{ij}1_{Y'_i}1_{Y'_j}$ and $\lambda(Y_i)=\lambda(Y_i')$ almost everywhere.
\end{lemma}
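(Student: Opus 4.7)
The lemma splits into two directions. For the ``if'' direction, given a partition $Y_1',\dots,Y_k'$ with $\lambda(Y_i')=\lambda(Y_i)$ and $g=\sum_{i,j}\beta_{ij}1_{Y_i'}1_{Y_j'}$ almost everywhere, the plan is to invoke the standard Borel-isomorphism fact for atomless probability spaces (the same fact cited as Theorem~A.7 of \cite{jansongraphonbook} in the proof of \Cref{intervalpartition}) to build a single measure-preserving bijection $\phi:[0,1]\to[0,1]$ whose restriction to each $Y_i$ is a measure-preserving bijection onto $Y_i'$ modulo null sets. Then $g^\phi=f$ almost everywhere, so $\delta_\square(f,g)\leq d_\square(f,g^\phi)=0$.

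For the substantive ``only if'' direction, I would start with the classical weak isomorphism theorem for graphons \cite{Borgs2008,Lovasz2012}: $\delta_\square(f,g)=0$ implies there exist measure-preserving, but not necessarily bijective, maps $\phi,\psi:[0,1]\to[0,1]$ with $f^\phi=g^\psi$ almost everywhere. Setting $Z_i=\phi^{-1}(Y_i)$ gives a partition of $[0,1]$ with $\lambda(Z_i)=\lambda(Y_i)$ and
\[g(\psi(x),\psi(y))=\sum_{i,j}\beta_{ij}1_{Z_i}(x)1_{Z_j}(y)\qquad\text{for a.e.\ }(x,y)\in[0,1]^2.\]
The technical heart of the argument is to transfer this block structure from $g\circ(\psi\times\psi)$ back to $g$ itself without using any invertibility of $\psi$.

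I would first reduce to the case in which the rows of the symmetric matrix $(\beta_{ij})$ are pairwise distinct by merging indices with identical rows into equivalence classes (symmetry of $\beta$ makes the resulting reduction consistent). Since the value of $f$ depends only on these classes, any partition realizing the coarser block structure of $g$ can be subdivided arbitrarily to recover the original index set with the correct measures $\lambda(Y_i)$. Under the distinct-rows assumption, I would define
\[A_i=\Bigl\{u\in[0,1]:\text{for a.e.\ }v,\ g(u,\psi(v))=\sum_{j}\beta_{ij}1_{Z_j}(v)\Bigr\},\]
a measurable set by Fubini. The displayed identity above, together with Fubini, gives $\psi(x)\in A_{i}$ for a.e.\ $x\in Z_i$; distinctness of rows makes the $A_i$ pairwise disjoint up to null sets, and the measure-preservation of $\psi$ then upgrades these facts to $\psi^{-1}(A_i)=Z_i$ mod null sets, $\lambda(A_i)=\lambda(Y_i)$, and $\bigcup_i A_i=[0,1]$ mod null sets. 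I would then take $Y_i':=A_i$.

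It remains to promote the block structure from $g\circ(\psi\times\psi)$ to $g$ itself. The key point is that $\psi\times\psi:[0,1]^2\to[0,1]^2$ is measure-preserving with respect to product Lebesgue measure, since the identity $\lambda^2((\psi\times\psi)^{-1}(A\times B))=\lambda(\psi^{-1}A)\lambda(\psi^{-1}B)=\lambda(A)\lambda(B)$ on rectangles determines measure-preservation on all Borel sets. Consequently, any measurable function that vanishes almost everywhere after pullback by $\psi\times\psi$ vanishes almost everywhere on $[0,1]^2$; applying this to $g(u,v)-\sum_{i,j}\beta_{ij}1_{Y_i'}(u)1_{Y_j'}(v)$ finishes the proof. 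The main obstacle throughout is precisely the possible non-bijectivity of $\psi$, and the ``row-fingerprint'' definition of $A_i$ combined with the measure-preservation of $\psi\times\psi$ is designed to circumvent it.
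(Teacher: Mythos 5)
Your argument is correct, but it follows a genuinely different route from the paper. The paper converts $f$ into a graphon on the finite probability space $([k],2^{[k]},\mu)$ with $\mu(i)=\lambda(Y_i)$, reduces to the twin-free case by merging identical rows of $\beta$, and then invokes Theorem~8.6(vi) of \cite{jansongraphonbook}, which asserts (for twin-free step functions) that $\delta_\square(f',g)=0$ forces a single measure-preserving map $\phi:[0,1]\to[k]$ with $g(x,y)=f'_{\phi(x),\phi(y)}$ a.e.; the partition is then read off as $Y'_i=\phi^{-1}(\{i\})$. You instead cite the Borgs--Chayes--Lov\'asz weak-isomorphism theorem (two not-necessarily-invertible measure-preserving maps $\phi,\psi$ with $f^\phi=g^\psi$ a.e.), perform the same twin-merging reduction (but now only in order to make the ``row-fingerprint'' sets $A_i$ well-defined and disjoint), and then transfer the block structure back to $g$ by hand, using the observation that $\psi\times\psi$ is measure-preserving on $[0,1]^2$ so that a function which vanishes a.e.\ after pullback by $\psi\times\psi$ vanishes a.e.\ on $[0,1]^2$. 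In effect you trade the machinery of graphons on general probability spaces (and the specialized uniqueness theorem for pure/twin-free graphons) for the more widely cited weak-isomorphism theorem plus a short direct argument; your approach is therefore somewhat more self-contained, at the cost of re-deriving by hand the content that the paper outsources to Janson's Theorem~8.6(vi).

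Two small points worth tightening. First, your disjointness argument for the $A_i$ only uses distinctness of the rows of $\beta$ restricted to indices $l$ with $\lambda(Z_l)=\lambda(Y_l)>0$; if some $Y_l$ is a null set, two rows could still disagree only in column $l$ and the corresponding $A_i,A_j$ would coincide. You should therefore discard zero-measure blocks before the twin-merging step (they contribute nothing to $f$ and can be re-appended as null sets $Y_l'$ at the end). The paper has the same implicit convention. Second, your $A_i$ form a partition of $[0,1]$ only modulo a null set; to literally satisfy the lemma's conclusion you should adjoin the leftover null set to, say, $A_1$. Both are routine, but worth a sentence each if this were to be written up.
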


\begin{proof}
If $g$ can be written as $\sum_{i,j\in [k]}\beta_{ij}1_{Y'_i}1_{Y'_j}$ with $\lambda(Y_i)=\lambda(Y_i')$ almost everywhere, then clearly $\delta_{\square}(f,g) = 0$. To establish the other direction,
we will use Theorem 8.6 (vi)  from \cite{jansongraphonbook}.  This will require us to turn $f$ into what is called a twin-free graphon, defined as a graphon $W$ such that there exists no pair $(x,x')\in [0,1]$
such that $W(x,\cdot)=W(x',\cdot)$ almost everywhere.  Unfortunately, by their very definition, step functions are not twin-free.  To remedy this, we introduce graphons over a general probability space $(\Omega,\mathcal F,\mu)$, defined as measurable functions $W:\Omega^2\to [0,1]$ such that $W(x,y)=W(y,x)$ for all $x,y\in \Omega$.  We also need to define a cut distance between graphons $W_i$ on (potentially) different probability spaces $(\Omega_i,\mathcal F_i,\mu_i)$, $i=1,2$.  It is defined as
$$
\delta_\square(W_1,W_2)=\inf_\mu \sup_{S,T\subset\Omega_1\times\Omega_2}\left|\int (W_1(x,x')-W_2(y,y'))d\mu(x,y)d\mu(x',y')\right|
$$
where the $\inf$ goes over all couplings of $\mu_1$ and $\mu_2$. It is easy to see that for graphons defined on $[0,1]$, this definition agrees with the previous one (see, e.g., Lemma 3.5 in \cite{Borgs2008}).  With these new definitions, we define two graphons $W_1$, $W_2$ over two possibly different probability spaces to be equivalent if $\delta_\square(W_1,W_2)=0$.

With this definition, the graphon $f$ is equivalent to the ``discrete'' graphon $f_{ij}'=\beta_{i,j}$ where $i$ and $j$ lie in the  probability space
$([k],2^{[k]},\mu)$ with $\mu(i)=\lambda(Y_i)$  (use the coupling which pairs $i\in [k]$ with the uniform measure on $Y_i$).
 It is  also easy to turn $f'$ into a twin free graphon as follows: if $i$ and $i'$ are twins, i.e., if the $i^\text{th}$ and $j^\text{th}$ row of $\beta$ are identical, just merge the sets $Y_i$ and $Y_j$
into a new set of measure $\lambda(Y_i)+\lambda(Y_j)$, reducing $k$ by one.  Note that this does not change the function $f$, just the representation of the form $f=\sum_{i,j\in [k]}\beta_{ij}1_{Y_i}1_{Y_j}$.   Iterating this procedure, we eventually obtain a twin free graphon $f'$ which  has cut-distance zero from $f$, $\delta_\square(f',f)=0$. 

At this point, we use Theorem 8.6 (vi)  from \cite{jansongraphonbook} which says that $\delta_\square(f',g)=0$ if and only if there exists a measure preserving map $\phi:[0,1]\to [k]$
such that $g(x,y)=f'_{\phi(x),\phi(y)}$ almost everywhere.  Defining $Y'_i=\phi^{-1}(\{i\})$ proves the lemma, in the case where the rows of $\beta$ are pairwise distinct. Otherwise, the intervals of $g$ may be split in order to match the representation of $f$.
\end{proof}

\subsection{Lower bound}\label{sec:lower}

In order to prove Statement (1) of \Cref{ldp}, we closely follow the proof of Theorem 2.3 in \cite{Chatterjee2011}.

\begin{proof}[Proof of \Cref{ldp}, Statement (1)]

We will prove the bound in the form given in Lemma~\ref{rem:LDP-compact}.
Let $f^{G_{kn}}$ be the empirical graphon of a graph on $kn$ vertices drawn according to $\pr_{kn,W_0}$. First, we claim that if
\begin{align} \label{goal 0} \liminf_{n \to \infty} \frac{1}{(kn)^2} \log{ \pr_{kn, W_0}\brac{d_\square(f^{G_{kn}},g) \leq \ve}} \geq - I_{W_0}(g)\end{align}
holds for all $g \in \mathcal{W}$ and $\ve>0$, then the theorem follows.

To see this, we first observe that for $\ti h\in \tW$, $0<\eta\leq\ve/2$ and $g \in B(\ti{h}, \eta)$
$$
 \ti{\mathbb{P}}_{kn, W_0}(S(\ti{h}, \ve))
 ={\mathbb{P}}_{kn, W_0}(\delta_\square(f^{G_{kn}},\ti h)\leq\ve)
\geq {\mathbb{P}}_{kn, W_0}(d_\square(f^{G_{kn}},g)\leq \ve/2)
$$
where the identity follows from the definition of  $\ti{\mathbb{P}}_{kn, W_0}$ and $S(\ti{h}, \ve)$,
and the lower bound follows upon noting that
$\delta_\square(f^{G_{kn}},\ti h)\leq d_\square(f^{G_{kn}},g)+\delta_\square(g,\ti h)$.
Therefore, assuming (\ref{goal 0}) yields
$$
\liminf_{n \to \infty} \frac{1}{(kn)^2} \log  \ti{\mathbb{P}}_{kn, W_0}(S(\ti{h}, \ve))
\geq - I_{W_0}(g)
$$
for all $0<\eta\leq \ve/2$ and all $g\in B(\ti{h},\ve/2)$.
It follows that
$$\liminf_{n \to \infty}  \frac{1}{(kn)^2}  \log\ti{\mathbb{P}}_{kn, W_0}(S(\ti{h}, \ve)) \geq -\sup_{\eta\in(0,\ve/2]}\inf_{g \in B(\ti{h}, \eta)} I_{W_0}(g)
= -\sup_{\eta>0}\inf_{g \in B(\ti{h}, \eta)} I_{W_0}(g)=-J_{W_0}(\ti{h})
$$
as required.

We have shown that (\ref{goal 0}) implies the theorem; we now turn to its proof. By \Cref{finite-or-not}, (\ref{goal 0}) holds trivially for $g \not \in \W_\Omega$. We may assume $g \in \W_\Omega$.
Let $\epsilon > 0$. We define $(g_n)_{n\geq 1}$, a sequence of $kn \times kn$ block graphons that approximate $g$. Recall \eqref{eq:k-n}.  For $i,j \in [kn]$, let
$$p_{ij}^{(n)}= (kn)^2 \int \int _{[\frac{i-1}{kn},\frac{i}{kn}] \times [\frac{j-1}{kn},\frac{j}{kn}]} g(x,y) dx dy
\qquad\text{and}\qquad
g_n(x,y)= p_{\vartheta_{kn}(x), \vartheta_{kn}(y)}^{(n)}.$$

Since $\|g_n - g\|_{1} \to 0$, in order to prove (\ref{goal 0}) it suffices to show that
$$\liminf_{n \to \infty} \frac{1}{(kn)^2} \log{ \pr_{kn, W_0}\brac{ B_{\ve,n}}} \geq - I_{W_0}(g),$$
where $B_{\ve,n}= \{f: d_\square (f, g_n) \leq \ve/2\}$.
We will apply the following proposition, which is proved as part of Theorem 2.3 in \cite{Chatterjee2011}. For completion, we include a proof sketch in the Appendix (\Cref{other-useful-results}).

\begin{proposition}\label{in thm 2.3}
\label{proposition:zero-probability}
Let $f_n$ be a graphon drawn from the measure $\pr_{kn, g_n}$, with $g_n$ as defined above.
For any $\epsilon > 0$, it holds that
\[\lim_{n \to \infty} \pr_{kn, g_n} ( d_\square(f_n, g_n) \geq \ve)= 0.\]
\end{proposition}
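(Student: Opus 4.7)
The key observation is that both $f_n$ (the empirical graphon of $G_{kn}$) and $g_n$ are step functions on the same partition of $[0,1]$ into $N := kn$ equal intervals $I_1,\dots,I_N$. Let $a_{ij}$ denote the value of $f_n - g_n$ on the block $I_i \times I_j$; by construction $a_{ij} = \mathbf{1}[\{i,j\} \in E(G_{kn})] - p^{(n)}_{ij}$ is a centered random variable with $|a_{ij}| \leq 1$, and the variables $\{a_{ij}\}_{i \leq j}$ are mutually independent with $a_{ji} = a_{ij}$. The plan is to reduce $d_\square(f_n,g_n)$ to a discrete cut-norm computation on the matrix $A = (a_{ij})$, apply Hoeffding's inequality for each choice of discrete subsets, and then union-bound.

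For the first step, for measurable $S, T \subseteq [0,1]$, write $s_i = N\lambda(S \cap I_i) \in [0,1]$ and $t_j = N\lambda(T \cap I_j) \in [0,1]$. Since $f_n - g_n$ is constant on each block,
\[
\int_{S \times T}(f_n - g_n)\,dx\,dy = \frac{1}{N^2}\sum_{i,j \in [N]} a_{ij}\, s_i\, t_j.
\]
The right side is bilinear in $(s,t) \in [0,1]^N \times [0,1]^N$, so its supremum in absolute value is attained at an extreme point, i.e., at $s \in \{0,1\}^N$ and $t \in \{0,1\}^N$. Consequently,
\[
d_\square(f_n,g_n) = \frac{1}{N^2}\max_{S,T \subseteq [N]}\left|\sum_{i \in S,\, j \in T} a_{ij}\right|.
\]

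For the second step, fix $S, T \subseteq [N]$. Group the terms by unordered pairs $\{i,j\}$: each $a_{ij}$ with $i < j$ appears with coefficient $\mathbf{1}_{i \in S}\mathbf{1}_{j \in T} + \mathbf{1}_{j \in S}\mathbf{1}_{i \in T} \in \{0,1,2\}$, and diagonal terms have coefficient in $\{0,1\}$. Thus the inner sum is a sum of at most $\binom{N+1}{2}$ independent, centered random variables, each bounded by $2$ in absolute value. Hoeffding's inequality gives, for any $u > 0$,
\[
\Pr\!\left[\,\left|\sum_{i \in S,\, j \in T} a_{ij}\right| \geq u\right] \leq 2\exp\!\left(-\frac{u^2}{4 N^2}\right).
\]

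For the third step, apply the union bound over the $4^N$ pairs $(S,T)$ with $u = \varepsilon N^2/2$:
\[
\Pr\!\left[d_\square(f_n,g_n) \geq \tfrac{\varepsilon}{2}\right] \leq 2\cdot 4^N \exp\!\left(-\frac{\varepsilon^2 N^2}{16}\right) = 2\exp\!\left(N\log 4 - \frac{\varepsilon^2 N^2}{16}\right),
\]
which tends to $0$ as $N = kn \to \infty$ for any fixed $\varepsilon > 0$. Since the statement asks for $\Pr[d_\square(f_n,g_n) \geq \varepsilon]$, the same bound (with $\varepsilon$ replacing $\varepsilon/2$) gives the claim. The only point requiring care is that while the $a_{ij}$ are not jointly independent (because of the symmetry $a_{ij} = a_{ji}$), they are independent when indexed by unordered pairs, which is sufficient for Hoeffding; there is no real obstacle, just bookkeeping. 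The whole argument is a classical consequence of the cut-norm concentration for independent edge samples, and no new ideas beyond those implicit in the proof of Theorem 2.3 of \cite{Chatterjee2011} are needed.
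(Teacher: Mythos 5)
Your proof is correct and takes the same route as the paper's sketch: reduce $d_\square(f_n,g_n)$ to a maximum over $4^{kn}$ discrete set-pairs (which you justify via the bilinear-form extremality argument that the paper leaves implicit), apply a concentration inequality for each pair (Hoeffding, where the paper cites Azuma---equivalent here since the increments are independent and bounded), and union-bound. One small point to patch: the diagonal entries $a_{ii}=-p^{(n)}_{ii}$ are deterministic rather than centered (the empirical graphon vanishes on diagonal blocks), but the diagonal has total measure $1/(kn)\to 0$, so discarding it costs at most $1/(kn)$ in $d_\square$ and the argument goes through unchanged.
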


\noindent
We now apply a tilting argument to establish the lower bound, following \cite{Chatterjee2011}. Note that since $g\in \W_\Omega$, $g_n \in \W_\Omega$, and so the support of $\pr_{kn, g_n}$ is contained in the support of $\pr_{kn, W_0}$. Therefore we may write
\begin{align*}
\pr_{kn, W_0}(B_{\epsilon, n}) &= \int_{B_{\epsilon, n}} d\pr_{kn, W_0}= \int_{B_{\epsilon, n}} \exp{- \log \frac{d\pr_{kn, g_n}}{d \pr_{kn, W_0}}} d\pr_{kn, g_n}\\
&= \pr_{kn, g_n}(B_{\epsilon,n}) \frac{1}{\pr_{kn, g_n} (B_{\epsilon, n})} \int_{B_{\epsilon, n}} \exp{- \log \frac{d\pr_{kn, g_n}}{d \pr_{kn, W_0}}} d\pr_{kn, g_n}.
\end{align*}
\Cref{proposition:zero-probability} implies that for $n$ sufficiently large, $\pr_{kn, W_0}(B_{\epsilon, n})>0$.
Taking the logarithm of both sides and applying Jensen's inequality, we obtain
\begin{align*}
\log \pr_{kn, W_0}(B_{\epsilon, n}) \geq \log \pr_{kn, g_n}(B_{\epsilon, n}) - \frac{1}{\pr_{kn, g_n}(B_{\epsilon, n})} \int_{B_{\epsilon, n}} \log \left(\frac{d \pr_{kn, g_n}}{d \pr_{kn, W_0}}\right) d \pr_{kn, g_n}.
\end{align*}
By Proposition \ref{proposition:zero-probability}, it holds that $\pr_{kn,g_n}(B_{\epsilon, n}) \to 1$. Furthermore, letting $p_{\text{min}}$ and $p_{\text{max}}$ be the minimal and maximal values taken on by $W_0$ in $(0,1)$, we have 
\[\frac{1}{(kn)^2}\int_{\mathcal{W}_\Omega\setminus B_{\ve, n}}\log \left(\frac{d \pr_{kn, g_n}}{d \pr_{kn, W_0}}\right) d \pr_{kn, g_n} \leq \frac{1}{(kn)^2} \int_{\mathcal{W}_\Omega\setminus B_{\ve, n}} \log\left(\left(\frac{1}{\min\{p_{\text{min}}, 1 - p_{\text{max}}\}}\right)^{\binom{kn}{2}}\right) d \pr_{kn, g_n} \to 0. \] Therefore,
\begin{align*}
\liminf_{n \to \infty} \frac{1}{(kn)^2}\log \pr_{kn, W_0}(B_{\epsilon, n}) &\geq - \limsup_{n \to \infty} \frac{1}{(kn)^2} \int  \log \left(\frac{d \pr_{kn, g_n}}{d \pr_{kn, W_0}}\right) d \pr_{kn, g_n}.
\end{align*}
Observe that the probability of sampling a graph $G$ on $kn$ vertices is
$$
\pr_{kn, g_n}(\{G\})=\prod_{\substack{i,j \in [kn]\\i<j}} \brac{ p_{ij}^{(n)}\1_{(i,j) \in E(G) } + (1-p_{ij}^{(n)})\1_{(i,j) \not \in E(G)}}.
$$
By construction if $W_0(i/(kn), j/(kn))=0$, then $p_{ij}^{(n)}=0$ and if $W_0(i/(kn), j/(kn))=1$, then $p_{ij}^{(n)}=1$. Let $V(G)$ denote the vertex set of $G$. Recalling the convention that $0 \log(0/0)=0$, we obtain 
\begin{align*}
 & \limsup_{n \to \infty} \frac{1}{(kn)^2} \int\log{ \left( \frac{d\pr_{kn, g_n}}{d\pr_{kn,W_0}}\right)} d\pr_{kn,g_n} \\
 &=
 \limsup_{n \to \infty}
 \frac{1}{(kn)^2} \sum_{G: V(G)=[kn]} \pr_{kn, g_n}(\{G\})\log \bfrac{ \pr_{kn, g_n}(\{G\})}{\pr_{kn, W_0}(\{G\})}\\
 &=
 \limsup_{n \to \infty} \frac{1}{(kn)^2} \sum_{G: V(G)=[kn]} \prod_{\substack{i,j \in [kn]\\i<j}} \brac{ p_{ij}^{(n)}\1_{(i,j) \in E(G) } + (1-p_{ij}^{(n)})\1_{(i,j) \not \in E(G)}} \times \\
  &~~~~~ \brac{\sum_{\substack{i,j \in [kn]\\i<j}} \log{ \frac{p_{ij}^{(n)}}{W_0(\frac{i}{kn},\frac{j}{kn})}} \1\{(i,j) \in E(G)\} +\log{ \frac{1-p_{ij}^{(n)}}{1-W_0(\frac{i}{kn},\frac{j}{kn})}} \1\{(i,j) \not\in E(G)\}} \\
  &=\limsup_{n \to \infty} \frac{1}{(kn)^2} \sum_{\substack{i,j \in [kn]\\i<j}} p_{ij}^{(n)}\log{ \frac{p_{ij}^{(n)}}{W_0(\frac{i}{kn},\frac{j}{kn})}} +(1-p_{ij}^{(n)})\log{ \frac{1-p_{ij}^{(n)}}{1-W_0(\frac{i}{kn},\frac{j}{kn})}}\\
  &= \limsup_{n \to \infty}I_{W_0}(g_n) = I_{W_0}(g). 
\end{align*}
The last equality follows from a straightforward adaptation of an argument from  \cite[Lemma 5.7]{Chatterjee2015}: for any $\varepsilon>0$, set  $A_{\varepsilon}=\{(x,y): \|g_n(x,y)-g(x,y)\|_1>\varepsilon\}$.  Observe that $\Vert g_n - g\Vert_1 \geq \lambda(A_{\ve}) \epsilon$, implying $\lambda(A_{\ve}) \to 0$ for any $\varepsilon>0$.
In addition, note that $Im(W_0)$ is finite and $h_p(\cdot)$ is uniformly continuous for any fixed $p$. Therefore, for every $\delta>0$, there exists $\varepsilon>0$ such that $|h_{W_0(x,y)}(g_n(x,y)) - h_{W_0(x,y)}(g(x,y))|<\delta$ whenever $|g_n(x,y) - g(x,y)| < \varepsilon$. Consequently, 
\begin{align}
    |I_{W_0}(g_n) - I_{W_0}(g)| &\leq  \frac{1}{2} \int_{A_{\varepsilon}} |h_{W_0(x,y)}(g_n(x,y)) - h_{W_0(x,y)}(g(x,y))| dx dy \nonumber \\
    &+ \frac{1}{2} \int_{[0,1]^2\backslash A_{\varepsilon}} |h_{W_0(x,y)}(g_n(x,y)) - h_{W_0(x,y)}(g(x,y))| dx dy \nonumber \\
    &\leq \frac{C}{2} \lambda(A_{\varepsilon}) + \frac{1}{2}\delta \nonumber 
\end{align}
where $C>0$ is such that $h_{W_0(x,y)}(a)<C$ for all $a\in[0,1]$ and $(x,y) \in [0,1]^2$. This completes the proof. 
\end{proof}

\subsection{Upper bound}\label{sec:upper}
In this section we prove the upper bound of \Cref{ldp}.
The proof requires two key lemmas.
The first one establishes that as long as we look at balls around block constant graphons, we can restrict to a finite number of measure preserving bijections, and the second one gives an upper bound on the probability that sampling a graphon and applying an invertible transformation yields a graphon in a particular $d_\square$ ball.
This is formalized in the following two lemmas. Let $\mathcal{M}_{kn}$ be the set of all permutations of $[0,1]$ corresponding to permuting the vertices of a graph on $kn$ vertices, i.e. if $\sigma \in \mathcal{M}_{kn}$, then for all $i \in [kn]$, the image of the interval $\vartheta_{kn}^{-1}(i)$ under $\sigma$ is an interval of the form $\vartheta_{kn}^{-1}(j)$ for some $j \in [kn]$.

\begin{lemma}\label{approx}
Let $s_h,s_w \in \mathbb{Z}^+$ and $\eta>0$.  Given $\mu \in \Delta_{s_h}$, $\gamma \in \Delta_{s_w}$,
$h \in \mathcal{B}^{\mu}$,
and a $k$-uniform block graphon $W_0 \in \mathcal{B}^\gamma$, there exists a finite set of invertible measure preserving transformations $T \subseteq \mathcal{M}$ with $|T|=N(\eta,s_w, s_h)$  such that the following holds for all
$n>12s_h^2 s_w /(k\eta)$:
For all $\sigma \in \mathcal{M}_{kn}$ there exists $\tau \in T$ such that for all $\ve >0$
 $$\pr_{kn,W_0}\brac{ d_\square(f^{G_{kn}^\sigma},h) \leq \ve} \leq \pr_{kn,W_0}\brac{ d_\square(f^{G_{kn}^\tau},h) \leq \ve + \eta},
 $$
where $f^{G_{kn}}$ is the empirical graphon obtained by sampling $W_0$ according to $\pr_{kn,W_0}$.
\end{lemma}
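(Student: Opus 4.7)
The strategy is to parameterize each $\sigma \in \mathcal{M}_{kn}$ by a finite-dimensional \emph{type matrix} capturing only the coarse way $\sigma$ interleaves the blocks of $h$ with those of $W_0$, and then to approximate the uncountable family of permutations by a finite net of representatives $\tau \in \mathcal{M}$ indexed by such matrices. Let $\{J_i\}_{i=1}^{s_h}$ and $\{I_j\}_{j=1}^{s_w}$ be the block partitions of $h$ and $W_0$ respectively, and for $\sigma \in \mathcal{M}$ define $M^\sigma \in \R^{s_h \times s_w}$ by $M^\sigma_{ij} = \lambda(J_i \cap \sigma^{-1}(I_j))$. Each $M^\sigma$ lies in the compact transportation polytope $\mathcal{T}_{\mu,\gamma} = \{M \geq 0 : M \mathbf{1} = \mu,\, M^\top \mathbf{1} = \gamma\}$, so it can be covered by $N = N(\eta, s_w, s_h)$ closed $\ell_\infty$-balls of radius $\delta$ of order $\eta/(s_h s_w)$. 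For each ball center $M^{(l)}$ pick a representative $\tau^{(l)} \in \mathcal{M}$ with $M^{\tau^{(l)}} = M^{(l)}$ and set $T = \{\tau^{(l)}\}$, giving $|T| = N(\eta, s_w, s_h)$.

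Given $\sigma \in \mathcal{M}_{kn}$, choose $\tau \in T$ with $\|M^\sigma - M^\tau\|_\infty \leq \delta$, so that $\|M^\sigma - M^\tau\|_1 \leq s_h s_w \delta$. The plan now is to construct an auxiliary bijection $\sigma'$ acting as a bridge between $\sigma$ and $\tau$: it should satisfy $M^{\sigma'} = M^\tau$ while agreeing with $\sigma$ outside a set of measure at most $\eta/4$. This is achieved by a standard transportation/reassignment argument---because the $\ell_1$-defect between the two type matrices is small, one reassigns the $\sigma$-images of a correspondingly small fraction of vertices to repair the type matrix. The quantitative hypothesis $n > 12 s_h^2 s_w/(k\eta)$ ensures the grid resolution $1/(kn)$ is fine enough to execute this discrete rearrangement with control on the measure of the disagreement set. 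From $\lambda(\{\sigma \neq \sigma'\}) \leq \eta/4$ one obtains the \emph{deterministic}, sample-wise bound $d_\square(f^\sigma, f^{\sigma'}) \leq 2 \lambda(\{\sigma \neq \sigma'\}) \leq \eta/2$, so by the triangle inequality $\{d_\square(f^\sigma, h) \leq \ve\} \subseteq \{d_\square(f^{\sigma'}, h) \leq \ve + \eta/2\}$ pointwise in the sample $f$.

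The main technical obstacle is the final \emph{equidistribution step}: to show that $\pr_{kn,W_0}[d_\square(f^{\sigma'}, h) \leq t] = \pr_{kn,W_0}[d_\square(f^\tau, h) \leq t]$ whenever $M^{\sigma'} = M^\tau$. The plan is to exhibit a factorization $\sigma' = \rho_2 \circ \tau \circ \rho_1$ in which $\rho_1 \in \mathcal{M}$ preserves every $h$-block $J_i$ (so that $h^{\rho_1^{-1}} = h$) while $\rho_2$ preserves every $W_0$-block $I_j$ at the $kn$-grid level (so that the law of $f$ under $\pr_{kn,W_0}$ is invariant under $\rho_2$). Combining the invariance of $d_\square$ under simultaneous measure-preserving change of variables with $h^{\rho_1^{-1}} = h$ and $f^{\rho_2} \stackrel{d}{=} f$ then yields $d_\square(f^{\sigma'}, h) \stackrel{d}{=} d_\square(f^\tau, h)$. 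Producing $\rho_2$ that respects the $kn$-grid structure (rather than merely the continuous $W_0$-partition) is the delicate point, and it is here that the numerical threshold on $n$ is finally exhausted. Chaining the three estimates produces
\begin{align*}
\pr_{kn,W_0}[d_\square(f^\sigma, h) \leq \ve]
&\leq \pr_{kn,W_0}[d_\square(f^{\sigma'}, h) \leq \ve + \eta/2] \\
&= \pr_{kn,W_0}[d_\square(f^\tau, h) \leq \ve + \eta/2] \\
&\leq \pr_{kn,W_0}[d_\square(f^\tau, h) \leq \ve + \eta],
\end{align*}
which is the desired bound.
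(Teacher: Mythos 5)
Your overall strategy tracks the paper's up through the net over the transportation polytope: the paper also parameterizes $\sigma$ by the matrix of masses it moves between $W_0$-blocks and $h$-blocks, covers the polytope $V=\{v\geq 0 : \sum_{ij}v_{ij}=1,\ \sum_iv_{ij}=\mu_j,\ \sum_jv_{ij}=\gamma_i\}$ by an $\eta/(s_ws_h)$-net $V^*$, and takes $T$ to consist of canonical piecewise translations associated to the points of $V^*$. Your deterministic bound $d_\square(f^\sigma,f^{\sigma'})\leq 2\lambda(\{\sigma\neq\sigma'\})$ is also correct.

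The genuine gap is in the equidistribution step. You want an exact factorization $\sigma'=\rho_2\circ\tau\circ\rho_1$ with $\rho_1$ preserving each $h$-block $J_i$ and $\rho_2$ a $W_0$-block-preserving permutation of $kn$-grid cells, so that $d_\square(f^{\sigma'},h)$ and $d_\square(f^\tau,h)$ are \emph{equal in law}. This cannot be arranged for a generic representative $\tau\in T$. The set $T$ is fixed once and for all, before $n$ is chosen; consequently the endpoints of $\tau^{-1}(m)$ for a vertex interval $m$ are typically not multiples of $1/(kn)$, on the order of $s_hs_w$ of the sets $\tau^{-1}(m)$ are not single intervals or straddle $h$-block boundaries, and hence $\rho_2=\sigma'\circ\rho_1^{-1}\circ\tau^{-1}$ cannot permute vertex intervals no matter how the $h$-block-preserving $\rho_1$ is chosen. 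The claimed distributional identity $\pr_{kn,W_0}[d_\square(f^{\sigma'},h)\leq t]=\pr_{kn,W_0}[d_\square(f^\tau,h)\leq t]$ is therefore false as stated, and no choice of $n$ fixes this, since the misalignment comes from $\tau$ living outside $\mathcal{M}_{kn}$.

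The paper's resolution is to abandon exact distributional equality in favor of a coupling. It builds two $h$-block-preserving maps $\alpha$ (attached to $\sigma$) and $\beta$ (attached to $\tau$), uses \Cref{respectful} to conclude $d_\square(f^\sigma,h)=d_\square((f^\sigma)^\alpha,h)$ and $d_\square(f^\tau,h)=d_\square((f^\tau)^\beta,h)$, and then couples two copies $f,g\sim\pr_{kn,W_0}$ so that $(f^\sigma)^\alpha$ and $(g^\tau)^\beta$ coincide on $K\times K$ for a union $K$ of ``synchronized'' vertex intervals with $\lambda(K)\geq 1-\eta/2$. This yields only $d_\square((f^\sigma)^\alpha,(g^\tau)^\beta)\leq\eta$ almost surely under the coupling, not equality in law, but the extra additive $\eta$ in the lemma's conclusion is exactly the price of the coupling defect. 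If you replace your exact factorization with an approximate one---allow $\rho_2$ to agree with a genuine grid permutation $\rho_2'$ outside a set of measure $O(s_hs_w/(kn))$, absorbing the discrepancy into a portion of the $\eta$-budget, so that $f^{\rho_2'}\stackrel{d}{=}f$ is what you actually use---your outline becomes the paper's coupling argument and the gap closes.
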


\begin{lemma}\label{bound on one} Let $W_0$ be a uniform $k$-block graphon, $\ve>0$ and $h \in \mathcal{W}$.
 Let $G_{kn}$ be the graph drawn from $\mathbb{P}_{kn, W_0}$, and let $f^{G_{kn}}$ denote the corresponding empirical graphon. For all invertible $\tau \in \mathcal{M}$, it holds that
\[ \limsup_{n \to \infty} \frac{1}{(kn)^2} \log \mathbb{P}_{kn, W_0} \left(d_\square\left (f^{G_{kn}^\tau},h \right) \leq \ve  \right) \leq  -\inf_{f: \delta_\square(f, h) \leq \ve } I_{W_0}(f).\]
\end{lemma}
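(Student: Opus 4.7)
The plan is an exponential tilting (Chernoff) argument using the variational representation $I_{W_0^\tau}(f) = \frac{1}{2}\sup_a K_{W_0^\tau}(f,a)$ from \Cref{proposition:rate-function-equivalent}, where $W_0^\tau(x,y) := W_0(\tau(x),\tau(y))$. Let $X_{ij}\in\{0,1\}$ for $1\leq i<j\leq kn$ be the independent edge indicators under $\pr_{kn,W_0}$ with parameter $p_{ij}^{(n)}=W_0(i/(kn),j/(kn))$, let $A = \{d_\square(f^{G_{kn}^\tau}, h)\leq\ve\}$, and for any bounded symmetric $a:[0,1]^2\to\R$, define $b_{ij}=a(\tau^{-1}(i/(kn)),\tau^{-1}(j/(kn)))$. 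Markov's inequality yields
\begin{align*}
\log\pr_{kn,W_0}(A) \leq \sum_{i<j}\log\bigl(p_{ij}^{(n)} e^{b_{ij}}+1-p_{ij}^{(n)}\bigr) - \inf_{G\in A}\sum_{i<j}b_{ij} X_{ij}(G).
\end{align*}

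For step-function $a$ with finitely many values, the two sums above equal, up to $O(kn)$ boundary terms, the Riemann integrals $\frac{(kn)^2}{2}\int\log(W_0^\tau e^a + 1 - W_0^\tau)$ and $\frac{(kn)^2}{2}\int a\cdot f^{G^\tau}$ respectively, where the latter uses the substitution $(u,v)=(\tau^{-1}(x),\tau^{-1}(y))$ together with the fact that $\tau$ is measure-preserving. Since $G\in A$ forces $f^{G^\tau}$ into the $d_\square$-ball of radius $\ve$ around $h$,
\begin{align*}
\limsup_{n\to\infty}\frac{1}{(kn)^2}\log\pr_{kn,W_0}(A) \leq -\frac{1}{2}\inf_{f:\, d_\square(f,h)\leq\ve} K_{W_0^\tau}(f,a).
\end{align*}

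Taking the supremum over $a$ and exchanging sup with inf via Sion's minimax theorem (which applies since $K_{W_0^\tau}(\cdot,a)$ is linear in $f$ and $K_{W_0^\tau}(f,\cdot)$ is concave in $a$) gives $\sup_a\inf_f K_{W_0^\tau}(f,a)=2\inf_f I_{W_0^\tau}(f)$. Finally, the identity $I_{W_0^\tau}(f)=I_{W_0}(f^{\tau^{-1}})$, which follows by the change of variables $(u,v)=(\tau(x),\tau(y))$ in \Cref{rel-entropy-defn}, together with the inequality $\delta_\square(f^{\tau^{-1}},h)\leq d_\square(f,h)$, reduces the infimum to one over (a subset of) the $\delta_\square$-ball around $h$ with rate function $I_{W_0}$, yielding the lemma.

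The main technical obstacle I anticipate is the Sion step, since $(\W,d_\square)$ is not compact and $K_{W_0^\tau}(\cdot,a)$ does not descend to $\tW$ (because $\int a\cdot f$ depends on the representative of $f$). The cleanest remedy is to apply Sion on the compact convex restrictions $\mathcal{A}_{L,M}=\{a:\text{ symmetric step function with }L^2\text{ pieces, }\|a\|_\infty\leq M\}$, and then pass to the limit $L,M\to\infty$, using that the supremum in \Cref{proposition:rate-function-equivalent} is attained in the limit by such step functions via monotone convergence. A secondary minor point: $W_0$ may take values in $\{0,1\}$ on some blocks, but there $\log(W_0^\tau e^a + 1 - W_0^\tau)$ remains bounded (equal to $a$ where $W_0^\tau=1$ and $0$ where $W_0^\tau=0$), so the argument proceeds without modification.
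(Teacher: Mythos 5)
Your approach takes a genuinely different route from the paper's. The paper observes that $d_\square(f^{G_{kn}^\tau},h)=d_\square(f^{G_{kn}},h^{\tau^{-1}})$, which moves $\tau$ into the target set rather than into the LDP, notes that the $d_\square$-ball around $h^{\tau^{-1}}$ is weakly closed, and then invokes the separately-proved weak-topology upper bound (Theorem~\ref{weak ldp}, itself a tilting argument abstracted through Chatterjee's Theorem~4.1). You push the tilting to the sample side and rely on Sion's minimax theorem to untangle the $\sup$/$\inf$. This is viable in spirit, but there is a genuine gap in the Riemann-sum step.

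You set $b_{ij}=a(\tau^{-1}(i/(kn)),\tau^{-1}(j/(kn)))$ and assert that $\sum_{i<j}b_{ij}X_{ij}$ agrees with $\frac{(kn)^2}{2}\int a\cdot f^{G_{kn}^\tau}$ up to $O(kn)$. However, the change of variables $(u,v)=(\tau(x),\tau(y))$ gives
\[
\int a\cdot f^{G_{kn}^\tau}=\int a\bigl(\tau^{-1}(u),\tau^{-1}(v)\bigr)\,f^{G_{kn}}(u,v)\,du\,dv
=\sum_{i,j}X_{ij}\int_{B(i,j,n)}a\bigl(\tau^{-1}(u),\tau^{-1}(v)\bigr)\,du\,dv,
\]
which uses the \emph{block averages} of $a\circ(\tau^{-1}\times\tau^{-1})$, not its point values. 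For a generic invertible measure-preserving $\tau$, the composition $a\circ\tau^{-1}$ has no regularity at the $1/(kn)$ scale even when $a$ is a step function: $\tau^{-1}$ can behave arbitrarily on the measure-zero set of grid points $\{i/(kn)\}$, so the point value $a(\tau^{-1}(i/(kn)),\tau^{-1}(j/(kn)))$ and the block average can differ by order $\|a\|_\infty$ on a constant fraction of the $(kn)^2$ cells. The discrepancy is therefore $\Theta((kn)^2)$, not $O(kn)$, and the same obstruction affects your first sum, $\sum_{i<j}\log\bigl(p_{ij}^{(n)}e^{b_{ij}}+1-p_{ij}^{(n)}\bigr)$. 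The fix is to define $b_{ij}$ as the block average of $a\circ(\tau^{-1}\times\tau^{-1})$ over $B(i,j,n)\cup B(j,i,n)$: the second sum then becomes an exact identity, and the first is controlled using the $L^2$-convergence of the stepping operator (as in the proof of Theorem~\ref{weak ldp}) together with the $1$-Lipschitz property of $x\mapsto\log(pe^x+1-p)$. But at that point you have essentially re-derived the weak-topology LDP, and it is cleaner to invoke it directly as the paper does (which also dispenses with Sion, since the Fenchel--Legendre form of the abstract upper bound already has $\sup$ and $\inf$ in the correct order). Your closing identities $I_{W_0^\tau}(f)=I_{W_0}(f^{\tau^{-1}})$ and $\delta_\square(f^{\tau^{-1}},h)\le d_\square(f,h)$ are correct and correspond precisely to the last line of the paper's proof.
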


To prove the second lemma,
we use the following LDP upper bound with respect to the weak topology.
Recall that the weak topology on $\W$ is the smallest topology under which the maps $f \mapsto \int_{[0,1]^2} f(x,y) g(x,y) dx dy$ are continuous for every $g \in L^2([0,1]^2)$.

\begin{theorem} \label{weak ldp}
Let $W_0$ be a uniform $k$-block graphon. For every weakly closed set $F \in \W$,
$$\limsup_{n \to \infty } \frac{1}{(kn)^2} \log \pr_{kn, W_0}\brac{ F} \leq - \inf_{f \in F} I_{W_0}(f).$$
\end{theorem}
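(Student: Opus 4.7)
The plan is to apply exponential Chebyshev with a tilt parameter $a$, combine the resulting bound with the variational representation $I_{W_0}(f) = \tfrac12 \sup_{a \in S} K_{W_0}(f,a)$ from Proposition~\ref{proposition:rate-function-equivalent}, and then upgrade the pointwise tilt bounds to the full upper bound via a covering argument exploiting the weak compactness of $\mathcal{W}$.

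For the MGF computation, I would pick any bounded symmetric step function $a \in S$ whose partition refines that of $W_0$; by $L^2$-density and the truncation argument from the proof of Proposition~\ref{proposition:rate-function-equivalent}, such tilts already saturate the $\sup$ defining $I_{W_0}$. Setting $a_{ij} = \int_{I_i \times I_j} a\,dx\,dy$ with $I_i = ((i-1)/(kn), i/(kn)]$ and $W_{ij} = W_0(i/(kn), j/(kn))$, the symmetry of $f^{G_{kn}}$ (and its vanishing on diagonal blocks) yields $\int a\, f^{G_{kn}} = 2\sum_{i<j} a_{ij} x_{ij}$, where the edge indicators $x_{ij} \sim \mathrm{Bernoulli}(W_{ij})$ are independent. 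Hence
\begin{align*}
\frac{1}{(kn)^2}\log \mathbb{E}_{kn,W_0}\!\left[e^{(kn)^2 \int a\, f^{G_{kn}}}\right] = \frac{1}{(kn)^2}\sum_{i<j}\log\!\left[W_{ij} e^{2(kn)^2 a_{ij}} + 1 - W_{ij}\right].
\end{align*}
For $n$ large enough that the partition into intervals of length $1/(kn)$ refines the common partition of $a$ and $W_0$, one has $2(kn)^2 a_{ij} = 2a(i/(kn), j/(kn))$, and (using the convention $0\log 0 = 0$ to absorb the blocks where $W_{ij} \in \{0,1\}$) the right-hand side is an exact Riemann-sum approximation of $\tfrac12 \int \log[W_0 e^{2a} + 1 - W_0]\,dx\,dy$. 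Combined with the exponential Chebyshev bound $\mathbb{P}_{kn,W_0}(f^{G_{kn}} \in A) \leq e^{-(kn)^2 \inf_{f \in A} \int a f} \cdot \mathbb{E}_{kn,W_0}[e^{(kn)^2 \int a f^{G_{kn}}}]$ and the reparametrization $a \leftarrow a/2$, this produces, for every such tilt $a$ and every measurable $A \subseteq \mathcal{W}$:
\begin{align*}
\limsup_{n\to\infty} \frac{1}{(kn)^2} \log \mathbb{P}_{kn,W_0}(f^{G_{kn}} \in A) \leq -\frac12 \inf_{f \in A} K_{W_0}(f, a).
\end{align*}

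Finally, I would upgrade this pointwise bound to the weak-topology upper bound via a covering argument. The set $\mathcal{W}$ is uniformly bounded in $L^\infty$, hence weakly compact in $L^2([0,1]^2)$, and by $L^2$-separability the induced weak topology on $\mathcal{W}$ is metrizable, so any weakly closed $F \subseteq \mathcal{W}$ is weakly compact. Fixing $\epsilon > 0$ and writing $c = \min(\inf_{f \in F} I_{W_0}(f),\, 1/\epsilon)$, I would invoke Proposition~\ref{proposition:rate-function-equivalent} (and the truncation construction in its proof in the case $I_{W_0}(f) = \infty$, where a large but bounded $a_f$ is localized on a positive-measure subset of $\{W_0 \in \{0,1\}\}$ on which $f$ differs from $W_0$) to choose, for each $f \in F$, a bounded symmetric step function $a_f$ with $\tfrac12 K_{W_0}(f, a_f) \geq c - \epsilon$. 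Since $g \mapsto K_{W_0}(g, a_f)$ is affine in $g$ with $L^2$ coefficient $a_f$, it is weakly continuous, so there is a weakly open neighborhood $U_f$ of $f$ on which $\tfrac12 K_{W_0}(g, a_f) \geq c - 2\epsilon$ for all $g \in U_f$. Extracting a finite subcover $U_{f_1},\ldots,U_{f_N}$ of $F$, applying the pointwise Chebyshev bound to each $U_{f_i}$ with tilt $a_{f_i}$, and taking a union bound yields $\limsup_n \frac{1}{(kn)^2}\log \mathbb{P}_{kn,W_0}(F) \leq -c + 2\epsilon$; sending $\epsilon \to 0$ concludes the argument. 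The main obstacle is the $I_{W_0}(f) = \infty$ case within the covering step: the variational sup is unbounded there, yet weak continuity of $g \mapsto K_{W_0}(g, a_f)$ requires a bounded tilt, forcing one to rely on the truncation construction of Proposition~\ref{proposition:rate-function-equivalent} to obtain a finite $a_f$ with $K_{W_0}(f, a_f)$ arbitrarily large.
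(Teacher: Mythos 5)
Your proposal is correct and follows the same analytic skeleton as the paper's proof: exponential tilting against affine functionals $\lambda_a(f) = \int af$, the Bernoulli log-MGF computation yielding $\limsup_n \tfrac{1}{(kn)^2}\log\mathbb E[e^{(kn)^2\int a f}] = \tfrac12\int\log(W_0 e^{2a}+1-W_0)$, identification of the Fenchel--Legendre transform with $2I_{W_0}$ via Proposition~\ref{proposition:rate-function-equivalent}, and weak compactness of $\W$. The one genuine structural difference is that you inline the compactness-based covering argument, whereas the paper imports it as a black box through \cite[Theorem 4.1]{Chatterjee2015}. This is a fair trade: your version is self-contained and makes visible exactly why weak compactness and weak continuity of $g \mapsto K_{W_0}(g,a)$ are the load-bearing facts, at the cost of redoing the finite-subcover bookkeeping that the cited theorem already packages. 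Your handling of the $I_{W_0}(f)=\infty$ case, using the localized step tilt $a_M$ supported on a zero/one block of $W_0$ where $f$ differs, and the truncation at $c=\min(\inf_F I_{W_0},1/\epsilon)$, is exactly right and is the part most people get wrong.

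One small technical inaccuracy: you assert that for $n$ large the level-$kn$ partition ``refines'' the partition of a fixed step tilt $a$, making the MGF Riemann sum exact. That only holds along the subsequence of $n$ divisible by the denominator of $a$'s partition, not for all large $n$ (and restricting $a$ to have the same partition as $W_0$ would only recover $I_{W_0}$ of the block-averaged version of $f$, which is strictly too small in general). The clean fix is what the paper does: drop exactness, replace $a$ by its level-$kn$ block averaging $\hat a_n$, and pass to the limit using $\hat a_n\to a$ in $L^2$ together with the $1$-Lipschitz bound $|\log(pe^x+1-p)-\log(pe^y+1-p)|\le|x-y|$. With that adjustment your argument goes through for arbitrary bounded symmetric step (or even $L^2$) tilts and for all $n$, not just a subsequence.
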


The proof of \Cref{weak ldp} is a straightforward generalization of Theorem 5.1 of \cite{Chatterjee2015}. For completeness we include the proof in the Appendix. We delay the proofs of \Cref{approx,bound on one} to the end of the section and proceed to the proof of the upper bound in \Cref{ldp}.

In the proof of the upper bound in \Cref{ldp}, we also use the following version of the Weak Regularity Lemma, which follows directly from  Theorem 3.1 in \cite{Chatterjee2015}.

\begin{lemma}[Weak Regularity Lemma]\label{lem:weak-reg}
Given $\ve>0$ there exists a finite set $H(\ve)\subset\W$ of block-graphons, such that if $f$ is a uniform $n$-block graphon
there exists $\sigma \in \mathcal{M}_{n}$ and an $h\in H(\ve)$ such that
$$
d_\square(f^\sigma,h)<\ve. 
$$
We call such a set $H(\ve)\subset\W$ an $\ve$-net. 
\end{lemma}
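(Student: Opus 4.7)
The plan is to specialize Theorem 3.1 of \cite{Chatterjee2015}---a quantitative form of the weak regularity lemma that produces a universal finite family of step graphons approximating all graphons in the cut metric---to the case of uniform $n$-block graphons. Concretely, given $\ve > 0$, I will take $H(\ve)$ to consist of all step graphons on $[0,1]$ whose steps are intervals of widths that are multiples of $1/N$ (for some $N = N(\ve)$) and whose values lie in the discretization $\{0, 1/M, 2/M, \ldots, 1\}$ (for some $M = M(\ve)$); this is manifestly a finite set.

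For a uniform $n$-block graphon $f$, the construction of $\sigma \in \mathcal{M}_n$ and $h \in H(\ve)$ proceeds in three steps. First, by the weak regularity lemma applied at the block level, there is a partition $\mathcal{P}$ of $[n]$ into at most $k = k(\ve)$ parts such that the step graphon $f_{\mathcal{P}}$ obtained by averaging $f$ over the blocks of $\mathcal{P} \times \mathcal{P}$ satisfies $d_\square(f, f_{\mathcal{P}}) < \ve/3$. Second, I choose $\sigma \in \mathcal{M}_n$ to reorder the $n$-blocks of $f$ so that each part of $\mathcal{P}$ becomes a contiguous interval of $[0,1]$; after this reordering, $(f^\sigma)_{\mathcal{P}}$ is a $k$-step graphon on intervals whose widths are multiples of $1/n$. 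Third, I approximate $(f^\sigma)_{\mathcal{P}}$ by the element $h \in H(\ve)$ obtained by rounding these widths to the nearest multiples of $1/N$ (adjusting one residual width so that the total equals $1$) and by rounding the constant value on each block to the nearest multiple of $1/M$. Choosing $N$ and $M$ so that the combined cut-norm error from both rounding steps is less than $2\ve/3$ then yields $d_\square(f^\sigma, h) < \ve$ via the triangle inequality.

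The main technical obstacle is ensuring that the discretization of block widths does not significantly distort the cut-distance, uniformly in $n$. This is handled by standard estimates: perturbing each of at most $k$ widths by at most $1/N$ contributes $O(k/N)$ to the cut-norm, while rounding the $k^2$ values each by at most $1/M$ contributes at most $1/M$; both can be made smaller than $\ve/3$ by choosing $N \gg k(\ve)/\ve$ and $M \gg 1/\ve$. The remainder of the argument is essentially a direct application of Theorem 3.1 of \cite{Chatterjee2015}, which is precisely formulated to yield such uniform approximations, so the lemma follows without substantial new work.
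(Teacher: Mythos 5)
Your approach is correct and is essentially the same route the paper takes: the paper simply cites Theorem~3.1 of \cite{Chatterjee2015} (a quantitative weak regularity statement yielding a finite $\ve$-net of step graphons) and implicitly relies on the fact that, for a uniform $n$-block graphon, the Frieze--Kannan partition can be taken to be a partition of the $n$ blocks, so the measure-preserving alignment can be taken to be a block permutation $\sigma\in\mathcal{M}_n$. You make this chain explicit: apply weak regularity at the combinatorial level to get a partition $\mathcal{P}$ of $[n]$, reorder blocks so that the parts of $\mathcal{P}$ become contiguous intervals, then discretize widths and values to land in a finite family $H(\ve)$.

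One small wrinkle in your step three: if you round each of the $k$ widths to the nearest multiple of $1/N$ independently and then ``adjust one residual width so that the total equals $1$,'' that adjusted width is generally not a multiple of $1/N$, so the resulting step function may fail to lie in $H(\ve)$ as you defined it. The standard fix is to round the \emph{cumulative} sums $\sum_{j\le i} w_j$ to the nearest multiples of $1/N$ (so the last one is automatically $1$) and take differences; this keeps every width a multiple of $1/N$ with error $O(1/N)$ per width. Alternatively, and more cleanly, define $H(\ve)$ as the set of uniform $N$-block graphons with values in $\{0,1/M,\dots,1\}$: any step graphon with block widths that are multiples of $1/N$ is automatically of this form, and the error bounds go through unchanged. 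With either of these minor adjustments your argument is complete.
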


\begin{proof}[Proof of \Cref{ldp}, Statement 2] We prove the bound in the form given in \Cref{rem:LDP-compact}. Fix $\ti{g}$ and $\alpha>0$. Let $\ve, \eta < \alpha/2$, let $H(\ve/2)$ be an $\ve/2$ net as given in \Cref{lem:weak-reg},
and let $T^*$ be the union of the sets of invertible transformations $T$ given by \Cref{approx} for each $h \in H(\ve/2)$. We index the finite set as $T^*=\{\tau_1, \tau_2, \dots \tau_{N(\eta, W_0, \ve)}\}$. Then

\begin{align*}
    \ti{\pr}_{kn,W_0}\brac{S(\ti{g}, \alpha)}
     &= \pr_{kn,W_0}\brac{ \delta_\square(f^{G_{kn}} ,g )\leq \alpha}
     \\
    &\leq \sum_{h \in H(\ve/2)}
    \pr_{kn,W_0}\brac{ \bigcup_{\sigma \in \mathcal{M}_{kn}}
        \left\{f^{G_{kn}^\sigma} \in \{ f: \delta_\square(  f, g ) \leq \alpha \} \cap \left\{ f : d_\square(f,h) \leq \frac\ve 2\right\}\right\}}
    \\
    &\leq \sum_{h \in H(\ve/2)}\sum_{\sigma \in \mathcal{M}_{kn}}
    \pr_{kn,W_0}\brac{\left\{f^{G_{kn}^\sigma} \in \{ f: \delta_\square(  f, g ) \leq \alpha \} \cap \left\{ f : d_\square(f,h) \leq \frac \ve 2\right\}\right\}}
    \\
      &\leq \sum_{\substack{h \in H(\ve/2)\\\delta_\square(g,h) \leq \alpha +\ve/2 } }\sum_{\sigma \in \mathcal{M}_{kn} }
      \pr_{kn,W_0}\brac{ d_\square(f^{G_{kn}^\sigma},h) \leq \frac \ve 2}
      \\
      & \leq (kn)!|H(\ve/2)| \max_{\substack{h \in H(\ve /2)\\\delta_\square(g,h) \leq \alpha +\ve/2 \\ i \in [N(\eta, W_0, \ve)]}}
      \pr_{kn,W_0}\brac{ d_\square(f^{G_{kn}^{\tau_i}},h) \leq \frac \ve 2 + \eta},
\end{align*}
where
the last inequality follows from \Cref{approx}.
\Cref{bound on one} implies that
\begin{align*}
\limsup_{n \to \infty} \frac{1}{(kn)^2} \log
\pr_{kn,W_0}\brac{ d_\square(f^{G_{kn}^{\tau_i}},h) \leq \frac\ve 2 + \eta}
&\leq -\inf_{f: \delta_\square(f,h) \leq \frac \ve 2 +\eta}I_{W_0}(f)
\\
&\leq - \inf_{f: \delta_\square(f,\ti{g}) \leq \alpha +\ve +\eta} I_{W_0}(f),
\end{align*}
when $h$ is such that $\delta_\square(g,h) \leq \alpha +\ve/2$ and $\tau_i$ is invertible.

Let $M \in \mathbb{Z}^+$, and consider the $M$ sequences $\{y_n^i\}_{n \geq 1}$ where $i \in [M]$. 
A basic analysis argument implies that if $\limsup_{n \to \infty} \frac 1{(kn)^2}\log y_n^{i} \leq C$ for all $i \in [M]$, then $\limsup_{n \to \infty} \frac 1{(kn)^2} \log\brac{ \max_{i \in [M]} y^{i}_n} \leq C$. 
It follows that
\begin{align*}
\limsup_{n \to \infty} &\frac{1}{(kn)^2} \log  \ti{\pr}_{kn,W_0}\brac{S(\ti{g}, \alpha)} \\
&\leq \limsup_{n \to \infty} \frac{1}{(kn)^2} \log \brac{ (kn)! 
 |H(\ve/2)| \max_{\substack{h \in H(\ve/2)\\\delta_\square(g,h) \leq \alpha +\ve/2 \\ i \in [N(\eta, W_0, \ve)]}}\pr_{kn,W_0}\brac{ d_\square(f^{G_{kn}^{\tau_i}},h) \leq \ve/2 + \eta}}\\
&= \limsup_{n \to \infty} \frac{1}{(kn)^2} \log \brac{  \max_{\substack{h \in H(\ve/2)\\\delta_\square(g,h) \leq \alpha +\ve/2 \\ i \in [N(\eta, W_0, \ve)]}}\pr_{kn,W_0}\brac{ d_\square(f^{G_{kn}^{\tau_i}},h) \leq \ve/2 + \eta}}\\
&\leq - \inf_{f: \delta_\square(f,\ti{g}) \leq \alpha +\ve +\eta} I_{W_0}(f).
\end{align*}
Since $\ve , \eta < \alpha/2$, we obtain
$$\limsup_{n \to \infty} \frac{1}{(kn)^2} \log  \ti{\pr}_{kn,W_0}\brac{S(\ti{g}, \alpha)}\leq - \inf_{f: \delta_\square(f,\ti{g}) \leq 2\alpha } I_{W_0}(f).$$
Since $(- \inf_{f: \delta_\square(f,\ti{g}) \leq 2\alpha} I_{W_0}(f))$ is a non-increasing function as $\alpha \to 0$,
\begin{align*}\lim_{\alpha\to 0} \limsup_{n \to \infty} \frac{1}{(kn)^2} \log  \ti{\pr}_{kn,W_0}\brac{S(\ti{g}, \alpha)} &\leq \inf_{\alpha >0 } \brac{- \inf_{f: \delta_\square(f,\ti{g})
\leq 2\alpha} I_{W_0}(f)}\\
&= - \sup_{\alpha >0 }  \inf_{f: \delta_\square(f,\ti{g}) \leq 2\alpha } I_{W_0}(f)\\
&= - J_{W_0} (g),
\end{align*}
as required.
\end{proof}

We will use the following definition and proposition
in the proof of \Cref{approx}.

\begin{definition}\label{defn-respectful}
Let $\phi \in \mathcal{M}$. Let $I_1 \dots, I_k$ be a partition of $[0,1]$. We say that $\phi$ \emph{respects the interval structure} of $I_1, \dots, I_k$ if for all $j$ and $X \subseteq I_j$,  $\phi(X) \subseteq I_j$.
\end{definition}

\begin{proposition}\label{respectful}
Let $h$ be a graphon that is constant on each block $I_i \times I_j$ for $i,j \in [k]$. If $\phi$ is invertible and respects the interval structure of $I_1 \dots I_k$, then for all $g \in \W$
$$d_\square(g^\phi, h) = d_\square(g,h).$$
\end{proposition}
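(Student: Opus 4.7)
The plan is to reduce everything to a single change-of-variables computation, exploiting the fact that $\phi$ preserving the interval structure makes $h$ invariant under $\phi$. Concretely, I would first observe that for any $(x,y)\in I_i\times I_j$, both $\phi(x)\in I_i$ and $\phi(y)\in I_j$ (using that $\phi$ respects the interval structure in the sense of \Cref{defn-respectful}), and since $h$ is constant on $I_i\times I_j$, this yields $h^\phi(x,y)=h(\phi(x),\phi(y))=h(x,y)$ pointwise, and in particular almost everywhere.

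With $h^\phi=h$ in hand, for any measurable $S,T\subseteq[0,1]$ I would write
\[
\int_{S\times T}\bigl(g^\phi(x,y)-h(x,y)\bigr)\,dx\,dy
=\int_{S\times T}\bigl(g(\phi(x),\phi(y))-h(\phi(x),\phi(y))\bigr)\,dx\,dy,
\]
and then perform the change of variables $u=\phi(x)$, $v=\phi(y)$. Since $\phi$ is a measure-preserving bijection of $[0,1]$, the product map $(x,y)\mapsto(\phi(x),\phi(y))$ is a measure-preserving bijection of $[0,1]^2$, and the integral becomes
\[
\int_{\phi(S)\times\phi(T)}\bigl(g(u,v)-h(u,v)\bigr)\,du\,dv.
\]
Taking absolute values and then supremum over $S,T$, and using that $S\mapsto\phi(S)$ is a bijection on the collection of measurable subsets of $[0,1]$, I obtain
\[
d_\square(g^\phi,h)=\sup_{S,T}\Bigl|\int_{\phi(S)\times\phi(T)}(g-h)\Bigr|=\sup_{S',T'}\Bigl|\int_{S'\times T'}(g-h)\Bigr|=d_\square(g,h).
\]

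There is essentially no obstacle here; the only subtle point is verifying the invariance $h^\phi=h$, which is an immediate consequence of the block-constancy of $h$ together with the interval-preserving property of $\phi$. The invertibility of $\phi$ is used to ensure that $\{(\phi(S),\phi(T)):S,T\text{ measurable}\}$ exhausts all pairs of measurable sets, so that the suprema coincide rather than merely satisfying an inequality. No part of this requires the full machinery of graph-limit theory beyond the definition of the cut distance.
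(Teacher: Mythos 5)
Your proof is correct and follows essentially the same route as the paper: establish $h^\phi=h$ from block-constancy plus interval-preservation, then conclude $d_\square(g^\phi,h)=d_\square(g^\phi,h^\phi)=d_\square(g,h)$. The only cosmetic difference is that the paper invokes the invariance $d_\square(g^\phi,h^\phi)=d_\square(g,h)$ as a known fact, whereas you unwind it into an explicit change-of-variables computation over $S\times T$ and use the bijectivity of $S\mapsto\phi(S)$ to match the suprema.
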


\begin{proof}Note that since $h$ is constant on each block $I_i \times I_j$ and $\phi$ respects the interval structure, $$h^\phi(x,y)= h(\phi(x), \phi(y))= h(x,y).$$ It follows that
$$d_\square(g,h)=d_\square(g^\phi, h^\phi)= d_\square(g^\phi, h).$$
\end{proof}

\begin{proof}[Proof of \Cref{approx}.]
For ease of notation, we denote $f=f^{G_{kn}}$ as the empirical graphon obtained by sampling $W_0$ according to $\pr_{kn,W_0}$. Let $\{H_i: i \in [s_h]\}$ and $\{W_{0,j}: j \in [s_w]\}$ denote the intervals of the block structure governed by $\mu$ and $\gamma$ respectively;  formally, $H_1 = [0, \mu_1]$, $W_{0,1} = [0, \gamma_1]$,
 $$H_i =\left(\sum_{j=1}^{i-1} \mu_j, \sum_{j=1}^{i} \mu_j\right] \text{ for } 2 \leq i \leq s_h, \quad  \text{ and } \quad W_{0,i} =\left(\sum_{j=1}^{i-1} \gamma_j, \sum_{j=1}^{i} \gamma_j\right] \text{ for } 2 \leq i \leq s_w.$$
 $\{H_i\}$ are the intervals corresponding to the blocks of $h$ and $\{W_{0,i}\}$ are the intervals corresponding to the blocks of $W_0$.

We begin with a proof outline. First we construct a finite set of measure preserving transformations $T \subseteq \mathcal{M}$ such that each $\sigma \in \mathcal{M}_{kn}$ is ``close'' to a transformation $\tau$ in $T$.  Then, in order to compare $f^{\sigma}$ and $f^{\tau}$, we define $\alpha, \beta$ invertible transformations that respect the intervals $H_1, \dots H_{s_h}$. \Cref{respectful} implies that $d_\square(f^\sigma,h) = d_\square((f^{\sigma})^\alpha,h)$ and $d_\square(f^{\tau},h) = d_\square((f^{\tau})^\beta,h)$.
It follows that
\begin{align}
    \pr_{kn,W_0}\brac{ d_\square(f^\sigma,h) \leq \ve}&= \pr_{kn,W_0}\brac{ d_\square(\brac{f^\sigma}^\alpha,h) \leq \ve},\label{equality1}\\
    \pr_{kn,W_0}\brac{ d_\square(f^{\tau},h) \leq \ve + \eta}&=\pr_{kn,W_0}\brac{ d_\square(\brac{f^{\tau}}^\beta,h) \leq \ve + \eta}.\label{equality2}
\end{align}

Finally, we will describe a coupling of $f$ and $g$, each with marginal distribution  $\pr_{kn,W_0}$, that guarantees \begin{equation}\label{c-eq} d_\square\brac{ \brac{f^\sigma}^\alpha ,\brac{g^{\tau}}^\beta } \leq \eta\end{equation}
with probability one.
The triangle inequality implies
\begin{align}\label{middle inequality}
    \pr_{kn,W_0}\brac{ d_\square(\brac{f^\sigma}^\alpha,h) \leq \ve}\leq \pr_{kn,W_0}\brac{ d_\square(\brac{f^{\tau}}^\beta,h) \leq \ve + \eta}.
\end{align}
The statement follows directly from \eqref{equality1}, \eqref{equality2}, and \eqref{middle inequality}.

To complete the proof according to this outline, we must complete the following tasks:
\begin{enumerate}[(a)]
    \item Define a finite net of measure preserving transformations $T \subseteq \mathcal{M}$ with $|T|=N(\eta, s_h, s_w)$.
    \item For each transformation  $\sigma\in\mathcal{M}_{kn}$, define a ``close'' transformation $\tau\in T$. Informally, we will say that two transformations are close if they map approximately the same amount of mass from $W_{0,i}$ to $H_j$ for all $i \in [s_w]$ and $j \in [s_h]$.
    \item Define $\alpha$ and $\beta$, invertible transformations that respect the intervals $H_1 \dots H_{s_h}$.
    \item Exhibit a coupling of $f$ and $g$ each sampled according to $\pr_{kn,W_0}$ that guarantees \eqref{c-eq}.
\end{enumerate}
We begin with (a). For convenience we index vectors $v \in \R^{s_w\cdot s_h}$ by pairs $(i,j) \in [s_w] \times [s_h]$. Let $$V= \bigg\{ v \in\R_{\geq 0}^{s_w\cdot s_h} \bigg| \sum_{i=1}^{s_w} \sum_{j=1}^{s_h} v_{ij}=1, \sum_{i=1}^{s_w} v_{ij}= \mu_j, \sum_{j=1}^{s_h} v_{ij}= \gamma_i   \bigg\}.$$ Recall that $\{H_i\}$ are the intervals corresponding to the blocks of $h$ and $\{W_{0,i}\}$ are the intervals corresponding to the blocks of $W_0$. For each $v \in V$ we associate an invertible measure preserving transformation $\tau_v \in \mathcal{M}$ that maps an interval of length $v_{ij}$ contained in $W_{0,i}$ to an interval that is contained completely in $H_j$ for each $(i,j)$. To this end, let
$$I_{11} = \left[0, v_{11} \right] \qquad \text{and}\qquad I_{ij}=\left( \sum_{a=1}^{i-1} \gamma_a  + \sum_{b=1}^{j-1} v_{ib},\sum_{a=1}^{i-1} \gamma_a  + \sum_{b=1}^{j} v_{ib} \right], (i,j) \neq (1,1).$$
 Note that the intervals $I_{ij}$ are sorted first by the first index, then by the second index. Define $\tau:[0,1]\to [0,1]$ to be the transformation that translates the intervals so they are first sorted by the second index, then by the first index,
$$\tau(I_{11}) = [0, v_{11}] \qquad \text{and} \qquad \tau(I_{ij}) =\left( \sum_{b=1}^{j-1} \mu_b  + \sum_{a=1}^{i-1} v_{aj},\sum_{b=1}^{j-1} \mu_b  + \sum_{a=1}^{i} v_{aj} \right], (i,j) \neq (1,1).$$
For an illustration of this transformation,  see \Cref{fig:transformation}.
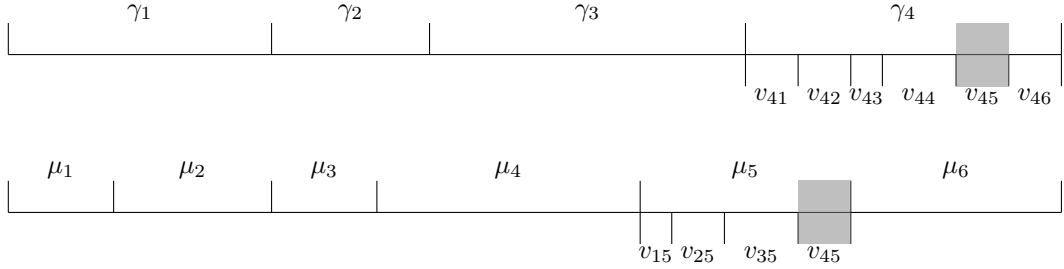
\begin{figure}[h]
    \centering
\begin{tikzpicture}[scale=1.4]
% First diagram
\draw (0,0) -- (10,0);
% Ticks going up
\draw (0,0) -- (0,0.3);
\draw (2.5,0) -- (2.5,0.3);
\draw (4,0) -- (4,0.3);
\draw (7,0) -- (7,0.3);
\draw (10,0) -- (10,0.3);
% Ticks going down
\draw (7,0) -- (7,-0.3);
\draw (7.5,0) -- (7.5,-0.3);
\draw (8,0) -- (8,-0.3);
\draw (8.3,0) -- (8.3,-0.3);
\draw (9,0) -- (9,-0.3);
\draw (9.5,0) -- (9.5,-0.3);
\draw (10,0) -- (10,-0.3);
% \gamma labels
\node at (1.25,0.4) {$\gamma_1$};
\node at (3.25,0.4) {$\gamma_2$};
\node at (5.5,0.4) {$\gamma_3$};
\node at (8.5,0.4) {$\gamma_4$};
% v labels
\node at (7.25,-0.4) {$v_{41}$};
\node at (7.75,-0.4) {$v_{42}$};
\node at (8.15,-0.4) {$v_{43}$};
\node at (8.65,-0.4) {$v_{44}$};
\node at (9.25,-0.4) {$v_{45}$};
\node at (9.75,-0.4) {$v_{46}$};
% shading
\draw[opacity=0, fill=gray, fill opacity = 0.5] (9,-0.3) rectangle (9.5,0.3);
% Second diagram
\draw (0,-1.5) -- (10,-1.5);
% Ticks going up
\draw (0,-1.5) -- (0,-1.2);
\draw (1,-1.5) -- (1,-1.2);
\draw (2.5,-1.5) -- (2.5,-1.2);
\draw (3.5,-1.5) -- (3.5,-1.2);
\draw (6,-1.5) -- (6,-1.2);
\draw (8,-1.5) -- (8,-1.2);
\draw (10,-1.5) -- (10,-1.2);
% Ticks going down
\draw (6,-1.5) -- (6,-1.8);
\draw (6.3,-1.5) -- (6.3,-1.8);
\draw (6.8,-1.5) -- (6.8,-1.8);
\draw (7.5,-1.5) -- (7.5,-1.8);
\draw (8,-1.5) -- (8,-1.8);
% \mu labels
\node at (0.5,-1.1) {$\mu_1$};
\node at (1.75,-1.1) {$\mu_2$};
\node at (3,-1.1) {$\mu_3$};
\node at (4.75,-1.1) {$\mu_4$};
\node at (7,-1.1) {$\mu_5$};
\node at (9,-1.1) {$\mu_6$};
% v labels
\node at (6.15,-1.9) {$v_{15}$};
\node at (6.55,-1.9) {$v_{25}$};
\node at (7.15,-1.9) {$v_{35}$};
\node at (7.75,-1.9) {$v_{45}$};
% shading
\draw[opacity = 0, fill=gray, fill opacity = 0.5] (7.5,-1.8) rectangle (8,-1.2);
\end{tikzpicture}
\caption{Illustration of the transformation $I_{45} \mapsto \tau(I_{45})$.}
\label{fig:transformation}
\end{figure}

Observe that $I_{ij} \subseteq W_{0,i}$ and $\tau(I_{ij}) \subseteq H_j$.
Note that $V$ is a compact set, and thus we can construct a finite net $V^* \subseteq V$ such that for all $v \in V$, there exists $v^* \in V^*$ such that $\Vert v- v^* \Vert_\infty \leq \eta/(8s_ws_h)$.  Let $T= \{\tau_v | v \in V^*\} \cup \{\tau_v^{-1} | v \in V^*\} $.

\medskip

Next we address (b). Let $M= \{\bar{m}_1, \dots \bar{m}_{kn}\}$ be the set of intervals corresponding to vertices in an empirical graphon with $kn$ vertices,  where $\bar{m}_1 = \left[0, \frac{1}{kn}\right]$ and
$$\bar{m}_i= \left( \frac{i-1}{kn}, \frac{i}{kn}\right] \text{ for } 2 \leq i \leq kn.$$
We call the intervals in $M$ ``vertex intervals''.

For each transformation in $\mathcal{M}_{kn}$, we find a transformation in $T$ that moves roughly the same amount of mass between intervals $W_{0,i}$ and $H_j$ for all $i \in [s_w]$ and $j \in [s_h]$.
We construct an element in $T$ that is close to  $\sigma^{-1}$ rather than $\sigma$ to make the construction in the next section more convenient. (Note that both $\sigma$ and $\sigma^{-1}$ are in $\mathcal{M}_{kn}$.)

Let $N_{ij}$ be the set of vertex intervals that are mapped from $W_{0,i}$ to $H_j$ under $\sigma^{-1}$,
$$
N_{ij}= \{ \bar{m}_\ell \in M | \bar{m}_\ell \subseteq W_{0,i} \text{ and } \sigma(\bar{m}_\ell) \subseteq H_j\},
$$
and let $n_{ij} =|N_{ij}|$.

Each vertex interval is contained in some $W_{0,i}$ by construction. Since $\sigma^{-1}$ maps at most $s_h -1$ vertex intervals  to the boundary between intervals of $h$,
$$\sum_{i=1}^{s_w}\sum_{j=1}^{s_h} n_{ij} \geq kn - s_h+1.$$
Define $v \in \R^{s_w\cdot s_h}$, $v_{ij}= n_{ij}/(kn)$.
We claim that there exists $v' \in V$ with $v'_{ij}\geq v_{ij}$ for all $i,j$ such that
	$$
	\Vert v - v' \Vert_\infty\leq \frac{s_h-1}{kn}< \frac{\eta}{8s_w s_h}.
	$$
	To see this, let $\tilde\mu_j= \sum_{i=1}^{s_w} v_{ij}$ and $\tilde \gamma_i=\sum_{j=1}^{s_h} v_{ij}$.
	Then $\tilde\mu_j\leq \mu_j$ and $\tilde\gamma_i\leq\gamma_i$ and
	$$
	\sum_i(\gamma_i-\tilde\gamma_i)=\sum_j(\mu_j-\tilde\mu_j)=1-\sum_{i=1}^{s_w} \sum_{j=1}^{s_h} v_{ij}=:\Delta,
	\qquad \Delta\leq  \frac{s_h-1}{kn}.
	$$
	Taking a coupling $\zeta_{ij}$ of the probability distributions $\left(\frac 1\Delta(\gamma_i-\tilde\gamma_i)\right)_{i\in[s_w]}$
	and $\left(\frac 1\Delta(\mu_j-\tilde\mu_j)\right)_{j\in[s_h]}$
	and setting $v_{ij}'=v_{ij}+\zeta_{ij}\Delta$ proves the claim. (Observe that  $n\geq 8{s_h}^2 s_w /(k\eta)$ by the assumption of the lemma.)

Choosing
$v^* \in V^*$ such that $\Vert v' - v^* \Vert_\infty \leq \eta/(8s_w s_h)$, we then have
$$\Vert v- v^* \Vert_\infty < \frac{\eta}{4s_ws_h}.$$
 We associate $\sigma^{-1}$ with $\tau_{v^*}\in T$.
Let $\tau= \tau_{v^*}^{-1}$, and note $\tau \in T$.

\medskip
Next we address (c), defining the transformations $\alpha$ and $\beta$. We must define $\alpha$ and $\beta$ in a way that conveniently facilitates a coupling satisfying \eqref{c-eq} in step (d). In particular, we will define a coupling so that $\brac{f^\sigma}^\alpha$ and $\brac{g^{\tau}}^\beta $ are identical on many sets of the form $\bar{m} \times \bar{m}'$ where $\bar{m}$ and $\bar{m}'$ are vertex intervals. We can exactly couple the values  $f(\sigma(\alpha(x)),\sigma(\alpha(y)))$ and $g(\tau(\beta(x)),\tau(\beta(y)))$ on $\bar{m} \times \bar{m}'$, provided $\sigma(\alpha(x))$ and $\tau(\beta(x))$ are in the same interval $W_{0,i}$ and $\sigma(\alpha(y))$ and $\tau(\beta(y))$ are in the same interval $W_{0,j}$. In this case, on both $\bar{m}$ and $\bar{m}'$ $f(\sigma(\alpha(x)),\sigma(\alpha(y))), g(\tau(\beta(x)),\tau(\beta(y))) \sim Bern(p_{ij})$ where $p_{ij}$ is the value of $W_0$ on $W_{0,i} \times W_{0,j}$, and thus they can be coupled.

We say that a vertex interval $\bar{m} \in M$ is ``synchronized'' if  $\bar{m} \subseteq H_j$ for some $j \in [s_h]$, and  $\sigma (\alpha(\bar{m}))$ and $\tau ( \beta(\bar{m}))$ are vertex intervals that belong to the same interval $W_{0,i}$ for some $i \in [s_w]$.
We construct $\alpha$ and $\beta$ so that at least a $(1-\eta/2)$ fraction of the vertex intervals $\bar{m} \in M$ are synchronized. In step (d), we will couple the behavior of the vertices corresponding to $\sigma (\alpha(\bar{m}))$ and $\tau ( \beta(\bar{m}))$ for each synchronized vertex interval $\bar{m}$.

Let $n_{ij}, I_{ij}, v^*$ be as defined in parts (a) and (b).
Let $k_{ij}= \min \{ n_{ij}, \lfloor v^*_{ij}kn \rfloor -1 \}$. We will construct $\alpha$ and $\beta$ so that there are $k_{ij}$ synchronized vertex intervals contained in $H_j$ whose images under
$(\sigma \circ \alpha)$ and $(\tau\circ \beta)$ are contained in $W_{0,i}$.

The transformations $\sigma^{-1}$ and $\tau^{-1}$  map approximately the same amount of mass from $W_{0,i}$ to $H_j$ for all $i \in [s_w]$ and $j \in [s_h]$, but the intersection of the image of $W_{0,i}$ and $H_j$ may be be very different under the two maps.
We design $\alpha^{-1}$ and $\beta^{-1}$ so that $\alpha^{-1} \circ \sigma^{-1}$ and $\beta^{-1} \circ \tau^{-1}$ both map mass from $W_{0,i}$ to the same subinterval of $H_j$. Working with the inverse functions allows us to think of $\alpha^{-1}$ and $\beta^{-1}$ as functions that reorganize the images of $W_{0,i}$ under $\sigma^{-1}$  and $\tau^{-1}$ (respectively) within each interval $H_j$. We now formally construct $\alpha$ and $\beta$ by constructing their inverses.

First we construct $\alpha^{-1}$, as illustrated in \Cref{fig:alpha}. There are $n_{ij}$ vertex intervals contained in $W_{0,i}$ that are mapped to vertex intervals in $H_j$ under $\sigma^{-1}$.  Informally, $\alpha^{-1}$ will rearrange the images of these vertex intervals within $H_j$ by sorting them by their origin interval $W_{0,i}$. Under $\alpha^{-1}$, the image of vertex intervals originating in {$W_{0,1}$}
 are mapped to the leftmost vertex intervals contained completely in $H_j$.

 Formally, let $a^{ij}_1,a^{ij}_2 \dots a^{ij}_{k_{ij}}$ enumerate $k_{ij}$ of the $n_{ij}$ vertex intervals contained in $W_{0,i}$ that are mapped to vertex intervals in $H_j$ under $\sigma^{-1}$.
Let $m_j(i)$ be the $i^{th}$ interval of $M$ that is entirely contained in $H_j$. Define $\alpha \in \mathcal{M}$ so that $\alpha^{-1}$ translates the interval $\sigma^{-1}(a_\ell^{ij})$ to the vertex interval specified as follows:
$$\alpha^{-1}(\sigma^{-1}(a_\ell^{ij}))=m_j\brac{\sum_{z=1}^{i-1} k_{zj} + \ell},$$
and $H_j\setminus \brac{\bigcup_{i=1}^{s_w}\bigcup_{\ell=1}^{k_{ij}}\sigma^{-1}(a^{ij}_\ell)}$ maps to $H_j\setminus \brac{\bigcup_{i=1}^{s_w}\bigcup_{\ell=1}^{k_{ij}}\alpha^{-1}( \sigma^{-1}(a^{ij}_\ell))}$ under $\alpha^{-1}$ in any invertible manner. Since $\alpha^{-1}(H_i)=H_i$, $\alpha$ and $\alpha^{-1}$ respect the intervals $H_1, \dots H_{s_h}$.

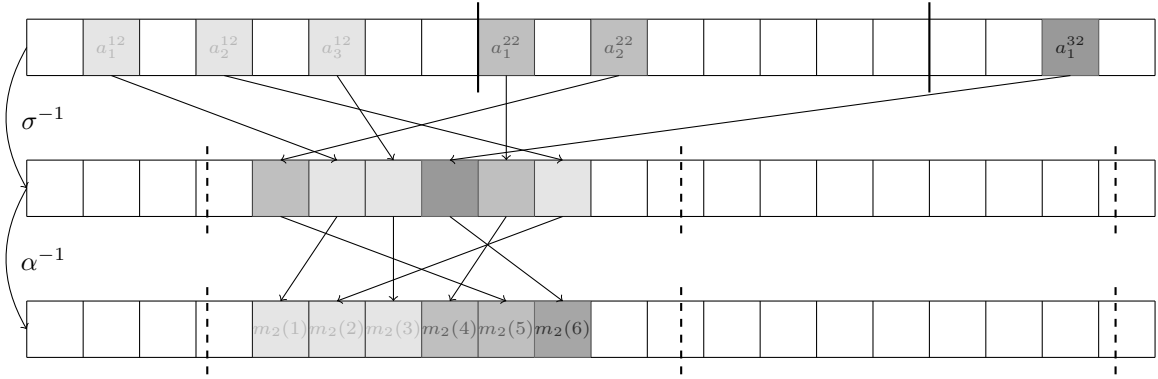
\begin{figure}
    \centering
    \begin{tikzpicture}[scale=0.75]
    %% top strip
    \draw (0,0) -- (20,0);
    \draw (0,-1) -- (20,-1);
    \foreach \i in {0,...,20}{
    \draw (\i,0) -- (\i,-1);
    }
    %% middle strip
    \draw (0,-2.5) -- (20,-2.5);
    \draw (0,-3.5) -- (20,-3.5);
    \foreach \i in {0,...,20}{
    \draw (\i,-2.5) -- (\i,-3.5);
    }
    %% bottom strip
    \draw (0,-5) -- (20,-5);
    \draw (0,-6) -- (20,-6);
    \foreach \i in {0,...,20}{
    \draw (\i,-5) -- (\i,-6);
    }
    %% Shading top
    \fill [fill=gray,fill opacity=0.2,text opacity=1] (1,-1) rectangle (2,0) node[pos=.5] {\scriptsize{$a_1^{12}$}};
    \fill [fill=gray,fill opacity=0.2,text opacity=1] (3,-1) rectangle (4,0) node[pos=.5] {\scriptsize{$a_2^{12}$}};
    \fill [fill=gray,fill opacity=0.2,text opacity=1] (5,-1) rectangle (6,0) node[pos=.5] {\scriptsize{$a_3^{12}$}};
    \fill [fill=gray,fill opacity=0.5,text opacity=1] (8,-1) rectangle (9,0) node[pos=.5] {\scriptsize{$a_1^{22}$}};
    \fill [fill=gray,fill opacity=0.5,text opacity=1] (10,-1) rectangle (11,0) node[pos=.5] {\scriptsize{$a_2^{22}$}};
    \fill [fill=gray,fill opacity=0.8,text opacity=1] (18,-1) rectangle (19,0) node[pos=.5] {\scriptsize{$a_1^{32}$}};
    %% Shading middle
    \fill [fill=gray,fill opacity=0.5] (4,-3.5) rectangle (5,-2.5);
    \fill [fill=gray,fill opacity=0.2] (5,-3.5) rectangle (6,-2.5);
    \fill [fill=gray,fill opacity=0.2] (6,-3.5) rectangle (7,-2.5);
    \fill [fill=gray,fill opacity=0.8] (7,-3.5) rectangle (8,-2.5);
    \fill [fill=gray,fill opacity=0.5] (8,-3.5) rectangle (9,-2.5);
    \fill [fill=gray,fill opacity=0.2] (9,-3.5) rectangle (10,-2.5);
    %% Shading bottom
    \fill [fill=gray,fill opacity=0.2, text opacity=1] (4,-6) rectangle (5,-5) node[pos=.5] {\scriptsize{$m_2(1)$}};
    \fill [fill=gray,fill opacity=0.2, text opacity=1] (5,-6) rectangle (6,-5) node[pos=.5] {\scriptsize{$m_2(2)$}};
    \fill [fill=gray,fill opacity=0.2, text opacity=1] (6,-6) rectangle (7,-5) node[pos=.5] {\scriptsize{$m_2(3)$}};
    \fill [fill=gray,fill opacity=0.5, text opacity=1] (7,-6) rectangle (8,-5) node[pos=.5] {\scriptsize{$m_2(4)$}};
    \fill [fill=gray,fill opacity=0.5, text opacity=1] (8,-6) rectangle (9,-5) node[pos=.5] {\scriptsize{$m_2(5)$}};
    \fill [fill=gray,fill opacity=0.7, text opacity=1] (9,-6) rectangle (10,-5) node[pos=.5] {\scriptsize{$m_2(6)$}};
    %% Arrows from top to middle
    \draw[->](1.5,-1) -- (5.5,-2.5);
    \draw[->](3.5,-1) -- (9.5,-2.5);
    \draw[->](5.5,-1) -- (6.5,-2.5);
    \draw[->](8.5,-1) -- (8.5,-2.5);
    \draw[->](10.5,-1) -- (4.5,-2.5);
    \draw[->](18.5,-1) -- (7.5,-2.5);
    %% Arrows from middle to bottom
    \draw[->](4.5,-3.5) -- (8.5,-5);
    \draw[->](5.5,-3.5) -- (4.5,-5);
    \draw[->](6.5,-3.5) -- (6.5,-5);
    \draw[->](7.5,-3.5) -- (9.5,-5);
    \draw[->](8.5,-3.5) -- (7.5,-5);
    \draw[->](9.5,-3.5) -- (5.5,-5);
    %% w interval lines
    \draw[thick] (8,-1.3) -- (8,0.3);
    \draw[thick] (16,-1.3) -- (16,0.3);
    %% h interval lines
    \draw[thick,dashed] (3.2,-3.8) -- (3.2,-2.2);
    \draw[thick,dashed] (11.6,-3.8) -- (11.6,-2.2);
    \draw[thick,dashed] (19.3,-3.8) -- (19.3,-2.2);
    \draw[thick,dashed] (3.2,-6.3) -- (3.2,-4.7);
    \draw[thick,dashed] (11.6,-6.3) -- (11.6,-4.7);
    \draw[thick,dashed] (19.3,-6.3) -- (19.3,-4.7);
    %% transformation labels
    \draw [->] (0,-0.5) to [out=240,in=120] (0,-3);
    \node at (0.3,-1.75) {$\sigma^{-1}$};
    \draw [->] (0,-3) to [out=240,in=120] (0,-5.5);
    \node at (0.3,-4.25) {$\alpha^{-1}$};
    \end{tikzpicture}
    \caption{The construction of $\alpha^{-1}$.  The tall solid vertical lines represent the divisions between the intervals $W_{0,1}, \dots W_{0,3}$, and the tall dashed vertical lines represent the divisions between the intervals $H_1, \dots H_4$. All arrows indicate that the respective transformations map the specified vertex intervals to vertex intervals by translation.}
    \label{fig:alpha}
\end{figure}

Next we construct the map $\beta^{-1}$, as illustrated in \Cref{fig:beta}. Recall the definition of $I_{ij}$ described in the construction of $ \tau_{v^*}=\tau^{-1}$. Each interval $I_{ij} $ is contained in $W_{0,i}$ and  $\tau^{-1}(I_{1j})$, $\tau^{-1}(I_{2j})$, \dots $\tau^{-1}(I_{s_h j})$ are consecutive intervals (in that order) whose union is $H_j$.
Unlike $\sigma^{-1}$, $\tau^{-1}$ may not be in  $\mathcal{M}_{kn}$, and so the image of vertex intervals under $\tau^{-1}$ are not necessarily vertex intervals. Informally, $\beta^{-1}$ will map the images of vertex intervals under $\tau^{-1}$ to vertex intervals in a way that maintains their relative order in $H_j$.

We now formally describe $\beta^{-1}$. Since $I_{ij}$ has length
$v^*_{ij}$, there are at least $\lfloor v^*_{ij}kn \rfloor -1$ vertex intervals contained in $I_{ij}\subseteq W_{0,i}$, all of which are mapped to $H_j$ under $\tau^{-1}$.   Let $b^{ij}_1,b^{ij}_2 \dots b^{ij}_{k_{ij}}$ enumerate $k_{ij}$ of these vertex intervals contained in $I_{ij}$.
Define $\beta \in \mathcal{M}$ so that $\beta^{-1}$ translates  the interval (which is not necessarily a vertex interval) $\tau^{-1}(b_\ell^{ij})$ to the vertex interval specified as follows: $$\beta^{-1}(\tau^{-1}(b_\ell^{ij}))=m_j\brac{\sum_{z=1}^{i-1} k_{zj} + \ell},$$
and $H_j\setminus \brac{\bigcup_{i=1}^{s_w}\bigcup_{\ell=1}^{k_{ij}}\tau^{-1}(b^{ij}_\ell)}$ maps to $H_j\setminus \brac{\bigcup_{i=1}^{s_w}\bigcup_{\ell=1}^{k_{ij}}\beta( \tau^{-1}(b^{ij}_\ell))}$ under $\beta^{-1}$ in any invertible manner. Since $\beta(H_i)=H_i$, $\beta$ and $\beta^{-1}$ respect the intervals $H_1, \dots H_{s_h}$.

\begin{figure}
    \centering
    \begin{tikzpicture}[scale=0.75]
    %% top strip
    \draw (0,0) -- (20,0);
    \draw (0,-1) -- (20,-1);
    \foreach \i in {0,...,20}{
    \draw (\i,0) -- (\i,-1);
    }
    %% middle strip
    \draw (0,-2.5) -- (20,-2.5);
    \draw (0,-3.5) -- (20,-3.5);
    \foreach \i in {0,...,20}{
    \draw (\i,-2.5) -- (\i,-3.5);
    }
    %% bottom strip
    \draw (0,-5) -- (20,-5);
    \draw (0,-6) -- (20,-6);
    \foreach \i in {0,...,20}{
    \draw (\i,-5) -- (\i,-6);
    }
    %% Shading top
    \fill [fill=gray,fill opacity=0.2] (1.1,-1) rectangle (2,0);
    \fill [fill=gray,fill opacity=0.2,text opacity=1] (2,-1) rectangle (3,0) node[pos=.5] {\scriptsize{$b_1^{12}$}};
    \fill [fill=gray,fill opacity=0.2,text opacity=1] (3,-1) rectangle (4,0) node[pos=.5] {\scriptsize{$b_2^{12}$}};
    \fill [fill=gray,fill opacity=0.2,text opacity=1] (4,-1) rectangle (5,0) node[pos=.5] {\scriptsize{$b_3^{12}$}};
    \fill [fill=gray,fill opacity=0.2] (5,-1) rectangle (5.6,0);
    \fill [fill=gray,fill opacity=0.5] (9.4,-1) rectangle (10,0);
    \fill [fill=gray,fill opacity=0.5,text opacity=1] (10,-1) rectangle (11,0) node[pos=.5] {\scriptsize{$b_1^{22}$}};
    \fill [fill=gray,fill opacity=0.5,text opacity=1] (11,-1) rectangle (12,0) node[pos=.5]{\scriptsize{$b_2^{22}$}};
    \fill [fill=gray,fill opacity=0.5] (12,-1) rectangle (12.1,0);
    \fill [fill=gray,fill opacity=0.7] (16.9,-1) rectangle (17,0);
    \fill [fill=gray,fill opacity=0.7,text opacity=1] (17,-1) rectangle (18,0) node[pos=.5]{\scriptsize{$b_1^{32}$}};
    \fill [fill=gray,fill opacity=0.7] (18,-1) rectangle (18.1,0);
    %% Shading middle
    \fill [fill=gray,fill opacity=0.2] (3.2,-3.5) rectangle (7.7,-2.5);
    \fill [fill=gray,fill opacity=0.5] (7.7,-3.5) rectangle (10.4,-2.5);
    \fill [fill=gray,fill opacity=0.7] (10.4,-3.5) rectangle (11.6,-2.5);
    %% Shading bottom
    \fill [fill=gray,fill opacity=0.2, text opacity=1] (4,-6) rectangle (5,-5) node[pos=.5] {\scriptsize{$m_2(1)$}};
    \fill [fill=gray,fill opacity=0.2, text opacity=1] (5,-6) rectangle (6,-5) node[pos=.5] {\scriptsize{$m_2(2)$}};
    \fill [fill=gray,fill opacity=0.2, text opacity=1] (6,-6) rectangle (7,-5) node[pos=.5] {\scriptsize{$m_2(3)$}};
    \fill [fill=gray,fill opacity=0.5, text opacity=1] (7,-6) rectangle (8,-5) node[pos=.5] {\scriptsize{$m_2(4)$}};
    \fill [fill=gray,fill opacity=0.5, text opacity=1] (8,-6) rectangle (9,-5) node[pos=.5] {\scriptsize{$m_2(5)$}};
    \fill [fill=gray,fill opacity=0.7, text opacity=1] (9,-6) rectangle (10,-5) node[pos=.5] {\scriptsize{$m_2(6)$}};
    %% Arrows from top to middle
    \draw[->](1.1,-1) -- (3.2,-2.5);
    \draw[->](5.6,-1) -- (7.7,-2.5);
    \draw[->](9.4,-1) -- (7.7,-2.5);
    \draw[->](12.1,-1) -- (10.4,-2.5);
    \draw[->](16.9,-1) -- (10.4,-2.5);
    \draw[->](18.1,-1) -- (11.6,-2.5);
    %% Partitions in middle
    \draw[-](4.1,-3.9) -- (7.1,-3.9);
    \draw[-](4.1,-4.1)--(4.1,-3.7);
    \draw[-](7.1,-4.1)--(7.1,-3.7);
    \draw[-](8.3,-3.9) -- (10.3,-3.9);
    \draw[-](8.3,-4.1)--(8.3,-3.7);
    \draw[-](10.3,-4.1)--(10.3,-3.7);
    \draw[-](10.5,-3.9) -- (11.5,-3.9);
    \draw[-](10.5,-4.1)--(10.5,-3.7);
    \draw[-](11.5,-4.1)--(11.5,-3.7);
    %% Arrows from middle to bottom
    \draw[->](4.1,-3.9) -- (4,-5);
    \draw[->](7.1,-3.9) -- (7,-5);
    \draw[->](8.3,-3.9) -- (7,-5);
    \draw[->](10.3,-3.9) -- (9,-5);
    \draw[->](10.5,-3.9) -- (9,-5);
    \draw[->](11.5,-3.9) -- (10,-5);
    %% w interval lines
    \draw[thick] (8,-1.3) -- (8,0.3);
    \draw[thick] (16,-1.3) -- (16,0.3);
    %% h interval lines
    \draw[thick,dashed] (3.2,-3.8) -- (3.2,-2.2);
    \draw[thick,dashed] (11.6,-3.8) -- (11.6,-2.2);
    \draw[thick,dashed] (19.3,-3.8) -- (19.3,-2.2);
    \draw[thick,dashed] (3.2,-6.3) -- (3.2,-4.7);
    \draw[thick,dashed] (11.6,-6.3) -- (11.6,-4.7);
    \draw[thick,dashed] (19.3,-6.3) -- (19.3,-4.7);
    %% transformation labels
    \draw [->] (0,-0.5) to [out=240,in=120] (0,-3);
    \node at (0.3,-1.75) {$\tau^{-1}$};
    \draw [->] (0,-3) to [out=240,in=120] (0,-5.5);
    \node at (0.3,-4.25) {$\beta^{-1}$};
    \end{tikzpicture}
    \caption{The construction of $\beta^{-1}$. The tall solid vertical lines represent the divisions between the intervals $W_{0,1}, \dots W_{0,3}$, and the tall dashed vertical lines represent the divisions between the intervals $H_1, \dots H_4$. The arrows corresponding to $\tau^{-1}$ illustrate that $\tau^{-1}$ map intervals to intervals by translation. The arrows depicting $\beta^{-1}$ show that $\beta^{-1}$ maps adjacent intervals of the form $\tau^{-1}(b_{\ell}^{ij})$ (shown by horizontal line segments) to vertex intervals by translation.}
    \label{fig:beta}
\end{figure}
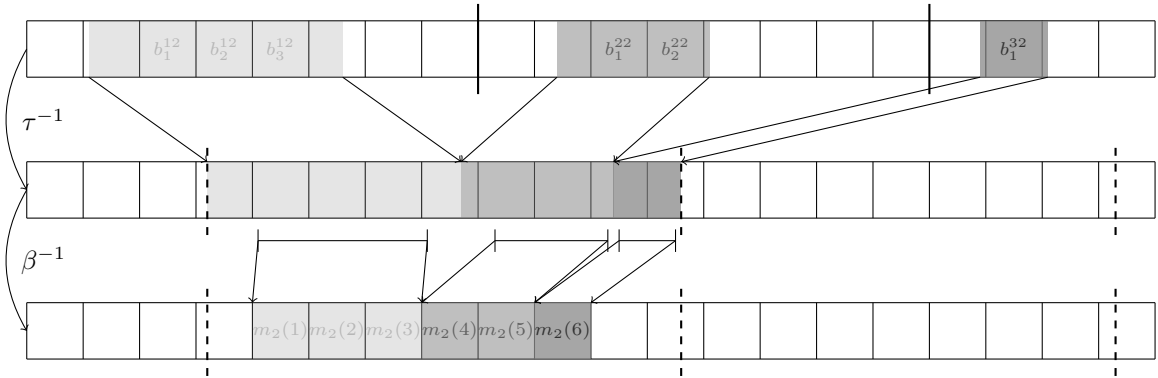

Next we construct $K$, a set of synchronized vertex intervals. Note that for all triples $i,j, \ell$ with $i \in [s_w]$, $j \in [s_h]$, and $\ell \in [k_{ij}]$,  $m_j\brac{\sum_{z=1}^{i-1} k_{zj} + \ell}$ is a synchronized vertex interval since $a_\ell^{ij}$ and $b_\ell^{ij}$ are vertex intervals contained in $W_{0,i}$. Let $$K=\left\{ m_j\brac{\sum_{z=1}^{i-1} k_{zj} + \ell} |i \in [s_w], j \in [s_h], \text{ and  }\ell \in [k_{ij}]\right\}.$$

Finally, we bound the size of $K$. Recall  that by construction $v_{ij}=n_{ij}/(kn)$ and $\Vert v- v^*\Vert_\infty \leq \eta/(4s_w s_h)$. It follows that
$$|n_{ij} - v^*_{ij}kn| \leq \frac{\eta k n}{4s_ws_h}.$$
Since $| \lfloor v^*_{ij}kn \rfloor -1-v^*_{ij}kn| \leq 2$, it follows that
$$k_{ij}= \min \{ n_{ij}, \lfloor v^*_{ij}kn \rfloor -1 \} \geq n_{ij} -2 - \frac{ \eta kn }{4 s_w s_h}.$$
 We use this to lower bound the total number of synchronized intervals in $K$,
 \begin{align*}
 |K|&=\sum_{i=1}^{s_w}\sum_{j=1}^{s_h}k_{ij}\geq  \brac{\sum_{i=1}^{s_w}\sum_{j=1}^{s_h}n_{ij}} - 2s_ws_h  -\frac{ \eta  k n }{4}\geq kn -s_h+1
 - 2s_ws_h  -\frac{ \eta k n }{4} \\
 &\geq kn \brac{1 - \frac{3s_w s_h}{kn}- \frac{\eta }{4}}\geq kn \brac{1 -\eta/2},
  \end{align*}
since  $n\geq 12s_h^2 s_w /(k\eta)\geq 12s_h  s_w /(k\eta)$ by the assumption of the lemma.

\medskip

 Finally, we address (d). We construct a coupling of $f$ and $g$ so that $(f^\sigma)^\alpha$ and $(g^\tau)^\beta$ agree on sets of the form $\bar{m} \times \bar{m}'$ where $\bar{m}, \bar{m}' \in K$ are synchronized intervals.  Let $v$ and $v'$ be the vertices in $f$ corresponding to the vertex intervals that are mapped to $\bar{m}$ and $\bar{m}'$ respectively under $\sigma \circ \alpha$.
 Let $u$ and $u'$ be the vertices in $g$ corresponding to the vertex intervals that are mapped to $\bar{m}$ and $\bar{m}'$ respectively under $\tau \circ \beta$. By construction, $v$ and $u$ correspond to vertex intervals contained in the same interval $W_{0,i}$, and likewise $v'$ and $u'$ correspond to vertex intervals contained in the same interval $W_{0,j}$.
 Let $X_{ab}$ and $Y_{ab}$ be the indicator random variables for the events that there is an edge between vertices $a$ and $b$ in $f$ and $g$ respectively. Recall that $p_{ij}$ denotes the value $W_0$ takes on $W_{0,i} \times W_{0,j}$. Since $X_{v v'} \sim Bern(p_{ij})$ and $Y_{u u'} \sim Bern(p_{ij})$, we can couple $X_{v v'}$ and $Y_{u u'}$ exactly, which then guarantees that $(f^\sigma)^\alpha$ and $(g^\tau)^\beta$ agree on the set $\bar{m} \times \bar{m}'$. 

Since $(f^\sigma)^\alpha$ and $(g^\tau)^\beta$ agree on the synchronized vertex intervals $\brac{\bigcup_{\bar{m} \in K} \bar{m}}^2$ and $|\brac{\bigcup_{\bar{m} \in K} \bar{m}}|\geq 1-\eta/2$, it follows that $d_\square\brac{(f^\sigma)^\alpha,(g^\tau)^\beta} \leq \eta$, as desired.
 \end{proof}

\begin{proof}[Proof of \Cref{bound on one}.]
Fix $\tau \in \mathcal{M}$. Note that $\tau$ is invertible and $d_\square(f^\tau, g)= d_\square(f, g^{\tau^{-1}})$. It follows that
    $$\mathbb{P}_{kn, W_0} \left(d_\square\left (f^{G_{kn}^\tau},h \right) \leq \ve  \right)=\mathbb{P}_{kn, W_0} \left(d_\square\left (f^{G_{kn}},h^{\tau^{-1}} \right) \leq \ve  \right)=\pr_{kn, W_0} \left( f^{G_{kn}} \in \left\{g: d_\square \left(g, h^{\tau^{-1}} \right) \leq \ve \right\}\right).$$
Note that \cite[Lemma 5.4]{Chatterjee2015} implies that the set $\{g: d_\square(g, h^{\tau^{-1}} ) \leq \ve\}$ is closed in the weak topology.
    Applying \Cref{weak ldp}, we obtain
   \begin{align*}
\limsup_{n \to \infty} \frac{1}{(kn)^2} \log \mathbb{P}_{kn, W_0} \left(d_\square(f^{G_{kn}^\tau},h) \leq \ve \right)&=
\limsup_{n \to \infty } \frac{1}{(kn)^2} \log \pr_{kn, W_0}\brac{ f^{G_{kn}} \in \left\{g: d_\square \left(g, h^{\tau^{-1}} \right) \leq \ve \right\}}\\
&\leq - \inf_{f \in \{g: d_\square(g, h^{\tau^{-1}} ) \leq \ve \}} I_{W_0}(f)  \leq -\inf_{f: \delta_\square(f, h) \leq \ve } I_{W_0}(f),
    \end{align*}
    where the last line follows from the observation that $\delta_\square(g,h)\leq d_\square (g, h^{\tau^{-1}} )$.
\end{proof}

\subsection{Proof of \Cref{theorem:variational-problem-nice}}\label{sec:pf-vp-nice}
We begin with the following theorem, which is a direct adaptation of  \cite[Theorem 3.1]{Chatterjee2011} to general $k$-block graphons $W_0$.  As usual, for $\ti{f} \in \mathcal{W}$ and $\ti{H} \subseteq\tW$, define $\delta_{\square}(\ti{f}, \ti{H}) \triangleq \inf_{\ti{h} \in \ti{H}} \delta_{\square}(\ti{f}, \ti{h})$.
\begin{theorem}\label{theorem:conditional}
Let $W_0$ be a uniform  $k$-block graphon. Let $\ti{F}$ be a closed subset of $\tW$, and let $\ti{F}^0$ be its interior. Suppose
\begin{align}
\inf_{\ti{h} \in \ti{F}^0} J_{W_0}(\ti{h}) &= \inf_{\ti{h} \in \ti{F}} J_{W_0}(\ti{h}).
\label{interior_assump} \end{align}
Let $\ti{F}^{\star}$ be the subset of $\ti{F}$ where $J_{W_0}$ is minimized. Then $\ti{F}^{\star}$ is non-empty and compact, and
\begin{equation}\label{inf=sup}
\min_{\ti{h} \in \ti{F}} J_{W_0}(\ti{h})
=-\lim_{n\to\infty} \frac{1}{(kn)^2}
\log \tilde{\mathbb{P}}_{kn, W_0}( \ti{F}).
\end{equation}
If $\min_{\ti{h} \in \ti{F}} J_{W_0}(\ti{h})<\infty$, then for all sufficiently large $n$ and all
$\epsilon > 0$, $\ti{\pr}_{kn,W_0}(\ti f^{G_{kn}}\in  \ti{F})>0$ and
\[\ti{\pr}_{kn,W_0}\left(\delta_{\square} (\ti{f}^{G_{kn}}, \ti{F}^{\star})  \geq \ve \Big| {\ti f^{G_{kn}}} \in \ti{F} \right) \leq e^{-C(\epsilon, \ti{F}) (kn)^2},\]
where $C(\epsilon, \ti{F})$ is a positive constant depending only on $\epsilon$ and $\ti{F}$. In particular, if $\ti{F}^{\star}$ contains only one element $\ti{h}^{\star}$ (and $J_{W_0}(\ti{h}^{\star})<\infty$), then the conditional distribution of ${\ti f^{G_{kn}}}$ given ${\ti f^{G_{kn}}} \in \ti{F}$ converges to the point mass at $\ti{h}^{\star}$ as $n \to \infty$.
\end{theorem}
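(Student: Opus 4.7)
The plan is to deduce the theorem from the LDP of \Cref{ldp} by standard Varadhan-style reasoning, leveraging the lower semi-continuity of $J_{W_0}$ (\Cref{lemma:semi-continuity-J}) and the compactness of $(\tW,\delta_\square)$. First, since $\ti F$ is closed in the compact space $\tW$, it is itself compact, and hence the lower semi-continuous functional $J_{W_0}$ attains its infimum $I^\star:=\inf_{\ti h\in\ti F}J_{W_0}(\ti h)$. This gives non-emptiness of $\ti F^\star$. Writing $\ti F^\star=\ti F\cap\{J_{W_0}\leq I^\star\}$ as the intersection of two closed subsets of $\tW$ (the sublevel set being closed by lower semi-continuity) yields compactness.

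For the identity \eqref{inf=sup}, I would apply the two halves of \Cref{ldp}. The upper bound on the closed set $\ti F$ gives
$$\limsup_{n\to\infty}\tfrac{1}{(kn)^2}\log\ti{\mathbb P}_{kn,W_0}(\ti F)\leq -\inf_{\ti h\in\ti F}J_{W_0}(\ti h)=-I^\star.$$
For the matching lower bound, apply the open-set lower bound to $\ti F^0\subseteq \ti F$ together with monotonicity of probability:
$$\liminf_{n\to\infty}\tfrac{1}{(kn)^2}\log\ti{\mathbb P}_{kn,W_0}(\ti F)\geq \liminf_{n\to\infty}\tfrac{1}{(kn)^2}\log\ti{\mathbb P}_{kn,W_0}(\ti F^0)\geq -\inf_{\ti h\in\ti F^0}J_{W_0}(\ti h)=-I^\star,$$
where the last equality is precisely the hypothesis \eqref{interior_assump}. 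When $I^\star<\infty$, this lower bound also shows $\ti{\mathbb P}_{kn,W_0}(\ti F)>0$ for all sufficiently large $n$.

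The concentration statement is obtained by applying the LDP upper bound to the closed set
$$\ti F_\epsilon:=\{\ti g\in\ti F:\delta_\square(\ti g,\ti F^\star)\geq\epsilon\},$$
whose closedness follows from continuity of $\ti g\mapsto\delta_\square(\ti g,\ti F^\star)$. The crux of the argument is to show that $\eta:=\inf_{\ti h\in\ti F_\epsilon}J_{W_0}(\ti h)-I^\star>0$. If this failed, there would exist $\ti h_n\in\ti F_\epsilon$ with $J_{W_0}(\ti h_n)\to I^\star$; compactness of $\ti F_\epsilon$ would provide a subsequential limit $\ti h^\star\in \ti F_\epsilon$, and lower semi-continuity would force $J_{W_0}(\ti h^\star)\leq I^\star$, whence $\ti h^\star\in\ti F^\star$, contradicting $\delta_\square(\ti h^\star,\ti F^\star)\geq\epsilon$. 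Combining the LDP upper bound $\ti{\mathbb P}_{kn,W_0}(\ti F_\epsilon)\leq e^{-(I^\star+\eta/2)(kn)^2}$ with the lower bound $\ti{\mathbb P}_{kn,W_0}(\ti F)\geq e^{-(I^\star+\eta/4)(kn)^2}$, both valid for $n$ large, and dividing yields the claimed exponential bound with $C(\epsilon,\ti F)=\eta/4$. The unique-minimizer consequence is then immediate, since $\ti F^\star=\{\ti h^\star\}$ forces $\delta_\square(\ti f^{G_{kn}},\ti h^\star)=\delta_\square(\ti f^{G_{kn}},\ti F^\star)$. I do not anticipate serious obstacles: all the ingredients---compactness of $\tW$, lower semi-continuity of $J_{W_0}$, and the two-sided LDP---are already in place, and the role of \eqref{interior_assump} is precisely to line up the open-set lower bound with the closed-set upper bound on $\ti F$.
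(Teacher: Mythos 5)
Your argument is correct and follows essentially the same route as the paper's proof: compactness plus lower semi-continuity for non-emptiness and compactness of $\ti F^\star$, the two halves of the LDP plus assumption \eqref{interior_assump} for the limit \eqref{inf=sup}, and a compactness/lower-semi-continuity argument to get a strict gap $\inf_{\ti F_\epsilon}J_{W_0}>\inf_{\ti F}J_{W_0}$ before combining the LDP bounds on the numerator and denominator. The only cosmetic difference is that you bound the denominator using the already-established \eqref{inf=sup} rather than applying the open-set lower bound to $\ti F^0$ a second time, which is equivalent.
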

\begin{proof}
Since $\tW$ is compact and $\ti{F}$ is  closed, $\ti{F}$ is also compact. By Lemma \ref{lemma:semi-continuity-J}, the function $J_{W_0}$ is lower semi-continuous on $\ti{F}$. Since $\ti{F}$ is compact, $J_{W_0}$ must attain its minimum on $\ti{F}$. Therefore, $\ti{F}^{\star}$ is non-empty. Moreover, by the lower semi-continuity of $J_{W_0}$, $\ti{F}^{\star}$ is closed, and hence compact.
Finally, by Theorem \ref{ldp},
$$
\begin{aligned}
-\inf_{\ti{h} \in \ti{F}^0} J_{W_0}(\ti{h})
&\leq
\liminf_{n \to \infty} \frac{1}{(kn)^2} \log \tilde{\mathbb{P}}_{kn, W_0}(\tilde{F}^0)
\leq
\liminf_{n \to \infty} \frac{1}{(kn)^2} \log \tilde{\mathbb{P}}_{kn, W_0}(\tilde{F})
\\
&\leq \limsup_{n \to \infty} \frac{1}{(kn)^2} \log \tilde{\mathbb{P}}_{kn, W_0}(\tilde{F})
 \leq - \inf_{\tilde{h} \in \tilde{F}} J_{W_0}(\tilde{h})=-\min_{\ti{h} \in \ti{F}} J_{W_0}(\ti{h}).
\end{aligned}
$$
Combined with  \eqref{interior_assump} this  proves \eqref{inf=sup}.

Next, assume that the $\inf$ in \eqref{interior_assump} is finite.  This is only compatible with
\eqref{inf=sup} if $ \tilde{\mathbb{P}}_{kn, W_0}(\tilde{F^0})>0$
for all $n$ larger than some $n_0$. Fix $\epsilon > 0$ and let
\[\ti{F}_{\epsilon} \triangleq \left\{ \ti{h} \in \ti{F} : \delta_{\square} (\ti{h}, \ti{F}^{\star})  \geq \ve \right\}, \]
which is also a closed subset. Observe that $\ti{F}_{\epsilon} \cap \ti{F}^{\star} = \emptyset$.
Then
\begin{align*}
\ti{\pr}_{kn,W_0} \left(\delta_{\square}({ \ti f^{G_{kn}}} , \ti{F}^{\star}) \geq \epsilon | {\ti f^{G_{kn}}} \in \ti{F} \right) &= \frac{\ti{\pr}_{kn,W_0} \left({ \ti f^{G_{kn}}} \in \ti{F}_{\epsilon} \right)}{\ti{\pr}_{kn,W_0}\left( {\ti f^{G_{kn}}} \in \ti{F} \right)}\leq \frac{\ti{\pr}_{kn,W_0} \left( {\ti f^{G_{kn}}}  \in \ti{F}_{\epsilon} \right)}{\ti{\pr}_{kn,W_0} \left( {\ti f^{G_{kn}}} \in \ti{F}^0 \right)}.
\end{align*}
Using Theorem \ref{ldp} again, this shows that,
\begin{align*}
&\limsup_{n \to \infty} \frac{1}{(kn)^2} \log \ti{\pr}_{kn,W_0} \left(\delta_{\square}( {\ti f^{G_{kn}}} , \ti{F}^{\star}) \geq \epsilon | {\ti f^{G_{kn}}} \in \ti{F} \right) \\
&\leq \limsup_{n \to \infty} \frac{1}{(kn)^2} \left[\log  \ti{\pr}_{kn,W_0} \left( {\ti f^{G_{kn}}} \in \ti{F}_{\epsilon} \right) - \log \ti{\pr}_{kn,W_0} \left( {\ti f^{G_{kn}}} \in \ti{F}^0 \right) \right]\\
&= \limsup_{n \to \infty} \frac{1}{(kn)^2} \log \ti{\pr}_{kn,W_0} \left( {\ti f^{G_{kn}}} \in \ti{F}_{\epsilon} \right) - \liminf_{n \to \infty} \frac{1}{(kn)^2} \log \ti{\pr}_{kn,W_0} \left( {\ti f^{G_{kn}}} \in \ti{F}^0 \right) \\
&\leq \inf_{\ti{h} \in \ti{F}^0} J_{W_0}(\ti{h}) - \inf_{\ti{h} \in \ti{F}_{\epsilon}} J_{W_0}(\ti{h})\\
&=\inf_{\ti{h} \in \ti{F}} J_{W_0}(\ti{h}) - \inf_{\ti{h} \in \ti{F}_{\epsilon}} J_{W_0}(\ti{h}).
\end{align*}
It now suffices to show that
$\inf_{\ti{h} \in \ti{F}} J_{W_0}(\ti{h}) < \inf_{\ti{h} \in \ti{F}_{\epsilon}} J_{W_0}(\ti{h})$. Clearly, $\inf_{\ti{h} \in \ti{F}} J_{W_0}(\ti{h}) \leq \inf_{\ti{h} \in \ti{F}_{\epsilon}} J_{W_0}(\ti{h})$. Suppose that equality holds. The compactness of $\ti{F}_{\epsilon}$ and the lower semi-continuity of $J_{W_0}$ (\Cref{lemma:semi-continuity-J}) imply that there exists $\ti{g} \in \ti{F}_{\epsilon}$ that attains the infimum. It follows that  $J_{W_0}(\ti{g}) = \inf_{\ti{h} \in \ti{F}_{\epsilon}} J_{W_0}(\ti{h}) = \inf_{\ti{h} \in \ti{F}}J_{W_0}(\ti{h})$. But then $\ti{g} \in \ti{F}^{\star}$, and so $\ti{F}_{\epsilon} \cap \ti{F}^{\star} \neq \emptyset$, which is a contradiction.
\end{proof}

\begin{proof}[Proof of Theorem \ref{theorem:variational-problem-nice}]
We will prove the theorem by establishing condition  \eqref{interior_assump} in Theorem \ref{theorem:conditional}.
We first note that the continuity of $\phi_{\tau}(W_0,\cdot)$ at $t$ excludes the trivial case $\tau(\W)=\{t\}$, since then $\tau(\tW)=\{t\}$ as well, which shows that  $\phi_{\tau}(W_0,\cdot)$ jumps from a finite constant to $\infty$ at $t$. Therefore, we may assume the graph parameter $\tau$ is not constant. 

Next we recall that $\phi_{\tau}(W_0,t) = \min \{J_{W_0}(\tilde{f}) : \ti{f}\in \ti F\}$, where
$\ti{F} = \{ \ti{f}\in \tW : \tau(\ti f) \geq t \}$.
By the continuity of $\tau$, $\ti{F}$ is  closed.
Also by the continuity of $\tau$,
the open set $\ti O=\{\ti{f}\in \tW : \tau(\ti f)>t\} \subseteq \ti{F}^0$ contains
 $\ti F_\ve=\{ \ti{f}\in \tW : \tau(\ti f) \geq t+\ve \}$ for all $\ve > 0$.
As a consequence,
 $$
 \phi_{\tau}(W_0,t) =\inf_{\ti{h} \in \ti{F}} J_{W_0}(\ti{h})\leq  \inf_{\ti{h} \in \ti{F}^0} J_{W_0}(\ti{h})
 \leq \inf_{\ti{h} \in \ti{O}} J_{W_0}(\ti{h}) \leq  \inf_{\ti{h} \in \ti{F_\ve}} J_{W_0}(\ti{h})= \phi_{\tau}(W_0,t+\ve).
 $$
 Sending $\ve\to 0$ and using the continuity of $\phi_{\tau}(W_0,\cdot)$ at $t$, we see that the first inequality is saturated, proving \eqref{interior_assump}.

 Recall that $\pr_{kn,W_0}$ is supported in $\W_\Omega$. The proof is completed by noting that $\phi_\tau(W_0,t)<\infty$ if and only if $t\leq t_{\max}$.
\end{proof}

\section{$\phi_{\tau}$: Monotonicity, continuity, and examples}
\label{sec:properties}
In this section, we establish some analytical properties of the function $\phi_{\tau}$, which will be critical for our discussion of symmetry/symmetry breaking in the subsequent sections. \Cref{vp-prelim} collects some preliminary properties of homomorphism densities and the cut distance. In  \Cref{sec:useful-phi}, we introduce the ``sufficient increase property'', which guarantees the continuity of $\phi_{\tau}$. Further, we establish that homomorphism densities satisfy this property, and the operator norm satisfies this property under additional assumptions. Finally, \Cref{prop-of-phi} establishes an alternative variational representation of $\phi_\tau$ at points of continuity. Using this representation, we identify a class of  parameters $\tau$ such that $\phi_{\tau}$ is strictly increasing.

\subsection{Preliminaries}\label{vp-prelim}
In  subsequent sections, it will be necessary to express the homomorphism density as the sum of interval labeled homomorphisms and identify interval labeled homomorphisms that are always zero on $\W_\Omega$. Given $m \in \mathbb{Z}^+$, $\gamma \in \Delta_m$ and $W_0 \in \mathcal{B}^\gamma$, let  $I_1, I_2, \dots I_m$ be the intervals of $\gamma$, i.e. $I_1 = [0, \gamma_1]$ and $I_j= (\sum_{i=1}^{j-1}\gamma_i, \sum_{i=1}^{j}\gamma_i]$ for $j \geq 2$. When we proved \Cref{theorem:variational-problem-nice}, we used $k$ to denote the total number of intervals and assumed that all intervals have the same length. Here the intervals need not be the same length--- to emphasize this we now use $m$ to denote the total number of intervals.
The blocks of $W_0$ have the form $I_i \times I_j$ for $i,j \in [m]$.
Let $v= |V(H)|$ be the number of vertices in $H$, and let $Y \in [m]^{v}$ be a vector of vertex interval indices. Define the \emph{interval-labeled homomorphism density} as
$$t(H, g, Y)= \int_{x_1 \in I_{Y_1}} \int_{x_2 \in I_{Y_2}} \dots \int_{x_v \in I_{Y_v}} \prod_{\{i,j\} \in E(H)} g(x_i, x_j) \, dx_v \dots dx_{2} dx_1.$$
In other words, $t(H,g,Y)$ accounts for the homomorphisms in which the $j^{th}$ vertex is in $I_{Y_j}$ for all $j \in [v]$ and so $$t(H,g) = \sum_{Y \in [m]^v} t(H,g,Y).$$

\tikzset{blockr/.style= {rectangle, draw=black!50, fill=black!20, thick}}
\begin{figure}
\centering
\begin{tikzpicture}[scale=0.7]
\draw [] (0,0) rectangle node {$0$} (2,-2);
\draw [blockr] (2,0) rectangle node {$p$} (4,-2);
\draw [](4,0) rectangle node {$0$} (6,-2);
\draw [blockr] (0,-2) rectangle node {$p$} (2,-4);
\draw [] (2,-2) rectangle node {$0$} (4,-4);
\draw [blockr] (4,-2) rectangle node {$p$} (6,-4);
\draw [] (0,-4) rectangle node {$0$} (2,-6);
\draw [blockr] (2,-4) rectangle node {$p$} (4,-6);
\draw [blockr] (4,-4) rectangle node {$p$} (6,-6);
%%% length labels
\draw [|-|] (-0.2,0) -- (-0.2,-2) node[midway,left] {$I_1$};
\draw [|-|] (-0.2,-2) -- (-0.2,-4) node[midway,left] {$I_2$};
\draw [|-|] (-0.2,-4) -- (-0.2,-6) node[midway,left] {$I_3$};
\end{tikzpicture}
\caption{For $W_0$ the graphon above and $H$ a triangle, the block $I_1 \times I_2$ is not relevant.  Increasing the number of edges between $I_1$ and $I_2$ will not increase the number of triangles in the graph since any triangle has at least two vertices coming only from $I_3$.}
\label{irr}
\end{figure}
Next, we define \emph{relevant blocks} to be the blocks whose values may affect the homomorphism density of a graphon in $\W_\Omega$. Increasing the value of a graphon $g$ in $\W_\Omega$ on a relevant block has the potential to increase $t(H,g)$.  \Cref{irr} gives an example of a block that is not relevant.

\begin{definition}\label{relevant} Fix a finite graph $H$ and $\W_\Omega$.
We say an interval labeling vector $Y$ is \emph{irrelevant} with respect to $W_0$ if there exists $\{i,j \} \in E(H)$ such that the block $I_{Y_i} \times I_{Y_j}$ takes value zero on $W_0$. Equivalently, $Y$ is irrelevant if $t(H,g,Y)=0$ for all $g \in \W_\Omega$. We say $Y$ is relevant otherwise.

We say a block $I_a \times I_b$ contributes to the interval-labeled homomorphism density $t(H,g,Y)$ if $Y_i=a$ and $Y_j=b$ for some $\{i,j\} \in E(H)$.  We say the block $I_a \times I_b$ is \emph{relevant} if $I_a \times I_b$ contributes to some $t(H,g,Y)$ with $Y$  relevant.  Let $R \subseteq [0,1]^2$ be the union of all relevant blocks.
\end{definition}

\noindent
Note that a block $I_a\times I_b$ is relevant if and only if $p_{ab}>0$
and $t(H,W_0)$ strictly decreases when $p_{ab}$ is lowered.

Our next result establishes that if the cut distance between two graphons is at least a constant, one can find a region where the values on the  graphons differ by at least a constant. This result will be crucially used to establish the ``sufficient increase property''  in this section. In our subsequent discussion, we will  use this result to establish the existence of nearby graphons with lower entropy.

\begin{lemma}\label{cut-sets}
Let $f,g \in \W$. Let $S_\beta^+=\{(x,y) \in [0,1]^2: f(x,y) -g(x,y) \geq \beta\}$ and $S_\beta^-=\{(x,y) \in [0,1]^2: g(x,y) -f(x,y) \geq \beta\}$.
\begin{enumerate}
    \item If $f \geq g$ pointwise and $d_\square(f,g) \geq \ve$, then $|S_{\ve/2}^+| > \ve/2$.
    \item If $d_\square(f,g) > \ve$, then $|S_{\ve/4}^+|\geq \ve/4$ or $|S_{\ve/4}^-|\geq \ve/4$.
\end{enumerate}
\end{lemma}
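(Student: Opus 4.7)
The plan is to reduce both parts to straightforward measure-theoretic bounds using the fact that $f,g$ are bounded by $1$, together with a suitable splitting of the domain into a set where the pointwise gap is large and its complement.

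For part (1), I would exploit the hypothesis $f \geq g$ pointwise: since the integrand $f-g$ is nonnegative, the supremum in the definition of $d_\square$ is attained at $S=T=[0,1]$, so the assumption $d_\square(f,g)\geq \ve$ is equivalent to $\int_{[0,1]^2}(f-g)\,dx\,dy \geq \ve$. I would then split this integral across $S_{\ve/2}^+$ and its complement, using the pointwise bounds $f-g\leq 1$ on $S_{\ve/2}^+$ and $f-g<\ve/2$ on the complement:
\[
\ve \;\leq\; \int_{S_{\ve/2}^+}(f-g) + \int_{(S_{\ve/2}^+)^c}(f-g) \;\leq\; |S_{\ve/2}^+| \cdot 1 + \bigl(1-|S_{\ve/2}^+|\bigr)\cdot \tfrac{\ve}{2}.
\]
Rearranging yields $|S_{\ve/2}^+|\geq \ve/(2-\ve) > \ve/2$, where strict inequality is forced by $\ve>0$. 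Since $d_\square(f,g)\leq 1$ always, we may assume $\ve\leq 1$ so all bounds make sense.

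For part (2), the hypothesis $d_\square(f,g)>\ve$ gives measurable sets $S,T$ with $\bigl|\int_{S\times T}(f-g)\bigr|>\ve$. Up to swapping $f$ and $g$ (which swaps $S_\beta^+$ and $S_\beta^-$), I would assume the signed integral is $>\ve$. The idea is to drop the set $S\times T$ and the negative part of $f-g$ at the cost of at most zero: writing $(f-g)^+=\max(f-g,0)$,
\[
\ve < \int_{S\times T}(f-g) \leq \int_{S\times T}(f-g)^+ \leq \int_{[0,1]^2}(f-g)^+.
\]
Then I would split $\{f\geq g\}$ into $S_{\ve/4}^+$ and $\{0\leq f-g<\ve/4\}$, using $(f-g)^+\leq 1$ on the first piece and $(f-g)^+<\ve/4$ on the second, to obtain
\[
\ve < \int(f-g)^+ \leq |S_{\ve/4}^+|\cdot 1 + 1\cdot \tfrac{\ve}{4},
\]
which gives $|S_{\ve/4}^+|>3\ve/4\geq \ve/4$, as required. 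The symmetric argument in the other case yields $|S_{\ve/4}^-|\geq \ve/4$.

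There is no real obstacle here; both parts are elementary splitting arguments. The only minor care needed is in part (1), where the strict inequality $|S_{\ve/2}^+|>\ve/2$ (rather than $\geq$) requires noticing that $\ve/(2-\ve)>\ve/2$ whenever $\ve>0$, which is why we need the constants $\ve/2$ in the set definition and the lower bound to differ slightly. This plays no role in the sequel, so the stronger constant $\ve/4$ chosen in part (2) is comfortable slack that makes the positive/negative decomposition go through cleanly.
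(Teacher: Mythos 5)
Your part (1) is exactly the paper's argument. Part (2) is correct but takes a cleaner route than the paper's. The paper's proof bounds $|\int_{A\times B}(f-g)|$ by $\int_{A\times B}|f-g|$, splits the latter over $\{f\geq g\}$ and $\{f<g\}$, and then argues that one of the two resulting nonnegative pieces must exceed $\ve/2$, losing a factor of $2$ before thresholding at $\ve/4$. You instead fix the sign by symmetry, drop the set $S\times T$ and the negative part of $f-g$ in one stroke via $(f-g)\leq (f-g)^+$, and threshold directly, obtaining $\ve < |S^+_{\ve/4}| + \ve/4$, hence $|S^+_{\ve/4}| > 3\ve/4$. This avoids the halving step entirely and proves a quantitatively stronger bound ($3\ve/4$ rather than $\ve/4$) on whichever of $S^+_{\ve/4}$, $S^-_{\ve/4}$ is selected by the sign of $\int_{S\times T}(f-g)$. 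Since only $\ve/4$ is asserted and used downstream, the improvement is cosmetic, but your argument is shorter and the sign bookkeeping is more transparent.
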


\begin{proof}
Suppose $f \geq g$ pointwise and $d_\square(f,g) \geq \ve$. Since $f \geq g$ pointwise, $d_\square(f,g)= \Vert f-g \Vert_1$.
It follows that
$$\ve \leq d_\square(f,g) = \Vert f-g \Vert_1 =\int_{[0,1]^2}f-g \leq |S_{\ve/2}^+| + \frac{\ve}{2} (1-|S_{\ve/2}^+|),$$
and so $|S_{\ve/2}^+| \geq \ve/(2-\ve) > \ve/2$.

Next suppose $d_\square(f,g) > \ve$ with no additional assumptions on $f,g \in \W$. Let $S^+=\{(x,y) \in [0,1]^2: f(x,y) \geq g(x,y)\}$ and $S^-=\{(x,y) \in [0,1]^2: f(x,y) < g(x,y)\}$.
Since $d_\square(f,g) > \ve$, there exists $A,B \subseteq [0,1]$ such that
$$\left| \int_{A \times B} f- g \right | \geq \ve.$$
Observe $$\left| \int_{A \times B} f- g \right | \leq  \int_{A \times B}\left| f- g \right | = \int_{(A \times B )\cap S^+} f-g + \int_{(A \times B) \cap S^-} g-f.
$$
It follows that $\int_{(A \times B )\cap S^+} f-g \geq \ve/2$ or $\int_{(A \times B) \cap S^-} g-f \geq\ve/2$. In the first case, let $\overline{f}(x,y) = f(x,y)$ for $(x,y) \in (A \times B) \cap S^+$, and otherwise set $\overline{f}(x,y) = g(x,y)$. Then $\overline{f} \geq g$ pointwise, and $d_{\square}(\overline{f},g) \geq \frac{\epsilon}{2}$. By the first statement, we obtain $|S^+_{\frac{\varepsilon}{4}}| \geq \frac{\varepsilon}{4}$. A similar argument applies to the second case.
\end{proof}

\subsection{Establishing the continuity of $\phi_\tau$}\label{sec:useful-phi}
In this subsection we establish that $\phi_\tau$ is continuous for certain graph parameters $\tau$.  When $\tau$ is clear from context, we let $\phi(t)= \phi_\tau(W_0, t).$

\begin{lemma}\label{homomorphism-cont} Let $H$ be a finite graph, let $\tau= t(H, \cdot)$, let  $m \in \mathbb{Z}^+$, $\gamma \in \Delta_m$, and $W_0 \in \mathcal{B}^\gamma$. Then $\phi$  is continuous
on $\R\setminus\{t^{\tau}_{\max}(\tW_{\Omega})\}$.
\end{lemma}

\begin{lemma}\label{operator-cont}  Let $\tau(g)= \Vert g\Vert_{\emph{op}}$,   let $W_0$ be a two-block bipartite graphon with $W_0 \in \mathcal{B}^{(\gamma, 1-\gamma)}$ where $\gamma \in (0,1) \cap \mathbb{Q}$. Then $\phi$  is continuous
on $\R\setminus\{t^{\tau}_{\max}(\tW_{\Omega})\}$.
\end{lemma}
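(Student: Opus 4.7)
My plan is to deduce the lemma from \Cref{p-star} by verifying the sufficient increase property of \Cref{defn:p-star} for $\tau = \Vert \cdot \Vert_{\emph{op}}$ on $W_0 = f_p^\gamma$, $p \in (0,1)$. Concretely, for every $f \in \W_\Omega$ with $\Vert f \Vert_{\emph{op}} < t_{\max}$, I need to exhibit a sequence $f_n \in \W_\Omega$ with $\delta_\square(f_n, f) \to 0$, $\Vert f_n \Vert_{\emph{op}} > \Vert f \Vert_{\emph{op}}$, and $I_{W_0}(f_n) \to I_{W_0}(f)$.

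I would first exploit the bipartite structure: since $W_0$ vanishes on $[0,\gamma]^2 \cup (\gamma,1]^2$, so does every $f \in \W_\Omega$, and writing $F = f|_{[0,\gamma]\times(\gamma,1]}$ the operator $T_f$ is block anti-diagonal. Hence $\Vert f \Vert_{\emph{op}} = \sigma := \Vert T_F \Vert_{\emph{op}}$, the top singular value of the Hilbert--Schmidt kernel operator $T_F$, attained by $L^2$-unit singular vectors $u, v$ with $T_F v = \sigma u$ and $T_F^{\ast} u = \sigma v$. Because $\sigma < t_{\max} = \Vert f_1^{\gamma} \Vert_{\emph{op}}$, the set $\{F < 1\} \subset [0,\gamma]\times(\gamma,1]$ has positive Lebesgue measure, providing slack for an increasing perturbation.

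Next I would extract a positive-measure subset $A' \subseteq \{F < 1\}$ on which $u(x) \geq c$ and $v(y) \geq c$ for some $c > 0$: partition $[0,\gamma]$ and $(\gamma,1]$ via the level sets $\{u \geq c\}, \{u \leq -c\}$ and similarly for $v$, and observe that for $c$ small enough at least one of the four sign-product rectangles has positive-measure intersection with $\{F < 1\}$; after possibly flipping $(u,v) \mapsto (-u,-v)$ we may take it to be the positive-positive one. Letting $A'^T$ be the reflection of $A'$ across the diagonal, define $f_n = \min(f + \tfrac{1}{n}\mathbf{1}_{A' \cup A'^T}, 1)$. The variational characterization of the operator norm then gives
\[
\Vert T_{f_n} \Vert_{\emph{op}} \ \geq\ \langle T_{f_n} v, u\rangle\ \geq\ \sigma \,+\, \tfrac{1}{n}\!\int_{A'} u(x)\,v(y)\,dx\,dy \,-\, o(1/n)\ >\ \sigma
\]
for all sufficiently large $n$, where the $o(1/n)$ term accounts for loss from the truncation at $1$. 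Moreover $d_\square(f_n, f) \leq \Vert f_n - f \Vert_1 \to 0$, and since the perturbation is supported where $W_0 = p \in (0,1)$, \Cref{uniform-cont} yields $I_{W_0}(f_n) \to I_{W_0}(f)$, completing the verification.

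The main obstacle is the measure-theoretic construction of $A'$: the singular vectors $u, v$ are only guaranteed to lie in $L^2$, so isolating a subset where both are uniformly bounded away from zero while simultaneously intersecting $\{F < 1\}$ in positive measure requires careful bookkeeping, especially when $\sigma$ is a degenerate singular value. The strict inequality $\sigma < t_{\max}$ is essential, as it prevents $\{F < 1\}$ from being contained in the zero set of $u(x) v(y)$; once $A'$ is produced, the remainder of the argument parallels the verification of the sufficient increase property for $\tau = t(H, \cdot)$ in \Cref{homomorphism-cont}, with the top singular vectors playing the role analogous to the gradient of $t(H, \cdot)$ there.
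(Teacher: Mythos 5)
Your plan correctly identifies the paper's strategy: deduce \Cref{operator-cont} from \Cref{p-star} by verifying the sufficient increase property of \Cref{defn:p-star}. However, your restatement of what needs to be verified is not the sufficient increase property. You write that for each $f$ you need a \emph{sequence} $f_n$ with $\tau(f_n) > \tau(f)$, $\delta_\square(f_n,f)\to 0$, and $I_{W_0}(f_n)\to I_{W_0}(f)$. This is a per-$f$, asymptotic statement, and it is strictly weaker than \Cref{defn:p-star}, which is a \emph{uniform} statement: given $\eta>0$ and $t<t_{\max}$, one must produce constants $\alpha,\beta>0$ depending only on $(t,t_{\max},\eta)$ such that \emph{every} $g\in\W_\Omega$ with $\tau(g)\geq t-\alpha$ admits a single perturbation $g^*$ with $\Vert g^*-g\Vert_\infty\leq\eta$ and $\tau(g^*)\geq t+\beta$. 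The uniformity over $g$ is essential: without it, \Cref{p-star} does not apply, and in fact the paper explicitly remarks that the Lubetzky--Zhao ``nice graph parameter'' route does not establish right-continuity when $W_0$ takes zero/one values---which is precisely why \Cref{defn:p-star} was introduced.

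Your construction does not produce this uniform bound. The perturbation is $f_n=\min(f+\tfrac1n\mathbf1_{A'\cup A'^T},1)$ where $A'$ is a positive-measure set on which the top singular vectors $u,v$ of $T_F$ exceed some $c>0$ and $F<1$. Even if the measure-theoretic construction of $A'$ goes through, both $c$ and $\lambda(A')$ depend on $f$ through its singular vectors, and as $f$ ranges over $\{g:\tau(g)\geq t-\alpha\}$ there is no mechanism in your argument forcing $c^2\lambda(A')$ to stay bounded below. Consequently, the resulting lower bound $\tau(f_n)-\tau(f)\gtrsim \tfrac1n\int_{A'}uv - o(1/n)$ can degenerate, and no uniform $\beta$ emerges. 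The paper's proof (\Cref{operator helper}) circumvents exactly this difficulty: rather than perturbing on a hand-picked set $A'$ tied to the singular vectors, it adds $\eta$ uniformly on all of $\Omega$ (forming $g^{+\eta}$), and then performs a case analysis on the measure of the level set $A_\varepsilon^c = \{x:u(x)<\varepsilon\}$ to extract an explicit lower bound $\tau(g^{+\eta})^2 - \tau(g)^2 \geq \tfrac{\eta^4}{2^{25}}(t_{\max}-\tau(g))^{20}\tau(g)^2$, valid for \emph{every} $g\in\W_\Omega$. That uniform, quantitative bound is what \Cref{p-star} actually consumes, and it is missing from your argument.
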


In order to establish the above lemmas, we describe the sufficient increase property of $\tau$ and $W_0$, and show that having this property guarantees that $\phi_\tau$ is continuous. Then we show that homomorphism densities  have this property (with any block constant base graphon $W_0$), and the operator norm has this property when $W_0$ is a two block bipartite graphon.

\begin{definition}\label{defn:p-star} We say that $\tau$ has \emph{the sufficient increase property} on $\W_\Omega$ if the following is true. Fix any $\eta >0$ and $t^{\tau}_{\min}(\tW_{\Omega}) \leq t < t^{\tau}_{\max}(\tW_{\Omega})$. Then there exist $\alpha= \alpha(t, t_{\max},\eta), \beta=\beta(t, t_{\max},\eta) >0$,  such that the following holds for all $g \in \W_\Omega$. If $\tau(g) \geq t- \alpha$, then there exists $g^* \in \{ f: \Vert f-g \Vert_\infty \leq \eta\} \cap \W_{\Omega}$ such that $\tau(g^*) \geq t+ \beta$.
\end{definition}

\begin{lemma}\label{p-star} Let $m \in \mathbb{Z}^+$, $\gamma \in \Delta_m$, $W_0 \in \mathcal{B}^\gamma$, and let $\tau$ be a continuous graph parameter that has the sufficient increase property on $\W_\Omega$.
 Then $\phi$ is continuous on $\R\setminus\{t^{\tau}_{\max}(\tW_{\Omega})\}$.
\end{lemma}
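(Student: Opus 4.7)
My plan is to prove continuity of $\phi=\phi_\tau(W_0,\cdot)$ on $\R\setminus\{t_{\max}\}$ by treating left- and right-continuity separately. Left-continuity will come essentially for free from lower semi-continuity plus compactness, while right-continuity is the substantive direction and will use the sufficient increase hypothesis together with the $L^\infty$-uniform continuity of $I_{W_0}$. As preliminaries I would record that $\phi$ is non-decreasing, $\phi\equiv 0$ on $(-\infty,\tau(W_0)]$, $\phi\leq C(W_0)<\infty$ on $[\tau(W_0),t_{\max}]$, and $\phi\equiv+\infty$ on $(t_{\max},\infty)$. Continuity at any $t<\tau(W_0)$ or $t>t_{\max}$ is then immediate, so the task reduces to continuity at each $t\in[\tau(W_0),t_{\max})$ (the case $\tau(W_0)=t_{\max}$ being trivial, since $\phi$ is then a step function whose only jump is at $t_{\max}$).

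For left-continuity at $t\in[\tau(W_0),t_{\max})$ I would pick $s_n\uparrow t$ and minimizers $\ti{f}_n$ of $\phi(s_n)$, pass to a cut-metric convergent subsequence $\ti{f}_{n_k}\to\ti{f}$ via compactness of $\tW$, use continuity of $\tau$ to get $\tau(\ti{f})\geq t$, and invoke lower semi-continuity of $J_{W_0}$ (\Cref{lemma:semi-continuity-J}) to conclude $\phi(t)\leq J_{W_0}(\ti{f})\leq\phi(t^-)$. Combined with the monotonicity bound $\phi(t^-)\leq\phi(t)$ this gives $\phi(t^-)=\phi(t)$. This step uses none of the hypotheses specific to the sufficient increase property.

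Right-continuity is where the sufficient increase property enters. Fix $t\in[\tau(W_0),t_{\max})$ and $\ve>0$, and let $\ti{f}^*$ be a minimizer of $\phi(t)$, so $\ti{f}^*\in\tW_\Omega$ and $J_{W_0}(\ti{f}^*)=\phi(t)$. Choose a small parameter $\eta_1>0$ (to be fixed later), let $\alpha=\alpha(t,t_{\max},\eta_1)$ and $\beta=\beta(t,t_{\max},\eta_1)>0$ be the constants produced by the sufficient increase property, and pick $\eta_2>0$ small enough via continuity of $\tau$ at $\ti{f}^*$ so that $\delta_\square(h,\ti{f}^*)\leq\eta_2$ forces $\tau(h)\geq t-\alpha$. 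By the variational definition of $J_{W_0}$ I can then find $h\in\W_\Omega$ with $\delta_\square(h,\ti{f}^*)\leq\eta_2$ and $I_{W_0}(h)\leq\phi(t)+\ve/2$. Applying sufficient increase to $h$ yields $h^*\in\W_\Omega$ with $\|h^*-h\|_\infty\leq\eta_1$ and $\tau(h^*)\geq t+\beta$; shrinking $\eta_1$ at the outset so that the $L^\infty$-uniform continuity of $I_{W_0}$ on $\W_\Omega$ (\Cref{uniform-cont}, which applies because the block graphon $W_0$ satisfies the gap condition \eqref{Im-gap}) gives $|I_{W_0}(h^*)-I_{W_0}(h)|\leq\ve/2$, I conclude $J_{W_0}(\ti{h}^*)\leq I_{W_0}(h^*)\leq\phi(t)+\ve$. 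Since $\ti{h}^*$ is feasible for $\phi(t+\beta)$, monotonicity yields $\phi(s)\leq\phi(t)+\ve$ for every $s\in(t,t+\beta]$, and letting $\ve\to 0$ gives $\phi(t^+)\leq\phi(t)$.

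The main obstacle I expect is coordinating the two different topologies in play: $J_{W_0}$ is defined through a cut-metric infimum, whereas the sufficient increase property and the continuity of $I_{W_0}$ both speak in $L^\infty$. One cannot apply sufficient increase directly to a representative of $\ti{f}^*$, because in general no single representative need realize $\phi(t)$ as an $I_{W_0}$-value. The two-step approximation above --- first producing an $L^\infty$-usable surrogate $h$ via the cut-metric definition of $J_{W_0}$, then carrying out an $L^\infty$ perturbation within $\W_\Omega$ --- bridges this gap, with the correct choice order $\eta_1\mapsto(\alpha,\beta)\mapsto\eta_2$ being the only quantitatively delicate point.
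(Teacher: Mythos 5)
Your proposal is correct and follows essentially the same strategy as the paper: left-continuity from compactness of $\tW$ together with lower semi-continuity of $J_{W_0}$, and right-continuity from the sufficient increase property combined with the $L^\infty$-uniform continuity of $I_{W_0}$ on $\W_\Omega$ (\Cref{uniform-cont}). The one place you genuinely streamline is the right-continuity step: the paper works with a full sequence $f_k\to\ti{g}$ realizing $J_{W_0}(\ti{g})$, applies the sufficient increase property to each term, and then extracts a $\delta_\square$-convergent subsequence of the perturbed graphons before invoking lower semi-continuity of $J_{W_0}$; you instead pick a single $h\in B(\ti{f}^*,\eta_2)\cap\W_\Omega$ with $I_{W_0}(h)$ within $\ve/2$ of $\phi(t)$, perturb once to get $h^*$, and then close with the elementary bound $J_{W_0}(\ti{h}^*)\leq I_{W_0}(h^*)$ (valid because $h^*\in B(\ti{h}^*,\eta)$ for every $\eta>0$). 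This avoids the compactness/subsequence extraction entirely and is a clean simplification, while the underlying mechanism — trading an $L^\infty$-small perturbation for a guaranteed increase in $\tau$ at the cost of only $\ve$ in entropy, with the order of dependence $\eta_1\mapsto(\alpha,\beta)\mapsto\eta_2$ handled correctly — is identical to the paper's.
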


In \cite{Lubetzky2015}, Lubetzky and Zhao studied the variational problem \eqref{min_eq} when $W_0$ is a constant graphon. They defined a ``nice graph parameter'' as a graph parameter $\tau$ that is (i) continuous with respect to $\delta_\square$ and (ii) has the property that every local extremum of $\tau$ with respect to $L_\infty$ is necessarily a global extremum.
They showed that for any nice graph parameter $\tau$, $\phi_\tau$ is continuous in the setting where the base graphon $W_0$ is constant.  Their proof technique cannot be directly adapted to the setting where $W_0$ is a block constant graphon with a zero or one block. When $W_0$ take values zero or one, the entropy function $I_{W_0}$ can be infinite, creating a technical hurdle. In particular, it is not clear how to establish right continuity of  $\phi_\tau$ for arbitrary nice graph parameters. We instead use \Cref{defn:p-star} as a sufficient condition for the right continuity of $\phi_\tau$.

Before proving \Cref{p-star}, we establish the left continuity of $\phi_\tau$ without any assumptions on the block graphon $W_0$ or the continuous graph parameter $\tau$.

\begin{lemma}\label{phi left} Let $\tau$ be a continuous graph parameter, and let $m \in \mathbb{Z}^+$, $\gamma \in \Delta_m$, and $W_0 \in \mathcal{B}^\gamma$. Then
$\phi$ is left-continuous.
\end{lemma}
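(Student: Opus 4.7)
The plan is to prove left-continuity by combining three facts: $\phi$ is monotone non-decreasing, $(\tilde{\mathcal{W}}, \delta_\square)$ is compact, and $J_{W_0}$ is lower semi-continuous (\Cref{lemma:semi-continuity-J}). Since $\phi(s) \leq \phi(t)$ whenever $s \leq t$ (the feasible set $\{\ti f : \tau(\ti f) \geq t\}$ shrinks in $t$), the left limit $L := \lim_{s\to t^-}\phi(s) = \sup_{s<t}\phi(s)$ exists in $\mathbb{R}\cup\{\infty\}$ and satisfies $L \leq \phi(t)$. Left-continuity therefore reduces to showing $L \geq \phi(t)$.

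First I would dispose of the easy case $t > t_{\max}$. For any $s$ with $t_{\max} < s < t$ we have $\phi(s) = \infty$, so $L = \infty = \phi(t)$ trivially. For the main case $t \leq t_{\max}$, I would fix a sequence $s_n \uparrow t$ with $s_n \leq t_{\max}$ for all $n$ large. For each such $n$, the set $\{\ti f \in \tW : \tau(\ti f) \geq s_n\}$ is a non-empty compact subset of $\tW$ (by continuity of $\tau$ and compactness of $(\tW,\delta_\square)$), and since $J_{W_0}$ is lower semi-continuous on this compact set, the minimum is attained by some $\ti f_n$ with $\tau(\ti f_n)\geq s_n$ and $J_{W_0}(\ti f_n) = \phi(s_n)$.

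Next I would exploit compactness of $\tW$ to pass to a subsequence (not relabeled) with $\delta_\square(\ti f_n,\ti f^*)\to 0$ for some $\ti f^*\in\tW$. Continuity of $\tau$ gives $\tau(\ti f^*)=\lim_n \tau(\ti f_n)\geq \lim_n s_n = t$, so $\ti f^*$ is feasible for the minimization defining $\phi(t)$. Lower semi-continuity of $J_{W_0}$ (\Cref{lemma:semi-continuity-J}) then yields
\[
\phi(t) \leq J_{W_0}(\ti f^*) \leq \liminf_{n\to\infty} J_{W_0}(\ti f_n) = \liminf_{n\to\infty}\phi(s_n) = L,
\]
which combined with $L\leq \phi(t)$ gives $L=\phi(t)$, establishing left-continuity at $t$.

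The argument is essentially routine once the three standard ingredients (monotonicity, compactness, lower semi-continuity) are in place; there is no serious obstacle. The only minor subtlety is making sure the minimizers $\ti f_n$ actually exist in the main case, which is why I separate out $t>t_{\max}$ first and restrict to $n$ large enough that $s_n \leq t_{\max}$ so that $\phi(s_n)<\infty$ and the infimum defining $\phi(s_n)$ is attained. Note that no assumption on $W_0$ beyond membership in $\mathcal{B}^\gamma$ or on $\tau$ beyond continuity is needed, which matches the statement of the lemma.
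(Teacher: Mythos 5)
Your proof is correct and follows essentially the same route as the paper: both use monotonicity of $\phi$ to reduce to an upper bound, extract minimizers attaining $\phi(s_n)$, pass to a convergent subsequence by compactness of $\tW$, use continuity of $\tau$ to show the limit is feasible for $\phi(t)$, and finish with lower semi-continuity of $J_{W_0}$. The only cosmetic difference is your case split at $t_{\max}=\tau_{\max}(\tW_\Omega)$ versus the paper's at $\tau_{\max}(\tW)$; either works, and in fact the existence of minimizers is already guaranteed whenever $t\leq\tau_{\max}(\tW)$ by the remark following \Cref{definition:nice-graph-parameter}, so the extra care about $\phi(s_n)<\infty$ is not strictly needed.
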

\begin{proof}
We first note that we may assume that  $t\leq t^{\tau}_{\max}(\tW_\Omega)$, since $\phi=\infty$ and hence constant above $t_{\max}$.
Let $t_n \nearrow t$. Since $\phi$ is non-decreasing in $t$,
the sequence  $ \phi(t_n)$ has a limit, and
$\lim_{n \to \infty} \phi(t_n) \leq \phi(t)$.
To prove an upper bound on $\phi(t)$, recall the definition \eqref{phi-defn} of $\phi(t)$ as a minimum.
For each $k \geq 1$, there exists $\ti{g}_k$ such that $\tau( \ti{g}_k)\geq t_k$ and
$J_{W_0}(\ti{g}_k)= \phi(t_k)$.
By the compactness of $\tW$, there exists a convergent subsequence $\ti{g}_{k_j}$ such that $\delta_\square(\ti{g}_{k_j}, \ti{g}) \to 0$ for some $\ti{g} \in \widetilde{\mathcal{W}}$. Since $\tau(\ti{g}_{k_j}) \geq t_{k_j}$ and $t_{k_j} \nearrow t$, it follows that $\tau(\ti{g}) \geq t$,
and thus $ \phi (t)\leq J_{W_0}(\ti{g})$.
Combined with
the
lower semi-continuity of $J_{W_0}$ (\Cref{lemma:semi-continuity-J}),
we get
$$ \phi (t)\leq
J_{W_0}(\ti{g})\leq
\liminf_{j \to \infty} J_{W_0} ( \ti{g}_{k_j})
=\liminf_{j \to \infty} \phi(t_{k_j})
=\lim_{n \to \infty} \phi(t_n). \qedhere
$$
\end{proof}

\begin{remark}\label{countable}
Since $\phi$ is left-continuous (\Cref{phi left}) and non-decreasing, $\phi$ can have at most countably many points of discontinuity.
\end{remark}

\noindent
We now prove \Cref{p-star}, which establishes the  continuity of $\phi_\tau$ when $\tau$ has the sufficient increase property.

\begin{proof}[Proof of \Cref{p-star}.] By \Cref{phi left}, it suffices to establish the right-continuity of $\phi$ at $t$.
By assumption,
 $t\neq t_{\max}$.  Since
 $\phi$ is constant on $(-\infty,t_{\min}]$ and $(t_{\max},\infty)$ (where it is $0$ and $\infty$, respectively),
 we may  assume that $t_{\min}\leq t <t_{\max}$.
  Consider  a sequence $t_n \searrow t$, and an arbitrary $\ve>0$.  We need to show that there exists $n$ sufficiently large such that $\phi(t_n) \leq \phi(t) +\ve$.

Let $\eta>0$ be such that if $f, g \in \W_\Omega$ and $\Vert f-g \Vert_\infty \leq \eta$, then $|I_{W_0}(f)- I_{W_0}(g)|< \ve$; \Cref{uniform-cont} guarantees the existence of such an $\eta$. Let $\ti{g} \in \tW_\Omega$ be such that $\tau(\ti{g})\geq t$
and $\phi(t)=J_{W_0}(\ti g)$. By definition of $J_{W_0}( \cdot)$ there exists a sequence $f_k \in \W_\Omega$ such that $$I_{W_0}(f_k) \to J_{W_0}(\ti{g}) \quad \text{ and } \quad \delta_\square(f_k, \ti{g}) \to 0.$$
Since $\tau$ has the sufficient increase property on $\W_\Omega$, there exist $\alpha, \beta>0$ such that 
$$\tau(h) \geq t- \alpha \implies \exists \, h^* \text{ with } \Vert h^*-h \Vert_\infty \leq \eta \text{ and } \tau(h^*) \geq t+ \beta.$$
Since $\tau$ is continuous in $( \tW_\Omega, \delta_\square)$ and $\tau(g)\geq t$, there exists $k_0$ sufficiently large such that for all $k\geq k_0$, $\tau(f_k)  \geq  t-\alpha$.
Thus, for all $k \geq k_0$, there exists $f_k'$ such that
$$\tau(f_k')  \geq  t+\beta \quad \text{ and } \quad \Vert f_k' - f_k \Vert_\infty \leq \eta.$$
The choice of $\eta$ implies
$$|I_{W_0}(f_k')-I_{W_0}(f_k)| \leq \ve.$$
By compactness of $(\tW_\Omega, \delta_\square)$, there exists a convergent subsequence such that  $\ti{f}_{k_j}' \to \ti{h}$ for some $\ti{h} \in \tW_\Omega$. Since $\tau$ is continuous with respect to $\delta_\square$, $\tau(\ti{f}_{k_j}') \to \tau(\ti{h})$, and so $\tau(\ti{h}) \geq t+\beta$. It follows that
\begin{align*}
    \phi(t +\beta) \leq J_{W_0}(\ti{h})& \leq \liminf_{j \to \infty} I_{W_0}(f_{k_j}')\leq \liminf_{j \to \infty} I_{W_0}(f_{k_j}) +\ve= J_{W_0}(\ti{g}) +\ve
    =\phi(t)+\ve.
\end{align*}
Taking $n$ sufficiently large such that $t_n \leq t+ \beta$ and noting $\phi(t_n) \leq \phi(t +\beta)$  yields the desired statement.
\end{proof}

Next we establish that homomorphism densities have the sufficient increase property.
To this end, we introduce the following graphon $g^{+\eta}$,
\begin{align}\label{eq:gplus_defn}
    g^{+\eta}(x,y) &= \begin{cases}
     g(x,y) & (x,y) \not\in \Omega\\
    \min \left\{g(x,y) + \eta, 1 \right\} & \text{otherwise}
    \end{cases}
\end{align}
and note the following fact.

\begin{fact}\label{fun fact}
Let $\ell, u,w,z \in \R$ with $u \geq 0$. Suppose that $\ell \geq uz$ and $\ell \geq w-z$. Then $\ell \geq \frac{uw}{u+1}$.
\end{fact}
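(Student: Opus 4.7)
The plan is to prove the fact by a two-line linear combination of the two hypotheses. First I would multiply the inequality $\ell \geq w - z$ through by $u$; this is the only step that uses the hypothesis $u \geq 0$, since without that sign condition the direction of the inequality would flip. The result is $u\ell \geq uw - uz$. Next I would rewrite the first hypothesis $\ell \geq uz$ in the equivalent form $-uz \geq -\ell$ and add the two inequalities so that the $uz$ term cancels on the right, producing $(u+1)\ell \geq uw$. Dividing by $u+1 \geq 1 > 0$ then yields the desired bound $\ell \geq \frac{uw}{u+1}$.

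The degenerate case $u = 0$ requires no separate treatment: the target inequality collapses to $\ell \geq 0$, which is already implied by the first hypothesis $\ell \geq uz = 0$. Consequently I do not expect any obstacle in executing this plan; the only thing worth flagging explicitly in the write-up is the use of $u \geq 0$ at the multiplication step, so the reader sees why the inequality is preserved. Given its placement at the end of Section~\ref{sec:useful-phi}, just after the definition of $g^{+\eta}$ in \eqref{eq:gplus_defn}, I anticipate the fact will be invoked as a bookkeeping lemma in the forthcoming proof that homomorphism densities satisfy the sufficient increase property (\Cref{defn:p-star}), most likely with $u$ playing the role of some combinatorial multiplicity coming from edges of $H$, and $w$, $z$ representing contributions to $t(H,g^{+\eta}) - t(H,g)$ coming from different subsets of the $[0,1]^v$ integration domain.
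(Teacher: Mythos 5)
Your proof is correct. You argue by a linear (in fact convex) combination: multiplying $\ell \geq w-z$ by $u \geq 0$, adding $\ell \geq uz$ to cancel the $uz$ terms, and dividing the resulting $(u+1)\ell \geq uw$ by $u+1 > 0$. The paper instead gives a dichotomy argument: it observes that for every real $z$, at least one of $uz \geq \frac{uw}{u+1}$ or $w-z \geq \frac{uw}{u+1}$ must hold (since if $uz < \frac{uw}{u+1}$ with $u>0$ then $z < \frac{w}{u+1}$, forcing $w-z > \frac{uw}{u+1}$, and the $u=0$ case is trivial), and then uses $\ell \geq \max(uz, w-z)$. The two arguments are equivalent in content — your convex-combination identity $\frac{u}{u+1}(w-z) + \frac{1}{u+1}(uz) = \frac{uw}{u+1}$ is precisely what makes $z^* = \frac{w}{u+1}$ the balance point in the paper's case split — but yours is purely algebraic and arguably slightly cleaner to write down, while the paper's makes the threshold value $\frac{uw}{u+1}$ visibly the worst case over $z$. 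Your anticipation of the application is also accurate: \Cref{fun fact} is invoked in the proof of \Cref{star helper} with $\ell = Z_S(H,g^{+\eta},Y) - Z_S(H,g,Y)$, $u = \eta'$, $w = \eta^{e_H}|S \cap (I_a\times I_b)|\beta^{v-2}$, and $z = Z_S(H,g,Y)$, reconciling two complementary lower bounds on the same increment.
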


\begin{proof}
Note that for all $z \in \R$, $uz \geq \frac{uw}{u+1}$ or $w-z \geq \frac{uw}{u+1}$. The fact follows directly.
\end{proof}

\begin{lemma} \label{star helper} Let $\tau= t(H, \cdot)$ where $H$ is a finite graph,  let $m \in \mathbb{Z}^+$, $\gamma \in \Delta_m$, and $W_0 \in \mathcal{B}^\gamma$, and let $t_{\max}$ denote $t^{\tau}_{\max}(\tW_\Omega)$.
Fix $\eta \in (0,1]$.  Then there exists $c= c(\gamma, H, \eta) \in (0,1]$ such that
$
\tau(g^{+\eta}) \geq \tau(g)+c(t_{\max}-\tau(g))^2
$
for all $g \in \W_\Omega$.
\end{lemma}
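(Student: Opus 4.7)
The plan is to reduce to a pointwise comparison with the extremal step graphon and expand the product $\prod_{ij \in E(H)}$ edge by edge. First, note that $g^{+\eta}(x,y)$ is non-decreasing in $\eta$ and stabilizes once $\eta \geq 1 - g(x,y)$, so I may assume $\eta \in (0, 1]$. Let $f_{\max}$ denote the step function equal to $1$ on the union of relevant blocks of $W_0$ and equal to $W_0$ elsewhere. Since blocks with $p_{ab} = 0$ are irrelevant, $f_{\max}$ agrees with $W_0$ on $\Omega^c$ and hence lies in $\mathcal{W}_\Omega$.

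The central step is an invariance statement coming directly from \Cref{relevant}: modifying a graphon in $\mathcal{W}_\Omega$ only on irrelevant blocks leaves $t(H, \cdot)$ unchanged. Indeed, the value on an irrelevant block $I_a \times I_b$ only enters an interval-labeled summand $t(H, g, Y)$ when some edge of $H$ is mapped to $(a, b)$; since the block is irrelevant, every such labeling $Y$ must also send another edge of $H$ to a zero block of $W_0$, forcing $t(H, g, Y) = 0$ for every $g \in \mathcal{W}_\Omega$. This immediately yields $t(H, f_{\max}) = t_{\max}$, and it lets me replace $g$ and $g^{+\eta}$ by graphons $g'$ and $g^\star$ that agree with the originals on relevant blocks and with $f_{\max}$ on irrelevant blocks. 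These satisfy $g' \leq g^\star \leq f_{\max}$ pointwise, while $t(H, g) = t(H, g')$ and $t(H, g^{+\eta}) = t(H, g^\star)$.

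Set $u := g^\star - g'$ and $w := f_{\max} - g'$, both non-negative. Off $\Omega$ and on irrelevant blocks inside $\Omega$ both vanish, while on a relevant block inside $\Omega$ one has $p_{ab} \in (0, 1)$, $f_{\max} = 1$, $w = 1 - g'$, and $u = \min(\eta, 1 - g') = \min(\eta, w)$, which is at least $\eta w$ for $\eta, w \in [0, 1]$. Thus $u \geq \eta w$ pointwise. Expanding $\prod_{ij}(g' + u)$ and $\prod_{ij}(g' + w)$ as sums over subsets $S \subseteq E(H)$ and subtracting the $S = \emptyset$ term,
\begin{align*}
t(H, g^\star) - t(H, g') &= \sum_{\emptyset \neq S \subseteq E(H)} \int \prod_{ij \in S} u(x_i, x_j) \prod_{ij \in E(H) \setminus S} g'(x_i, x_j)\, dx, \\
t(H, f_{\max}) - t(H, g') &= \sum_{\emptyset \neq S \subseteq E(H)} \int \prod_{ij \in S} w(x_i, x_j) \prod_{ij \in E(H) \setminus S} g'(x_i, x_j)\, dx.
\end{align*}
Since $u \geq \eta w \geq 0$ and $\eta \in (0, 1]$, each summand obeys $\prod_{ij \in S} u \geq \eta^{|S|} \prod_{ij \in S} w \geq \eta^{|E(H)|} \prod_{ij \in S} w$, so a term-by-term comparison gives $\tau(g^{+\eta}) - \tau(g) \geq \eta^{|E(H)|}\bigl(t_{\max} - \tau(g)\bigr)$. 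Because $t_{\max} - \tau(g) \in [0, 1]$, this dominates $\eta^{|E(H)|}(t_{\max} - \tau(g))^2$, proving the claim with $c = \eta^{|E(H)|} \in (0, 1]$.

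The main obstacle is cleanly setting up the invariance on irrelevant blocks, since without it the pointwise bound $g' \leq f_{\max}$ could fail on irrelevant blocks inside $\Omega$ (where $g$ may exceed $f_{\max}$) and the term-by-term comparison would collapse. The definitions of relevant block and relevant labeling in \Cref{relevant} are arranged precisely to make this invariance hold, after which the expansion argument is elementary.
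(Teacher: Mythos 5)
Your proof is correct and takes a genuinely different route from the paper. The paper defines $g^{\text{max}}$, invokes the Counting Lemma to lower-bound $\delta_\square(g^{\text{max}},g)$, passes to a measurable set via \Cref{cut-sets}, restricts to a single block with substantial overlap, and finally splices two competing bounds together with \Cref{fun fact}; the detour through cut distance is exactly what introduces the quadratic dependence on $t_{\max}-\tau(g)$. You instead exploit the irrelevant-block invariance to normalize both $g$ and $g^{+\eta}$ to agree with $f_{\max}$ on irrelevant blocks, observe the pointwise bound $g^\star - g' \geq \eta\,(f_{\max}-g')$, and expand $\prod_{ij\in E(H)}$ multiplicatively into the $2^{|E(H)|}$ subset terms, comparing them one by one. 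This avoids the Counting Lemma and \Cref{cut-sets} entirely, is considerably shorter, and in fact yields the stronger \emph{linear} estimate $\tau(g^{+\eta}) \geq \tau(g) + \eta^{|E(H)|}\bigl(t_{\max}-\tau(g)\bigr)$ for $\eta\in(0,1]$, of which the stated quadratic inequality is an immediate corollary. The one point worth spelling out a little more (but not a gap) is the claim that $t(H,f_{\max})=t_{\max}$: it uses the irrelevant-block invariance together with the monotonicity of $t(H,\cdot)$ under pointwise domination, since for any $f\in\W_\Omega$ one replaces $f$ by its irrelevant-block normalization $f''$, which satisfies $f''\leq f_{\max}$ pointwise and $t(H,f)=t(H,f'')\leq t(H,f_{\max})$. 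All other steps — membership of $g'$, $g^\star$, $f_{\max}$ in $\W_\Omega$, the pointwise inequality $\min(\eta,w)\geq\eta w$, and the nonnegativity of every term in the multinomial expansion — check out.
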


\begin{proof}
Define $g^{\text{max}}$ as follows
$$g^{\text{max}}(x,y)=
\begin{cases}
g(x,y) & (x,y) \in ([0,1]^2\setminus R)\cup \Omega^c\\
1 & \text{ otherwise},
\end{cases}
$$
where $R$ is the union of relevant blocks, as defined above.
Note that since $g=W_0$ on $[0,1]^2 \setminus \Omega$, $g^{\text{max}} \in \W_\Omega$. Also note that $\tau(g^{\text{max}})= \max_{f \in \W_\Omega}\tau(f)= t_{\max}.$

Let $e_H=|E(H)|$ and $v= |V(H)|$ be the numbers of edges and vertices in $H$.  Let
$$
d=
 \frac{t(H, g^{\text{max}})- t(H,g)}{e_H}= \frac{t_{\max}- \tau(g)}{e_H}.
 $$
Since the statement of the lemma is trivial if $\tau(g)=t_{\max}$  we may assume w.l.o.g. that
$d>0$.
The Counting Lemma \cite[Theorem 3.7]{Borgs2008}\cite[Lemma 10.23]{Lovasz2012}  implies that $\delta_\square(g^{\text{max}}, g)\geq d$, and so
$$d_\square(g, g^{\text{max}})\geq \delta_\square(g,g^{\text{max}}) \geq d.$$
Let $S=\{(x,y) \in [0,1]^2: g^{\text{max}}-g \geq d/2\}$.
Since $g^{\text{max}} \geq g$ pointwise,
\Cref{cut-sets} implies that $|S| \geq d/2$.
 Let $\eta'\triangleq \min\{\eta, d/2\}$. It follows that $g^{+\eta} -g \geq \eta'$ on $S$. By construction, $S \subseteq R$. Recall that $m$ denotes the number of blocks in $W_0$. Therefore, there are at most $m^2$ relevant blocks of $[0,1]^2$ of the form $I_i \times I_j$ for $i,j \in [m]$. Thus, there exists $a,b \in [m]$ such that $I_a \times I_b$ is relevant and $|(I_{a} \times I_{b})\cap S |\geq  d/(2m^2)$.

It suffices to show that increasing $g$ to $g^{+\eta}$ on $(I_{a} \times I_{b})\cap S$ yields a constant increase in the homomorphism density. Since $I_{a} \times I_{b}$ is a relevant block, there exists a relevant $Y \in [m]^v$ such that $Y_p=a$, $Y_q=b$ for some $\{p,q\} \in E(H)$.

Define
$$Z_{\overline{S}}(H,g,Y)= \int_{x_1 \in I_{Y_1}}  \dots \int_{x_v \in I_{Y_v}} \prod_{\{i,j\} \in E(H)} g(x_i, x_j) \1\{(x_p, x_q) \not \in S\} \, dx_v \dots dx_1. $$
$$Z_{S}(H,g,Y)= \int_{x_1 \in I_{Y_1}}  \dots \int_{x_v \in I_{Y_v}} \prod_{\{i,j\} \in E(H)} g(x_i, x_j)\1\{(x_p, x_q) \in S\} \, dx_v \dots dx_1. $$
In other words, $Z_{S}(H,g,Y)$ accounts for the  homomorphisms in which the $\{p,q\}$ edge lies in $S$, and $Z_{\overline{S}}(H,g,Y)$ accounts for the homomorphisms in which the $\{p,q\}$ edge does not lie in $S$.
Therefore,
$$t(H,g,Y)= Z_{\overline{S}}(H,g,Y)+Z_{S}(H,g,Y).$$
Since $g^{+\eta}\geq g$ pointwise, $Z_{\overline{S}}(H,g^{+\eta},Y) \geq Z_{\overline{S}}(H,g,Y)$. Next we derive two lower bounds on $Z_{S}(H,g^{+\eta},Y)$.

First observe
\begin{align*}
    Z_{S}(H,g^{+\eta},Y)&= \int_{x_1 \in I_{Y_1}} \dots \int_{x_v \in I_{Y_v}} \prod_{\{i,j\} \in E(H)} g^{+\eta}(x_i, x_j)\1\{(x_p, x_q) \in S\} \, dx_v \dots dx_1\\
    &= \int_{x_1 \in I_{Y_1}}  \dots \int_{x_v \in I_{Y_v}} g^{+\eta}(x_p, x_q) \prod_{\substack{\{i,j\} \in E(H)\\\{i,j\} \not = \{p,q\}}} g^{+\eta}(x_i, x_j)\1\{(x_p, x_q) \in S\} \, dx_v \dots dx_1\\
    &\geq\int_{x_1 \in I_{Y_1}}  \dots \int_{x_v \in I_{Y_v}} (g(x_p, x_q)+\eta') \prod_{\substack{\{i,j\} \in E(H)\\\{i,j\} \not = \{p,q\}}} g(x_i, x_j)\1\{(x_p, x_q) \in S\} \, dx_v \dots dx_1\\
    &\geq Z_{S}(H,g,Y)( 1+\eta').
    \end{align*}
The final inequality follows from noting that $g + \eta' \geq (1+ \eta')g$.
Our goal is to lower bound the  difference  $t(H,g^{+\eta})- t(H,g)$ by a constant. The above computation implies that  $t(H,g^{+\eta})- t(H,g) \geq \eta'Z_{S}(H,g,Y)$. This lower bound may not be sufficient if $Z_{S}(H,g,Y)$ is too small. We derive another lower bound for this case.

Let $\beta= \min_{j \in [m]} |I_j|$. Observe that for all $\{i,j\}\in E(H)$,  $g^{+\eta}(x_i,x_j)\geq \eta$ when $x_i \in I_{Y_i}$ and $x_j \in I_{Y_j}$, as $Y$ is relevant. It follows that $$Z_S(H, g^{+\eta},Y) \geq \eta^{e_H} |S\cap( I_{a} \times I_{b})|\prod_{j \in [m]\setminus \{p,q\}} |I_{Y_j}| \geq \eta^{e_H} |S\cap( I_{a} \times I_{b})| \beta^{v-2}.$$
We have shown that
$$Z_{S}(H,g^{+\eta},Y)- Z_{S}(H,g,Y) \geq \eta' Z_{S}(H,g,Y)$$
and
$$Z_{S}(H,g^{+\eta},Y)- Z_{S}(H,g,Y) \geq \eta^{e_H} |S\cap( I_{a} \times I_{b})| \beta^{v-2} - Z_{S}(H,g,Y).$$
Applying Fact \ref{fun fact} with $u=\eta', w= \eta^{e_H}|S\cap( I_{a} \times I_{b})|\beta^{v-2}, z=Z_{S}(H,g,Y) , $ and $\ell=Z_{S}(H,g^{+\eta},Y)- Z_{S}(H,g,Y)$, we obtain $$Z_{S}(H,g^{+\eta},Y)- Z_{S}(H,g,Y) \geq  \frac{\eta'(\eta^{e_H}|S\cap( I_{a} \times I_{b})|\beta^{v-2})}{\eta'+1}.$$
We now simplify our lower bound using the facts that $|S\cap( I_{a} \times I_{b})| \geq d/(2m^2)$, $\eta'/(1+\eta') \geq \eta'/2$,  $\eta' \geq \eta d/2,$  and
$d= (t_{\max}-\tau(g))/e_H$, obtaining
$$\frac{\eta'(\eta^{e_H}|S\cap( I_{a} \times I_{b})|\beta^{v-2})}{\eta'+1}\geq \frac{\eta^{e_H+1} d^2\beta^{v-2}}{8m^2} = \frac{\eta^{e_H+1} (t_{\max}-\tau(g))^2\beta^{v-2}}{8e_H^2 m^2}. $$

Set $c=\frac{\eta^{e_H+1} \beta^{v-2}}{8e_H^2 m^2}.$ Note that $c$ is a function only of $H$, $\gamma$, and $\eta$.
It follows that
\begin{align*}
    t(H,g^{+\eta},Y)&= Z_{\overline{S}}(H,g^{+\eta},Y)+Z_{S}(H,g^{+\eta},Y)\\
    &\geq Z_{\overline{S}}(H,g,Y)+Z_{S}(H,g,Y)+c(t_{\max}-\tau(g))^2\\
    &= t(H,g,Y) +c(t_{\max}-\tau(g))^2.
\end{align*}
Thus,
$t(H,g^{+\eta}) \geq t(H,g) +c(t_{\max}- \tau(g))^2$.
\end{proof}

Next we establish that $\phi_\tau$ is continuous when $\tau$ is a homomorphism density by using \Cref{star helper} to show that homomorphism densities have the sufficient increase property.
\begin{proof}[Proof of \Cref{homomorphism-cont}.]
By \Cref{p-star} it suffices to show that $\tau$ has the sufficient increase property.
Fix $\eta$ and $t$. Let $\alpha = \beta = \frac{c}{8}(t_{\max} -t)^2$, where $c \in (0,1]$ is from \Cref{star helper}. If
$\tau(g)> t+\frac 12 (t_{\max}-t)$, we can choose $g^*=g$.  Next, suppose
$\tau(g)\leq t+\frac 12 (t_{\max}-t)$. But then
$
t_{\max}-\tau(g)\geq \frac 12 (t_{\max}-t)$ and by \Cref{star helper},
$
\tau(g^{+\eta})\geq\tau(g)+\frac c4 (t_{\max}-t)^2.
$
The assumption $\tau(g)\geq t-\alpha$ then implies
$$
\tau(g^{+\eta})\geq t+\frac c8 (t_{\max}-t)^2 = t + \beta.
$$
Noting that $\beta\leq \frac 12  (t_{\max}-t)^2\leq  \frac 12  (t_{\max}-t)$
completes the proof.
\end{proof}
\noindent
Next we show  that the operator norm has the sufficient increase property. We begin with the following lemma.

\begin{lemma}\label{operator helper}
Let $\tau(g)= \Vert g \Vert_{\emph{\op}}$ and let $W_0 \in \mathcal{B}^{(\gamma, 1-\gamma)}$ where $\gamma \in (0,1)\cap \mathbb{Q}$ be a bipartite graphon. Let $g^{+\eta}$ be as defined in \eqref{eq:gplus_defn}. Fix any $\eta \in (0,1]$ and $g \in \W_{\Omega}$ such that  $\tau(g) < t^{\tau}_{\max}(\tW_\Omega)$. Then
$$\tau(g^{+\eta})^2 \geq
\max\left\{ \eta^2 t^2_{\max},\left(1 + \frac{\eta^4}{2^{25}} \brac{t_{\max} - \tau(g)}^{20}\right)\tau(g)^2\right\}.$$
\end{lemma}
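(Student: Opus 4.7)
The first lower bound is straightforward. Since $W_0 = f_p^\gamma$ vanishes on the diagonal blocks, every $g \in \mathcal{W}_\Omega$ satisfies $g\equiv 0$ off $\Omega$, while on $\Omega$ we have $g^{+\eta}(x,y) = \min(g(x,y)+\eta,1)\geq\eta$. Hence $g^{+\eta}\geq \eta\mathbf{1}_\Omega$ pointwise. For non-negative symmetric kernels, the operator norm is monotone under pointwise inequalities (the top eigenvector may be chosen non-negative by Perron--Frobenius, and then the variational characterization transfers the pointwise bound), and a direct computation gives $\|\mathbf{1}_\Omega\|_{\mathrm{op}}=t_{\max}=\sqrt{\gamma(1-\gamma)}$. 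Therefore $\tau(g^{+\eta})\geq\eta t_{\max}$, giving the first bound.

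For the second bound, I would exploit the bipartite structure. Let $\tilde g$, $\tilde g'$ denote the restrictions of $g$, $g^{+\eta}$ to the block $[0,\gamma]\times(\gamma,1]$, so that $\tau(g)=\|\tilde g\|_{\mathrm{op}}=s$ and $\tau(g^{+\eta}) = \|\tilde g'\|_{\mathrm{op}}$. Let $v_1\in L^2([0,\gamma])$ and $v_2\in L^2((\gamma,1])$ be the non-negative unit top singular vectors of $\tilde g$, satisfying $T_{\tilde g}v_2 = sv_1$. Testing $\tilde g'$ against $v_2$ gives
\begin{equation*}
\tau(g^{+\eta})^2 \;\geq\; \|T_{\tilde g'}v_2\|_2^2 \;=\; \|sv_1 + T_\Delta v_2\|_2^2 \;\geq\; s^2 + 2s\langle v_1, T_\Delta v_2\rangle,
\end{equation*}
with $\Delta = \tilde g'-\tilde g\geq 0$, and $\Delta \geq \eta\mathbf{1}_T$ on $T = \{(x,y):\tilde g(x,y)\leq 1-\eta\}$. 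Hence $\tau(g^{+\eta})^2 \geq s^2 + 2s\eta J$ where $J=\int_T v_1(x)v_2(y)\,dx\,dy$, and the claim reduces to proving a polynomial lower bound of the form $J \geq c\,\eta^3 s (t_{\max}-s)^{20}$ with $c$ at most $2^{-26}$.

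The hard part is this bound on $J$, which I would handle by a case split according to how close $v_1,v_2$ are to the uniform vectors $u_0=\mathbf{1}_{[0,\gamma]}/\sqrt\gamma$, $v_0=\mathbf{1}_{(\gamma,1]}/\sqrt{1-\gamma}$. In the near-uniform regime, instead of using $v_2$, test $g^{+\eta}$ directly against $(u_0+v_0)/\sqrt 2$, which gives $\tau(g^{+\eta})\geq t_{\max}^{-1}\int_{[0,\gamma]\times(\gamma,1]} g^{+\eta}$; combined with the elementary inequality $\int\min(\eta,1-\tilde g)\geq\eta\int(1-\tilde g)\geq\eta t_{\max}(t_{\max}-s)$ (the last inequality following from testing $\tilde g$ against $u_0,v_0$ to obtain $\int\tilde g\leq st_{\max}$), this yields a strong increase with no factor of $J$. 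In the non-uniform regime, use the eigen-equation $sv_1(x) = \int \tilde g(x,y)v_2(y)\,dy$ and Cauchy--Schwarz to obtain the $L^\infty$-bounds $v_1(x)\leq \sqrt{1-\gamma}/s$ and $v_2(y)\leq \sqrt\gamma/s$, and combine with the measure lower bound $|T|\gtrsim (t_{\max}-s)$ (again from $\int(1-\tilde g)\geq t_{\max}(t_{\max}-s)$, splitting that integral into contributions from $T$ and $T^c$) to force $v_1 v_2$ away from being supported entirely in $T^c$. The large exponent $20$ and the $2^{-25}$ constant reflect the accumulation of polynomial losses as one patches the two regimes together at a threshold determined by $t_{\max}-s$.
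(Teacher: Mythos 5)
Your derivation of the first bound is correct and matches the paper's argument in spirit: both proceed via $g^{+\eta}\geq\eta\mathbf{1}_{\Omega}$ and monotonicity of the operator norm.

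For the second bound, however, your proposal takes a genuinely different route from the paper's and, more importantly, it is not actually a proof — the key estimate (the lower bound $J\geq c\eta^3 s(t_{\max}-s)^{20}$) is deferred to an informal ``near-uniform vs.\ non-uniform'' dichotomy whose steps you neither quantify nor verify, and whose claimed output constant you openly attribute to ``accumulation of polynomial losses.'' One concrete gap: you set $T=\{\tilde g\leq 1-\eta\}$ and assert $|T|\gtrsim t_{\max}-s$, but the inequality $t_{\max}(t_{\max}-s)\leq\int(1-\tilde g)\leq |T|+\eta\,t_{\max}^2$ only yields $|T|\geq t_{\max}(t_{\max}-s-\eta t_{\max})$, which can be vacuous when $\eta$ is not small relative to $t_{\max}-s$. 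The paper avoids exactly this by working with the reduced threshold $\eta'=\min\{\eta,d/2\}$ (with $d=(t_{\max}-\tau(g))^4/4$, coming from the Lubetzky--Zhao Lipschitz bound $|\|f\|_{\mathrm{op}}-\|g\|_{\mathrm{op}}|^4\leq 4\,\delta_\square(f,g)$) together with Lemma~\ref{cut-sets}, which guarantees the relevant set $S$ has measure at least $d/2$ regardless of how $\eta$ compares to $t_{\max}-s$. Without this device your non-uniform branch does not close.

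The paper's argument is also structurally different from yours: rather than decomposing into singular vectors $v_1,v_2$ on the off-diagonal block and trying to control $\int_T v_1v_2$, it takes the non-negative top eigenvector $u$ of $T_g$ on all of $[0,1]$, notes (via the eigen-equation on the zero block) that $\int_\gamma^1 u\geq\|g\|_{\mathrm{op}}$, and splits on the measure of $A_\varepsilon^c=\{u<\eta/2\}$. In Case~1 ($|A_\varepsilon^c\cap[0,\gamma]|\geq d/16$) it exploits that $T_g u$ is tiny where $u$ is tiny but $T_{g^{+\eta}}u\geq\eta\|g\|_{\mathrm{op}}$ there; in Case~2 it uses the $|S|\geq d/2$ guarantee from Lemma~\ref{cut-sets} together with the lower bound $u\geq\eta/2$ on $A_\varepsilon$. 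Each case produces a clean $\tau(g^{+\eta})^2\geq\tau(g)^2+\text{(explicit)}$ with no unverified intermediate lower bound. If you want to rescue your singular-vector strategy, you would need, at minimum, to replace $\eta$ by an $\eta'$ shrunk with $t_{\max}-s$ when defining $T$, and then fully carry out both branches with explicit constants.
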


\begin{proof}
Recalling the definition of a bipartite graphon $f_p^{\gamma}$ from \Cref{b-er-results}, we
note that except for the trivial case $p\in \{0,1\}$ (in which case $\W_\Omega=\{W_0\}$
and $\tau(g)=t_{\max}$ for all $g\in \W_\Omega$), the set $\Omega$ is  $[0,\gamma]\times (\gamma,1]\cup  (\gamma,1]\times [0,\gamma]$.
Let $h^{\text{max}} =f_1^\gamma$ be the graphon that takes value $1$ on $\Omega$ and agrees with $W_0$ on $\Omega^c$ (where both are $0$).
Note that if $f \leq g$ pointwise, then $\Vert f\Vert_{\text{op}} \leq \Vert g \Vert_{\text{op}}$. It follows that
$\tau(h^{\text{max}})= t_{\max}$. The graphon $h^{\text{max}}$ satisfies $f \leq h^{\text{max}}$ for every $f \in \mathcal{W}_{\Omega}$.

To prove the first lower bound, we note that $g^{+\eta}\geq f_\eta^\gamma$ pointwise, implying that
 $\Vert g^{+\eta}\Vert_{\text{op}}\geq \Vert f_\eta^\gamma\Vert_{\text{op}}=
\eta\Vert f_1^\gamma\Vert_{\text{op}}=\eta t_{\max}$.

To prove the second lower bound, we note that it follows from the proof of \cite[Lemma 3.6]{Lubetzky2015} that for $f, g \in \W$
$$\frac{( \Vert f \Vert_{\op}- \Vert g \Vert_{\op})^4}{4}\leq \delta_\square (f,g).$$
Let $d=\frac{( t_{\max}- \Vert g \Vert_{\op})^4}{4} $. It follows that
$$d \leq \delta_\square (h^{\text{max}},g) \leq d_\square(h^{\text{max}},g).$$

Next, let $\eta'= \min\{\eta, d/2\}$ and $S= \{(x,y) \in \Omega: h^{\text{max}}(x,y)- g(x,y) \geq \eta' \}.$ Since $h^{\text{max}}=g$ on $[0,1]^2 \setminus \Omega$ and $h^{\text{max}} \geq g$ pointwise, \Cref{cut-sets} implies that $|S| \geq d/2$.

Since $T_g$ is a self-adjoint compact nonnegative linear operator, there exists $u \in L_2([0,1])$ such that $u(x) \geq 0$, $\Vert u \Vert_2 =1$ and $T_gu(x)= \Vert g \Vert_\op u(x)$ for almost all $x \in [0,1]$  \cite[Proposition 2.11]{Chatterjee2015}. Let $P \subseteq [0,1]$ be a measure one subset where this proposition holds. We will derive a lower bound on $\tau(g^{+\eta})^2$ by showing that for some $c>0$,
\begin{align}
    T_{g^{+\eta}}u(x) &\geq T_{g}u(x) +c, \label{case-goal1}
\end{align}
for $x$ in some subset of $[0,1]$. The construction of this subset will depend on $S$ and $u$. For ease of notation, we let $\ve= \eta / 2$. Define
$$A_\ve= \{ x \in [0,1]: u(x) \geq \ve\} \quad \text{ and } \quad A_\ve^c = [0,1] \setminus A_\ve.$$
We will consider two cases that depend on the size of $A_\ve^c$. In each case we find a subset of $[0,1]$ and $c>0$ satisfying \eqref{case-goal1}.

Before proceeding to the cases, we establish a useful property of $u$.
Define $$u_1 = \int_{0}^\gamma u(x) dx \quad \text{ and } \quad u_2 = \int_{\gamma}^1 u(x) dx.$$
Since $\Vert u \Vert_2=1$, there exists a subset of $[0,1]$ with positive measure where $u \geq 1$ on the subset. Let $z$ be an element of the intersection of this subset with $P$. We assume that $z \in [0, \gamma]$. After completing the proof under this assumption, we will discuss how a similar argument applies when $z \in (\gamma, 1]$. Since $z \in [0,\gamma]$, $g$ is zero on $\{z\} \times [0,\gamma]$ and so
\begin{align}\label{u2 bound}
    \Vert g \Vert_\op \leq \Vert g \Vert_\op u(z)= T_gu(z) = \int_0^1 g(z,y) u(y) dy = \int_\gamma^1 g(z,y) u(y) dy \leq u_2.
\end{align}

\noindent \textbf{Case 1:} $ \vert A_\ve^c \cap [0, \gamma] \vert \geq d/16$.\\
Let $x \in A_\ve^c \cap [0, \gamma] \cap P$. Observe
$$T_gu(x)= \Vert g \Vert_\op u(x) \leq \Vert g \Vert_\op \ve = \frac{ \eta \Vert g \Vert_\op }{2}.$$
Note that $g^{+\eta}(x,y) \geq \eta$ for all $y \in (\gamma,1]$ and $g^{+\eta}(x,y) =0 $ for all $y \in [0, \gamma]$ by construction. It follows that
$$T_{g^{+\eta}}u(x) = \int_{\gamma}^1 g^{+\eta}(x, y) u(y) dy\geq \eta u_2 \geq \eta \Vert g \Vert_\op.$$
Thus for all $x \in A_\ve^c \cap [0, \gamma] \cap P$ $$T_{g^{+\eta}}u(x) - T_gu(x) \geq \frac{\eta \Vert g \Vert_\op}{2}.$$
Observe that
\begin{align*}
    \tau(g^{+\eta})^2 &= \Vert g^{+\eta} \Vert_\op^2\geq \Vert T_{g^{+\eta}}u \Vert_2^2= \int_{0}^1 (T_{g^{+\eta}}u(x))^2 dx\\
    &\geq \int_{ A_\ve^c \cap [0, \gamma]} \brac{T_gu(x)+ \frac{\eta \Vert g \Vert_\op}{2}}^2 dx + \int_{( A_\ve^c \cap [0, \gamma] )^c}T_gu(x)^2 dx\\
    &\geq\Vert T_{g}u \Vert_2^2 + |A_\ve^c \cap [0, \gamma]|\frac{\eta^2 \Vert g \Vert_\op^2}{4}\geq
 \tau(g)^2 + \frac{d\eta^2 \Vert g \Vert_\op^2}{64}.
\end{align*}

\noindent \textbf{Case 2:} $ \vert A_\ve^c \cap [0, \gamma] \vert < d/16$.\\
Recall $|S| \geq d/2$.
For each $x \in [0,1]$, define $S_x= \{y \in [0,1]: (x,y)\in S\}$, and let $X=\{x \in (\gamma,1]: |S_x| > d/8\}$. Since $g$ is symmetric, $(x,y) \in S$ if and only if $(y,x) \in S$.  It follows that
\begin{align}\label{x eq}
    \frac{d}{4}\leq \frac{|S|}{2}= \int_\gamma^1 |S_x| \leq |X| + \frac{d}{8}(1-\gamma -|X|) \leq  |X| + \frac{d}{8}(1 -|X|) \leq  \frac{d}{8} + |X|.
\end{align}
Therefore $|X|  \geq d/8$.
Note that for all $x \in X$, $|S_x \cap A_\ve| >  d/16$ because $|S_x| > d/8$, $S_x \subseteq [0, \gamma]$ and $|A_\ve^c \cap [0,\gamma]|< d/16$. Recall that for $y \in S_x$, $g^{+\eta}(x,y) \geq  g(x,y)+ \eta'$ and that for $y \in A_\ve$, $u(y) \geq \ve$.
Therefore, for all $x \in X$,
\begin{align*}
    T_{g^{+\eta}}u(x) &= \int_0^1 g^{+\eta}(x,y) u(y) dy
\geq \int_{ S_x \cap A_\ve} (g(x,y)+ \eta') u(y) dy + \int_{( S_x \cap A_\ve)^c} g(x,y) u(y) dy\\
&\geq T_gu(x) + \ve \eta' |S_x \cap A_\ve|
\geq T_gu(x) + \frac{d\ve \eta'}{16}.
\end{align*}
Finally, observe that
\begin{align*}
    \tau(g^{+\eta})^2
    &\geq \int_{0}^1 (T_{g^{+\eta}}u(x))^2 dx
    \geq \int_{ X} \brac{ T_gu(x) +\frac{d \ve \eta'}{16}}^2 dx + \int_{ X^c}T_gu(x)^2 dx\\
    &\geq \Vert T_{g}u \Vert_2^2 + |X|\bfrac{d \ve \eta'}{16}^2
    \geq \tau(g)^2 + \frac{d^3\ve^2 \eta'^2}{2^{11}}.
\end{align*}

\noindent Recalling that $\ve= \eta/2$, $\eta' \geq \eta d/2$, and $d= ( t_{\max} - \tau(g))^4/4$, we see that in
the first case, we have
\begin{align*}
\tau(g^{+\eta})^2- \tau(g)^2 \geq \frac{d \eta^2 \Vert g \Vert_{\op}^2}{64} = \frac{\eta^2 (t_{\max} - \tau(g))^4 \tau(g)^2}{2^{8}}
\end{align*}
and in the second case,
\begin{align*}
\tau(g^{+\eta})^2- \tau(g)^2 \geq \frac{d^3 \ve^2 \eta'^2}{2^{11}} \geq \frac{d^3 \eta^2}{2^{13}} \left(\frac{\eta d}{2}\right)^2 = \frac{\eta^4 d^5}{2^{15}} = \frac{\eta^4 (t_{\max} - \tau(g))^{20}}{2^{25}}.
\end{align*}
Therefore, in both cases
$$\tau(g^{+\eta})^2- \tau(g)^2 \geq \frac{\eta^4}{2^{25}} ( t_{\max} - \tau(g))^{20}\tau(g)^2.$$

Finally we revisit our assumption prior to the case work that the value $z \in P$ such that $u(z) \geq 1$ is in $[0,\gamma]$. Suppose instead that $z \in (\gamma, 1]$. The equation analogous to  \eqref{u2 bound} implies that $u_1 \geq \Vert g \Vert_\op$. Now switching the roles of $[0, \gamma]$ and $(\gamma, 1]$ in Case 1 gives the same lower bounds on $\tau(g^{+\eta})^2$.
\end{proof}

\noindent
Next we establish the continuity of  $\phi_\tau$ when $\tau$ is the operator norm and $W_0$ is a bipartite graphon with $W_0 \in \mathcal{B}^{(\gamma, 1-\gamma)}$. We use \Cref{operator helper} to establish the sufficient increase property.

\begin{proof}[Proof of \Cref{operator-cont}.]
By \Cref{p-star}, it suffices to show that $\tau$ has the sufficient increase property.
Fix $\eta$ and $t$. If $t=0$, the choice $\beta=\eta t_{\max}$ and the first bound in  \Cref{operator helper} implies that $\tau(g^{+\eta})\geq \beta$.

 If $t>0$, the proof proceeds along the same lines as the proof of \Cref{homomorphism-cont}.
 Set $\alpha=\min\{t/2,ct/4\}$ and $\beta=\min\{ct/4, (t_{\max}-t)/2\}$, where \[c=\sqrt{1 + \frac{\eta^4}{2^{25}} \brac{t_{\max} - t}^{20}}-1.\] As before, the required bound is easy if
$\tau(g)> t+\frac 12 (t_{\max}-t)$.  In this case, we again choose $g^*=g$. Next, suppose
$\tau(g)\leq t+\frac 12 (t_{\max}-t)$.  But then
$
t_{\max}-\tau(g)\geq \frac 12 (t_{\max}-t)$ and by the second bound in \Cref{operator helper} ,
$\tau(g^{+\eta}) \geq
(1+c)\tau(g)$. %\qquad\text{where}\qquad c=\sqrt{1 + \frac{\eta^4}{2^{25}} \brac{t_{\max} - t}^{20}}-1. 
The assumption
$\tau(g)\geq t-\alpha$ implies
$$
\tau(g^{+\eta}) \geq (t-\alpha) +(t-\alpha)c\geq t-ct/4+ct/2\geq t+ct/4\geq t+\beta.
$$
\end{proof}

\subsection{Properties of $\phi_\tau$ at points of continuity}\label{prop-of-phi}
\Cref{phi is h} below states that at points of continuity, the function $\phi_\tau(W_0,t)$ can be alternatively expressed as the minimum of $I_{W_0}$ over a subset of $\W$. We use this characterization of $\phi_\tau$ to establish that $\phi_\tau$ is strictly increasing when $\tau$ is an increasing, uniformly continuous graph parameter and $\phi_\tau$ is continuous (\Cref{strictly-increasing}).

\begin{lemma} \label{phi is h} Fix $m \in \mathbb{Z}^+$, $\gamma \in \Delta_m$, $W_0 \in \mathcal{B}^\gamma$, and let  $t\in\R$.
If $\phi_{\tau}(W_0,\cdot)$ is continuous at $t$, then
\begin{equation}
    \phi_{\tau}(W_0,t) = \inf \{I_{W_0}(f) : f \in \mathcal{W}, \tau(f) \geq t\}.
\end{equation}
\end{lemma}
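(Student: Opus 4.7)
The plan is to prove both inequalities separately. The easy direction, $\phi_{\tau}(W_0,t) \leq \inf\{I_{W_0}(f) : f \in \W, \tau(f) \geq t\}$, does not use the continuity hypothesis and follows directly from the definitions of $\phi_\tau$ and $J_{W_0}$. Indeed, for any $f \in \W$ with $\tau(f) \geq t$, its equivalence class $\tilde f$ satisfies $\tau(\tilde f) \geq t$, so $\tilde f$ is feasible for the problem defining $\phi_\tau(W_0,t)$; and since $f \in B(\tilde f, \eta)$ for every $\eta > 0$, we have
\[
J_{W_0}(\tilde f) = \sup_{\eta>0}\inf_{h \in B(\tilde f,\eta)}I_{W_0}(h) \leq I_{W_0}(f),
\]
so $\phi_\tau(W_0,t) \leq J_{W_0}(\tilde f) \leq I_{W_0}(f)$. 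Taking the infimum over $f$ gives the inequality.

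For the opposite direction, write $t_{\max} = \tau_{\max}(\tW_\Omega)$. If $t > t_{\max}$, then any $f$ with $\tau(f) \geq t$ lies outside $\W_\Omega$, so $I_{W_0}(f) = \infty$ by \Cref{finite-or-not} and both sides are $\infty$; since $\phi_\tau(W_0,\cdot)$ is discontinuous at $t_{\max}$ (see the discussion following \Cref{definition:nice-graph-parameter}), the continuity hypothesis forces $t < t_{\max}$ in every remaining case. Fix $t' \in (t, t_{\max})$. The set $\{\tilde g \in \tW : \tau(\tilde g) \geq t'\}$ is a nonempty compact subset of $\tW$ by continuity of $\tau$ and compactness of $\tW$, and $J_{W_0}$ is lower semi-continuous on $\tW$ and bounded by $C(W_0)$ on $\tW_\Omega$ by \Cref{lemma:semi-continuity-J} and \Cref{finite-or-not}; hence the problem defining $\phi_\tau(W_0,t')$ admits a minimizer $\tilde f^*_{t'} \in \tW_\Omega$. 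By the definition of $J_{W_0}$ as a supremum of infima, for every $\eta > 0$ one can pick $h_{\eta,t'} \in B(\tilde f^*_{t'}, \eta)$ with
\[
I_{W_0}(h_{\eta,t'}) \leq J_{W_0}(\tilde f^*_{t'}) + \eta = \phi_\tau(W_0,t') + \eta.
\]

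The last step is to make $h_{\eta,t'}$ feasible for the right-hand-side infimum, i.e., to arrange $\tau(h_{\eta,t'}) \geq t$. Continuity of $\tau$ on $(\tW,\delta_\square)$ together with $\tau(\tilde f^*_{t'}) \geq t'$ gives $\tau(h_{\eta,t'}) \geq t$ once $\eta$ is small enough (depending on $t'-t$), so $\inf\{I_{W_0}(f) : \tau(f) \geq t\} \leq \phi_\tau(W_0,t') + \eta$. Sending $\eta \to 0$ and then $t' \searrow t$, and invoking the assumed continuity of $\phi_\tau(W_0,\cdot)$ at $t$, finishes the proof. The only real subtlety, and the place where the continuity hypothesis is used in an essential way, is this last passage from $t'$ back to $t$: because the near-minimizing $h_{\eta,t'}$ may violate the constraint $\tau \geq t$, we overshoot to a slightly larger target $t' > t$ and then use continuity of $\phi_\tau$ to push $t'$ back down to $t$ without losing entropy.
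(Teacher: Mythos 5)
Your proof is correct and follows essentially the same route as the paper's: both exploit that near-minimizers of $J_{W_0}$ in the cut metric can be made to satisfy the relaxed constraint $\tau \geq t$ (via continuity of $\tau$) once one overshoots to a slightly larger target $t' > t$, and both then remove the overshoot using right continuity of $\phi_\tau$ at $t$. The paper packages the first step as the standalone identity $\inf\{J_{W_0}(\tilde f): \tau(\tilde f) > a\} = \inf\{I_{W_0}(f): \tau(f) > a\}$, whereas you work directly with an explicit minimizer of $\phi_\tau(W_0,t')$ and a near-minimizing graphon in its $\eta$-ball; these are two phrasings of the same argument.
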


\begin{lemma}\label{strictly-increasing}
Fix $m \in \mathbb{Z}^+$, $\gamma \in \Delta_m$, $W_0 \in \mathcal{B}^\gamma$, and let $\tau$ be a graph parameter that is uniformly continuous (with respect to $\delta_\square$) and increasing, meaning that if $ f \geq g$ pointwise then $\tau(f) \geq \tau(g)$. Suppose that
$\phi_{\tau}(W_0, \cdot)$ is continuous
on the open interval $(\tau(W_0), t^{\tau}_{\max}(\tW_\Omega))$.
Then $\phi_{\tau}(W_0,\cdot)$
is strictly increasing on
$[\tau(W_0), t_{\max}]$.

Therefore, if $\tau$ is a uniformly continuous increasing graph parameter, $\phi_\tau(W_0,\cdot)$ is continuous on $(\tau(W_0), t_{\max})$,  and $\ti{f}$ is a minimizer of the variational problem \eqref{phi-defn}, then $\tau(f)=t$
for all $t\in [\tau(W_0), t_{\max}]$.
\end{lemma}

\begin{remark}\label{equality at min}
Note that the Counting Lemma
\cite[Lemma 10.23]{Lovasz2007} and \cite[Lemma 3.6]{Lubetzky2015} imply that homomorphism densities and the operator norm are each uniformly continuous with respect to $\delta_\square$. 
\end{remark}

\begin{proof}[Proof of \Cref{phi is h}.]
Let $h(t)= \inf\{I_{W_0}(f) : f \in \W, \tau(f) \geq t\}$.
It is clear from the definition that $\phi \leq h$. We will show that the right continuity of $\phi$ at $t$ implies that $\phi(t) \geq h(t)$.

To this end, we claim that for all $a \in \R$,
\begin{align} \label{strict}
\inf\{J_{W_0} ( \ti{f}) : \tau(\ti{f}) > a\}=\inf\{I_{W_0} ( f) : \tau(f) > a\}.
\end{align}
Indeed, it is clear the left hand side is at most the right hand side.
Since both sides are infinite if the set  $\{\ti f\in \tW_\Omega:\tau(\ti f)>a\}$ is empty,  we may assume that this set contains at least one
$\ti{f}\in\tW_\Omega$ such that $\tau(\ti{f}) > a$.
 By the definition of $J_{W_0}$, there exists $g_n$ such that $\delta_\square(g_n , \ti{f}) \to 0$ and $I_{W_0}(g_n) \to J_{W_0}( \ti{f})$. By continuity of $\tau$, there exists $n_0$ sufficiently large such that for all $n >n_0$, $\tau(g_n) >a$. Thus $$J_{W_0}(\ti{f}) \geq \inf \{ I_{W_0}(h) : \tau(f) >a\},$$ and so \eqref{strict} follows.

We now turn to showing that if $\phi$ is right continuous at $t$, then $\phi(t) \geq h(t)$. Applying \eqref{strict}, we obtain
\begin{align*}
\phi(t + \ve) &\geq \inf \{ J_{W_0}(\ti{f}) : \tau(\ti{f}) > t+ \ve/2\}\\
&=\inf\{ I_{W_0}(f) : \tau(f) > t + \ve/2\}\\
&\geq \inf \{ I_{W_0}(f) : \tau(f) \geq t\}\\
&= h(t)
\end{align*} for any $\ve>0$.
It follows by right continuity of $\phi$ at $t$ that $$\phi(t) = \lim_{\ve \to 0} \phi(t +\ve) \geq h(t).$$\end{proof}

\begin{lemma} \label{increasing-helper}
Fix $m \in \mathbb{Z}^+$, $\gamma \in \Delta_m$, and $W_0 \in \mathcal{B}^\gamma$,
and let $f, g \in \W_\Omega$. For any $\ve > 0$, there exists $\eta= \eta(\ve,W_0)>0$ such that if $f \geq g \geq W_0$ pointwise and $d_\square(f,g) \geq \ve$, then
$$I_{W_0}(g) \leq I_{W_0}(f)-\eta.$$
\end{lemma}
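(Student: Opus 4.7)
The plan is to combine a quantitative strong convexity estimate for $h_p$ with the pointwise separation guaranteed by \Cref{cut-sets}. In fact, I expect $\eta$ can be taken independent of $W_0$, so the ``$W_0$'' in $\eta(\ve,W_0)$ is harmless.

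First, I would note that since $f,g\in \W_\Omega$ and $f\geq g\geq W_0$ pointwise, we have $f=g=W_0$ almost everywhere on $\Omega^c$. Define $S^+_{\ve/2}=\{(x,y):f(x,y)-g(x,y)\geq \ve/2\}$; then $S^+_{\ve/2}\subseteq\Omega$ up to a null set. Since $f\geq g$ pointwise and $d_\square(f,g)\geq \ve$, \Cref{cut-sets}(1) gives $|S^+_{\ve/2}|\geq \ve/2$.

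The analytic ingredient is that $h_p''(u)=\tfrac{1}{u(1-u)}\geq 4$ for all $p,u\in(0,1)$, and that $h_p'(p)=0$ with $h_p'$ increasing on $[p,1]$. By Taylor's theorem with integral remainder, whenever $p\in(0,1)$ and $p\leq g\leq f\leq 1$,
\begin{align*}
h_p(f)-h_p(g) \;=\; h_p'(g)(f-g) + \int_g^f h_p''(t)(f-t)\,dt
\;\geq\; 0 + 4\int_g^f (f-t)\,dt \;=\; 2(f-g)^2.
\end{align*}
The only mild subtlety is that the integrand $h_p''(t)(f-t)$ is unbounded near $t=1$, but the bound $4(f-t)$ is integrable, and the weighted integral of $h_p''$ remains finite even at $f=1$ (e.g.\ $\int_g^1 (1-t)/(t(1-t))\,dt=-\log g<\infty$), so the estimate above is valid throughout $[p,1]$. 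This is the only step I would identify as needing care.

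Applying this pointwise on $\Omega$ with $p=W_0(x,y)$, and using that the integrand vanishes on $\Omega^c$ (where $f=g$) and is nonnegative on $\Omega$ (by $f\geq g\geq W_0$), I would conclude
\begin{align*}
I_{W_0}(f)-I_{W_0}(g)
&=\frac{1}{2}\int_\Omega \bigl[h_{W_0(x,y)}(f(x,y))-h_{W_0(x,y)}(g(x,y))\bigr]\,dx\,dy \\
&\geq \int_\Omega (f-g)^2\,dx\,dy
\;\geq\; \int_{S^+_{\ve/2}} \Bigl(\frac{\ve}{2}\Bigr)^2 dx\,dy
\;\geq\; \Bigl(\frac{\ve}{2}\Bigr)^2\cdot\frac{\ve}{2} \;=\; \frac{\ve^3}{8}.
\end{align*}
Thus one may take $\eta=\ve^3/8$, completing the proof.
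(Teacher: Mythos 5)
Your proof is correct, and it follows the same overall structure as the paper's: apply Lemma \ref{cut-sets}(1) to produce a set $S_{\ve/2}^+$ of measure at least $\ve/2$ on which $f - g \geq \ve/2$, lower-bound the pointwise entropy gap on this set, and integrate. The difference is in the quantitative lower bound on $h_p(f) - h_p(g)$. The paper uses that $h_p$ is increasing on $[p,1]$, and sets
$\eta' = \min_{p \in Im(W_0)\setminus\{0,1\}} \min_{x \in [p,1-\ve/2]} \bigl[h_p(x+\ve/2) - h_p(x)\bigr]$,
a minimum over a finite, $W_0$-dependent set of $p$'s; this is elementary but produces a constant that depends on $W_0$. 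You instead invoke the uniform strong-convexity bound $h_p''(u) = 1/(u(1-u)) \geq 4$ together with Taylor's theorem with integral remainder, using $h_p'(g) \geq 0$ (valid because $g \geq W_0 = p$ on $\Omega$) to discard the first-order term. This yields the explicit, $W_0$-independent bound $h_p(f) - h_p(g) \geq 2(f-g)^2$, hence $\eta = \ve^3/8$. That is a genuinely cleaner route and produces a strictly stronger conclusion (uniformity in $W_0$). Your handling of the endpoint $f = 1$ and the reduction to $\Omega$ (using $f = g = W_0$ a.e.\ on $\Omega^c$) are both correct and are also implicitly required in the paper's version. The only thing worth flagging is cosmetic: the paper's $\eta'$ is defined via a minimum over $Im(W_0)\setminus\{0,1\}$, which requires $W_0$ to have finite image bounded away from $\{0,1\}$ for the min to be positive; your bound sidesteps that issue entirely.
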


\begin{proof}
We may assume that $\ve < 2(1-p)$ for $p \in Im(W_0) \setminus \{0,1\}$.
Recall the definition
$$S_{\ve/2}^+= \{(x,y) \in [0,1]^2 : f-g \geq \ve/2\}.$$
\Cref{cut-sets} implies that $|S_{\ve/2}^+| >\ve/2$.
Let $$\eta'= \min_{p \in Im(W_0)\setminus \{0,1\}} \min_{x \in [p,1-\ve/2]} [h_p(x +\ve/2)- h_p(x)]>0.$$
Since $f \geq g \geq W_0$ and $h_{p}(\cdot)$ is increasing on $[p,1]$, we have that
$$
h_{W_0(x,y)}(f(x,y)) \geq h_{W_0(x,y)}(g(x,y) +\ve/2) \geq h_{W_0(x,y)}(g(x,y)) +\eta'
$$
for all $(x,y) \in S_{\ve/2}^+$.  As a consequence,
\begin{align*}
    I_{W_0}(f) &= \int_{[0,1]^2} h_{W_0(x,y)}(f(x,y))\, dx \,dy\\
    &\geq \int_{[0,1]^2\setminus S_{\ve/2}^+} h_{W_0(x,y)}(g(x,y))\, dx \,dy+ \int_{S_{\ve/2}^+} (h_{W_0(x,y)}(g(x,y))+\eta')\, dx \,dy \\
    &\geq I_{W_0}(g) +\eta' |S_{\ve/2}^+| \geq I_{W_0}(g)+\frac{\ve\eta'}{2}.
\end{align*}
Taking $\eta= \eta' \ve/2$ completes the proof.
\end{proof}

\begin{proof}[Proof of \Cref{strictly-increasing}]
The lemma is trivial if $\tau(W_0) =t^{\tau}_{\max}(\tW_\Omega)$ so we may assume that
$\tau(W_0) < t_{\max}$.  Furthermore, since $\phi_\tau(W_0,\cdot)$ is
non-decreasing, it is enough to prove that it is strictly increasing on the open interval
$(\tau(W_0), t_{\max})$.
Let $\tau(W_0)< t_1 < t_2<t_{\max}$. We will prove that $\phi_\tau(W_0,t_1)< \phi_\tau(W_0,t_2)$ by applying \Cref{phi is h} and showing that
$$ \inf\{I_{W_0}(f): \tau(f) \geq t_1\} <  \inf\{I_{W_0}(f): \tau(f) \geq t_2\}.$$
To establish the above statement, it suffices to show that there exists $\eta= \eta(t_1,t_2)>0$ such that the following is true. If $f \in \W_\Omega$ is such that $\tau(f) \geq t_2$, then there exists $g' \in \W_\Omega$ such that $\tau(g') = t_1$ 
and $I_{W_0}(g') \leq I_{W_0}(f)-\eta$.

Given $f \in \mathcal{W}_{\Omega}$ with $\tau(f) \geq t_2$, we will define $g$ and $g'$ satisfying $\delta_{\square}(g, g') \geq \beta$ where $\beta$ is a function of $t_2 - t_1$. We will then show that $I_{W_0}(g) \leq I_{W_0}(f)$ and $I_{W_0}(g') \leq I_{W_0}(g) - \eta$, for $\eta = \eta(\beta)$.
Define $g \in \W_\Omega$ such that $g(x,y) = \max\{ W_0(x,y), f(x,y)\}$. Since $g \geq f$ pointwise and $\tau$ is increasing, $\tau(g) \geq \tau(f) \geq t_2$. Moreover $g \geq W_0$ pointwise. Next define $g_\alpha \in \W_\Omega$ where
\[g_\alpha(x,y) = W_0(x,y) + \alpha(g(x,y)-W_0(x,y)).  \]

By construction $g_1=g $ and $g_0=W_0$, and so
$\tau(g_1)\geq t_2$, and $\tau(g_0) \leq t_1$. Since $\tau( g_\alpha)$ decreases continuously as $\alpha \to 0$, there exists some $\alpha_0$ such that $\tau(g_{\alpha_0}) = t_1$.
Let $g'= g_{\alpha_0}$. Since $\tau(g)- \tau(g') \geq t_2-t_1$, the uniform continuity of $\tau$ implies that $\delta_\square(g,g') \geq \beta$ for some positive $\beta= \beta(t_2-t_1)$. It follows that $d_\square(g,g') \geq \beta$. Note also that $g,g' \in \W_\Omega$ and $g \geq g'$ pointwise. Therefore \Cref{increasing-helper} implies that there exists $\eta=\eta(\beta)$ such that $I_{W_0} (g') \leq I_{W_0} (g) -\eta.$ By construction, $I_{W_0}(g) \leq I_{W_0}(f)$ and so $I_{W_0} (g') \leq I_{W_0} (f) -\eta$.
\end{proof}

\section{The symmetric regime in general block models}
\label{sec:symmetry}
In this section we prove \Cref{sym-regime,sym-regime-near-one}, which establish that for any $d$-regular graph $H$ and $W_0\in \mathcal{B}^\gamma$, there exists a symmetric regime for $W_0$ and $t(H, \cdot)$.

\paragraph{Notation.}
Throughout this section we fix a particular $d$-regular graph $H$ with $d\geq 2$. Since we consider $d$ to be fixed, we suppress the dependence on $d$ in our notation.

\begin{definition} Let $p \in (0,1)$ and $d \geq 2$. We define $\psi_p: [0,1] \to \R$ as
$$\psi_p(x) = h_p(x^{1/d}),$$
and let $\hat{\psi}_p(x)$ denote the convex minorant of $\psi_p(x)$.
\end{definition}
\Cref{all-about-psi} in the Appendix collects some useful properties of $\psi_p$.

Next, we introduce notation that allows us to reason about individual blocks within a graphon.
Let $m \in \mathbb{Z}^+$ and let $\gamma \in \Delta^m$ be a vector of interval widths that determines block membership. Let $I_j$ for $j \in [m]$ be as given in \Cref{block-graphon}.
For $x \in [0,1]$, recall that $\vartheta(x)$ denotes the membership of $x$, so that  $x \in I_{\vartheta(x)}$.

Let $f \in \W$. For each $i,j \in [m]$, we define a
 function $f_{ij}:[0,1]^2\to[0,1]$
 that describes $f$ restricted to the block $ I_i \times  I_j$ by \footnote{
 Strictly speaking, the function $f_{ij}$ contains a little more information than is contained in $f$ restricted to $I_i\times I_j$, namely, it represents the function $f$ restricted to the closure of $I_i\times I_j$.    But this  difference only appears on a set of measure zero, and is thus inconsequential; furthermore, the relation \eqref{r-defn-2} holds for all $(x,y)\in [0,1]^2$. }
 \begin{equation}\label{fij}
 f_{ij}(x,y)=f\left(\sum_{k=0}^{i-1}\gamma_k+x\gamma_i,\sum_{k=0}^{j-1}\gamma_k +y\gamma_j\right)
\end{equation}
with $\gamma_0 = 0$.  We write $f = (f_{ij})_{i,j \in [m]}$ to indicate that
\begin{align}\label{r-defn-2}
f(x,y) &= f_{\vartheta(x), \vartheta(y)}(r(x), r(y)),
\end{align}
where
\begin{align}\label{r-defn}
r(x) = \frac{x - \sum_{i=0}^{\vartheta(x) - 1} \gamma_i}{\gamma_{\vartheta(x)}}
\end{align}
 (see Figure \ref{fig:block-graphon}). By an abuse of notation, when a graphon $f$ takes constant values on the blocks defined by $\gamma$ (as in  \Cref{block-graphon}), we write $f=(f_{ij})_{i,j \in [m]}$ where each $f_{ij} \in \mathbb{R}$ is a constant rather than a constant function.

In this section, we will utilize the restricted functions $f_{ij}$. Note that in contrast to the original graphon $f \in \W$, the $f_{ij}$ functions are not necessarily symmetric. However, we will continue to use the cut distance $d_{\square}$ on these functions. In particular, we will crucially use Lemma \ref{cut-sets}---we note that the proof of this result does not utilize the symmetry of the functions, and thus continues to hold in this extended setting.

\tikzset{blockr/.style= {rectangle, draw=black!50, fill=black!20, thick}}
\tikzset{blockr/.style= {rectangle, draw=black!50, fill=black!20, thick}}
\tikzset{blockr/.style= {rectangle, draw=black!50, fill=black!20, thick}}
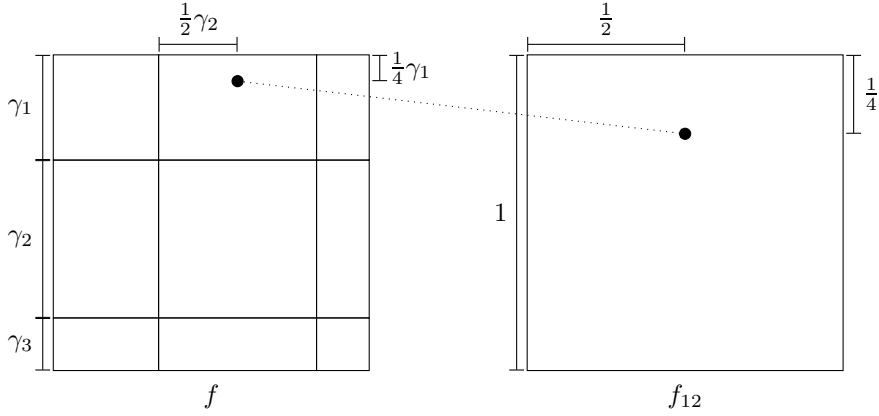
\begin{figure}[h]
\centering
\begin{tikzpicture}[scale=0.7]
%first row
\draw [] (0,0) rectangle node {} (2,-2);
\draw [] (2,0) rectangle node {} (5,-2);
\draw [] (5,0) rectangle node {} (6,-2);
%second row
\draw [] (0,-2) rectangle node {} (2,-5);
\draw [] (2,-2) rectangle node {} (5,-5);
\draw [] (5,-2) rectangle node {} (6,-5);
%third row
\draw [] (0,-5) rectangle node {} (2,-6);
\draw [] (2,-5) rectangle node {} (5,-6);
\draw [] (5,-5) rectangle node {} (6,-6);
%%% length labels
\draw [|-|] (-0.2,0) -- (-0.2,-2) node[midway,left] {$\gamma_1$};
\draw [|-|] (-0.2,-2) -- (-0.2,-5) node[midway,left] {$\gamma_2$};
\draw [|-|] (-0.2,-5) -- (-0.2,-6) node[midway,left] {$\gamma_3$};
%% labelled point
\filldraw (3.5,-0.5) circle (3pt);
\draw [|-|] (2,0.2) -- (3.5,0.2) node[midway,above] {$\frac{1}{2} \gamma_2$};
\draw [|-|] (6.2,0) -- (6.2,-0.5) node[midway,right] {$\frac{1}{4} \gamma_1$};
%% graphon f12
\draw [] (9,0) rectangle node {} (15,-6);
\filldraw (12,-1.5) circle (3pt);
\draw [|-|] (8.8,0) -- (8.8,-6) node[midway,left] {$1$};
\draw [|-|] (9,0.2) -- (12,0.2) node[midway,above] {$\frac{1}{2}$};
\draw [|-|] (15.2,0) -- (15.2,-1.5) node[midway,right] {$\frac{1}{4}$};
\draw[dotted,->] (3.5,-0.5) -- (12,-1.5);
%% graphon labels
\node at (3,-6.5) {$f$};
\node at (12,-6.5) {$f_{12}$};
\end{tikzpicture}
\caption{Illustration of the graphon $f = (f_{ij})_{i,j \in [m]}$ for $m = 3$. The indicated point in the graphon $f$ is in the $(1,2)$ block. The point is mapped to a point in the function $f_{12}$, with a scaled position.}\label{fig:block-graphon}
\end{figure}

\begin{definition}\label{defn-averaged}
Let $d>0$ and $m \in \mathbb{Z}^+$. For a graphon $f = (f_{ij})_{i,j \in [m]}$, we define the corresponding $d$-averaged block constant graphon $$f^{*} = (\|f_{ij}\|_d)_{i,j \in [m]},$$  where the $d$-norm is defined as $\Vert g \Vert_d = \left(\int_{[0,1]^2} g(x,y)^d dx dy \right)^{\frac{1}{d}}$.
\end{definition}

\begin{definition}
\label{def:convex-min}
 Let $m \in \mathbb{Z}^+$, $\gamma \in \Delta_m$, and $W_0\in \mathcal{B}^{\gamma, *}$ with $W_0= (p_{ij})_{i,j \in [m]}$. We say that the graphon $f= (f_{ij})_{i,j \in [m]}$ satisfies the \emph{$\ve$-neighborhood minorant condition} with respect to $W_0$ if for all $(i,j)$ such that $p_{ij} \in (0,1)$,  $\psi_{p_{ij}} = \hat{\psi}_{p_{ij}}$ in an $\ve$-neighborhood around $\Vert f_{ij} \Vert_d^d$, meaning
\[x \in \left(\Vert f_{ij} \Vert_d^d - \epsilon, \Vert f_{ij} \Vert_d^d + \epsilon \right) \cap[0,1] \implies  \psi_{p_{ij}}(x) = \hat{\psi}_{p_{ij}}(x).\] 
\end{definition}

\begin{remark}
To avoid ambiguity, we define the $\ve$-neighborhood minorant condition for $W_0$ with respect to the minimal block structure describing $W_0$. For this reason we require $W_0 \in {B}^{\gamma,*}$ in \Cref{j-sym,def:convex-min} and \Cref{sym-regime,sym-regime-near-one}.
\end{remark}

\subsection{A key lemma for establishing the symmetric regime}
We will establish the symmetric regimes for $\delta$ small and $\delta$ large (\Cref{sym-regime,sym-regime-near-one} respectively) by applying the following lemma.

\begin{lemma} \label{sym-condition}
Let $m \in \mathbb{Z}^+$, $\gamma \in \Delta_m$, and $W_0 \in \mathcal{B}^{\gamma, *}$, $\ve>0$, and $H$ be a $d$-regular graph. Suppose $\ti{f}$ is a minimizer of the variational problem \eqref{min_eq} for $\tau= t(H, \cdot)$ and there exists a sequence of graphons $f_n \in \W_\Omega$ such that each $f_n$ satisfies the $\ve$-neighborhood minorant condition, $\delta_\square(f_n, \ti{f}) \to 0$, and $I_{W_0}(f_n) \to J_{W_0}(\ti{f}).$ Then $\ti{f} \in \ti{\mathcal{B}}^\gamma$ and $J_{W_0}(\ti{f})= I_{W_0}(g)$ for some $g \in \mathcal{B}^\gamma$ with $\ti{g}= \ti{f}$.
\end{lemma}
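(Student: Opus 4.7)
The strategy is to build a sequence of block-constant graphons $f_n^{\ast} \in \mathcal{B}^\gamma$ converging to $\tilde f$ in the cut metric. Since $\tilde{\mathcal{B}}^\gamma$ is closed (as follows from the proof of \Cref{b-gamma-compact}), this will force $\tilde f \in \tilde{\mathcal{B}}^\gamma$, and a subsequent comparison of entropies will deliver the identity $J_{W_0}(\tilde f)=I_{W_0}(g)$. The natural candidate is $f_n^{\ast}=(\|f_{n,ij}\|_d)_{i,j\in[m]}$, the $d$-averaged block-constant graphon of \Cref{defn-averaged}. Two symmetrization inequalities drive the argument. First, decomposing $t(H,f_n)=\sum_{Y\in[m]^v}t(H,f_n,Y)$ over interval labelings and applying the block-wise generalized H\"older inequality (valid for $d$-regular $H$) yields $t(H,f_n)\leq t(H,f_n^{\ast})$. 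Second, decomposing $I_{W_0}$ block-wise, writing $h_{p_{ij}}(\cdot)=\psi_{p_{ij}}(\cdot^d)$, and applying Jensen's inequality to the convex minorant $\hat\psi_{p_{ij}}$---combined with the $\ve$-convex minorant condition, which guarantees $\hat\psi_{p_{ij}}(\|f_{n,ij}\|_d^d)=\psi_{p_{ij}}(\|f_{n,ij}\|_d^d)$---gives $I_{W_0}(f_n^{\ast})\leq I_{W_0}(f_n)$. Blocks with $p_{ij}\in\{0,1\}$ contribute trivially since $f_n\equiv p_{ij}$ there.

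Since $\mathcal{B}^\gamma$ is identified with the compact set $[0,1]^{m^2}$, after passing to a subsequence I may assume $f_n^{\ast}\to g$ in $\|\cdot\|_\infty$ for some $g\in\mathcal{B}^\gamma$. Then \Cref{uniform-cont} gives $I_{W_0}(f_n^{\ast})\to I_{W_0}(g)$, and the continuity of $t(H,\cdot)$ paired with the first inequality above yields $t(H,g)\geq t(H,\tilde f)\geq t$. Combining these with $I_{W_0}(f_n)\to J_{W_0}(\tilde f)$ and the minimality of $\tilde f$:
\[
J_{W_0}(\tilde f)\leq J_{W_0}(\tilde g)\leq I_{W_0}(g)=\lim_n I_{W_0}(f_n^{\ast})\leq \lim_n I_{W_0}(f_n)=J_{W_0}(\tilde f),
\]
so equality holds throughout. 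In particular $\tilde g$ is a minimizer in $\tilde{\mathcal{B}}^\gamma$ with $J_{W_0}(\tilde g)=I_{W_0}(g)$. What remains is to identify $\tilde g$ with $\tilde f$; for this it suffices to show $\delta_\square(f_n,f_n^{\ast})\to 0$, since the triangle inequality would then give $\tilde f_n^{\ast}\to\tilde f$ in cut metric, so $\tilde g=\tilde f$, yielding both conclusions of the lemma.

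The vanishing of the total Jensen gap $I_{W_0}(f_n)-I_{W_0}(f_n^{\ast})\to 0$ forces each block-wise gap to vanish. Fix a block $(i,j)$ with $p_{ij}\in(0,1)$ and set $u_n=f_{n,ij}^d$, $a_n=\|f_{n,ij}\|_d^d$. After passing to a further subsequence with $a_n$ converging to a point in the strict-convexity region of $\psi_{p_{ij}}$, a uniform lower bound $\psi_{p_{ij}}''\geq c>0$ is available on $[a_n-\ve,a_n+\ve]$. Partitioning the domain into $A_n=\{u_n\in[a_n-\ve,a_n+\ve]\}$ and $B_n=A_n^c$, using $\int(u_n-a_n)=0$, a Taylor estimate on $A_n$, and a convexity bound producing a uniform gap of at least $c\ve^2/2$ on $B_n$ between $\hat\psi_{p_{ij}}(u_n)$ and the tangent line at $a_n$, one obtains
\[
\int\bigl[\psi_{p_{ij}}(u_n)-\psi_{p_{ij}}(a_n)\bigr]\;\geq\;\tfrac{c}{2}\int_{A_n}(u_n-a_n)^2+\tfrac{c\ve^2}{2}\,|B_n|.
\]
Since the left side tends to $0$, both $|B_n|\to 0$ and $\int_{A_n}|u_n-a_n|\to 0$ by Cauchy--Schwarz, so $\|u_n-a_n\|_1\to 0$. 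The concavity bound $\|u_n^{1/d}-a_n^{1/d}\|_1\leq \|u_n-a_n\|_1^{1/d}$ transfers this to $\|f_{n,ij}-\|f_{n,ij}\|_d\|_1\to 0$, and summing over blocks gives $\delta_\square(f_n,f_n^{\ast})\leq \|f_n-f_n^{\ast}\|_1\to 0$. The main obstacle is precisely the quantitative gap above: the Taylor estimate on $A_n$ is routine, but the uniform $c\ve^2/2$ lower bound on $B_n$ depends on the $\ve$-convex minorant condition in its full strength (strict convexity of $\psi_{p_{ij}}$ on the entire $\ve$-neighborhood, so that the one-sided derivatives of $\hat\psi_{p_{ij}}$ at $a_n\pm\ve$ exceed $\psi_{p_{ij}}'(a_n)$ by at least $c\ve$), and on a delicate subsequence extraction ensuring that the strict-convexity constant $c>0$ survives the limit as $a_n$ evolves within the strict-convexity region.
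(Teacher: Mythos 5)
Your proof is correct and its core strategy coincides with the paper's: use the $d$-averaged block graphon $f_n^\ast$, invoke the generalized H\"older inequality to get $t(H,f_n)\leq t(H,f_n^\ast)$, use Jensen/convex-minorant to get $I_{W_0}(f_n^\ast)\leq I_{W_0}(f_n)$, and conclude by showing that $\delta_\square(f_n,f_n^\ast)\to 0$. Where you differ is in how the last step is carried out. The paper proceeds by contradiction: if $d_\square(f_{n_i},f_{n_i}^\ast)\geq\beta$ along a subsequence, it invokes the pre-established \Cref{averaged-is-better} to get a uniform entropy gap $\eta>0$, extracts a cut-metric convergent subsequence of $\ti{f}_{n_i}^\ast$, and derives a contradiction with the minimality of $\ti f$. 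You instead first identify the limit $g$ of $f_n^\ast$ and establish the equality chain
\[
J_{W_0}(\ti f)\leq J_{W_0}(\ti g)\leq I_{W_0}(g)=\lim_n I_{W_0}(f_n^\ast)\leq \lim_n I_{W_0}(f_n)=J_{W_0}(\ti f),
\]
then convert the vanishing block-wise Jensen gap directly into $\|f_n-f_n^\ast\|_1\to 0$ via a quadratic lower bound on the gap. This quantitative estimate is exactly the content of \Cref{averaged-is-better} (proved via \Cref{a-set-lemma,linear-approx}), which you re-derive inline with a cleaner tangent-line/Taylor argument in place of the paper's $A^\pm_\beta$ machinery. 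Your direct route avoids the contradiction framing, but the technical heart — strict convexity of $\psi_{p_{ij}}$ on the $\ve$-neighborhood forcing $L^1$-closeness of $f_{n,ij}$ to its $d$-average — is the same; your worry about whether the constant $c$ survives the subsequence extraction is resolved as you describe (the limit $a_\infty$ of the block averages $a_n$ inherits a strictly convex $\ve'$-neighborhood for any $\ve'<\ve$, and continuity of $\psi_{p_{ij}}''$ on a compact subinterval gives the uniform lower bound). One small point: the first subsequence extraction already makes the $a_n$ converge, so the ``further subsequence'' you invoke is redundant but harmless.
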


\noindent
The proof relies on the following two lemmas. We delay their proofs until the end of the subsection.

\begin{lemma}\label{lemma:new- holder-general}
Let $H$ be a $d$-regular graph. Let $f = (f_{ij})_{i,j \in [m]}$ and $f^*=( \Vert f_{ij} \Vert_d)_{i,j \in [m]}.$
Then
\begin{align*}
    t(H,f) \leq t(H,f^*).
\end{align*}
\end{lemma}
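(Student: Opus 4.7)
The plan is to decompose $t(H,f)$ according to which blocks the integration variables land in, then apply a generalized Hölder (Finner) inequality at each fixed block labeling.

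First I would write
\begin{equation*}
t(H,f) = \sum_{Y \in [m]^v} \int_{I_{Y_1}} \!\! \cdots \int_{I_{Y_v}} \prod_{\{i,j\} \in E(H)} f(x_i,x_j)\, dx_1 \cdots dx_v.
\end{equation*}
On the $Y$-summand I would change variables via $x_i = \sum_{k<Y_i}\gamma_k + u_i \gamma_{Y_i}$, $u_i \in [0,1]$; by \eqref{r-defn-2} this turns $f(x_i,x_j)$ into $f_{Y_iY_j}(u_i,u_j)$ and introduces a Jacobian $\prod_i \gamma_{Y_i}$, giving
\begin{equation*}
t(H,f) = \sum_{Y \in [m]^v} \Bigl(\prod_{i=1}^v \gamma_{Y_i}\Bigr) \int_{[0,1]^v} \prod_{\{i,j\} \in E(H)} f_{Y_iY_j}(u_i, u_j)\, du_1 \cdots du_v.
\end{equation*}
The same decomposition applied to $f^*$, which is constant on the block $I_a \times I_b$ with value $\|f_{ab}\|_d$, yields the cleaner expression
\begin{equation*}
t(H,f^*) = \sum_{Y \in [m]^v} \Bigl(\prod_{i=1}^v \gamma_{Y_i}\Bigr) \prod_{\{i,j\} \in E(H)} \|f_{Y_iY_j}\|_d.
\end{equation*}

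Next, for each fixed $Y$, I would invoke the generalized Hölder (Finner) inequality: for any nonnegative measurable $g_{ij}:[0,1]^2 \to \mathbb{R}$,
\begin{equation*}
\int_{[0,1]^v} \prod_{\{i,j\} \in E(H)} g_{ij}(u_i,u_j)\, du_1 \cdots du_v \;\leq\; \prod_{\{i,j\} \in E(H)} \|g_{ij}\|_d.
\end{equation*}
This is exactly the case of Finner's inequality in which every factor is assigned the exponent $d$; the hypothesis $\sum_{e \ni i} 1/d \leq 1$ at each vertex $i$ reduces to $\deg_H(i)/d \leq 1$, which holds with equality since $H$ is $d$-regular. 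Equivalently, it can be obtained by iterating the ordinary Hölder inequality one variable at a time, peeling off the $d$ edges incident to a single vertex.

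Applying this termwise to the decomposition of $t(H,f)$, term by term, and comparing with the expression for $t(H,f^*)$ yields $t(H,f) \leq t(H,f^*)$. The only nontrivial ingredient is the generalized Hölder inequality, which is classical; the rest is bookkeeping with the block structure, and I do not anticipate any real obstacle.
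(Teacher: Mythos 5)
Your proof is correct and follows essentially the same route as the paper: decompose $t(H,f)$ over block labelings $Y \in [m]^v$, change variables to pull out the Jacobian $\prod_i \gamma_{Y_i}$, and apply the generalized H\"older (Finner) inequality with exponent $d$ on each factor, using $d$-regularity to verify the $\deg_H(i)/d \le 1$ condition. The paper's proof is the same argument with the same ingredients.
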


\begin{lemma}\label{averaged-is-better}
Let $H$ be a $d$-regular graph, $m \in \mathbb{Z}^+$, $\gamma \in \Delta_m$, and $W_0 \in \mathcal{B}^{\gamma, *}$.
Let $f= (f_{ij})_{i,j \in [m]} \in \mathcal{W}_{\Omega}$.  
Assume that $f \in \mathcal{W}_{\Omega}$ satisfies the $\ve$-neighborhood minorant condition with respect to $W_0$ for some $\varepsilon>0$. If $d_\square(f, f^*) \geq \alpha>0$, there exists $\eta=\eta(\ve, d, W_0, \alpha)>0$, such that $$I_{W_0}(f)\geq I_{W_0}(f^*) +\eta.$$
Consequently, $I_{W_0}(f) = I_{W_0}(f^*)$ if and only if $f \in \mathcal{B}^\gamma$.
\end{lemma}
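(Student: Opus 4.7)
The plan is to decompose $I_{W_0}(f)-I_{W_0}(f^*)$ into block contributions, establish block-by-block non-negativity via Jensen's inequality applied to the convex minorant $\hat\psi_{p_{ij}}$, and extract a quantitative lower bound from a single block where $f$ deviates substantially from $f^*$. Writing $W_0=(p_{ij})_{i,j\in[m]}$ and $f=(f_{ij})_{i,j\in[m]}$ via \eqref{fij}, and using the identity $h_p(y)=\psi_p(y^d)$, I obtain
\begin{equation*}
I_{W_0}(f)-I_{W_0}(f^*)=\tfrac12\sum_{i,j\in[m]}\gamma_i\gamma_j E_{ij}, \qquad E_{ij}:=\int_{[0,1]^2}\bigl[\psi_{p_{ij}}(f_{ij}^d)-\psi_{p_{ij}}(M_{ij}^d)\bigr],
\end{equation*}
where $M_{ij}:=\|f_{ij}\|_d$. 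On blocks with $p_{ij}\in\{0,1\}$, membership in $\mathcal{W}_\Omega$ forces $f_{ij}=p_{ij}$ a.e.\ and $E_{ij}=0$. On blocks with $p_{ij}\in(0,1)$, the chain $\int\psi_{p_{ij}}(f_{ij}^d)\geq\int\hat\psi_{p_{ij}}(f_{ij}^d)\geq\hat\psi_{p_{ij}}(M_{ij}^d)=\psi_{p_{ij}}(M_{ij}^d)$ (Jensen in the middle; the $\varepsilon$-convex minorant condition, which forces $\hat\psi_{p_{ij}}=\psi_{p_{ij}}$ in an $\varepsilon$-neighborhood of $M_{ij}^d$, at the end) gives $E_{ij}\geq 0$.

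To extract a quantitative bound I would use $d_\square(f,f^*)\geq\alpha$ to isolate one block carrying a large deviation. Decomposing $\int_{S\times T}(f-f^*)$ over the $m^2$ block intersections and applying the triangle inequality produces indices $(i_0,j_0)$ and a constant $c=c(\alpha,W_0)>0$ such that, after rescaling the block to $[0,1]^2$, $d_\square(f_{i_0j_0},f^*_{i_0j_0})\geq c$; blocks with $p_{i_0j_0}\in\{0,1\}$ are excluded because there $f_{i_0j_0}=f^*_{i_0j_0}$ a.e. Setting $p=p_{i_0j_0}\in(0,1)$, $u=f_{i_0j_0}$, and $M=M_{i_0j_0}$ (the boundary cases $M\in\{0,1\}$ are likewise excluded since they force $u$ constant), the identity $\int u^d=M^d$ yields
\begin{equation*}
E_{i_0j_0}=\int\phi(u^d), \qquad \phi(w):=\psi_p(w)-\psi_p(M^d)-\psi_p'(M^d)(w-M^d).
\end{equation*}
Convexity of $\hat\psi_p$ together with the $\varepsilon$-convex minorant condition implies that $\phi$ is continuous and non-negative on $[0,1]$, vanishes only at $M^d$, and satisfies a strong-convexity bound $\phi(w)\geq\tfrac{\kappa}{2}(w-M^d)^2$ on the interval $J=[M^d-\varepsilon/2,M^d+\varepsilon/2]\cap[0,1]$ for some $\kappa=\kappa(\varepsilon,p,M)>0$. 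Applying \Cref{cut-sets} to $u$ and the constant $M$ produces $A\subseteq[0,1]^2$ with $\lambda(A)\geq c/4$ on which $|u-M|\geq c/4$; the elementary bound $(M+c/4)^d\geq M^d+(c/4)^d$ and its counterpart when $u\leq M-c/4$ (valid under $M\geq c/4$) then give $|u^d-M^d|\geq(c/4)^d$ on $A$. Splitting $A$ according to whether $u^d\in J$ (where the quadratic bound applies) or $u^d\notin J$ (where $\phi\geq\phi_{\min}:=\inf_{[0,1]\setminus J}\phi>0$, positive by continuity and isolated vanishing of $\phi$) yields $\int_A\phi(u^d)\geq\eta'>0$, and hence $I_{W_0}(f)-I_{W_0}(f^*)\geq\tfrac12\gamma_{i_0}\gamma_{j_0}\eta'=:\eta$.

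The main technical obstacle is that $\kappa$ and $\phi_{\min}$ above a priori depend on $M$, whereas the conclusion asks for $\eta$ uniform in $f$. I would address this by observing that $\mathrm{Im}(W_0)\cap(0,1)$ is a finite set, so it suffices to prove uniformity separately for each $p$ in this set; for fixed $p$, the $\varepsilon$-convex minorant condition restricts $M^d$ to the $\varepsilon$-interior of the strictly convex region of $\psi_p$, on which $\psi_p''$ is bounded below by continuity together with the properties of $\psi_p$ collected in \Cref{all-about-psi}, delivering a single $\eta=\eta(\varepsilon,d,W_0,\alpha)$ valid for all admissible $f$. The \emph{if and only if} clause is then immediate: if $f\in\mathcal{B}^\gamma$ then each $f_{ij}$ is constant and equals $M_{ij}$, so $f=f^*$ a.e.\ and $I_{W_0}(f)=I_{W_0}(f^*)$; conversely, $I_{W_0}(f)=I_{W_0}(f^*)$ forces every $E_{ij}=\int\phi(f_{ij}^d)$ to vanish, and since $\phi>0$ off $M_{ij}^d$, each $f_{ij}$ equals $M_{ij}$ a.e., placing $f$ in $\mathcal{B}^\gamma$.
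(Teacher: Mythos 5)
Your proof is correct and follows the same overall structure as the paper's: block-decompose $I_{W_0}(f)-I_{W_0}(f^*)$, use Jensen and the convex minorant condition to get non-negativity on every block, use the cut-distance decomposition (the paper's \Cref{1-norm-on-blocks}) to isolate a block carrying the deviation, and extract a quantitative gap from that block.

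The genuine difference is in how you obtain the per-block quantitative bound. The paper first invokes \Cref{a-set-lemma}, a ``balance'' argument showing that \emph{both} $|A^+_\beta(f^d,\|f\|_d^d)|\geq\beta$ and $|A^-_\beta(f^d,\|f\|_d^d)|\geq\beta$, and then \Cref{linear-approx} performs a second-order Taylor expansion of $\psi_p$ along the segment from $M^d$ to $w$, which forces one to pick whichever of $A^+$ or $A^-$ keeps the segment inside the strictly convex region; this is why both sets are needed. You avoid this entirely by noting that $\phi=\psi_p-g_{M^d}\geq\hat\psi_p-g_{M^d}\geq 0$ globally (since the $\ve$-convex minorant condition makes $g_{M^d}$ a supporting line of the convex function $\hat\psi_p$), so strict positivity of $\phi$ off $M^d$ holds on both sides and a single large-deviation set from \Cref{cut-sets} suffices. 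This is a legitimate simplification. The tradeoff is that the paper's \Cref{linear-approx} extracts uniformity in $M$ from explicit fixed compact intervals tied to the inflection points of $\psi_p$ (\Cref{all-about-psi}), whereas you invoke compactness more abstractly; the details there deserve a bit more care than the sketch provides. In particular, the admissible values of $M^d$ form a set whose closure touches the inflection points $r_1^d-\ve$ and $r_2^d+\ve$, so $\inf_{B_\ve(M^d)}\psi_p''$ is not uniformly bounded below; your argument works precisely because $J$ has radius $\ve/2$, which keeps the infimum of $\psi_p''$ over $\bigcup_{M^d}J$ strictly positive, and because $\phi_z(w)\geq\hat\psi_p(w)-g_z(w)$ is, by convexity, bounded below for $|w-z|\geq\ve/2$ by its value at $|w-z|=\ve/2$, which in turn is controlled by the same uniform second-derivative bound. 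You would want to spell that out. Your elementary pointwise bound $|u^d-M^d|\geq(c/4)^d$ is also fine, but note it is exactly what the paper packages as \Cref{a-set-lemma} (which moreover avoids the case distinction on whether $M\geq c/4$, since the deficit/excess balance makes both sets large automatically), so the two arguments are closer than they first appear.
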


\begin{proof}[Proof of \Cref{sym-condition}.]
Let $\ti{f}$ be a minimizer of the variational problem \eqref{min_eq}. Let $f_n \in \W_\Omega$ be such that each $f_n$ satisfies the $\ve$-neighborhood minorant condition, $\delta_\square(f_n, \ti{f}) \to 0$, and $I_{W_0}(f_n) \to J_{W_0}(\ti{f}).$ For each $f_n$, define %$f_n^*= (\Vert f_{n,ij} \Vert_d)_{i,j \in [m]}$ 
$f_n^*$ 
to be the corresponding $d$-averaged graphon. We claim that $d_\square (f_n^*, f_n) \to 0$.

Indeed, suppose for the sake of contradiction that there exists $\beta>0$ and a subsequence $\{f_{n_i}\}_{i \geq 1}$ such that $d_\square (f_{n_i}^*, f_{n_i}) \geq \beta$ for all $i$. By \Cref{averaged-is-better}, there exists some $\eta > 0$ such that
$I_{W_0}(f_{n_i}) \geq I_{W_0}(f_{n_i}^*) +\eta$. Consider the sequence $\{\ti{f}_{n_i}^*\}_{i\geq 1}$. By the compactness of $\tW_\Omega$, there exists a convergent subsequence $\ti{f}_{n_{i_k}}^* \to \ti{f'}$ for some $f' \in \W_\Omega$.
By \Cref{lemma:new- holder-general}, $t(H,f_{n_{i_k}}^*) \geq t(H,f_{n_{i_k}})$ and so $t(H, f' ) \geq t(H,f) \geq t$. It follows that $J_{W_0}(\ti{f'}) \geq \min\{J_{W_0}(\ti{g}) : \tau(g) \geq t\}$.
Observe
\begin{align*}
    J_{W_0}(\ti{f}) &= \liminf_{k \to \infty} {I_{W_0}(f_{n_{i_k}})} \geq \liminf_{k\to \infty}{I_{W_0}(f^*_{n_{i_k}})} +\eta\geq J_{W_0}(\ti{f'})+\eta\\ &\geq \min\{J_{W_0}(\ti{g}) : \tau(g) \geq t\} +\eta,
\end{align*}
and thus we have reached a contradiction.

We have shown that $d_\square (f_n^*, f_n) \to 0$. It follows that $\delta_\square(f_n^*, \ti{f} ) \to 0$. Since each $f_n^* \in \mathcal{B}^\gamma$, we can write $f_n^*= (\alpha_{ij}^n)_{i,j \in [m]}$ where each $\alpha_{ij}^n \in [0,1]$. By the compactness of $[0,1]^{m^2}$, there exists a subsequence such that  $$\alpha_{ij}^{n_k} \to \beta_{ij} \text{ for all $i,j \in [m]$}.$$
Let $g=(\beta_{ij})_{i,j \in [m]}$, $ g \in \mathcal{B}^\gamma$. Since $f_{n_k}^* \to g$ pointwise and $d_\square(f_{n_k}^*, g) \leq \Vert f_{n_k}^*- g \Vert_1$, the Dominated Convergence Theorem implies that $d_\square(f_{n_k}^*, g) \to 0$. Since $\delta_\square(f_{n_k}^*, \ti{f}) \to 0$, we have $\delta_\square(\ti{g}, \ti{f})=0$. Thus $\ti{f} \in \ti{\mathcal{B}}^\gamma$. Note $\tau(g) \geq t$.

Next we show that $I_{W_0}(g)= J_{W_0}(\ti{f}).$ Since $\delta_\square(f_n^*, \ti{f}) \to 0$,
$\liminf_{n \to \infty} I_{W_0}(f_{n}^*)\geq J_{W_0}(\ti{f}).$ Further, since each $f_{n}$ satisfies the $\varepsilon$-neighborhood minorant condition, then by \Cref{averaged-is-better},
$\limsup_{n \to \infty} I_{W_0}(f_{n}^*) \leq \lim_{n \to \infty} I_{W_0}(f_{n})=J_{W_0}(\ti{f})$.
Thus, $\lim_{n \to \infty} I_{W_0}(f_{n}^*)=J_{W_0}(\ti{f})$.
Since $f_{n_k}^* \to g$ pointwise and $f_{n_k}, g \in \W_\Omega$, the continuity of $h_p$ for any fixed $p \in (0,1)$ implies that  $$I_{W_0}(g)= \lim_{k \to \infty} I_{W_0}(f_{n_k}^*)=J_{W_0}(\ti{f}).$$
\end{proof}

\subsubsection{Proofs of supporting lemmas}
We turn to the proofs of  \Cref{lemma:new- holder-general,averaged-is-better}.

\begin{proof}[Proof of \Cref{lemma:new- holder-general}]
The lemma is a direct consequence of  the generalized H\"older inequality of  \cite{Finner1992}
(stated as Theorem \ref{theorem:holder} in the Appendix).
Let $v = |V(H)|$ be the number of vertices in $H$.  Recall the definition of $t(H,f)$:
\begin{align*}
t(H,f) &= \int_{[0,1]^{v}} \prod_{(i,j) \in E(H)} f(x_i, x_j) dx_1 \cdots dx_{v}.
\end{align*}
We break up the integration over the blocks specified by the vector $\gamma$. Recall the definition of $I_j$ (\Cref{block-graphon}). We have
\begin{align*}
t(H,f)&= \sum_{i_1 = 1}^m  \dots \sum_{i_{v} = 1}^m  \int_{x_1 \in I_{i_1}} \dots \int_{x_{v} \in I_{i_v}} \prod_{(a,b) \in E(H)}f(x_a, x_b) dx_1 \cdots dx_{v}.
\end{align*}
Recall that $f(x,y) = f_{\vartheta(x),\vartheta(y)}(r(x),r(y))$ as stated in \eqref{r-defn} and \eqref{r-defn-2}. It follows that
\begin{align*}
f(x_a, x_b) &= f_{\vartheta(x_a),\vartheta(x_b)}(r(x_a), r(x_b))= f_{i_a,i_b}(r(x_a), r(x_b)).
\end{align*}
Substituting and applying a change of variables, we obtain
\begin{align*}
t(H,f)&= \sum_{i_1 = 1}^m  \dots \sum_{i_{v} = 1}^m  \int_{x_1 \in I_{i_1}} \dots \int_{x_{v} \in I_{i_v}} \prod_{(a,b) \in E(H)}f_{i_a,i_b}(r(x_a), r(x_b)) dx_1 \cdots dx_{v}\\
&= \sum_{i_1 = 1}^m  \dots \sum_{i_{v} = 1}^m \left(\prod_{j = 1}^{v} \gamma_{i_j}\right) \int_{x \in [0,1]^{v}} \prod_{(a,b) \in E(H)}f_{i_a,i_b}(x_a, x_b) dx_1 \cdots dx_{v}.
\end{align*}
By the generalized H\"older inequality (Theorem \ref{theorem:holder}),
\begin{align*}
t(H,f)&\leq \sum_{i_1 = 1}^m  \dots \sum_{i_{v} = 1}^m \left(\prod_{j = 1}^{v} \gamma_{i_j}\right) \prod_{(a,b) \in E(H)} \left \Vert f_{i_a,i_b} \right \Vert_d = t(H, f^*).
\end{align*}
\end{proof}

\noindent
We will apply \Cref{a-set-lemma,1-norm-on-blocks,linear-approx} in the proof of \Cref{averaged-is-better}.

\begin{lemma}\label{1-norm-on-blocks}
Fix $m \in \mathbb{Z}^+$, $\gamma \in \Delta_m$, and let
$f,g \in \mathcal{B}^\gamma$ be  two graphons with
$f = (f_{ij})_{(i,j) \in [m]^2}$ and $g = (g_{ij})_{(i,j) \in [m]^2}$.
Then $d_\square(f,g)\leq\max_{i,j}d_\square(f_{ij},g_{ij})$.
\end{lemma}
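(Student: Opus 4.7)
The plan is to unfold the definition of $d_\square(f,g)$ as a supremum over measurable $S, T \subseteq [0,1]$ and exploit the common $\gamma$-block structure of $f$ and $g$. Fix such $S$ and $T$. I would split the integral over $S \times T$ according to the blocks $I_i \times I_j$, writing
\[
\int_{S \times T}(f-g)\,dx\,dy \;=\; \sum_{i,j \in [m]} \int_{(S \cap I_i) \times (T \cap I_j)}(f-g)\,dx\,dy.
\]

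For each pair $(i,j)$, I would apply the affine change of variables that rescales $I_i \times I_j$ onto $[0,1]^2$, namely $u = (x - \sum_{k<i}\gamma_k)/\gamma_i$ and $v = (y - \sum_{k<j}\gamma_k)/\gamma_j$. This introduces a Jacobian factor $\gamma_i\gamma_j$ and, by the defining formula \eqref{fij}, converts the integrand to $f_{ij}(u,v) - g_{ij}(u,v)$ integrated over $S_i' \times T_j'$, where $S_i'$ and $T_j'$ are the affine images of $S \cap I_i$ and $T \cap I_j$; these sets are measurable since the rescaling is a measurable bijection.

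Bounding each rescaled contribution in absolute value by the definition of $d_\square(f_{ij}, g_{ij})$ and then applying the triangle inequality yields
\[
\left| \int_{S \times T}(f-g)\,dx\,dy \right|
\;\le\; \sum_{i,j} \gamma_i \gamma_j\, d_\square(f_{ij}, g_{ij})
\;\le\; \Bigl(\max_{i,j} d_\square(f_{ij}, g_{ij})\Bigr) \sum_{i,j} \gamma_i\gamma_j
\;=\; \max_{i,j} d_\square(f_{ij}, g_{ij}),
\]
where the final equality uses $\sum_i \gamma_i = 1$. Since this bound is uniform in $S$ and $T$, taking the supremum on the left gives the claim.

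No significant obstacle is anticipated, as this is essentially a bookkeeping computation. The only mild subtlety is that although $f_{ij}$ and $g_{ij}$ are generally not symmetric as functions on $[0,1]^2$, the cut distance $d_\square$ is defined via arbitrary rectangles $S\times T$ and therefore applies to them without modification; the bound via convex combination of block cut distances then reduces cleanly to the maximum.
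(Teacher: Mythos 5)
Your proof is correct and follows essentially the same argument as the paper: decompose the integral over $S\times T$ block by block, rescale each block $I_i\times I_j$ to $[0,1]^2$ (picking up the Jacobian $\gamma_i\gamma_j$), bound each term by $\gamma_i\gamma_j\, d_\square(f_{ij},g_{ij})$, and sum, using $\sum_{i,j}\gamma_i\gamma_j=1$.
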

\begin{proof}
Recalling the definition of $d_\square$, we will need to bound
$\sup_{S,T}|\int_{S\times T}(f-g)|$ over all measurable subsets $S,T\subset [0,1]$.  Fix two such subsets, and recall that $I_i$ is the $i$th block. By the triangle inequality,
\begin{align*}
\left| \int_{S \times T} \left(f(x,y) - g(x,y) \right) dx dy \right| &= \left| \sum_{(i,j) \in [m]^2} \int_{\{S \times T\} \cap \{I_i \times I_j\}} \left(f(x,y) - g(x,y) \right) dx dy \right|\\
&\leq \sum_{(i,j) \in [m]^2}\left|  \int_{(S \times T) \cap (I_i \times I_j)} \left(f(x,y) - g(x,y) \right) dx dy \right|.
\end{align*}
Setting $T_i=r(T\cap I_i)$ and $S_i=r(T\cap I_i)$, where $r$ is as described in \eqref{r-defn} and \eqref{r-defn-2}, we write the right hand side as
$$
 \sum_{(i,j) \in [m]^2}\left| \gamma_i\gamma_j \int_{(S_i \times T_i) } \left(f_{ij}(x,y) - g_{ij}(x,y) \right) dx dy\right|
 \leq\sum_{i,j}\gamma_i\gamma_j\max_{k,\ell}d_\square(f_{k\ell},g_{k\ell})=\max_{k,\ell}d_\square(f_{k\ell},g_{k\ell}).
$$
Since $S,T\subset [0,1]$ were arbitrary, this completes the proof.
\end{proof}

\begin{definition}\label{a-set}
Given $f:[0,1]^2\to [0,1]$ measurable, $c \in \R$, and $\ve \in [0,1]$, define the sets $$A^+_\ve (f, c)=\{ (x,y) \in [0,1]^2: f(x,y)-c \geq \ve\}$$
$$A^-_\ve (f, c)=\{ (x,y)\in [0,1]^2: c- f(x,y) \geq \ve\}.$$
\end{definition}

\begin{lemma}\label{a-set-lemma}
Let $f:[0,1]^2 \to [0,1]$  be measurable, and let $g$ be the constant graphon that takes value $\Vert f \Vert_d$. There exists $\beta=\beta(\ve)$ such that if $d_\square(f , g) \geq \ve$, then
$$|A^+_\beta (f^d, \Vert f\Vert_d^d)| \geq \beta \quad \text{ and } \quad |A^-_\beta (f^d, \Vert f\Vert_d^d)| \geq \beta $$
where $f^d$ denotes the function $f^d(x,y)= f(x,y)^d$.
\end{lemma}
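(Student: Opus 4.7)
The plan is to set $c = \|f\|_d$ and exploit the identity $\int_{[0,1]^2}(f^d - c^d) = 0$ (which follows immediately from $\int f^d = c^d$). This gives the crucial symmetry
\[\int (f^d - c^d)^+ = \int (c^d - f^d)^+ =: J,\]
so the problem reduces to (i) lower bounding $J$ by some $\alpha = \alpha(d, \ve) > 0$, and (ii) converting this integral lower bound into measure lower bounds on the superlevel sets in both directions simultaneously.

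For step (i), I would apply Lemma \ref{cut-sets} part 2 to the pair $(f, c)$ (losing a harmless factor of two to pass from the strict hypothesis $d_\square(f,c) > \ve/2$ to the given $d_\square(f,c) \geq \ve$). This produces a constant $\delta = \delta(\ve) > 0$ such that at least one of $|\{f - c \geq \delta\}| \geq \delta$ or $|\{c - f \geq \delta\}| \geq \delta$ holds. Both cases rely on the elementary inequality $(a+b)^d \geq a^d + b^d$ for $a,b \geq 0$ and $d \geq 1$, which is immediate by rewriting as $(1+t)^d \geq 1 + t^d$ for $t \geq 0$ and comparing derivatives. In the first case this yields $f^d - c^d \geq (c+\delta)^d - c^d \geq \delta^d$ on the set, hence $J \geq \delta^{d+1}$. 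In the second case, the inequality $c - f \geq \delta$ together with $f \geq 0$ forces $c \geq \delta$, and applying the same inequality with $a = c-\delta$, $b = \delta$ gives $c^d - (c-\delta)^d \geq \delta^d$, so again $J \geq \delta^{d+1}$. Thus $\alpha := \delta^{d+1}$ works in all cases.

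For step (ii), a short mass argument closes the proof: since $(f^d - c^d)^+ \leq 1$ with integral $\geq \alpha$, the assumption $|\{f^d - c^d \geq \beta\}| < \beta$ would give $\int (f^d - c^d)^+ < \beta \cdot 1 + \beta = 2\beta$, forcing $\beta > \alpha/2$. Taking $\beta := \alpha/2$ therefore yields $|A^+_\beta(f^d, c^d)| \geq \beta$, and the identical argument applied to $(c^d - f^d)^+$ yields $|A^-_\beta(f^d, c^d)| \geq \beta$. The only genuine subtlety is that Lemma \ref{cut-sets} provides deviation of $f$ from $c$ in only one direction, whereas the conclusion requires two-sided deviation of $f^d$ from $c^d$; this is exactly what the mean-zero identity $\int (f^d - c^d) = 0$ handles for free, and is the conceptual heart of the proof.
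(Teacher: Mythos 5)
Your proof is correct and takes essentially the same route as the paper's: both apply Lemma~\ref{cut-sets} to obtain a one-sided deviation of $f$ from $\Vert f\Vert_d$, lift it to a deviation of $f^d$ from $\Vert f\Vert_d^d$ via an elementary monotonicity estimate, and then invoke the mean-zero identity $\int (f^d - \Vert f\Vert_d^d) = 0$ together with a short mass argument to obtain deviation sets of positive measure in both directions. Your pointwise bound $(a+b)^d \geq a^d + b^d$ is a bit tidier than the paper's fundamental-theorem-of-calculus estimate and avoids the paper's preliminary smallness assumption $\ve/4 \geq d(\ve/8)^d$, but the logical skeleton is the same.
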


\begin{proof}
Without loss of generality, we may assume that $\ve$ is small enough such that $1\geq \nicefrac{\ve}{4} \geq d \left(\nicefrac{\ve}{8}\right)^d.$ We begin by observing that for $u,v \geq 0$ satisfying $u- v\geq \ve/4$ and $a(x) =x^d$, \begin{align}\label{easy-calc}
    a(u)-a(v) = \int_{v}^u a'(x) dx \geq \int_{v+\ve/8}^{v+\ve/4} a'(x) dx \geq \frac{\ve}{8} a'\bfrac{\ve}{8} = d\bfrac{\ve}{8}^d.
\end{align}

 Let $f:[0,1]^2 \to [0,1]$ satisfy the hypotheses of the lemma. \Cref{cut-sets} implies that $|A^+_{\ve/4}(f, \Vert f \Vert_d) | \geq \ve/4$  or $|A^-_{\ve/4}(f, \Vert f \Vert_d) | \geq \ve/4$. We will establish the result in the case that $|A^+_{\ve/4}(f, \Vert f \Vert_d) | \geq \ve/4$. The other case follows by an analogous argument.
 Note that if $f(x,y) - \Vert f\Vert_d \geq \ve/4$, then \eqref{easy-calc} implies $$f^d (x,y) -\Vert f\Vert_d^d \geq  d\bfrac{\ve}{8}^d.$$
 Let $c= d(\ve/8)^d$.
Since $|A^+_{\ve/4}(f, \Vert f \Vert_d) | \geq \ve/4$ and $A^+_{\ve/4}(f, \Vert f \Vert_d) \subseteq A^+_{c}(f^d, \Vert f \Vert_d^d)$,  it follows that $|A^+_{c}(f^d, \Vert f \Vert_d^d) | \geq \ve/4\geq c$. For ease of notation, let  $A^+=A^+_{0}(f^d, \Vert f \Vert_d^d)$ and $A^-=A^-_{0}(f^d, \Vert f \Vert_d^d)$. Observe
$$ c^2 \leq \int_{A^+_{c}(f^d, \Vert f \Vert_d^d)} f^d- \Vert f \Vert_d^d \leq \int_{A^+} f^d- \Vert f \Vert_d^d= \int_{A^-}  \Vert f \Vert_d^d-f^d \leq |A^-_{c^2/2}(f^d, \Vert f \Vert_d^d)| + \frac{c^2}{2}.$$
It follows that $|A^-_{c^2/2}(f^d, \Vert f \Vert_d^d)|  \geq c^2/2$.  Since $|A^+_{c^2/2}(f^d, \Vert f \Vert_d^d)| \geq |A^+_{c}(f, \Vert f \Vert_d) | \geq c \geq c^2/2$, taking $\beta=c^2/2$ completes the proof.
\end{proof}

\begin{lemma}\label{linear-approx}
Let $p \in (0,1)$, $\varepsilon>0$, and $f:[0,1]^2 \to [0,1]$ measurable.
There exists $\eta=\eta(p, \ve, \beta)>0$ such that if $|A^-_\beta(f, \Vert f \Vert_1)|,|A^+_\beta(f, \Vert f \Vert_1)| \geq \beta$ and $\psi_p(x) = \hat{\psi}_p(x)$ for all $x \in \left(\Vert f \Vert_1 -\ve, \Vert f \Vert_1+\ve\right)$,
then $$\int_{[0,1]^2} \hat{\psi}_p(f)  \geq \hat{\psi}_p \brac{\Vert f \Vert_1} +\eta.$$
\end{lemma}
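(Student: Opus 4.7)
Set $c = \|f\|_1$. The twin hypotheses $|A^\pm_\beta(f,c)| \geq \beta$ force $c \in [\beta, 1-\beta]$ since $f$ takes values in $[0,1]$. By the strict convexity assumption and \Cref{all-about-psi}, $\hat\psi_p$ agrees with $\psi_p$ on $(c-\ve, c+\ve)$, and on this interval $\psi_p$ is $C^2$ with $\psi_p'' > 0$; in particular $\hat\psi_p$ is differentiable at $c$. I will use the tangent line
\[L(x) = \hat\psi_p(c) + \hat\psi_p'(c)(x-c),\]
which satisfies $\hat\psi_p \geq L$ on $[0,1]$ by convexity, with equality at $c$. Since $\int f\, d\lambda = c$, linearity gives $\int L(f)\, d\lambda = L(c) = \hat\psi_p(c)$, so the claim reduces to producing $\eta > 0$ with $\int (\hat\psi_p(f) - L(f))\, d\lambda \geq \eta$.

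The first step is a global strict positivity statement: $\hat\psi_p(x) > L(x)$ for every $x \in [0,1] \setminus \{c\}$. If instead $\hat\psi_p(x_0) = L(x_0)$ for some $x_0 \neq c$, then $L$ is the chord joining $(c, \hat\psi_p(c))$ to $(x_0, \hat\psi_p(x_0))$, and convexity forces $\hat\psi_p \equiv L$ on the closed interval between $c$ and $x_0$, contradicting the strict convexity of $\hat\psi_p = \psi_p$ in the $\ve$-neighborhood of $c$. Consequently the continuous function $\hat\psi_p - L$ attains strictly positive minima
\[\delta_\pm := \min\{\hat\psi_p(x) - L(x) : x \in [0,1],\ \pm(x-c) \geq \ve/2\}\]
on each tail (with the convention $\delta_\pm = +\infty$ if the corresponding set is empty). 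On the strictly convex window $[c-\ve/2, c+\ve/2]$, a Taylor expansion with $\mu := \min_{|x-c| \leq \ve/2} \psi_p''(x) > 0$ gives $\hat\psi_p(x) - L(x) \geq \tfrac{\mu}{2}(x-c)^2$.

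To quantify the integral I use the set $A^+_\beta = \{f \geq c+\beta\}$ of measure $\geq \beta$. If $\beta > \ve/2$, the whole of $A^+_\beta$ lies in $\{f \geq c + \ve/2\}$ and the tail bound immediately yields $\int(\hat\psi_p(f) - L(f)) \geq \beta\, \delta_+$. Otherwise I decompose $A^+_\beta = B_{\mathrm{cl}} \sqcup B_{\mathrm{far}}$ with $B_{\mathrm{cl}} = A^+_\beta \cap \{f \leq c+\ve/2\}$ and $B_{\mathrm{far}} = A^+_\beta \cap \{f > c+\ve/2\}$; at least one has measure $\geq \beta/2$. On $B_{\mathrm{cl}}$, $f - c \in [\beta, \ve/2]$ yields $\hat\psi_p(f) - L(f) \geq \mu\beta^2/2$, while on $B_{\mathrm{far}}$ the tail bound gives $\hat\psi_p(f) - L(f) \geq \delta_+$. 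Taking $\eta = \tfrac{\beta}{2}\min\{\mu\beta^2/2,\ \delta_+\}$ completes the lower bound in every case; the symmetric hypothesis on $A^-_\beta$ supplies an analogous bound via $\delta_-$ should $A^+_\beta$ happen to be empty (e.g.\ if $c + \beta > 1$).

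\textbf{Main obstacle.} The delicate point is establishing global strict positivity of $\hat\psi_p - L$ on all of $[0,1] \setminus \{c\}$ rather than merely near $c$: a priori $\hat\psi_p$ could coincide with $L$ along a long linear piece of the convex minorant far from $c$. The chord argument above rules this out using only the strict convexity of $\hat\psi_p$ at the single point $c$. Once this is in hand, the remainder is a routine case split that quantitatively bridges the quadratic bound (from strict convexity near $c$) with the flat tail bound (from the strict positivity just established).
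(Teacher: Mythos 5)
Your proof is correct and takes a genuinely different route from the paper's. The paper works with $\psi_p$ directly and therefore must select a ``safe'' side to work on: it picks $S=A^+_\beta$ or $S=A^-_\beta$ according to the position of $\|f\|_1$ relative to the inflection points of $\psi_p$ (splitting on $p$ vs.\ $p_0$ from \Cref{all-about-psi}), so that for every $(x,y)\in S$ the entire segment $[\|f\|_1, f(x,y)]$ stays inside a region of strict convexity; only then is the Taylor-type bound $\psi_p(f)-L(f)\geq \tfrac12 \eta'(f-\|f\|_1)^2$ legitimate. You instead pass to $\hat\psi_p$ at the outset and prove the global strict positivity $\hat\psi_p-L>0$ on $[0,1]\setminus\{\|f\|_1\}$ via the chord argument, which lets you treat both sides of $\|f\|_1$ symmetrically: a quadratic bound on the nearby window where $\hat\psi_p=\psi_p$ is strictly convex, and a flat tail bound $\delta_\pm>0$ everywhere else. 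This removes the casework entirely. A side benefit is that your argument uses only one of the two measure hypotheses (you integrate over $A^+_\beta$ alone), so it actually establishes the conclusion under a strictly weaker assumption. Both proofs share one small unstated fact: the relevant infima of $\psi_p''$ are positive even when the window reaches $0$ or $1$; this follows because $\psi_p''\to+\infty$ at both endpoints, and the paper glosses over it in the same way you do. It would be worth adding one sentence making that explicit.
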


\begin{proof}
 For ease of notation let $z= \Vert f\Vert_1$. Since $\psi_p$ is differentiable, it follows that $\hat{\psi}_p$ is differentiable at $z$, and so $\hat{\psi}_p'(z)=\psi_p'(z)$ is a subdifferential of $\hat{\psi}_p$ at $z$. Let $$g(w)= \psi_p(z) + \psi_p'(z)(w-z).$$
Moreover, since $\psi_p'(z)$ is a subdifferential of $\hat{\psi}_p$ at $z$, $\hat{\psi}_p(w) \geq g(w)$ for all $w \in [0,1]$.
Since $g$ is a linear function,
\begin{align}\label{g-good}\int_{[0,1]^2} g(f(x,y))= g\brac{\int_{[0,1]^2} f(x,y)}=g\brac{\Vert f \Vert_1}= \psi_p\brac{\Vert f \Vert_1}= \hat{\psi}_p\brac{\Vert f \Vert_1}.\end{align}

\noindent
Define $$d(w)= \psi_p(w)-g(w),$$
and note that $d'(w)=\psi_p'(w)-\psi_p'(z)$. Assuming $w\geq z + \beta/2$ and applying the Fundamental Theorem of Calculus twice, we obtain
\begin{align*}
    d(w)&= d(w)-d(z)= \int_{z}^w d'(a)\, da
    = \int_z^w \int_z^a \psi_p''(b) \,db\, da \\
    &= \int_z^{z + \beta/2} \int_z^a \psi_p''(b) \,db\, da +  \int_{z+\beta/2}^w \int_z^a \psi_p''(b) \,db\, da,\\
    &\geq \frac{(w-z-\beta/2)^2}{2}  \min \{ \psi_p''(x): x \in  [z+\beta/2,w]\},
\end{align*}
provided that  $\psi_p''(x)$ is  non-negative on $[z, w]$. The same argument applies for $w\leq z$, and one obtains a similar lower bound, with $\min\{\psi_p''(x): x \in [w,z-\beta/2]\}$ instead.

\noindent
Next, we construct a set $S \subseteq[0,1]^2$ and choose $\eta'>0$ such that for all $(x,y) \in S$,
\begin{itemize}
\item[(i)] $\psi_p(f(x,y))= \hat{\psi}_p(f(x,y))$,
\item[(ii)]  $|f(x,y)- z | \geq \beta$,
\item[(iii)] $\psi_p''(b) \geq \eta'$ for all $b \in [f(x,y),z-\beta/2]$ if $f(x,y) \leq z$, or for all $b \in [z+\beta/2,f(x,y)]$ if $f(x,y) \geq z$,
\item[(iv)] $\psi_p''$ is non-negative on  $[f(x,y),z]$ if $f(x,y) \leq z$ or non-negative on $[z,f(x,y)]$ if $f(x,y) \geq z$, and
\item[(v)] $|S| \geq \beta$.
\end{itemize}
Our construction of $S$ depends on $\psi_p$ and $z$. Let $p_0$ be as given in \Cref{all-about-psi}. There are three cases concerning $p_0$. In each case, $\eta'$ is well-defined because it is the minimum of a continuous function over a compact set.
\begin{enumerate}[(1)]
    \item If $p > p_0$, then $\psi_p''$ is positive  on $[0,1]$. Let $S= A^+_\beta(f, z)$ and $\eta'= \min\{ \psi_p''(x): x \in [0,1]\}$.
    \item If $p < p_0$, then the function $\psi_p$ has two inflection points $r_1$ and $r_2$, and  $\psi_p''(x) >0$ on $[0,r_1)$ and $(r_2, 1]$ and $\psi_p''(x) <0$ on $(r_1,r_2)$. Note that $z \not \in (r_1 - \epsilon, r_2 + \epsilon)$, since $\psi_p=\hat{\psi}_p$ is convex in an $\ve$-neighborhood around $z$.
    \begin{itemize}[-]
   \item If $z\leq r_1 - \ve$, let $\eta'= \min\{ \psi_p''(x): x \in [0,r_1-\ve]\}$. Note $\eta' >0$ since $\psi_p''$ is positive on $[0,r_1-\ve]$. Let $S= A^-_\beta(f, z)$.
   \item If $z \geq r_2 +\ve$, let $\eta'= \min\{ \psi_p''(x): x \in [r_2+\ve,1]\}$. Note $\eta' >0$ since $\psi_p''$ is positive on $[r_2+\ve,1]$. Let $S= A^+_\beta(f, z)$.
    \end{itemize}
\item If $p=p_0$,  then the function $\psi_p$ has one point $r$ such that $\psi_p''(r)=0$ and $\psi_p''(x) >0$ on $[0,r)$ and $(r, 1]$. If $z < r$, let $\eta'= \min\{ \psi_p''(x): x \in [0, z - \beta/2]\}$ and $S= A^-_\beta(f, z)$. If $z \geq r$, let $\eta'= \min\{ \psi_p''(x): x \in [z + \beta/2, 1]\}$ and $S= A^+_\beta(f, z)$. Since $A^+_\beta(f,z), A^{-}_\beta(f,z)$ are non-empty, $0< z - \beta/2< z+\beta/2<1$.
%Construct $\eta'$ and $S$ as in the above case with $r=r_1=r_2$.   Namely, if $z \leq r - \ve$, let $\eta'= \min\{ \psi_p''(x): x \in [0,r-\ve]\}$ and $S= A^-_\beta(f, z)$. If $z \geq r + \ve$, let $\eta'= \min\{ \psi_p''(x): x \in [r+\ve,1]\}$ and $S= A^+_\beta(f, z)$.
\end{enumerate}

\noindent By our choice of $S = A_{\beta}^-(f,z)$ or $S = A_{\beta}^+(f,z)$ as needed, it is easy to see that properties (i), (ii), and (v) are satisfied. To see how the remaining properties are satisfied, consider for example Case (2), where $z \leq r_1 - \varepsilon$. In this case, $(x,y) \in S = A_{\beta}^-(f,z)$ implies $f(x,y) \leq z - \beta \leq r_1 - \varepsilon - \beta$. Since $\psi_p''(x) > 0$ on $[0,r_1)$, property (iv) is satisfied. Property (iii) is satisfied since $\psi_p''(b) \geq \eta'$ for all $b \in [0,r_1 - \varepsilon] \supset [f(x,y), z - \beta/2]$, where the inclusion holds for any $(x,y) \in S$.

Note that if $(x,y) \in S$ and $f=f(x,y) \geq z$, then properties (i)-(iv) imply that \begin{equation}\label{s-good}
 \hat{\psi}_p(f)- g(f)= \psi_p(f)- g(f)= d( f) \geq \frac{(f-z-\beta/2)^2}{2}  \min \{ \psi_p''(b): b \in [z+\beta/2,f]\} \geq \frac{\eta'\beta^2}{8}.
 \end{equation}
The same bound holds if $f(x,y) \leq z$.

\noindent
Recall that $\hat{\psi}_p(w)\geq g(w)$ for all $w \in [0,1]$. It follows by \eqref{g-good}, \eqref{s-good} and property (v) that
\begin{align*}
    \int_{[0,1]^2} \hat{\psi}_p(f(x,y)) \, dx \, dy - \hat{\psi}_p\brac{\Vert f \Vert_1}&= \int_{[0,1]^2} \hat{\psi}_p(f(x,y))- g(f(x,y)) \, dx \, dy \\
    &\geq \int_S \hat{\psi}_p(f(x,y))- g(f(x,y))\, dx \, dy
    \geq |S| \frac{\eta'\beta^2}{8} \geq \frac{\eta' \beta^3}{8}.
\end{align*}
Taking $\eta=\eta' \beta^3 / 8 $ completes the proof.
\end{proof}

\begin{proof}[Proof of \Cref{averaged-is-better}]
Suppose $f$ is a graphon that satisfies the $\ve$-neighborhood minorant condition,  and has the property that $d_\square(f, f^* ) \geq \alpha$.
Since $d_\square(f, f^* ) \geq \alpha$,   \Cref{1-norm-on-blocks} implies that there exists
some $a,b \in [m]^2$ such that $d_\square(f_{ab}, \Vert f_{ab}\Vert_d) \geq \alpha$
(where by an abuse of notation, $\Vert f_{ab}\Vert_d$ denotes the constant function that takes that value $\Vert f_{ab}\Vert_d$). Since $f \in \W_\Omega$, $f_{ij}$ is constant whenever $p_{ij}\in \{0,1\}$ and the values $a,b$ are such that $ p_{ab} \in (0,1)$.
 \Cref{a-set-lemma}
implies that there exists $\beta= \beta(\alpha)$
 such that
$|A^+_\beta (f_{ab}^d, \Vert f_{ab}\Vert_d^d)| \geq \beta$ and $|A^-_\beta (f_{ab}^d, \Vert f_{ab}\Vert_d^d)| \geq \beta$.

For each $(i,j)$ such that $p_{ij} \in (0,1)$, \Cref{linear-approx} implies that there exists $\eta_{ij}=\eta_{ij}(p_{ij}, \ve, \beta)$ such that if  $g : [0,1]^2 \to [0,1]$, $|A^-_\beta(g, \Vert g \Vert_1)|,|A^+_\beta(g, \Vert g \Vert_1)| \geq \beta$  and $\psi_{p_{ij}}(x) = \hat{\psi}_{p_{ij}}(x)$ for all $x \in \left(\Vert g \Vert_1 -\ve, \Vert g \Vert_1+\ve\right)$,
then \begin{align}\label{gap}\int_{[0,1]^2} \hat{\psi}_{p_{ij}}(g)  \geq \hat{\psi}_{p_{ij}} \brac{\Vert g \Vert_1} +\eta_{ij}.\end{align}
Let $\eta'= \min_{(i,j)} \{ \eta_{ij} : p_{ij} \in (0,1)\} $.

Since $f$ satisfies the $\ve$-neighborhood minorant condition, $\psi_{p_{ab}}= \hat{\psi}_{p_{ab}}$ in an $\ve$-neighborhood around the value $\Vert f_{ab}\Vert_d^d=\Vert f_{ab}^d \Vert_1$.
Applying \eqref{gap} to $f_{ab}^d$,  we obtain
\begin{align*}
 \int_{[0,1]^2} &h_{p_{ab}}(f_{ab}(x,y)) dx dy
= \int_{[0,1]^2} \psi_{p_{ab}}\left(f^d_{ab}(x,y)\right) dx dy
\geq \int_{[0,1]^2} \hat{\psi}_{p_{ab}}\left(f^d_{ab}(x,y)\right) dx dy\\
&\geq  \hat{\psi}_{p_{ab}}\left(\left \Vert f_{ab} \right \Vert_d^d\right) + \eta'
=  \psi_{p_{ab}}\left(\left \Vert f_{ab} \right \Vert_d^d\right) +\eta'
=  h_{p_{ab}}\left(\left \Vert f_{ab} \right \Vert_d\right)+ \eta'.
\end{align*}

For all $i,j$ such that $p_{ij}\in (0,1)$, the point $\left( \Vert f_{ij} \Vert_d^d, h_p(\Vert f_{ij} \Vert_d)\right)$ lies on the convex minorant of $\psi_{p_{ij}}(x)= h_{p_{ij}}(x^{1/d})$, and so applying Jensen's Inequality we obtain
\begin{align*}
 \int_{[0,1]^2} h_{p_{ij}}(f_{ij}(x,y))& dx dy
= \int_{[0,1]^2} \psi_{p_{ij}}\left(f^d_{ij}(x,y)\right) dx dy
\geq \int_{[0,1]^2} \hat{\psi}_{p_{ij}}\left(f^d_{ij}(x,y)\right) dx dy\\
&\geq  \hat{\psi}_{p_{ij}}\left(\int_{[0,1]^2} f^d_{ij}(x,y) dx dy\right)
=  \hat{\psi}_{p_{ij}}\left(\left \Vert f_{ij} \right \Vert_d^d\right)
=  \psi_{p_{ij}}\left(\left \Vert f_{ij} \right \Vert_d^d\right)
=  h_{p_{ij}}\left(\left \Vert f_{ij} \right \Vert_d\right).
\end{align*}

\noindent
 Since $f \in \mathcal{W}_{\Omega}$, it holds that $f(x,y) = W_0(x,y)$ for all $(x,y)$ such that $W_0(x,y) \in \{0,1\}$. Let $c_{ij}$ be the indicator that $p_{ij} \in (0,1)$.
Observe
\begin{align*}
I_{W_0}(f) &= \sum_{i=1}^m \sum_{j=1}^m \gamma_i \gamma_j c_{ij} \int_{[0,1]^2} h_{p_{ij}}(f_{ij}(x,y)) dx dy\\
&\geq  \eta' \gamma_a \gamma_b +\sum_{i=1}^m \sum_{j=1}^m \gamma_i \gamma_j c_{ij} h_{p_{ij}}\left(\left \Vert f_{ij} \right \Vert_d\right)\\
&=\eta' \gamma_a \gamma_b + I_{W_0}\left(f^* \right).
\end{align*}
Taking $\eta= \eta'\min_{(i,j) } \{\gamma_i \gamma_j : p_{ij} \in (0,1) \}$ yields the desired result.
\end{proof}

\subsection{Symmetry for small $\delta$}
We now prove \Cref{sym-regime}, which establishes the existence of a symmetric regime when $\delta$ is small. We state the key lemmas used in the proof and defer the proofs of these lemmas to the end of the subsection.

\begin{lemma}\label{CMC-seq}
Given $m \in \mathbb{Z}^+$, $\gamma \in \Delta_m$, $W_0 \in \mathcal{B}^{\gamma, *}$ and a finite $d$-regular graph $H$, there exist
 $\delta_{0}=\delta_{0}(H, W_0)$ and $\ve=\ve(W_0)>0$
 such that the following is true.
For all $0 < \delta < \delta_0$, if $\ti{f}$ is a minimizer of the variational problem \eqref{min_eq} with $t=(1+\delta)t(H,W_0)$, then there exists a sequence of graphons $f_n \in \W_\Omega$ such that  $\delta_\square(f_n, \ti{f}) \to 0$, $I_{W_0}(f_n) \to J_{W_0}(\ti{f})$ and each $f_n$ satisfies the $\ve$-neighborhood minorant condition.
\end{lemma}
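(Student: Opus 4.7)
The plan is to exploit the fact that, for $\delta$ close to zero, any minimizer $\ti{f}$ has $J_{W_0}(\ti{f})$ arbitrarily close to zero, which forces any near-optimal approximating sequence $\{f_n\}$ guaranteed by the definition of $J_{W_0}$ to be close to $W_0$ in $L^2$. This $L^2$-closeness propagates to the block $d$-norms $\|(f_n)_{ij}\|_d^d$ and traps them in a strictly convex neighborhood of $\psi_{p_{ij}}$. First I would note that, by the definition of $J_{W_0}$ and the fact that $J_{W_0}(\ti{f})\leq C(W_0)<\infty$ (\Cref{finite-or-not}), there is a sequence $f_n\in \W_\Omega$ with $\delta_\square(f_n,\ti{f})\to 0$ and $I_{W_0}(f_n)\to J_{W_0}(\ti{f})=\phi_\tau(W_0,(1+\delta)t(H,W_0))$. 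The task then reduces to showing that, for $\delta$ small and $n$ large, every block $d$-norm $\|(f_n)_{ij}\|_d^d$ with $p_{ij}\in(0,1)$ lies in a common strictly convex $\varepsilon$-neighborhood of $\psi_{p_{ij}}$.

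The next step is the choice of $\varepsilon=\varepsilon(W_0)$. Direct differentiation of $\psi_p(x)=h_p(x^{1/d})$ yields $\psi_p'(p^d)=0$ (since $h_p'(p)=0$) and $\psi_p''(p^d)=p^{1-2d}/(d^2(1-p))>0$ for every $p\in(0,1)$. By continuity of $\psi_p''$, for each $p\in(0,1)$ there is $\varepsilon_p>0$ such that $\psi_p''>0$ on $(p^d-2\varepsilon_p,p^d+2\varepsilon_p)\cap(0,1)$. Setting $\varepsilon(W_0)=\min\{\varepsilon_{p_{ij}}: p_{ij}\in(0,1)\}$ ensures that whenever $|z-p_{ij}^d|\leq\varepsilon$, the $\varepsilon$-neighborhood of $z$ is contained in a region where $\psi_{p_{ij}}''>0$, i.e.\ the convex minorant condition at $z$ is satisfied.

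For the choice of $\delta_0$, I would combine continuity of $\phi$ with a Pinsker-type bound. By \Cref{homomorphism-cont}, $\phi$ is continuous at $t(H,W_0)<t_{\max}$, and $\phi(t(H,W_0))=0$; hence $\phi((1+\delta)t(H,W_0))\to 0$ as $\delta\to 0^+$. Pinsker's inequality $h_p(q)\geq 2(q-p)^2$ gives $I_{W_0}(g)\geq \|g-W_0\|_2^2$, so any $g$ with $I_{W_0}(g)\leq A$ satisfies $\|g-W_0\|_2\leq\sqrt{A}$. Writing $\gamma_{\min}=\min_i\gamma_i$ and using Cauchy--Schwarz together with the Lipschitz estimate $|x^d-y^d|\leq d|x-y|$ on $[0,1]$, one obtains
\[
\left|\,\|(f_n)_{ij}\|_d^d-p_{ij}^d\,\right|
=\gamma_i^{-1}\gamma_j^{-1}\left|\int_{I_i\times I_j}(f_n^d-W_0^d)\right|
\leq \frac{d}{\gamma_{\min}^2}\,\|f_n-W_0\|_2
\leq \frac{d}{\gamma_{\min}^2}\sqrt{I_{W_0}(f_n)}.
\]
Choose $\delta_0=\delta_0(H,W_0)$ so that $\phi((1+\delta)t(H,W_0))<\varepsilon^2\gamma_{\min}^4/(4d^2)$ for all $\delta<\delta_0$. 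Then for every such $\delta$, and every $n$ large enough that $I_{W_0}(f_n)\leq 2\phi((1+\delta)t(H,W_0))$ (which is automatic from $I_{W_0}(f_n)\to J_{W_0}(\ti{f})$), the displayed bound forces $|\|(f_n)_{ij}\|_d^d-p_{ij}^d|\leq\varepsilon$ for every $(i,j)$ with $p_{ij}\in(0,1)$, so $f_n$ satisfies the $\varepsilon$-convex minorant condition by the choice of $\varepsilon$ above.

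The main subtle point is that $\delta_\square(f_n,\ti{f})\to 0$ does not a priori imply $d_\square(f_n,W_0)\to 0$, since the cut metric tolerates measure-preserving rearrangements, so one cannot compare the blocks of $\ti{f}$ directly with those of $W_0$. The strategy sidesteps this entirely by controlling the spatial $L^2$-distance $\|f_n-W_0\|_2$ purely through the entropy budget $I_{W_0}(f_n)$, which is forced to be small by continuity of $\phi$ at $t(H,W_0)$. The only other routine check is that $f_n\in\W_\Omega$ for $n$ large, which follows from finiteness of $I_{W_0}(f_n)$ via \Cref{finite-or-not}.
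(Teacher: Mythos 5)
Your proposal is correct, and it takes a genuinely different route from the paper's. The paper proceeds by first invoking \Cref{nice-seq} to replace the approximating sequence $\{f_n\}$ by one satisfying $f_n\geq W_0$ pointwise and $f_n=W_0$ on irrelevant blocks (without disturbing $\delta_\square(f_n,\ti f)\to 0$ or $I_{W_0}(f_n)\to J_{W_0}(\ti f)$), and then applies \Cref{delta-small-star-holds}, whose proof uses the combinatorial estimate of \Cref{new}: since $t(H,f_n)\leq(1+\delta_0)t(H,W_0)$ for large $n$, the measure of the set $\{(x,y): f_n(x,y)\geq W_0(x,y)+\eta\}$ is small, which traps each block $d$-norm $\|(f_n)_{ij}\|_d^d$ near $p_{ij}^d$. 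Your argument instead never modifies the sequence and never touches the homomorphism-density constraint on $f_n$ directly: you use continuity of $\phi$ at $t(H,W_0)$ (\Cref{homomorphism-cont}, which both proofs already rely on via \Cref{equality at min}) to get $J_{W_0}(\ti f)$ small, then Pinsker's inequality $h_p(q)\geq 2(q-p)^2$ to convert the small entropy budget $I_{W_0}(f_n)$ into a small $L^2$-distance $\|f_n-W_0\|_2$, which immediately pins down every $\|(f_n)_{ij}\|_d^d$ near $p_{ij}^d$. This bypasses \Cref{nice-seq,new,delta-small-star-holds} entirely and is, arguably, shorter and more transparent; what the paper's route buys is that \Cref{nice-seq} and \Cref{big-delta-cmc} are reused in the proof of \Cref{sym-regime-near-one}, where the entropy is no longer small and a Pinsker argument is unavailable. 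Two minor bookkeeping points: (i) the Cauchy--Schwarz step gives $|\int_{I_i\times I_j}(f_n^d-W_0^d)|\leq d\sqrt{\gamma_i\gamma_j}\,\|f_n-W_0\|_2$, so the constant is $d/\sqrt{\gamma_i\gamma_j}\leq d/\gamma_{\min}$ rather than $d/\gamma_{\min}^2$ — this is only more conservative and does not affect the existence of $\delta_0$; (ii) as in the paper, your use of continuity of $\phi$ at $t(H,W_0)$ tacitly requires $t(H,W_0)<t_{\max}$, which is the standing hypothesis of \Cref{sym-regime} where this lemma is applied.
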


%\textcolor{red}{Lemma 80 to be removed.}
%\begin{lemma}\label{unique-min}
%\st{Given  $m \in \mathbb{Z}^+$, $\gamma \in \Delta_m$, $W_0 \in \mathcal{B}^{\gamma}$,  a finite graph $H$, $\tau = t(H, \cdot)$,
%and $t(H,W_0)\leq
%t \leq t^{\tau}_{\max}(\tW_{\Omega})$,
%there is a unique minimizer of the minimization problem}
%\begin{equation}\label{block-opt}
 % \min\{I_{W_0}(g): g \in \mathcal{B}^\gamma, t(H,g) \geq
 % t\}.
%\end{equation}
%\end{lemma}

\noindent
We now use \Cref{CMC-seq} to prove \Cref{sym-regime}.

\begin{proof}[Proof of \Cref{sym-regime}.]
Let $W_0 \in \mathcal{B}^{\gamma,*}$, and
let $H$ be a finite $d$-regular graph.  Let $\delta_0>0$ and $\ve>0$ be as in
\Cref{CMC-seq}, and
assume that $\delta < \delta_0$.  Suppose that $\ti{f}$ is a minimizer of the variational problem \eqref{min_eq} with $t=(1+\delta)t(H,W_0)$. By \Cref{CMC-seq}, there exists a sequence of graphons $f_n \in \W_\Omega$ such that each $f_n$ satisfies the $\ve$-neighborhood minorant condition, $\delta_\square(f_n, \ti{f}) \to 0$, and $I_{W_0}(f_n) \to J_{W_0}(\ti{f}).$
\Cref{sym-condition} implies that
$\ti{f} \in \ti{\mathcal{B}}^\gamma$ and $J_{W_0}(\ti{f})= I_{W_0}(g)$ for some $g \in \mathcal{B}^\gamma$ with $\ti{g}= \ti{f}$. Thus, the problem is in the symmetric regime.

%\textcolor{red}{To re-visit this argument} \textcolor{violet}{Remove the next sentence and the final one?}

%Next we establish that a unique element of $\tW_\Omega$ minimizes \eqref{min_eq}. 
Let $\ti{h}$ be a minimizer of the variational problem \eqref{min_eq} with $t=(1+\delta)t(H,W_0)$. By the above argument we may assume that $h \in \mathcal{B}^\gamma$ and $I_{W_0}(h) = J_{W_0}(\ti{h})$. Note that
 $$\min\{I_{W_0}(g): g \in \mathcal{B}^\gamma, t(H, g) \geq (1+\delta)t(H, W_0)\} \leq I_{W_0}(h)=J_{W_0}(\ti{h})$$ and
\begin{align*} \min\{I_{W_0}(g): g \in \mathcal{B}^\gamma, t(H, g) \geq (1+\delta)t(H, W_0)\} &\geq \min\{J_{W_0}(\ti{g}): \ti{g }\in \ti{\mathcal{B}}^\gamma, t(H, g)\geq(1+\delta)t(H, W_0)\}\\
&= J_{W_0}(\ti{h})=I_{W_0}(h).
\end{align*}
It follows that
$$J_{W_0}(\ti{h})=I_{W_0}(h)=\min\{I_{W_0}(g): g \in \mathcal{B}^\gamma, t(H, g) \geq (1+\delta)t(H, W_0)\}.$$
%By \Cref{unique-min}, there is a unique minimizer of the rightmost expression; $h$ must be that unique minimizer, and so the solution to the variational problem is unique. \textcolor{red}{(The last sentence to be removed.)}
\end{proof}

\subsubsection{Proofs of supporting lemmas}
We now prove \Cref{CMC-seq}. We first show that graphons with homomorphism density close to $t(H,W_0)$ must be close to $W_0$ pointwise except possibly on a small set.

\begin{lemma}\label{new}
Let  $m \in \mathbb{Z}^+$, $\gamma \in \Delta_m$, $W_0 \in \mathcal{B}^\gamma$ such that $\gamma \in \R^m$, and  let $H$ be a finite $d$-regular graph. Suppose $f \geq W_0$ pointwise and $t(H,f) \leq (1+\delta) t(H,W_0)$ for $\delta > 0$. Let $Y$ be a relevant interval labeling vector with respect to $W_0$. Suppose $I_a \times I_b$ contributes to $Y$, i.e. $Y_i=a$ and $Y_j=b$ for some $\{i,j \} \in E(H)$. (See \Cref{relevant}.) Let $S_\ve=\{ (x,y)  \in I_a \times I_b : f(x,y) \geq W_0(x,y) +\ve\}$.
Then
$$\ve |S_\ve| \leq \frac{\delta t(H, W_0)p_{ab} \gamma_a \gamma_b}{t(H,W_0, Y)},$$
where $p_{ab}$ is the value of $W_0$ on $I_a \times I_b$, $\gamma_a=|I_a|$, and $\gamma_b= |I_b|$.
\end{lemma}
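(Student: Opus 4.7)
The plan is to obtain a lower bound on $t(H,f)$ of the form $t(H,W_0)+\ve|S_\ve|\cdot\frac{t(H,W_0,Y)}{p_{ab}\gamma_a\gamma_b}$, and then use the hypothesis $t(H,f)\leq (1+\delta)t(H,W_0)$ to deduce the claimed inequality.

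First, I would decompose $t(H,f)=\sum_Z t(H,f,Z)$ according to all interval labelings $Z\in [m]^v$ of the vertices of $H$. Since $f\geq W_0$ pointwise, $t(H,f,Z)\geq t(H,W_0,Z)$ for every $Z$, and therefore
\[
t(H,f)\geq t(H,f,Y)+\sum_{Z\neq Y} t(H,W_0,Z)=t(H,f,Y)+t(H,W_0)-t(H,W_0,Y).
\]
It thus suffices to give a sharp lower bound on $t(H,f,Y)$.

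Next, fix the distinguished edge $\{i,j\}\in E(H)$ with $Y_i=a$, $Y_j=b$. Using $f\geq W_0$ on every edge other than $\{i,j\}$, and then splitting the integral over $(x_i,x_j)$ according to whether $(x_i,x_j)\in S_\ve$ or not, I get
\[
t(H,f,Y)\geq t(H,W_0,Y)+\ve\int \mathbf 1_{(x_i,x_j)\in S_\ve}\prod_{\{k,\ell\}\in E(H)\setminus\{\{i,j\}\}} W_0(x_k,x_\ell)\prod_{p\in[v]}\mathbf 1_{x_p\in I_{Y_p}}\,dx,
\]
where on $S_\ve$ I used $f(x_i,x_j)\geq W_0(x_i,x_j)+\ve=p_{ab}+\ve$, and outside $S_\ve$ I just used $f\geq W_0$. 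Because $W_0$ is block constant and $Y$ is relevant, the factors $W_0(x_k,x_\ell)$ on the remaining edges equal the constants $p_{Y_kY_\ell}$ (all strictly positive), so the integral factorizes into $|S_\ve|$ times a product of interval widths and edge weights. Using the identity
\[
t(H,W_0,Y)=\Bigl(\prod_{\{k,\ell\}\in E(H)} p_{Y_kY_\ell}\Bigr)\Bigl(\prod_{p\in[v]}\gamma_{Y_p}\Bigr),
\]
a direct computation gives that the remaining integral equals $\dfrac{t(H,W_0,Y)\,|S_\ve|}{p_{ab}\gamma_a\gamma_b}$.

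Putting this together yields
\[
t(H,f)\geq t(H,W_0)+\ve\,|S_\ve|\,\frac{t(H,W_0,Y)}{p_{ab}\gamma_a\gamma_b},
\]
and combining with $t(H,f)-t(H,W_0)\leq \delta\, t(H,W_0)$ gives the claim. There is no real obstacle here; the only thing to check carefully is the bookkeeping in the factorization of the integral and that $p_{ab}>0$, $t(H,W_0,Y)>0$ (which follows from $Y$ being relevant and $W_0\in\mathcal B^\gamma$).
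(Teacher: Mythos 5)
Your proposal is correct and follows essentially the same route as the paper: bound $t(H,f,Y)$ from below by splitting the integral over the distinguished edge according to membership in $S_\varepsilon$, use $f\ge W_0$ and the block-constant structure to factorize the gain as $\varepsilon|S_\varepsilon|\,t(H,W_0,Y)/(p_{ab}\gamma_a\gamma_b)$, and then sum over all labelings $Z$ using $t(H,f,Z)\ge t(H,W_0,Z)$. The only cosmetic difference is that you perform the sum over $Z$ before zooming in on $Y$, whereas the paper does the reverse; the calculation is otherwise identical.
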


\begin{proof}
Let $v= |V(H)|$ be the number of vertices in $H$. Observe
\begin{align*}
t(H,f,Y)&= \int_{x_1 \in I_{Y_1}} \dots \int_{x_v \in I_{Y_v}} \prod_{\{u,w\} \in E(H)} f(x_u, x_w)  \, dx_v \dots dx_1\\
&\geq\brac{\prod_{u \in [v]\setminus\{i,j\}}|I_{Y_u}| \prod_{\{u,w\}\in E(H)\setminus\{i,j\}} p_{Y_uY_w}} \int_{x_i \in I_a} \int_{x_j \in I_b} f(x_i,x_j) dx_i \, dx_j\\
&\geq\brac{\prod_{u \in [v]\setminus\{i,j\}}|I_{Y_u}| \prod_{\{u,w\}\in E(H)\setminus\{i,j\}} p_{Y_uY_w}} \brac{\int_{(x_i,x_j) \in I_a \times I_b \setminus S_\ve} p_{ab}+\int_{(x_i,x_j) \in S_\ve} (\ve + p_{ab})  }\\
&= \brac{\prod_{u \in [v]\setminus\{i,j\}}|I_{Y_u}| \prod_{\{u,w\}\in E(H)\setminus\{i,j\}} p_{Y_uY_w}} \brac{  |I_a| |I_b| p_{ab}+\ve |S_\ve| }\\
&= t(H,W_0,Y) +  \frac{\ve |S_\ve|}{p_{ab} \gamma_a \gamma_b}t(H,W_0,Y) .
\end{align*}
We use this to lower bound the homomorphism density of $f$, and obtain
\begin{align*}(1+\delta) t(H,W_0) \geq t(H,f) &= \sum_{Z \in [m]^v} t(H,f,Z) \\
&\geq \frac{\ve |S_\ve|}{p_{ab} \gamma_a \gamma_b}t(H,W_0,Y)+ \sum_{Z \in [m]^v} t(H, W_0, Z)\\
&= \frac{\ve |S_\ve|}{p_{ab} \gamma_a \gamma_b}t(H,W_0,Y)+ t(H,W_0).
\end{align*}
The statement of the lemma follows directly.
\end{proof}

\begin{lemma}\label{delta-small-star-holds}
 Fix $m \in \mathbb{Z}^+$, $\gamma \in \Delta_m$, $W_0 \in \mathcal{B}^{\gamma}$ and $H$ a finite $d$-regular graph.
There exist $\delta_0=\delta_0(H, W_0)$ and $\ve=\ve(W_0)>0$ such that the following is true.
If $f \in \W_\Omega$, $f\geq W_0$ pointwise, $f=W_0$ on irrelevant blocks (\Cref{relevant}), and $t(H, f) \leq (1+\delta_0) t(H, W_0)$, then $f$ satisfies the $\ve$-neighborhood minorant condition.
\end{lemma}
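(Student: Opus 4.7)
}
The plan is to show that when $\delta$ is small, $\|f_{ij}\|_d^d$ is close to $p_{ij}^d$ for every block $(i,j)$ with $p_{ij}\in(0,1)$, and then to observe that $p_{ij}^d$ itself sits in a strictly convex neighborhood of $\psi_{p_{ij}}$. A uniform choice of $\varepsilon$ and $\delta$ will then force every such $\|f_{ij}\|_d^d$ into a strictly convex $\varepsilon$-neighborhood.

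\emph{Step 1: $p^d$ is in the strictly convex region of $\psi_p$.} For $p\in(0,1)$ I compute, using $\psi_p(x)=h_p(x^{1/d})$ and $h_p'(p)=0$, that
\[
\psi_p''(p^d)=\frac{1}{d^2\,p^{\,2d-2}}\,h_p''(p)=\frac{1}{d^2\,p^{\,2d-1}(1-p)}>0.
\]
By continuity of $\psi_p''$ and the finiteness of the set $\{p_{ij}:p_{ij}\in(0,1)\}$, there exists a single $\varepsilon=\varepsilon(W_0)>0$ such that for every $(i,j)$ with $p_{ij}\in(0,1)$, the function $\psi_{p_{ij}}''$ is strictly positive on $[p_{ij}^d-2\varepsilon,\,p_{ij}^d+2\varepsilon]\cap[0,1]$.

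\emph{Step 2: control of $\|f_{ij}\|_d^d$ on irrelevant blocks.} On every irrelevant block $I_i\times I_j$ the hypothesis gives $f_{ij}\equiv p_{ij}$, so $\|f_{ij}\|_d^d=p_{ij}^d$ and the condition of Step 1 is satisfied trivially.

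\emph{Step 3: control of $\|f_{ij}\|_d^d$ on relevant blocks.} Fix a relevant block $I_a\times I_b$ with $p_{ab}\in(0,1)$, and pick a relevant labeling $Y$ to which it contributes (it exists by definition of relevance); since $Y$ is relevant, $t(H,W_0,Y)>0$. Writing, via the change of variables defining $f_{ab}$,
\[
\|f_{ab}\|_d^d=\frac{1}{\gamma_a\gamma_b}\int_{I_a\times I_b} f(u,v)^d\,du\,dv,
\]
and splitting the integral according to the set $S_{\varepsilon'}=\{(x,y)\in I_a\times I_b:f(x,y)\geq p_{ab}+\varepsilon'\}$, I use $p_{ab}\leq f\leq 1$ and $(p_{ab}+\varepsilon')^d\leq p_{ab}^d+d\varepsilon'$ to get
\[
p_{ab}^d\leq \|f_{ab}\|_d^d\leq p_{ab}^d+d\varepsilon'+\frac{|S_{\varepsilon'}|}{\gamma_a\gamma_b}.
\]
By \Cref{new}, $|S_{\varepsilon'}|\leq \delta\,t(H,W_0)\,p_{ab}\,\gamma_a\gamma_b/(\varepsilon'\,t(H,W_0,Y))$. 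Choosing $\varepsilon'=\sqrt{\delta}$ gives
\[
0\leq \|f_{ab}\|_d^d-p_{ab}^d\leq d\sqrt{\delta}+\sqrt{\delta}\cdot\frac{t(H,W_0)\,p_{ab}}{t(H,W_0,Y)}.
\]
The right-hand side tends to $0$ with $\delta$, uniformly over the finitely many relevant blocks with $p_{ab}\in(0,1)$ (taking the maximum of the finitely many constants).

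\emph{Step 4: conclusion.} With $\varepsilon$ fixed as in Step~1, choose $\delta=\delta(H,W_0)>0$ so small that the bound in Step~3 is at most $\varepsilon$ on every relevant block with $p_{ab}\in(0,1)$. Combined with Step~2, this shows that for every $(i,j)$ with $p_{ij}\in(0,1)$, $\|f_{ij}\|_d^d$ lies within $\varepsilon$ of $p_{ij}^d$ and thus inside the strictly convex region $[p_{ij}^d-2\varepsilon,\,p_{ij}^d+2\varepsilon]$ of $\psi_{p_{ij}}$, with $\varepsilon$-ball remaining inside that region. This is precisely the $\varepsilon$-convex minorant condition. The only real content is Step~1, which reduces the statement to the pointwise positivity of $\psi_p''$ at its minimizer $p^d$; the rest is bookkeeping around \Cref{new}.
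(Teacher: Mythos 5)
Your proof is correct and follows the same overall strategy as the paper: split into irrelevant blocks (where $f=W_0$ forces $\|f_{ij}\|_d^d=p_{ij}^d$) and relevant blocks (where \Cref{new} with a threshold $\varepsilon'\asymp\sqrt\delta$ pushes $\|f_{ab}\|_d^d$ into an $O(\sqrt\delta)$-window above $p_{ab}^d$), then conclude from convexity of $\psi_{p_{ab}}$ near $p_{ab}^d$. The one place you genuinely diverge is in how $\varepsilon$ is fixed: the paper appeals to the full inflection-point classification in \Cref{all-about-psi}, defining $\varepsilon=\min_{(i,j)\in P}(q_{ij}^d-p_{ij}^d)/2$ in terms of the first zero $q_{ij}^d$ of $\psi_{p_{ij}}''$, whereas you compute $\psi_p''(p^d)=\bigl(d^2p^{2d-1}(1-p)\bigr)^{-1}>0$ directly from $h_p'(p)=0$ and then use continuity and the finiteness of $\{p_{ij}\}$ to get a uniform $\varepsilon$. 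Your version is a touch more self-contained, since it only needs positivity of $\psi_p''$ at the minimizer $p^d$ rather than the whole structural picture of $\psi_p$; the paper's version, conversely, is natural given that \Cref{all-about-psi} is already needed and proved elsewhere. Both are valid and lead to the same conclusion.
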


\begin{proof}
Let $W_0=(p_{ij})_{i,j \in [m]}$.
By \Cref{all-about-psi}, for $p_{ij} \in (0,1)$ the function $\psi_{p_{ij}}:(0,1)\to \R$ is either convex 
or $\hat{\psi}_{p_{ij}}$ is constructed by replacing $\psi_{p_{ij}}$ with its lower common tangent on exactly one interval. 
If $\psi_{p_{ij}}$ is convex, then the  minorant condition is trivially satisfied for the $(i,j)$ block. Otherwise, let $q_{ij}$ be
such that $q_{ij}^d$ is the left point of intersection between $\psi_{p_{ij}}$ and its lower common tangent. Let $P=\{(i,j): \psi_{p_{ij}}''(x)=0 \text{ for some } x \in [p_{ij}^d,1] \}$ and define 
\begin{align*}
\ve &= \min_{(i,j)\in P }q_{ij}^d- p_{ij}^d.
\end{align*}
Since $\psi'_{p_{ij}}(p_{ij}^d) = 0$ for all $(i,j)$, we have $q_{ij} > p_{ij}$ for all $(i,j) \in P$, so that $\ve > 0$.

Let $f \in \W_\Omega$, $f\geq W_0$ pointwise, $f=W_0$ on irrelevant blocks (\Cref{relevant}), and $t(H, f) \leq (1+\delta) t(H, W_0)$. Let $(a,b) \in P$. When $I_a \times I_b$ is irrelevant, $\Vert f_{ab} \Vert_d^d=p_{ab}^d \leq q_{ab}^d - \ve$. Since $\psi_{p_{ab}} = \hat{\psi}_{p_{ab}}$ on $[0, q_{ab}^d]$, $\psi_{p_{ab}} = \hat{\psi}_{p_{ab}}$ in an $\ve$-neighborhood around $\Vert f_{ab}\Vert_d^d$. It suffices to show that for $\delta$ sufficiently small, $\psi_{p_{ab}}=\hat{\psi}_{p_{ab}}$ in a neighborhood around $\Vert f_{ab} \Vert_d^d$ for all $(a,b) \in P$ such that $I_a \times I_b$ is relevant. 

Fix such a pair $(a,b)$. Let $S_\eta=\{ (x,y)  \in I_a \times I_b : f(x,y) \geq W_0(x,y) +\eta\}$. Let $S_\eta^c=(I_a \times I_b) \setminus S_\eta$. Observe
\begin{align*}
\Vert f_{ab}\Vert_d^d &= \frac{1}{\gamma_a \gamma_b}\left(\int_{S_\eta^c}f(x,y)^d dx \, dy+\int_{S_\eta} f(x,y)^d dx \, dy \right) \\
&\leq  \frac{1}{\gamma_a \gamma_b} \left(|S_{\eta}^c|(p_{ab}+ \eta)^d + |S_\eta| \right)\\
&\leq (p_{ab}+ \eta)^d + \frac{1}{\gamma_a \gamma_b}|S_\eta|.
\end{align*}
Let $\eta(\delta) = \sqrt{ \frac{\delta t(H, W_0)p_{ab} \gamma_a \gamma_b}{t(H,W_0, Y)}}$. By \Cref{new}, $$\eta(\delta)|S_{\eta(\delta)}| \leq\frac{\delta t(H, W_0)p_{ab} \gamma_a \gamma_b}{t(H,W_0, Y)} \implies |S_{\eta(\delta)}| \leq \eta(\delta).$$ It follows that
 $$ \Vert f_{ab} \Vert_d^d \leq \brac{ p_{ab} +\eta(\delta)}^d + \frac{\eta(\delta)}{\gamma_a \gamma_b}.$$

Note that as $\delta \to 0$, $\eta(\delta) \to 0$, and the above expression approaches $p_{ab}^d$. Thus we can pick $\delta$ sufficiently small so that $\Vert f_{ab} \Vert_d^d<q_{ab}^d-\ve$. Since $\psi_{p_{ab}} = \hat{\psi}_{p_{ab}}$ on $[0, q_{ab}^d]$, $\psi_{p_{ab}}=\hat{\psi}_{p_{ab}}$ in an $\ve$-neighborhood around $\Vert f_{ab} \Vert_d^d$.
Repeating the argument for each block concludes the proof.
\end{proof}

\noindent
We will combine \Cref{delta-small-star-holds} with the following lemma to prove
 \Cref{CMC-seq}.

\begin{lemma}\label{nice-seq}
Let $\ti{f}$ be a minimizer of the variational problem $\eqref{min_eq}$ with $t= (1+\delta) t(H,W_0)$ for $\delta > 0$.
Then there exists a sequence $\{f_n\}_{n \geq 1}$ such that $f_n \in \W_\Omega$, $f_n \geq W_0$ pointwise, $f_n=W_0$ on irrelevant blocks, $I_{W_0}(f_n) \to J_{W_0}(\ti{f})$ and $\delta_\square(  f_n, \ti{f}) \to 0$.
\end{lemma}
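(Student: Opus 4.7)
The plan is to construct $f_n$ by modifying a natural approximating sequence $g_n$ for $\ti f$ so as to respect the block structure of $W_0$, and then verify that the required properties, particularly the cut metric convergence, survive the modification. The hard part will be showing that the cut distance $\delta_\square(f_n,\ti f)$ still tends to zero, since the pointwise modification is defined with reference to the specific block representation of $W_0$ and is \emph{not} invariant under measure-preserving bijections.

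First I would extract from the definition of $J_{W_0}(\ti f)$ as a lower semi-continuous envelope a sequence $g_n\in\W_\Omega$ with $\delta_\square(g_n,\ti f)\to 0$ and $I_{W_0}(g_n)\to J_{W_0}(\ti f)$. Define
\[
f_n(x,y) = \begin{cases} W_0(x,y) & \text{if $(x,y)$ lies in an irrelevant block, or in $\Omega^c$,}\\ \max(g_n(x,y),W_0(x,y)) & \text{if $(x,y)$ lies in a relevant block and in $\Omega$.}\end{cases}
\]
On $\Omega^c$ we have $g_n=W_0$ anyway since $g_n\in\W_\Omega$, so $f_n\in\W_\Omega$; the inequalities $f_n\geq W_0$ and $f_n|_{\text{irrelevant}}=W_0$ are immediate.

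Next I would verify entropy convergence. Since $h_{W_0(x,y)}(\cdot)$ is minimized at $W_0(x,y)$ and $f_n=W_0$ wherever $f_n\neq g_n$, one has $h_{W_0}(f_n)\leq h_{W_0}(g_n)$ pointwise, so $I_{W_0}(f_n)\leq I_{W_0}(g_n)$ and $\limsup I_{W_0}(f_n)\leq J_{W_0}(\ti f)$. For the matching lower bound, note that irrelevant blocks do not contribute to $\tau=t(H,\cdot)$ and on relevant blocks we only raised values, so $\tau(f_n)\geq \tau(g_n)$. Since $\delta_\square(g_n,\ti f)\to 0$ and $\tau$ is continuous, $\tau(g_n)\to\tau(\ti f)=t$ (using Remark~\ref{equality at min} applied via Lemmas~\ref{homomorphism-cont} and \ref{strictly-increasing}). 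Combining $I_{W_0}(f_n)\geq \inf\{I_{W_0}(h):\tau(h)\geq \tau(g_n)\}$ with Lemma~\ref{phi is h} and the continuity of $\phi_\tau$ at $t$ (Lemma~\ref{homomorphism-cont}) yields $\liminf I_{W_0}(f_n)\geq \phi_\tau(W_0,t)=J_{W_0}(\ti f)$, hence $I_{W_0}(f_n)\to J_{W_0}(\ti f)$.

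The main obstacle, $\delta_\square(f_n,\ti f)\to 0$, will be handled via Pinsker's inequality for Bernoulli variables, $h_p(u)\geq 2(u-p)^2$ for $p\in(0,1)$. Let $R_n=\{(x,y):f_n(x,y)\neq g_n(x,y)\}$. Then $R_n\subseteq\Omega$, and on $R_n$ we have $f_n=W_0$, so
\[
I_{W_0}(g_n)-I_{W_0}(f_n) \;=\; \int_{R_n} h_{W_0(x,y)}(g_n(x,y))\,dx\,dy \;\geq\; 2\int_{R_n}(g_n-W_0)^2\,dx\,dy.
\]
By Cauchy--Schwarz and $|R_n|\leq 1$,
\[
\|f_n-g_n\|_1 \;=\; \int_{R_n}|g_n-W_0| \;\leq\; \sqrt{\tfrac{1}{2}\bigl(I_{W_0}(g_n)-I_{W_0}(f_n)\bigr)}.
\]
Since both $I_{W_0}(g_n)$ and $I_{W_0}(f_n)$ tend to $J_{W_0}(\ti f)$, the right-hand side vanishes, so $d_\square(f_n,g_n)\to 0$; the triangle inequality $\delta_\square(f_n,\ti f)\leq d_\square(f_n,g_n)+\delta_\square(g_n,\ti f)$ then completes the proof.
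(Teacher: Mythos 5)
Your construction of $f_n$ matches the paper's, and you correctly observe that $f_n\in\W_\Omega$, $f_n\geq W_0$, $f_n=W_0$ on irrelevant blocks, and $I_{W_0}(f_n)\leq I_{W_0}(g_n)$; the substantive difference lies in how you close the argument. The paper proves $d_\square(f_n,g_n)\to 0$ first, by contradiction: if the cut distances were bounded below along a subsequence, one finds a set of positive measure where $|f_n-g_n|\geq\ve/4$, shows this forces $I_{W_0}(f_n)\leq I_{W_0}(g_n)-\ve\eta/4$ for a fixed $\eta>0$, and then passes to a limit point to contradict the minimality of $\ti f$; the entropy convergence $I_{W_0}(f_n)\to J_{W_0}(\ti f)$ is then extracted as a corollary. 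You invert the order: you establish the entropy lower bound $\liminf I_{W_0}(f_n)\geq J_{W_0}(\ti f)$ directly from the $\phi_\tau$ machinery (observing that $\tau(f_n)\geq\tau(g_n)\to t$ and $I_{W_0}(f_n)\geq\phi_\tau(W_0,\tau(g_n))$), and then deduce cut-distance convergence from Pinsker's inequality applied to the entropy gap. This is a genuinely different and arguably cleaner route: the Pinsker step replaces the contradiction/compactness argument with a quantitative bound $\|f_n-g_n\|_1\lesssim\sqrt{I_{W_0}(g_n)-I_{W_0}(f_n)}$, avoiding the case analysis on the set $S_{\ve/4}^\pm$. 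The paper's proof is self-contained modulo basic facts about $J_{W_0}$ and compactness of $\tW_\Omega$, whereas yours depends on Lemma~\ref{phi is h}, Lemma~\ref{homomorphism-cont}, and Lemma~\ref{strictly-increasing}; all of these are proved earlier, so there is no circularity.

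Two small corrections. First, you drop the factor $\tfrac12$ in the definition of $I_{W_0}$: the correct identity is $I_{W_0}(g_n)-I_{W_0}(f_n)=\tfrac12\int_{R_n}h_{W_0}(g_n)$, so after Pinsker the bound is $\|f_n-g_n\|_1\leq\sqrt{I_{W_0}(g_n)-I_{W_0}(f_n)}$ rather than $\sqrt{\tfrac12(\cdots)}$; this does not affect the conclusion. Second, and more importantly, your lower-bound step invokes ``the continuity of $\phi_\tau$ at $t$,'' which Lemma~\ref{homomorphism-cont} only gives for $t\neq t_{\max}$. The paper applies this lemma with $t=t_{\max}$ (in the proof of Theorem~\ref{thm:tmax_behavior}), and the paper's contradiction argument handles that case with no modification. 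Your argument can be repaired at $t=t_{\max}$: since $\tau(g_n)\leq t_{\max}$ and $\tau(g_n)\to t_{\max}$, use the left-continuity (Lemma~\ref{phi left}) and monotonicity of $\phi_\tau$, namely $\liminf_n\phi_\tau(W_0,\tau(g_n))\geq\phi_\tau(W_0,t_{\max}-\ve)$ for every $\ve>0$ eventually, and then let $\ve\to 0$. You should also note that $\inf\{I_{W_0}(h):\tau(h)\geq t'\}\geq\phi_\tau(W_0,t')$ holds for all $t'$ simply because $J_{W_0}\leq I_{W_0}$, so Lemma~\ref{phi is h} is not actually needed for the direction of the inequality you use.
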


\begin{proof}
Let $\ti{f}$ be a minimizer of the variational problem $\eqref{min_eq}$ with $t= (1+\delta) t(H,W_0)$.
There exists a sequence $\{f_n\}_{n \geq 1}$ such that $f_n \in \W_\Omega$, $I_{W_0}(f_n) \to J_{W_0}(\ti{f})$ and $\delta_\square( \tilde{f}, f_n) \to 0$.
Let $R\subseteq[0,1]^2$ be the union of the relevant blocks (\Cref{relevant}). Define  $f_n' \in \W_\Omega$ such that
$$f_n'(x,y)=
\begin{cases}
\max\{f_n(x,y), W_0(x,y)\} & (x,y) \in R\\
 W_0(x,y) & (x,y) \not \in R
\end{cases}
$$
Note that this ensures that $f_n' \geq W_0$ pointwise, and note further that $f_n'=W_0$ whenever $f_n'\neq f_n$, which in turn implies that
$h_{W_0(x,y)}( f'_{n}(x,y)) \leq h_{W_0(x,y)}( f_{n}(x,y))$ for all $(x,y) \in [0,1]^2$ and hence
 $I_{W_0}(f_n') \leq I_{W_0}(f_n)$.

We claim that the lemma follows from showing that $d_\square(f'_n, f_n) \to 0$. Indeed, if $d_\square(f'_{n}, f_{n}) \to 0$, then $\delta_\square(f'_n, \ti{f}) \to 0$. Thus $\liminf I_{W_0}(f_n') \geq J_{W_0}(\ti{f})$. Since $I_{W_0}(f_n') \leq I_{W_0}(f_n)$ for each $n$, $\limsup I_{W_0}(f_n') \leq \limsup I_{W_0}(f_n)= J_{W_0}(\ti{f})$. Thus, $ I_{W_0}(f_n') \to J_{W_0}(\ti{f})$ as $n \to \infty$. 
Note that by construction $f'_{n} \geq W_0$ pointwise and $f'_{n}=W_0$ on irrelevant blocks. This completes the proof.

It remains to prove that $d_\square(f'_n, f_n) \to 0$. Suppose for contradiction that there exists $\ve>0$ and a subsequence such that $d_\square( f'_{n_i}, f_{n_i}) \geq \ve$ for all $i\geq 1$.
Let $$\eta= \min_{p \in Im(W_0) \setminus\{0,1\}} \min\{ h_p(p+\ve/4), h_p(p-\ve/4)\},$$
where we have chosen $\ve>0$ sufficiently small so that $p +\ve/4, p - \ve/4 \in (0,1)$ for all $p \in \text{Im}(W_0) \setminus\{0,1\}$.
We will show that $I_{W_0}(f_{n_i}') \leq I_{W_0}(f_{n_i}) -\ve\eta/4$, and then use this to derive a contradiction.

Indeed, let $S_{\ve/4}^+=\{(x,y) \in [0,1]^2: f'_{n_i}(x,y)-f_{n_i}(x,y) \geq \ve/4\}$ and
$S_{\ve/4}^-=\{(x,y) \in [0,1]^2: f_{n_i}(x,y)-f'_{n_i}(x,y) \geq \ve/4\}$. Let $S= S_{\ve/4}^+ \cup S_{\ve/4}^-$. \Cref{cut-sets} implies that $|S|\geq \ve/4$.

If $(x,y) \in S_{\ve/4}^+ \cap R$, then $\max\{ f_{n_i}(x,y), W_0(x,y) \} - f_{n_i}(x,y) \geq \ve/4$. It follows that $f_{n_i}'(x,y)= W_0(x,y)$. If $(x,y) \in S_{\ve/4}^+ \setminus R$, then $ W_0(x,y) - f_{n_i}(x,y) \geq \ve/4$. In both cases $f_{n_i}(x,y) \leq W_0(x,y) -\ve/4$, and so  $h_{W_0(x,y)}(f_{n_i}(x,y)) \geq \eta$. Therefore $$h_{W_0(x,y)}( f'_{n_i}(x,y))=0 \leq h_{W_0(x,y)}(f_{n_i}(x,y))-\eta.$$

If $(x,y) \in S_{\ve/4}^-$, then $(x,y) \not \in R$ because $f_n' \geq f_n$ on $R$. It follows that $f_{n_i}'(x,y) = W_0(x,y)$, and so $f_{n_i}(x,y) \geq W_0(x,y) +\ve/4$. Thus $h_{W_0(x,y)}( f_{n_i}(x,y)) \geq \eta$, which implies
$$h_{W_0(x,y)}( f'_{n_i}(x,y))=0 \leq h_{W_0(x,y)}(f_{n_i}(x,y))-\eta.$$

Recall that $h_{W_0(x,y)}( f'_{n_i}(x,y)) \leq h_{W_0(x,y)}( f_{n_i}(x,y))$ for all $(x,y) \in [0,1]^2$ and that $|S|\geq \ve/4$. Therefore
\begin{align*}
    I_{W_0}(f_{n_i}') \leq  \int_{[0,1]^2 \setminus S} h_{W_0(x,y)}( f_{n_i}(x,y)) _+ \int_{S} \left[h_{W_0(x,y)}( f_{n_i}(x,y))- \eta\right]
    \leq I_{W_0}(f_{n_i}) - \frac{\ve \eta}{4}.
\end{align*}

Next, consider the sequence $\{\ti{f}_{n_i}'\}_{i\geq 1}$. By the compactness of $\tW_\Omega$, there exists a convergent subsequence $\ti{f}'_{n_{i_k}} \to \ti{h}$ for some $\tilde{h} \in \tW_{\Omega}$.
Since $f'_{n_i} \geq f_{n_i}$ on all relevant blocks, $t(H, f'_{n_i})\geq t(H, f_{n_i})$, and so $t(H, h) \geq t(H,f) \geq (1+\delta)t(H,W_0)$.
It follows that $J_{W_0}(\ti{h}) \geq \min\{J_{W_0}(\ti{g}) : t(H,g) \geq (1+\delta) t(H, W_0)\}$, which in turn implies
\begin{align*}
    J_{W_0}(\ti{f}) &= \liminf{I_{W_0}(f_{n_{i_k}})} \geq \liminf{I_{W_0}(f'_{n_{i_k}})} +\frac{\ve\eta}{4} \geq J_{W_0}(\ti{h})+\frac{\ve\eta}{4}\\ &\geq \min\{J_{W_0}(\ti{g}) : t(H,g) \geq (1+\delta) t(H, W_0)\} +\frac{\ve\eta}{4}
    =J_{W_0}(\ti{f}) +\frac{\ve\eta}{4}.
\end{align*}
We thus have reached a contradiction.
\end{proof}

\begin{proof}[Proof of \Cref{CMC-seq}.]
By \Cref{delta-small-star-holds}, there exists $\delta_0, \ve$ such that if $f \in \W_\Omega$, $f\geq W_0$ pointwise, $f=W_0$ on irrelevant blocks (\Cref{relevant}), and $t(H, f) \leq (1+\delta_0) t(H, W_0)$, then $f$ satisfies the $\ve$-neighborhood minorant condition. Let $0 < \delta <\delta_0$, and let
$\ti{f}$ be a minimizer of the variational problem \eqref{min_eq} with $t=(1+\delta)t(H,W_0)$.

\Cref{homomorphism-cont,strictly-increasing} imply that $t(H,f)= (1+\delta) t(H, W_0)$.
By \Cref{nice-seq}, there exists a sequence $\{f_n\}_{n \geq 1}$ such that $f_n \in \W_\Omega$, $f_n \geq W_0$ pointwise, $f_n=W_0$ on irrelevant blocks, $I_{W_0}(f_n) \to J_{W_0}(\ti{f})$, and $\delta_\square(\tilde{ f}, f_n) \to 0$. Since $\delta_\square( \tilde{f}, f_n) \to 0$, $t(H,f_n) \to (1+\delta) t(H, W_0)$. Since $\delta< \delta_0$, there exists $n_0$ such that for all $n \geq n_0$, $t(H,f_n) \leq (1+ \delta_0) t(H,W_0)$. It follows by the assumption on $\delta_0$ that for all $n\geq n_0$, $f_n$ satisfies the $\ve$-neighborhood minorant condition.
\end{proof}

\noindent
The next proposition derives crucial properties for the minimizer of the relative entropy problem, which will be useful in the subsequent analysis. %\textcolor{red}{(Maybe this proposition may be removed?)} \textcolor{violet}{I did a control-F and found that it is used elsewhere besides the proof of \Cref{unique-min}.}

\begin{proposition}\label{proposition:equality-in-constraint} Let $m \in \mathbb{Z}^+$, $\gamma \in \Delta_m$, $W_0 \in \mathcal{B}^\gamma$, a finite regular graph $H$, $\tau = t(H, \cdot)$, and $t(H,W_0)\leq t\leq t^{\tau}_{\max}(\tW_{\Omega})$.
If $f$ is a minimizer of
$$\min\{I_{W_0}(g): g \in \mathcal{B}^\gamma, t(H,g) \geq
t\},
$$ then
$W_0\leq f$
pointwise, with equality on the irrelevant set. Furthermore, $t(H,f) =t$.

\end{proposition}
\begin{proof}
Let $W_0=(p_{ij})_{i,j \in [m]}$ and let $f= (\alpha_{ij})_{i,j \in [m]}$ be a minimizer. First, if
$\alpha_{ij}<p_{ij}$, we can decrease $I_{W_0}$ while maintaining the constraint $ t(H,g) \geq t$ by increasing
$\alpha_{ij}$.  Next observe that if
 $I_i\times I_j$ is
irrelevant, then $ t(H,f)$ does not depend on $f_{ij}$; since $h_p(\beta)$ has a unique minimum at $\beta=p$, this implies
$f_{ij}=p_{ij}$.

To prove the last statement,
suppose for contradiction that $t(H, f)> t$. There exists a relevant block $I_{a} \times I_{b}$ such that $\alpha_{ab} > p_{ab}$. Let $R$ be the union of the relevant blocks. Let $f_\beta$ be the following graphon
$$
f_\beta(x,y)=
\begin{cases}
\beta & (x,y) \in I_a \times I_b\\
f(x,y) & (x,y) \in R \setminus I_a \times I_b\\
W_0(x,y) & \text{otherwise}.
\end{cases}
$$
Since $h_p(\beta)$ is strictly increasing for $\beta\in [p,1]$,
$I_{W_0}(f_\beta)$ is strictly increasing for $\beta\geq p_{ab}$.  Combined with the continuity of $t(H, f_\beta)$ as a function of $\beta$, we conclude that there exists $\beta\in [p_{ab}, \alpha_{ab})$ such  that $t(H,f_{\beta}) >t$ and  $I_{W_0}(f) > I_{W_0}(f_\beta)$. This is a contradiction.
\end{proof}

\subsection{A symmetric regime for larger $\delta$}
We now establish the existence of a symmetric regime for larger $\delta$ (\Cref{sym-regime-near-one}). 

\begin{lemma} \label{big-delta-cmc}
Fix $m \in \mathbb{Z}^+$, $\gamma \in \Delta_m$, $W_0 \in \mathcal{B}^\gamma$ and $H$ a finite $d$-regular graph. There exists $\delta=\delta(W_0, H), \ve=\ve(W_0)>0$ for which $(1+\delta)t(H, W_0) < \max_{g\in \W_\Omega} t(H,g)$ such that the following is true: If $f \in \W_\Omega$, $f \geq W_0$ pointwise, $f= W_0$ on irrelevant blocks, and $t(H,f) \geq (1+\delta) t(H,W_0)$, then $f$ satisfies the $\ve$-neighborhood minorant condition.
\end{lemma}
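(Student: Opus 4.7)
The $\varepsilon$-convex minorant condition requires that for each block $(i,j)$ with $p_{ij}\in(0,1)$, $\|f_{ij}\|_d^d$ admits a strictly convex $\varepsilon$-neighborhood with respect to $\psi_{p_{ij}}$. By \Cref{all-about-psi}, each such $\psi_{p_{ij}}$ is either globally strictly convex on $(0,1)$ (in which case the condition on that block is automatic), or has at most two inflection points $q_{ij}^d \leq r_{ij}^d$ with $p_{ij}^d < q_{ij}^d \leq r_{ij}^d < 1$ and $\psi_{p_{ij}}'' > 0$ on $(0,q_{ij}^d)\cup(r_{ij}^d,1)$. I will choose
\[
0 < \varepsilon < \tfrac{1}{2}\min_{(i,j):\,p_{ij}\in(0,1)}\min(q_{ij}^d-p_{ij}^d,\ 1-r_{ij}^d).
\]
With this choice, on irrelevant blocks with $p_{ij}\in(0,1)$ the hypothesis $f=W_0$ gives $\|f_{ij}\|_d^d=p_{ij}^d$, which automatically has a strictly convex $\varepsilon$-neighborhood. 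The remaining task is to force $\|f_{ij}\|_d^d > r_{ij}^d + \varepsilon$ on every relevant block with $p_{ij}\in(0,1)$ when $\delta$ is suitably chosen.

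Write $F(\alpha):=t(H,(\alpha_{ij})_{i,j})$, a polynomial in the block values all of whose monomial coefficients are nonnegative. The crucial structural fact is that the block graphon $f_{\max}$ (equal to $1$ on every relevant block and to $W_0$ elsewhere) is the \emph{unique} maximizer of $F$ on the compact set
\[
\mathcal{R} := \{\alpha\in[0,1]^{m\times m}:\ \alpha_{ij}=p_{ij}\text{ on irrelevant blocks},\ \alpha_{ij}\in[p_{ij},1]\text{ on relevant blocks}\}.
\]
This follows from observing that $\partial F/\partial\alpha_{ij}$ is itself a polynomial with nonnegative coefficients, so its value at any $\alpha\in\mathcal{R}$ dominates its value at $W_0$; by the definition of relevance, the latter is strictly positive for every relevant $(i,j)$. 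Hence $F$ is strictly monotone in every relevant coordinate throughout $\mathcal{R}$, so its unique maximum is $F(f_{\max})=t_{\max}$.

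Now define the compact subset
\[
K := \{\alpha\in\mathcal{R}:\ \alpha_{ij}^d\leq r_{ij}^d+\varepsilon\ \text{for some relevant }(i,j)\text{ with }p_{ij}\in(0,1)\}.
\]
Because $\varepsilon < (1-r_{ij}^d)/2$, $f_{\max}\notin K$, so $M:=\max_{\alpha\in K}F(\alpha) < t_{\max}$. On the other hand, since $t(H,W_0)<t_{\max}$ forces at least one relevant block with $p_{ij}\in(0,1)$ and since $p_{ij}^d < r_{ij}^d < r_{ij}^d+\varepsilon$ there, $W_0\in K$, so $t(H,W_0)\leq M$. Pick $\delta>0$ with $M<(1+\delta)t(H,W_0)<t_{\max}$. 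If $f$ satisfies the hypotheses, set $f^*:=(\|f_{ij}\|_d)_{i,j}$; Lemma~\ref{lemma:new- holder-general} gives $F(f^*)\geq t(H,f)\geq(1+\delta)t(H,W_0)>M$, while $f\geq W_0$ with equality on irrelevant blocks yields $f^*\in\mathcal{R}$. Hence $f^*\notin K$, i.e.\ $\|f_{ij}\|_d^d>r_{ij}^d+\varepsilon$ on every relevant block with $p_{ij}\in(0,1)$, as required.

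\textbf{Main obstacle.} The delicate step is propagating the strict monotonicity of $F$ in relevant coordinates from the single point $W_0$ (where it follows from the definition of relevance) to all of $\mathcal{R}$, as this is precisely what makes the compactness argument yield $M<t_{\max}$ rather than merely $M\leq t_{\max}$. This propagation hinges on the nonnegativity of the coefficients of $\partial F/\partial\alpha_{ij}$ together with the componentwise bound $\alpha\geq W_0$ on $\mathcal{R}$.
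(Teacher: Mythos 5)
Your proof is correct and takes a genuinely different route from the paper's. The paper picks an explicit threshold: for each $(i,j)\in P$ (blocks whose $\psi_{p_{ij}}$ actually has inflection points) it builds a witness graphon $g_{ij}$ equal to $q_{ij}+\ve^{1/d}$ on $I_i\times I_j$ and to $1$ on the rest of $\Omega$ (there $q_{ij}^d$ denotes the \emph{larger} inflection point), sets $(1+\delta)t(H,W_0)=\max_{(i,j)\in P}t(H,g_{ij})$, and argues block by block: since $g_{ij}\geq f^*$ off $I_i\times I_j$ while $t(H,f^*)\geq t(H,g_{ij})$ by \Cref{lemma:new- holder-general}, strict monotonicity of $t(H,\cdot)$ in a relevant block value forces $\|f_{ij}\|_d\geq q_{ij}+\ve^{1/d}$ and hence $\|f_{ij}\|_d^d\geq q_{ij}^d+\ve$. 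You replace that explicit construction with a soft argument: the nonnegative-coefficient observation upgrades the definition of relevance (positive derivative at $W_0$) to strict monotonicity of $F$ in every relevant coordinate throughout the box $\mathcal{R}$, so $f_{\max}$ is the unique maximizer of $F$ on $\mathcal{R}$, the closed ``bad'' set $K$ is a compact set not containing $f_{\max}$ and thus $\max_K F<t_{\max}$, and any $\delta$ putting $(1+\delta)t(H,W_0)$ in the interval $(\max_K F,\,t_{\max})$ works. Both proofs hinge on the same two ingredients — the H\"older comparison $t(H,f)\leq t(H,f^*)$ and strict monotonicity in relevant coordinates — but yours trades the paper's explicit $\delta$ for a cleaner one-shot compactness argument. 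Two tidying points: $K$ should range only over relevant $(i,j)\in P$ (otherwise $r_{ij}^d$ is undefined; if no relevant block lies in $P$ the lemma is trivially true with any $\ve>0$), and you implicitly assume $t(H,W_0)<t_{\max}$, which is anyway forced by the lemma's requirement $(1+\delta)t(H,W_0)<t_{\max}$. Incidentally, the extra constraint $\ve<\tfrac12\min_{(i,j)}(q_{ij}^d-p_{ij}^d)$ (your $q_{ij}$ is the \emph{smaller} inflection point) is exactly what makes the irrelevant blocks with $p_{ij}\in(0,1)$ automatic; the paper's stated $\ve=\min_{(i,j)\in P}(1-q_{ij}^d)/2$ alone need not be that small, so your version tightens a point the paper leaves implicit.
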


\begin{proof}

Let $W_0=(p_{ij})_{i,j \in [m]}$.
By \Cref{all-about-psi}, for $p_{ij} \in (0,1)$ the function $\psi_{p_{ij}}:(0,1)\to \R$ is either convex 
or $\hat{\psi}_{p_{ij}}$ is constructed by replacing $\psi_{p_{ij}}$ with its lower common tangent on exactly one interval. 
If $\psi_{p_{ij}}$ is convex, then the  minorant condition is trivially satisfied for the $(i,j)$ block. Otherwise, let $\underline{q}_{ij}$ and $\overline{q}_{ij}$ be
such that $\underline{q}_{ij}^d$ and $\overline{q}_{ij}^d$ are respectively the left and right points of intersection between $\psi_{p_{ij}}$ and its lower common tangent.
Let $P=\{(i,j): \psi_{p_{ij}}''(x)=0 \text{ for some } x \in [p_{ij}^d,1] \}$, let $R$ denote the relevant blocks, and define
\begin{align*}
\ve_1 &= \min_{(i,j)\in P }\left(1 - \overline{q}_{ij}\right)^d\\
\ve_2 &= \min_{(i.j) \in P \setminus R}~~ \underline{q}_{ij}^d - p_{ij}^d\\
\ve &= \min\{\ve_1, \ve_2\}.
\end{align*}
Since $\psi'_{p_{ij}}(p_{ij}^d) = 0$ and  $\psi'_{p_{ij}}(1) = \infty$ for all $i,j$, we have $\underline{q}_{ij} > p_{ij}$ and $\overline{q}_{ij} < 1$ for $(i,j) \in P$, so that $\ve > 0$. Let $g_{ij}$ be the graphon in $\W_\Omega$ that takes value $\overline{q}_{ij} +\ve^{1/d}$ on the block $I_i \times I_j$ and takes value $1$ on the rest of $\Omega$. Let $\delta$ be such that $(1+\delta)t(H,W_0)=\max_{(i,j) \in P} t(H, g_{ij})$. 

Suppose  $f \in \W_\Omega$, $f \geq W_0$ pointwise, $f=W_0$ on irrelevant blocks, and $t(H,f ) \geq (1+\delta)t(H,W_0) \geq t(H,g_{ij})$. \Cref{lemma:new- holder-general} implies that for $(i,j) \in P$,
$t(H,g_{ij}) \leq t(H,f)\leq  t(H, f^*)$
where $f^*= ( \Vert f_{ij} \Vert_d )_{i,j \in [m]}$ is the $d$-averaged graphon. Let $(a,b) \in P$. When $I_a \times I_b$ is irrelevant, $\Vert f_{ab} \Vert_d^d=p_{ab}^d \leq \underline{q}_{ab}^d - \ve$. Since $\psi_{p_{ab}} = \hat{\psi}_{p_{ab}}$ on $[0, \underline{q}_{ab}^d]$, $\psi_{p_{ab}} = \hat{\psi}_{p_{ab}}$ in an $\ve$-neighborhood around $\Vert f_{ab}\Vert_d^d$. Next suppose $I_a \times I_b$ is relevant. Since $t(H,f^*)\geq t(H,g_{ab})$ and $g_{ab} \geq f^*_{ab}$ on $[0,1]^2 \setminus (I_a \times I_b)$, $f^*$ must be greater than $g_{ab}$ on $I_a \times I_b$.  It follows that $\Vert f_{ab} \Vert_d \geq \overline{q}_{ab}+\ve^{1/d}$, and so $\Vert f_{ab} \Vert_d^d \geq \overline{q}_{ab}^d +\ve$. Since $\psi_{p_{ab}} = \hat{\psi}_{p_{ab}}$ on $[ \overline{q}_{ab}^d,1]$, $\psi_{p_{ab}}=\hat{\psi}_{p_{ab}}$ in an $\ve$-neighborhood around $\Vert f_{ab} \Vert_d^d$.
\end{proof}

\begin{proof}[Proof of \Cref{sym-regime-near-one}.]
Let $W_0 \in \mathcal{B}^{\gamma,*}$.  By \Cref{big-delta-cmc}, there exist $\delta, \ve>0$ such that if $f \in \W_\Omega$, $f \geq W_0$ pointwise, $f=W_0$ on irrelevant blocks,
and $t(H,f) \geq (1+\delta) t(H,W_0)$, then $f$ satisfies the $\ve$-neighborhood minorant condition. Let $\eta$ be such that $(1+\delta) t(H,W_0)= (1-\eta)t_{\max}$. Since $(1+\delta)t(H, W_0) < \max_{g\in \W_\Omega} t(H,g)$, it holds that $\eta >0$. Let $t\in  ((1-\eta)t_{\max},t_{\max} ] $

 Let $g$ be a minimizer of the variational problem \eqref{min_eq} with this value of $t$. \Cref{nice-seq} implies that there exists a sequence $g_n \in \W_\Omega$ such that $I_{W_0}(g_n) \to J_{W_0}(\ti{g})$, each $g_n \geq W_0$ pointwise, and $\delta_\square(g_n, \ti{g}) \to 0$. It follows that $t(H,g_n) \to t(H,g) \geq t$. Since $t >(1+\delta) t(H,W_0)$, there exists some $n_0$ such that for all $n \geq n_0$, $t(H, g_n) \geq (1+\delta) t(H,W_0)$. It follows by \Cref{big-delta-cmc} that for all $n \geq n_0$, $g_n$ satisfies the $\ve$-neighborhood minorant condition. \Cref{sym-condition} implies that $\ti{g} \in \mathcal{B}^\gamma$, which establishes symmetry in this case. The rest of the proof is the same as that of \Cref{sym-regime}. 

 %Uniqueness of the symmetric minimizer follows from  \Cref{unique-min}, using exactly the same argument as in the proof of \Cref{sym-regime}. \textcolor{red}{(The last sentence to be removed)}
\end{proof}

\section{Symmetry breaking in special cases}\label{sec:non-sym}
In previous sections we let $\gamma \in \Delta_m$ and defined $\mathcal{B}^\gamma$ as the set of block graphons in which the interval structure is given by the vector $\gamma$. In this section, we let $\gamma \in (0,1) \cap \mathbb{Q}$ and let $\mathcal{B}^{(\gamma, 1-\gamma)}$ denote the set of block graphons with two intervals, the first of length $\gamma$ and the second of length $1-\gamma$. 

Recall the definition
\begin{align*}
f_{p,q,r}^{\gamma}(x,y) &= \begin{cases}
p & \text{if } (x,y) \in [0,\gamma]^2\\
r &\text{if } (x,y) \in (\gamma, 1]^2\\
q & \text{otherwise.}
\end{cases}
\end{align*}
 We prove \Cref{nonsym opt}, which establishes the existence of a non-symmetric regime for graphons of the form $f_{0,p,p}^\gamma,f_{1,p,p}^\gamma,$ and  $f_{1,p,0}^\gamma$ when $p$ is sufficiently small.  

\begin{lemma}\label{nice-i-min} Let $\gamma \in (0,1)\cap \mathbb{Q}$ and $W_0\in\mathcal{B}^{(\gamma, 1-\gamma)}$ be a graphon with $Im(W_0) \in \{0,p,1\}$.  If $g \in \W_\Omega$, then $J_{W_0}(\ti{g})= I_{W_0}(g).$
 Moreover, if $\tau$ is a continuous graph parameter
 and $W_0$ is a graphon of the form $f_{z,p,p}^\gamma$ with $z \in \{0,1\}$, 
then
$$\min\{ J_{W_0}(\ti{f}): \ti{f} \in \ti{\mathcal{B}}^{(\gamma,1-\gamma)},\, \tau(\ti{f}) \geq t\}= \min\{ I_{W_0}(f): f \in \mathcal{B}^{(\gamma,1-\gamma)} \cup \mathcal{B}^{(1-\gamma, \gamma)}, \, \tau(f) \geq t\},$$
provided  $t \in \R$ is such that the above minima are finite.
If $W_0$ is a graphon of the form $f_{z_1,p,z_2}^\gamma$  with $z_1, z_2 \in \{0,1\}$, 
then
\begin{align}
\min\{ J_{W_0}(\ti{f}): \ti{f} \in \ti{\mathcal{B}}^{(\gamma,1-\gamma)},\, \tau(\ti{f}) \geq t\}= \min\{ I_{W_0}(f): f \in \mathcal{B}^{(\gamma,1-\gamma)}, \, \tau(f) \geq t\}, \nonumber
\end{align}
again provided  $t \in \R$ is such that these  minima are finite.
\end{lemma}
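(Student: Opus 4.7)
The plan is to first prove the pointwise identity $J_{W_0}(\ti g)=I_{W_0}(g)$ of the opening sentence, and then deduce the two minimum equalities by bookkeeping on block representatives of the equivalence classes. The first step carries the technical weight.

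For the first step, the key observation is that when $\mathrm{Im}(W_0) \subseteq \{0,p,1\}$, on the region $\Omega = \{W_0 = p\}$ the integrand of $I_{W_0}$ reduces to $h_p(f)$, which depends only on the value of $f$ and not on $(x,y)$. Introduce the auxiliary functional $\widetilde{I}(f) = \tfrac{1}{2}\int_{[0,1]^2} h_p(f)\,dx\,dy$, the relative entropy against the constant graphon $\equiv p$. By \Cref{lemma:semi-continuity-I} applied to this constant graphon, $\widetilde{I}$ is lower semi-continuous on $(\W,d_\square)$; since $\widetilde{I}$ is manifestly invariant under measure-preserving rearrangements, a routine argument upgrades this to lower semi-continuity of $\widetilde{I}$ on $(\tW,\delta_\square)$. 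For $g\in\W_\Omega$, since $h_{W_0(x,y)}(g(x,y))=0$ a.e.\ on $\Omega^c$, one has
\[
I_{W_0}(g) \;=\; \widetilde{I}(g) - C, \qquad C \;=\; \tfrac{1}{2}\int_{\Omega^c} h_p(W_0(x,y))\,dx\,dy < \infty.
\]
The bound $J_{W_0}(\ti g)\le I_{W_0}(g)$ is immediate (take $g$ itself in each ball $B(\ti g,\eta)$). Conversely, any sequence $\{h_n\}$ realizing the infimum defining $J_{W_0}(\ti g)$ must lie in $\W_\Omega$ eventually (else $I_{W_0}(h_n)=\infty$), so $I_{W_0}(h_n) = \widetilde{I}(h_n) - C$, and $\delta_\square$-lower semi-continuity of $\widetilde{I}$ gives $\liminf_n I_{W_0}(h_n) \ge \widetilde{I}(g) - C = I_{W_0}(g)$.

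For the minimum equalities, the first step implies that $J_{W_0}(\ti f)= I_{W_0}(h)$ for \emph{every} $\W_\Omega$-representative $h$ of $\ti f$; this value is therefore constant across such representatives. The inequality $\mathrm{LHS}\le \mathrm{RHS}$ follows: for $f$ in the RHS set with $I_{W_0}(f)<\infty$ we have $f\in\W_\Omega$, and if $f\in\mathcal{B}^{(1-\gamma,\gamma)}$ then the measure-preserving transposition swapping $[0,\gamma]$ with $(1-\gamma,1]$ produces a $\delta_\square$-equivalent graphon in $\mathcal{B}^{(\gamma,1-\gamma)}$, so $\ti f\in\ti{\mathcal{B}}^{(\gamma,1-\gamma)}$; combined with $J_{W_0}(\ti f)=I_{W_0}(f)$ this gives the bound. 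For the reverse, apply \Cref{step-functions} to see that any $\W_\Omega$-representative of $\ti f\in\ti{\mathcal{B}}^{(\gamma,1-\gamma)}$ arises from permuting two measurable subsets of $[0,1]$ of measures $\gamma$ and $1-\gamma$; compatibility with the $\{0,1\}$-structure of $W_0$ on $\Omega^c$ forces the partition to equal either the canonical one $([0,\gamma],(\gamma,1])$ or its transpose, producing a step-function representative in $\mathcal{B}^{(\gamma,1-\gamma)}\cup\mathcal{B}^{(1-\gamma,\gamma)}$. Part~1 then equates $J_{W_0}(\ti f)$ with $I_{W_0}$ at this representative. In the asymmetric case $z_1\neq z_2$, the transposed partition would require a single block of the representative to simultaneously take both values $z_1$ and $z_2$ on pieces of $[0,\gamma]^2\cup(\gamma,1]^2$ of positive measure, which is impossible unless $\gamma=1/2$; hence only the canonical partition survives and the union collapses to $\mathcal{B}^{(\gamma,1-\gamma)}$ alone.

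The main obstacle is the pointwise identity of Part~1, specifically the lower bound $J_{W_0}(\ti g)\ge I_{W_0}(g)$. The functional $I_{W_0}$ is not $\delta_\square$-lower semi-continuous in general (indeed $I_{W_0}(f^\phi)\neq I_{W_0}(f)$ for a typical measure-preserving $\phi$ and generic $W_0$), so \Cref{lemma:semi-continuity-I} does not apply directly to $I_{W_0}$ on the quotient space. The hypothesis $\mathrm{Im}(W_0)\subseteq\{0,p,1\}$ is precisely what renders $I_{W_0}$ restricted to $\W_\Omega$ a constant shift of a measure-preserving-invariant functional, unlocking the $\delta_\square$-LSC argument. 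Parts 2 and 3 then reduce to cataloguing the measurable partitions $(Y_1,Y_2)$ of $[0,1]$ with $|Y_1|=\gamma$ that are consistent with the zero/one structure of $W_0$ on $\Omega^c$.
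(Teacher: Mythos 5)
Your Part 1 argument is correct and takes essentially the paper's route: both exploit the observation that when $\mathrm{Im}(W_0)\subseteq\{0,p,1\}$, the restriction of $I_{W_0}$ to $\W_\Omega$ is a constant shift of the rearrangement-invariant functional $I_p$, so that the $d_\square$-lower semi-continuity of $I_p$ from \Cref{lemma:semi-continuity-I} upgrades to a statement on the quotient. The paper keeps the measure-preserving bijections $\phi_n$ explicit (via \Cref{IW0-to-Ip}) rather than first descending the functional to $\tW$ as a separate lemma, but this is only a presentational difference.

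Part 2 has a genuine gap. Your claim that ``compatibility with the $\{0,1\}$-structure of $W_0$ on $\Omega^c$ forces the partition to equal either the canonical one $([0,\gamma],(\gamma,1])$ or its transpose'' is not correct. \Cref{step-functions} only tells you that the $\W_\Omega$-representative $h$ is a step function on an \emph{arbitrary} measurable partition $(Y_1',Y_2')$ of $[0,1]$ with measures $(\gamma,1-\gamma)$, carrying the same block-value matrix $(\alpha_{ij})$ as the canonical-partition representative $g\in\mathcal{B}^{(\gamma,1-\gamma)}$; the $\{0,1\}$-structure of $W_0$ on $\Omega^c$ constrains the \emph{values} $\alpha_{ij}$, not the sets $Y_i'$. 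Concretely, with $W_0=f_{0,p,p}^{1/4}$, the step function $h$ on $Y_1'=(1/2,3/4]$, $Y_2'=[0,1/2]\cup(3/4,1]$ with $\alpha_{11}=\alpha_{12}=1/2$, $\alpha_{22}=0$ lies in $\W_\Omega$ and has $\ti h\in\ti{\mathcal{B}}^{(1/4,3/4)}$, but its partition is not $([0,1/4],(1/4,1])$ or its reflection. What the argument actually needs (and what \Cref{good int} supplies) is that the canonical-partition $g$ itself, or its reflection $g^\phi$ with $\phi(x)=1-x$, lies in $\W_\Omega$. This requires a case split on whether $|Y_1'\cap[0,\gamma]|>0$: in the first case one deduces $\alpha_{11}$ matches $W_0$ on $\Omega^c$ so $g\in\W_\Omega$; in the second ($\gamma\le 1/2$) one deduces $\alpha_{22}$ matches, so $g^\phi\in\W_\Omega$; and when $z_1\neq z_2$ and $\gamma<1/2$ the second branch forces $\alpha_{22}=z_1$ and $\alpha_{22}=z_2$ simultaneously, a contradiction, which is how the union collapses to $\mathcal{B}^{(\gamma,1-\gamma)}$. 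Your conclusion is right, but the shortcut of reading off the partition is false and this case analysis on $Y_1'\cap[0,\gamma]$ cannot be bypassed.
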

\noindent 
\Cref{b-gamma-compact} establishes that sets of the form $\{\ti{f} \in \ti{\mathcal{B}}^{(\gamma,1-\gamma)}:\, \tau(\ti{f}) \geq t\}$ are compact under $\delta_\square$, and therefore the above left-hand minima are well-defined. Since $f \in \mathcal{B}^{(\gamma,1-\gamma)}$ can be identified with $[0,1]^3$, the set $\mathcal{B}^{(\gamma,1-\gamma)}$ is compact with respect the Euclidean metric. Since $I_{W_0}$ is continuous, it follows that the righthand minima are well-defined.

\begin{lemma}\label{construct nonsym opt}
Let $\gamma \in (0,1) \cap \mathbb{Q}$ and $H$ be a $d$-regular graph with $v$ vertices.  Assume that
\begin{enumerate}
    \item \label{ns0pp} $0 < t< t(H, f^\gamma_{0,0,1})$ and denote $W_p=f_{0,p,p}^\gamma$, $\mathscr{C}(\gamma) = \mathcal{B}^{(\gamma,1-\gamma)} \cup \mathcal{B}^{(1-\gamma, \gamma)}$,  or
\item \label{ns1pp} $t(H, f^\gamma_{1,0,0}) < t< 1$ and denote $W_p=f_{1,p,p}^\gamma$, $\mathscr{C}(\gamma) = \mathcal{B}^{(\gamma,1-\gamma)} \cup \mathcal{B}^{(1-\gamma, \gamma)}$,  or
\item \label{ns1p0} $t(H,f^\gamma_{1,0,0}) < t< t(H, f^\gamma_{1,1,0})$ and denote $W_p=f_{1,p,0}^\gamma$, $\mathscr{C}(\gamma)=  \mathcal{B}^{(\gamma,1-\gamma)} $.
\end{enumerate}
Separately, under each of these assumptions, there exists $p_0 > 0$ such that if $p < p_0$,
\[\inf\{I_{W_p}(f): t(H,f) \geq t\} < \min\{ I_{W_p}(f): f \in \mathscr{C}(\gamma), \, t(H,f) \geq t\}.\]
\end{lemma}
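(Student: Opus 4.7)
The strategy, common to all three cases, is to exhibit an explicit non-symmetric modification of $W_p$ whose entropy is asymptotically strictly smaller than the minimum entropy over the symmetric class $\mathscr{C}(\gamma)$ as $p\to 0$. The central observation is that $h_p(x)/\log(1/p)\to x$ uniformly in $x\in[0,1]$ as $p\to 0$, so that for any block-constant $f\in\W_\Omega$ one has
\[
\frac{I_{W_p}(f)}{\log(1/p)}\;\longrightarrow\;\tfrac12\int_{\Omega} f(x,y)\,dx\,dy,
\]
uniformly over the compact, $p$-independent symmetric feasible set parametrized by $(q,r)\in[0,1]^2$. The proof thus reduces to a purely deterministic comparison of $\int_\Omega$-masses against
\[
m_{\mathrm{sym}}\;:=\;\min\Bigl\{\int_\Omega f\;:\;f\in\mathscr{C}(\gamma)\cap\W_\Omega,\;t(H,f)\geq t\Bigr\}.
\]

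For Case~1, I would fix $\beta\in(t^{1/v},\,1-\gamma)$ (nonempty since $t<(1-\gamma)^v$) and take $g_p=\mathbf{1}_{[1-\beta,1]^2}$ on $(\gamma,1]^2$, $g_p=0$ on $[0,\gamma]^2$, and $g_p=p$ elsewhere; then $t(H,g_p)\geq\beta^v-O(p)>t$ and $I_{W_p}(g_p)=\tfrac12\beta^2\log(1/p)+O(1)$. For Case~2 one extends the planted clique, setting $g_p=1$ on $[0,\gamma+\alpha]^2$ with $\alpha=t^{1/v}-\gamma$ and $g_p=p$ elsewhere on $\Omega$, yielding entropy $\approx\tfrac12(t^{2/v}-\gamma^2)\log(1/p)$. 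Case~3 uses an analogous extension into the mixed region only, since $(\gamma,1]^2$ is frozen at $0$. The lower bound on the symmetric side comes from the generalized H\"older (Finner) inequality underlying \Cref{lemma:new- holder-general}, which for $d$-regular $H$ yields $\|f\|_d^{|E(H)|}\geq t(H,f)\geq t$, i.e.\ $\|f\|_d^d\geq t^{2/v}$. Combined with the pointwise bound $f\geq f^d$ (since $f\leq 1$), after subtracting the frozen contribution of $\Omega^c$ this gives the baselines $m_{\mathrm{sym}}\geq t^{2/v}$ in Case~1 and $m_{\mathrm{sym}}\geq t^{2/v}-\gamma^2$ in Case~2 (matching the non-symmetric constants above).

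The heart of the argument is upgrading these inequalities to strict ones, so as to furnish a definite gap. Equality in $f\geq f^d$ forces $f\in\{0,1\}$ a.e., while equality in Finner's forces $f$ to be rank-one, i.e.\ $f(x,y)=u(x)u(y)$. Together, for a block-constant symmetric $f\in\mathcal{B}^{(\gamma,1-\gamma)}\cap\W_\Omega$, these conditions pin $(q,r)\in\{(0,0),(1,1)\}$; in each case the corresponding $t(H,f)$ equals a boundary value---$(1-\gamma)^v$ or $0$ in Case~1, $\gamma^v$ or $1$ in Case~2, and the analogous extremes in Case~3---all excluded by the lemma's hypotheses. The copy $\mathcal{B}^{(1-\gamma,\gamma)}$ (in Cases~1 and 2) is handled analogously and actually yields a larger $\int_\Omega f$ when $\gamma<1-\gamma$, because its first block of length $1-\gamma$ is forced to $W_p$'s boundary value on a set strictly larger than $\Omega^c$. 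Compactness of the feasible set (\Cref{b-gamma-compact}) then turns the strict pointwise inequality into a uniform gap $m_{\mathrm{sym}}>\text{baseline}$. Choosing the non-symmetric parameter so that the corresponding entropy constant lies strictly inside $(\text{baseline},\,m_{\mathrm{sym}})$ and invoking the uniform convergence of $I_{W_p}/\log(1/p)$ then delivers the strict inequality for all $p<p_0$. The hardest part is the finite case-check of Finner equality configurations; the combinatorial bookkeeping relating $t(H,f)$ to $\|f\|_d^d$ for each $\{0,1\}$-valued $(q,r)$ is routine once one observes that rank-one plus the $\W_\Omega$ constraint pins $u_1\in\{0,1\}$ and forces $u_2=u_1$.
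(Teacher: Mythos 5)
Your approach is essentially the same as the paper's for Cases 1 and 2: a planted clique that saturates the Finner bound, the asymptotic $h_p(x)/\log(1/p)\to x$ reducing the comparison to $\int_\Omega$-masses, and the generalized H\"older inequality to lower-bound the symmetric minimum by $t^{2/v}$ (resp.\ $t^{2/v}-\gamma^2$). You are in fact somewhat more careful than the paper on two points the paper glosses over: you flag the equality configurations of both $g^d\le g$ and Finner explicitly (the paper asserts that $(\int g^d)^{v/2}<(\int g)^{v/2}$ ``unless $g\equiv 1$,'' which is imprecise --- equality already holds whenever $g\in\{0,1\}$ a.e.; the chain still closes because in those configurations the first inequality $t^{2/v}\le t(H,g)^{2/v}$ is strict under the lemma's hypotheses), and you invoke compactness of the feasible set to extract a single $p_0$ uniform over $(q,r,z)$. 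The only substantive deviation in Cases 1--2 is that you take a clique side $\beta$ slightly larger than $t^{1/v}$ rather than exactly $t^{1/v}$; this works but requires first establishing the strict gap before choosing $\beta$, which you note.

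The gap is Case 3, which your plan treats by the same ``pick the non-symmetric constant strictly inside $(\text{baseline},\,m_{\mathrm{sym}})$'' template, and that template does not transfer. With $W_p=f^\gamma_{1,p,0}$, any $g\in\W_\Omega$ is frozen to $1$ on $[0,\gamma]^2$ and $0$ on $(\gamma,1]^2$; the unique rank-one $\{0,1\}$-valued graphon compatible with these constraints is $\mathbf 1_{[0,\gamma]^2}=f^\gamma_{1,0,0}$, whose homomorphism density $\gamma^v$ lies strictly below the feasibility threshold $t$. So the Finner baseline $t^{2/v}-\gamma^2$ is not achievable by any feasible $g$, and there is no room to ``drop into the gap at the baseline.'' The constraint $t(H,g)\ge t$ pins the mass of the planted $1$-region in the mixed blocks to roughly $2\gamma(1-\gamma)(\alpha^*)^d$ (where $t(H,f^\gamma_{1,\alpha^*,0})=t$), which is in general strictly larger than $t^{2/v}-\gamma^2$, so you cannot dial the non-symmetric entropy freely. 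The paper instead uses a matching argument: $\chi_\alpha$ is constructed to be $\{0,1\}$-valued with a $1$-region of mass $2\gamma(1-\gamma)\alpha^d$ in the mixed blocks, and an enumeration over independent sets of $H$ shows $t(H,\chi_\alpha)=t(H,f^\gamma_{1,\alpha,0})$ \emph{exactly}; the entropy comparison then reduces to the elementary $\alpha^d<\alpha$ for $\alpha\in(0,1)$. Your ``analogous extension into the mixed region'' should be spelled out as this matching argument rather than as a Finner-gap comparison.
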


\begin{proof}[Proof of \Cref{nonsym opt}]
We apply \Cref{construct nonsym opt} to conclude that there exists $p_0 > 0$ such that if $p < p_0$,
\[\inf\{I_{W_p}(f): t(H,f) \geq t\} < \min\{ I_{W_p}(f): f \in \mathscr{C}(\gamma), \, t(H,f) \geq t\}.\]
\Cref{homomorphism-cont,phi is h,nice-i-min} imply that
$$\min \{ J_{W_p}(\ti{g}) : t(H,g) \geq t\} < \min\{ J_{W_p}(\ti{f}): \ti{f} \in \ti{\mathcal{B}}^{(\gamma,1-\gamma)},\, t(H,f) \geq t\}.$$
Therefore if $\ti{g}$ is a minimizer of \eqref{min_eq}, then $\ti{g} \not \in \ti{\mathcal{B}}^{(\gamma,1-\gamma)}$, meaning $t$ is not in the symmetric regime.
\end{proof}

\subsection{Proof of \Cref{nice-i-min}}

\begin{proposition}\label{IW0-to-Ip}
Let $\gamma \in (0,1) \cap \mathbb{Q}$ and $W_0\in\mathcal{B}^{(\gamma, 1-\gamma)}$  be a graphon such that $Im(W_0) \subseteq \{0, p,1\}$. Let $I_p(f)= \int_{[0,1]^2} h_p(f(x,y)) dx dy$, and let $\Omega_q=\{(x,y) \in [0,1]^2 : W_0(x,y)=q\}$ for $q \in \{0,p,1\}$. For all $f \in \W_\Omega$, $$I_{W_0}(f)=I_{p}(f) - |\Omega_0| h_p(0) - |\Omega_1| h_p(1).$$
\end{proposition}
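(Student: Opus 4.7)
The plan is to establish the identity by decomposing the unit square $[0,1]^2$ according to the image of $W_0$, namely as the disjoint union $\Omega_0 \cup \Omega_p \cup \Omega_1$ (up to a set of measure zero), and then using the assumption $f \in \W_\Omega$ to control the integrand on $\Omega_0$ and $\Omega_1$.

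First I would observe that by the definition of $\W_\Omega$, the graphon $f$ agrees with $W_0$ almost everywhere on $\Omega^c = \Omega_0 \cup \Omega_1$. Hence $f(x,y) = 0$ a.e.\ on $\Omega_0$ and $f(x,y) = 1$ a.e.\ on $\Omega_1$. Using the conventions $0 \log 0 = 0$ and $0 \log(0/0) = 0$, this gives $h_{W_0(x,y)}(f(x,y)) = h_0(0) = 0$ on $\Omega_0$ and $h_{W_0(x,y)}(f(x,y)) = h_1(1) = 0$ on $\Omega_1$. On $\Omega_p$ we simply have $h_{W_0(x,y)}(f(x,y)) = h_p(f(x,y))$. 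Putting this together with \Cref{rel-entropy-defn} yields
\[
I_{W_0}(f) \;=\; \tfrac{1}{2}\int_{\Omega_p} h_p(f(x,y))\, dx\, dy.
\]

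Next I would evaluate $I_p(f)$ by splitting the integral over the same three pieces. On $\Omega_0$ we have $f = 0$ a.e., so the integrand equals $h_p(0)$; on $\Omega_1$ we have $f = 1$ a.e., so the integrand equals $h_p(1)$. Therefore
\[
I_p(f) \;=\; \int_{\Omega_p} h_p(f(x,y))\, dx\, dy \;+\; |\Omega_0|\, h_p(0) \;+\; |\Omega_1|\, h_p(1).
\]
Combining the two displays (with the appropriate factor of $\tfrac{1}{2}$ dictated by the conventions in \Cref{rel-entropy-defn}) gives the claimed identity, modulo the normalization used in the definition of $I_p$.

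There is no genuine technical obstacle here: the argument is purely a bookkeeping computation relying on the definition of $\W_\Omega$ and on the vanishing of $h_0(0)$ and $h_1(1)$. The only thing one must be careful about is to invoke the conventions $0\log 0 = 0$ and $0\log(0/0) = 0$ uniformly, so that the integrands are unambiguously defined on $\Omega_0$ and $\Omega_1$.
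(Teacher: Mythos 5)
Your proof is correct and mirrors the paper's argument exactly: decompose $[0,1]^2$ into $\Omega_0 \cup \Omega_p \cup \Omega_1$, use $f = W_0$ a.e.\ on $\Omega_0 \cup \Omega_1$ so that $h_{W_0}(f)$ vanishes there, and then account for the constant contributions $|\Omega_0|h_p(0)$ and $|\Omega_1|h_p(1)$ in $I_p(f)$. You are also right to flag the factor-of-$\tfrac{1}{2}$ mismatch: Definition~\ref{rel-entropy-defn} defines $I_{W_0}$ with a $\tfrac{1}{2}$ in front while the $I_p$ in this proposition has none, and the paper's own proof silently drops that $\tfrac{1}{2}$ in its first line, so as literally stated the identity is off by a factor of $2$ unless $I_p$ is also defined with the $\tfrac{1}{2}$ normalization.
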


\begin{proof}
Suppose $f \in \W_\Omega$. Then for $q \in \{0,1\}$, $f=q$ almost everywhere on $\Omega_q$. It follows that
\begin{align*}
    I_{W_0}(f) &= \int_{\Omega_p} h_p(f(x,y)) dxdy + \int_{\Omega_0} h_0(f(x,y)) dx \,dy +\int_{\Omega_1} h_1(f(x,y)) dx \,dy\\
    &=\int_{\Omega_p} h_p(f(x,y)) dx\,dy\\
    &=\int_{[0,1]^2} h_p(f(x,y)) dx\,dy - |\Omega_0| h_p(0) - |\Omega_1| h_p(1)\\
    &= I_{p}(f) - |\Omega_0| h_p(0) - |\Omega_1| h_p(1).
\end{align*}
\end{proof}

\begin{lemma}\label{good int}Let $\gamma \in (0,1) \cap \mathbb{Q}$ and $W_0\in\mathcal{B}^{(\gamma, 1-\gamma)}$.
Let $\tW_\Omega$ be defined with respect to $W_0$.
\begin{itemize}
	\item[(i)] Suppose $W_0= f_{z,p,p}^\gamma$, and $z \in \{0,1\}$. If $\ti{f} \in \ti{\mathcal{B}}^{(\gamma,1-\gamma)} \cap \tW_\Omega$, there exists $g$ such that $\delta_\square(\ti{f},g)=0$ and $g \in (\mathcal{B}^{(\gamma,1-\gamma)} \cup \mathcal{B}^{(1-\gamma, \gamma)}) \cap \W_\Omega$.
	\item[(ii)] Suppose $W_0= f_{z_1,p,z_2}^\gamma$, and $z_1, z_2 \in \{0,1\}$.  If $\ti{f} \in \ti{\mathcal{B}}^{(\gamma,1-\gamma)} \cap \tW_\Omega$, 
	there exists $g$ such that $\delta_\square(\ti{f},g)=0$ and $g \in \mathcal{B}^{(\gamma,1-\gamma)} \cap \W_\Omega$.

\end{itemize}
\end{lemma}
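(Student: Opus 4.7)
The plan is to exploit \Cref{step-functions}: any graphon at cut-distance zero from a two-block step function $f_0=(\alpha_{ij})_{i,j\in\{1,2\}}$ with block widths $\gamma, 1-\gamma$ must itself be a rearrangement $\sum_{i,j}\alpha_{ij}1_{Y_i}1_{Y_j}$ for some measurable partition $Y_1\sqcup Y_2=[0,1]$ with $|Y_1|=\gamma$. I will use this twice: once to extract constraints on the values $\alpha_{ij}$ from the existence of some $h\in\W_\Omega$ in the equivalence class of $\ti f$, and once to produce the desired $g$ by choosing a convenient interval partition of $[0,1]$.

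First, pick $f_0=(\alpha_{ij})\in \mathcal{B}^{(\gamma,1-\gamma)}$ with $\delta_\square(f_0,\ti f)=0$ (from $\ti f\in\ti{\mathcal B}^{(\gamma,1-\gamma)}$) and $h\in\W_\Omega$ with $\delta_\square(h,\ti f)=0$ (from $\ti f\in\tW_\Omega$). Applying \Cref{step-functions} to $\delta_\square(f_0,h)=0$, write $h=\sum\alpha_{ij}1_{Y_i}1_{Y_j}$ and set $a=|Y_1\cap[0,\gamma]|\in[0,\gamma]$. On $[0,\gamma]^2$ the graphon $h$ takes value $\alpha_{11}$ on a set of measure $a^2$, value $\alpha_{22}$ on a set of measure $(\gamma-a)^2$, and value $\alpha_{12}$ on a set of measure $2a(\gamma-a)$; an analogous decomposition with $a$ replaced by $\gamma-a$ holds on $(\gamma,1]^2$. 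The condition $h=W_0$ on $\Omega^c$ then forces each nonempty piece to match the corresponding constant value of $W_0$.

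For case (i), the constraint $h\equiv z$ on $[0,\gamma]^2$ yields $\alpha_{11}=z$ whenever $a>0$ and $\alpha_{22}=z$ whenever $a<\gamma$, so at least one of these equalities always holds. Moreover $|Y_1\cap(\gamma,1]|=\gamma-a\leq 1-\gamma$ forces $a\geq 2\gamma-1$, so $\alpha_{11}=z$ is automatic once $\gamma>1/2$. When $\alpha_{11}=z$, take $g=f_0\in\mathcal B^{(\gamma,1-\gamma)}$: then $g=\alpha_{11}=z$ on $[0,\gamma]^2$, so $g\in\W_\Omega$. Otherwise $\alpha_{22}=z$ and necessarily $\gamma\leq 1/2$, and we take $g\in\mathcal B^{(1-\gamma,\gamma)}$ to be the step function with value $\alpha_{22}$ on $[0,1-\gamma]^2$, $\alpha_{11}$ on $(1-\gamma,1]^2$, and $\alpha_{12}$ on the off-diagonal. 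This $g$ is a measure-preserving rearrangement of $f_0$, so $\delta_\square(g,f_0)=0$; and since $[0,\gamma]\subseteq[0,1-\gamma]$, we have $g=\alpha_{22}=z$ on $[0,\gamma]^2$, giving $g\in\W_\Omega$.

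Case (ii) is handled identically with the constraint applied simultaneously on both diagonal blocks. When $z_1=z_2=z$, the same bookkeeping yields $\alpha_{11}=\alpha_{22}=z$ in every admissible subcase, so $g=f_0\in\mathcal B^{(\gamma,1-\gamma)}\cap\W_\Omega$ works. When $z_1\neq z_2$, the only admissible configurations are $Y_1=[0,\gamma]$ (so $\alpha_{11}=z_1,\alpha_{22}=z_2$), or, only when $\gamma=1/2$, $Y_1=(\gamma,1]$ (so $\alpha_{11}=z_2,\alpha_{22}=z_1$); in the former we take $g=f_0\in\mathcal B^{(\gamma,1-\gamma)}\cap\W_\Omega$, and in the latter we take $g$ to be the rearrangement of $f_0$ that swaps the two diagonal values, which lies in $\mathcal B^{(1/2,1/2)}=\mathcal B^{(\gamma,1-\gamma)}$ and satisfies $g\in\W_\Omega$ by construction. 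The main obstacle is purely the bookkeeping of these subcases; once the constraints on the $\alpha_{ij}$ are in hand, the construction of $g$ is a direct application of \Cref{step-functions} to an explicit interval partition of $[0,1]$.
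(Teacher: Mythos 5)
Your proposal is correct and follows essentially the same approach as the paper's proof: both invoke \Cref{step-functions} to write $h$ as a rearrangement $\sum_{ij}\alpha_{ij}1_{Y_i}1_{Y_j}$, then do casework on $a=|Y_1\cap[0,\gamma]|$ to force $\alpha_{11}$ and/or $\alpha_{22}$ to match $W_0$ on the diagonal blocks, producing $g$ as either $f_0$ itself or its flip $\phi(x)=1-x$. Your observation that $\alpha_{11}=\alpha_{22}=z$ holds in every admissible subcase when $z_1=z_2$ streamlines that part of case (ii) slightly, but the decomposition and key lemma are the same.
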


\begin{proof}
Since $\ti{f} \in \ti{\mathcal{B}}^{(\gamma, 1-\gamma)}\cap \tW_\Omega$, there exists some $g \in \mathcal{B}^{(\gamma, 1-\gamma)}$  and $h \in \W_\Omega$ such that $\delta_\square(\ti{f},h) =0$ and $\delta_\square(\ti{f},g) =0$. Thus $\delta_\square(\ti{h}, \ti{g}) =0$.
Since $g$ is of the form $g=\sum_{ij}\alpha_{ij}1_{Y_i}\times 1_{Y_j}$ with $Y_1=[0,\gamma]$ and $Y_2=(\gamma,1]$, we
can use \Cref{step-functions} to conclude that $h$ must be of the same form with appropriate sets $Y_1', Y_2'$ of sizes $\gamma$ and $1-\gamma$.

First suppose that $W_0$ has the form $f_{z,p,p}^\gamma$ for $z \in \{0,1\}$.
We consider cases:
 \begin{enumerate}[(a)]
     \item If $|Y_1' \cap [0, \gamma]|>0$, then $\alpha_{11}=z$ and
     $g \in \mathcal{W}_\Omega$, which completes the proof.
     \item If $|Y_1' \cap [0, \gamma]|=0$, it must be the case that $\gamma\leq  1/2$  and $|Y_2' \cap [0,\gamma]|>0$,
      implying that $\alpha_{22}=z$.
      Note that we can always re-define $g$ on the measure zero set $(\{\gamma\} \times [0,1]) \cup ([0,1] \times \{\gamma\})$
      so that $g^{\phi} \in \mathcal{B}^{(1-\gamma, \gamma)}$ for $\phi(x) = 1- x$.  Since $\gamma \leq 1/2$, $g^\phi=z$ on $[0, \gamma] \times [0, \gamma]$ meaning $g^\phi \in \W_\Omega$. By construction, $g^\phi \in \mathcal{B}^{(1-\gamma, \gamma)}$ and $\delta_\square(g^\phi, \ti{f})=0$.
 \end{enumerate}

Next, suppose that $W_0$ has the form $f_{z_1,p,z_2}^\gamma$ for $z_1, z_2 \in \{0,1\}$. As $h\in \W_\Omega$, $h$ takes value $z_1$ on $[0, \gamma]^2$ and value $z_2$ on $(\gamma,1]^2$. 

 \begin{enumerate}[(a)]
     \item If $|Y'_1 \cap [0, \gamma]|>0$ and $|Y'_2 \cap (\gamma,1]|>0$, then
     $\alpha_{11}=z_1$, $\alpha_{22}=z_2$ and again $g\in\W_\Omega$.
     \item If $|Y'_1 \cap [0, \gamma]|=0$, then $\gamma \leq 1/2$, $|Y'_2 \cap [0, \gamma]|>0$ and $|Y_1' \cap ( \gamma,1]|>0$.  It follows that $\alpha_{11}=z_2$ and $\alpha_{22}=z_1$.
     \begin{itemize}[-]
         \item  Suppose $\gamma =1/2$. As before, by re-defining $g$ on the boundary if necessary, we note that for $\phi(x) = 1-x$, $g^\phi$ takes value $z_1$ on $[0,\gamma] \times [0,\gamma]$, and value $z_2$ on $(\gamma, 1] \times (\gamma, 1]$.
        Thus $g^\phi\in \W_\Omega$. By construction $g^\phi \in \mathcal{B}^{(\gamma, 1-\gamma)}$ and $\delta_\square(g^\phi, \ti{f})=0$.
         \item   If $\gamma<1/2$, then $|Y'_2|= 1-\gamma  = |( \gamma,1]|>1/2$. Thus $|Y'_2 \cap (\gamma,1]|>0$,
         and $\alpha_{22}=z_2$. Thus $z_2=z_1$ and $h$ must take value $z_1 = z_2$ almost everywhere, meaning $g$ does as well, so $g \in \W_\Omega \cap \mathcal{B}^{(\gamma, 1-\gamma)}$.
     \end{itemize}
     \item The case that $|Y'_2 \cap ( \gamma,1]|=0$ follows analogously to the above case.

  \noindent
  Note that if $\gamma \neq 1/2$, cases (b) and (c) only occur when $z_1=z_2$. Therefore when $z_1 \not = z_2$, $g \in \mathcal{B}^{(\gamma, 1-\gamma)} \cap \W_\Omega$.
 \end{enumerate} \end{proof}

\begin{proof}[Proof of \Cref{nice-i-min}.] First we show that if $g \in \W_\Omega$, then $J_{W_0}(\ti{g})= I_{W_0}(g).$ Let
$g \in \W_\Omega$. There exists a sequence of graphons $\{g_n\}_{n \geq 1}$ with each $g_n \in \W_\Omega$ such that $I_{W_0}(g_n ) \to J_{W_0}(\ti{g})$ and $\delta_\square(g_n, \ti{g}) \to 0$. It follows that there exists a sequence $\phi_n \in \mathcal{M}$ such that $d_\square(g_n^{\phi_n},g) \to 0$.  Note that $I_p(g_n^{\phi_n})= I_p(g_n)$. Let $c=|\Omega_0| h_p(0) +|\Omega_1| h_p(1)$. By \Cref{IW0-to-Ip}, $I_{W_0}(g)= I_p(g) -c$ and
$I_{W_0}(g_n)= I_p(g_n) -c$ since $g_n, g \in \W_\Omega$.
Leveraging the lower semi-continuity of $I_p$ with respect to $d_\square$ (\Cref{lemma:semi-continuity-I}), we obtain
\begin{align*}
    J_{W_0}(\ti{g}) &= \liminf_{n \to \infty} I_{W_0}(g_n)
     = \liminf_{n \to \infty} I_{p}(g_n)-c
    = \liminf_{n \to \infty} I_{p}(g_n^{\phi_n})-c
    \geq I_p(g) -c
    =I_{W_0}(g).
\end{align*}
Since the definition of $J_{W_0}$ implies that $J_{W_0}(\ti{g}) \leq I_{W_0}(g)$, it follows that $J_{W_0}(\ti{g}) =I_{W_0}(g)$.

Next suppose $W_0$ is of the form $f_{z,p,p}^\gamma$ or $f_{z_1,p,z_2}^\gamma$ where $z,z_1, z_2 \in \{0,1\}$ and $z_1= z_2$. Clearly $\min\{ I_{W_0}(f): f \in \mathcal{B}^{(\gamma,1-\gamma)} \cup \mathcal{B}^{(1-\gamma, \gamma)}, \,\tau(f) \geq t\}\geq \min \{ J_{W_0}(\ti{g}) : \ti{g} \in \ti{\mathcal{B}}^{(\gamma,1-\gamma)}, \tau(g) \geq t\}.$ Let $h$ be such that $J_{W_0}(\ti{h})= \min \{ J_{W_0}(\ti{g}) : \ti{g} \in \ti{\mathcal{B}}^{(\gamma,1-\gamma)}, \tau(\ti{g}) \geq t\}$. \Cref{good int} implies that we may assume $h \in (\mathcal{B}^{(\gamma,1-\gamma)} \cup \mathcal{B}^{(1-\gamma, \gamma)}) \cap \W_\Omega$.
Observe
\begin{align*}
    \min \{ J_{W_0}(\ti{f}) : \ti{f} \in \ti{\mathcal{B}}^{(\gamma,1-\gamma)}, \tau(f) \geq t\}&=J_{W_0}(\ti{h})
    =I_{W_0}(h)\\
    &\geq \min\{ I_{W_0}(f): f \in \mathcal{B}^{(\gamma,1-\gamma)} \cup \mathcal{B}^{(1-\gamma, \gamma)}, \, \tau(f) \geq t\}.
\end{align*}
This establishes the claim in these cases. The proof for $W_0 = f_{z_1, p,z_2}^{\gamma}$, $z_1, z_2 \in \{0,1\}$, $z_1 \neq z_2$ is analogous.
\end{proof}

\subsection{Proof of \Cref{construct nonsym opt}}

Our construction of a non-symmetric graphon with lower entropy than any symmetric graphon is different for each of the three cases. Each proof uses the following proposition.
\begin{proposition}\label{really-edge}
Let $\{W_p\}_{p \in (0,1)}$ be a family of graphons where $W_p$ takes value in $\{0,p,1\}$. Assume further that the sets where $W_p$ assumes the values $\{0,p,1\}$ is the same for all $p\in (0,1)$.
%Let $p \in (0,1)$ and let $W_p \in \W$ be a graphon that takes values in $\{0,p,1\}$. 
For $i \in \{0,p,1\}$, let $\Omega_i= \{(x,y) \in [0,1]: W_p(x,y)=i\}$. Let $E$ be the graph with two vertices and one edge.
If $f \in \W_\Omega$, then $$\lim_{p \to 0} \frac{I_{W_p}(f)}{\log{1/p}}=\frac{1}{2} \brac{ t(E,f) - |\Omega_1|}.$$
\end{proposition}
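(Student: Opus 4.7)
The plan is to reduce the double integral defining $I_{W_0}$ to an integral over $\Omega_p$ and then extract the dominant logarithmic term from $h_p$. Since $f \in \mathcal{W}_\Omega$ agrees with $W_0 \in \{0,1\}$ almost everywhere on $\Omega_0 \cup \Omega_1$, the integrand $h_{W_0(x,y)}(f(x,y))$ vanishes there, so exactly as in Proposition \ref{IW0-to-Ip},
\[
I_{W_0}(f) = \tfrac{1}{2}\int_{\Omega_p} h_p(f(x,y))\,dx\,dy.
\]

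The core step is then to rewrite $h_p(u) = u\log(u/p) + (1-u)\log((1-u)/(1-p))$ by separating off the part that diverges with $p$:
\[
h_p(u) = u\log(1/p) + \bigl[u\log u + (1-u)\log(1-u)\bigr] - (1-u)\log(1-p).
\]
The bracketed term lies in $[-\log 2, 0]$ for $u \in [0,1]$, and $-\log(1-p) = O(p)$. Dividing by $\log(1/p)$ therefore yields the uniform estimate
\[
\sup_{u \in [0,1]}\left|\frac{h_p(u)}{\log(1/p)} - u\right| \longrightarrow 0 \quad \text{as } p \to 0.
\]
Since $|\Omega_p| \leq 1$, this uniform bound passes through the integral to give
\[
\lim_{p \to 0}\frac{I_{W_0}(f)}{\log(1/p)} = \tfrac{1}{2}\int_{\Omega_p} f(x,y)\,dx\,dy.
\]

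Finally, I would identify $\int_{\Omega_p} f$ with $t(E,f) - |\Omega_1|$ using the structure of $\mathcal{W}_\Omega$: since $f = 0$ on $\Omega_0$ and $f = 1$ on $\Omega_1$,
\[
t(E,f) = \int_{[0,1]^2} f(x,y)\,dx\,dy = \int_{\Omega_p} f(x,y)\,dx\,dy + |\Omega_1|,
\]
giving the claimed formula. No step here poses any real obstacle; the only point requiring some care is the uniform convergence on $[0,1]$, which is immediate from boundedness of $u\log u + (1-u)\log(1-u)$ on $[0,1]$ and from $\log(1-p)\to 0$.
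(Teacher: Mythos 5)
Your proof is correct and follows essentially the same route as the paper: both reduce $I_{W_0}(f)$ to $\tfrac12\int_{\Omega_p} h_p(f)$ using $f\in\W_\Omega$, both rest on the pointwise limit $h_p(u)/\log(1/p)\to u$, and both conclude with $\int_{\Omega_p}f = t(E,f)-|\Omega_1|$. The only difference is how the limit is passed inside the integral: the paper invokes Dominated Convergence after a brief boundedness remark, whereas you give the slightly more explicit uniform estimate $\sup_{u\in[0,1]}|h_p(u)/\log(1/p)-u|\to 0$, which is arguably a touch cleaner since it avoids any measure-theoretic machinery and makes the bounded error term visible.
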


\begin{proof}
First observe that for a fixed $\alpha$, 
\begin{align}\label{old-fact}
\lim_{p \to 0} \frac{h_p(\alpha)}{\log \frac{1}{p}}= \lim_{p \to 0} \frac{\alpha \log\frac{\alpha}{p} + (1-\alpha) \log \frac{1-\alpha}{1-p} }{\log \frac{1}{p}}
=\alpha.
\end{align}
Since $\frac{ h_p(f)}{\log{1/p}}$ is bounded as $p\to 0$ (using Lemma \ref{lem:h_p}) and $f \in \W_\Omega$, the Dominated Convergence Theorem implies that
\begin{align*}
    \lim_{p \to 0} \frac{I_{W_p}(f)}{\log{1/p}}= \lim_{p \to 0}\frac{1}{2}\int_{\Omega_p} \frac{ h_p(f)}{\log{1/p}} = \frac{1}{2}\int_{\Omega_p}\lim_{p \to 0} \frac{ h_p(f)}{\log{1/p}} =\frac{1}{2} \int_{\Omega_p} f = \frac{1}{2} \brac{ t(E,f)- |\Omega_1|}.
\end{align*}
\end{proof}

\subsubsection{\ER graphs with a planted
independent set}\label{sec:6-non-symmetric-0ppp}
In this subsection, we prove the existence of a non-symmetric regime for $d$-regular subgraph counts in graphons of the form $f^\gamma_{0,p,p}$ when $p$ is sufficiently small.  We will do this by showing that the union of isolated vertices with a clique will
have lower relative entropy than the minimum in $ \mathcal{B}^{(\gamma,1-\gamma)}  \cup \mathcal{B}^{(1-\gamma, \gamma)}$.

\begin{proof}[Proof of \Cref{construct nonsym opt}, Statement \ref{ns0pp}.] Let $W_p= f^\gamma_{0,p,p}$.
First note that
\begin{align}\label{simp0}
&\min\{ I_{W_p}(f): f \in \mathcal{B}^{(\gamma,1-\gamma)}  \cup \mathcal{B}^{(1-\gamma, \gamma)}, \, t(H,f) \geq t\}\nonumber\\
&=\min\{ I_{W_p}(f_{0, \alpha,\beta}^z): \alpha, \beta \in  [p,1], z \in \{\gamma, 1-\gamma\}, \,t(H, f^z_{0,\alpha, \beta}) \geq t\}.
\end{align}
We can restrict to graphons of the form $f^z_{0,\alpha,\beta}$ since $I_{W_p}(f_{\eta, \alpha,\beta}^z)= \infty$ when $\eta \not =0$. We can further restrict to $\alpha, \beta \geq p$ since $h_p(\cdot)$ is decreasing on $[0, p]$ and $t(H, \cdot)$ is an increasing function.

Define the non-symmetric graphon $\chi_t$ as follows.
\begin{align*}
\chi_t(x,y) &= \begin{cases}
1 & (x,y) \in [1-t^{\frac{1}{v}}, 1]^2\\
0 & \text{otherwise.}
\end{cases}
\end{align*}
In other words, the graphon $\chi_t$ is the union of a clique and isolated vertices that has the required subgraph density (by the fact that $t(H,\chi_t)=t$).  Note that
the assumption that $t \leq t(H, f^\gamma_{0,0,1})$ implies that $t^{1/v} \leq 1-\gamma$ and so $\chi_t \in \W_\Omega$.
By \eqref{simp0}, it suffices to show
\begin{align}\label{compare limits}
\lim_{p \to 0} \frac{I_{W_p}(\chi_t)}{\log \frac{1}{p}} < \lim_{p \to 0} \frac{I_{W_p}(f_{0,\alpha, \beta}^{z})}{\log \frac{1}{p}}
\end{align}
for each triple $z \in \{\gamma, 1-\gamma\}$, $\alpha, \beta \in [p,1]$ such that $t(H, f^z_{0,\alpha, \beta}) \geq t$.

Let $E$ be the graph with two vertices and one edge. By \Cref{really-edge}, for $z\in \{ \gamma, 1-\gamma\}$  $$\lim_{p \to 0} \frac{I_{W_p}(f_{0,\alpha, \beta}^{z})}{\log \frac{1}{p}}= \frac{1}{2} t(E, f_{0,\alpha, \beta}^z)\quad \text{ and } \quad \lim_{p \to 0} \frac{I_{W_p}(\chi_t)}{\log \frac{1}{p}} = \frac{1}{2} t(E, \chi_t)
= \frac{1}{2} t^{\frac{2}{v}}.$$  Therefore to establish \eqref{compare limits}, it suffices to show that
\begin{align*}
t ^{\frac{2}{v}} \leq t(H, f_{0,\alpha, \beta}^{z})^{\frac{2}{v}} < t (E, f_{0,\alpha, \beta}^{z}).
\end{align*}
By the generalized H\"older inequality (\Cref{theorem:holder}) and the facts that $E(H)=dv/2$ and $g^d\leq g$ for any graphon $g$,
\begin{align}\label{genholder}
t(H, g) &= \int_{[0,1]^v} \prod_{(i,j) \in E(H)} g(x_i, x_j) dx_1 \dots dx_v \leq \left(\int_{[0,1]^2} \left(g(x,y)\right)^d \, dx \, dy \right)^{\frac{e(H)}{d}}\nonumber\\
&\leq \left(\int_{[0,1]^2} g(x,y) \, dx \, dy \right)^{\frac{v}{2}}
= t(E, g)^{\frac{v}{2}}
\end{align}
The second inequality is strict if $g$ takes values in $(0,1)$ on a set with positive measure, as is the case for all graphons in the set of potential minimizers described on the right hand side of  \eqref{simp0}, with the exception of graphons of the form  $f^z_{0,1,1}$. We claim that for such graphons, the first inequality is strict. When we apply \Cref{theorem:holder}, we take $f_{1}(x_i, x_j)= f^z_{0,1,1}(x_i,x_j)$. To have equality, $f_1$ must have a product representation $f_1(x_i,x_j)=f_{1i}(x_i)f_{1j}(x_j)$. Suppose such a representation exists. For $x_i,x_j \in [0,z)^2$, $f_1(x_i,x_j) =0$, and so $f_{1i}(x_i) =0$ or $f_{1j}(x_j)=0$. However, then $f_1(x_i, \cdot) = 0$ or $f_1(\cdot, x_j) = 0$ on a set of positive measure, which is not consistent with the structure of the graphon $f^z_{0,1,1}$.
Finally, taking $g= f_{0,\alpha, \beta}^z$ and rearranging establishes \eqref{compare limits}.
\end{proof}

\subsubsection{\ER graphs with a planted clique}
In this subsection, we prove the existence of a non-symmetric regime for $d$-regular subgraph counts in graphons of the form $f^\gamma_{1,p,p}$ when $p$ is sufficiently small.
Again it will be  the union of a clique and isolated vertices which has lower relative entropy than the minimizer in $ \mathcal{B}^{(\gamma,1-\gamma)}  \cup \mathcal{B}^{(1-\gamma, \gamma)}$.

\begin{proof}[Proof of \Cref{construct nonsym opt}, Statement \ref{ns1pp}.] Let $W_p= f^\gamma_{1,p,p}$.
First note that
\begin{align}\label{simp1}
&\min\{ I_{W_p}(f): f \in \mathcal{B}^{(\gamma,1-\gamma)}  \cup \mathcal{B}^{(1-\gamma, \gamma)}, \, t(H,f) \geq t\}\nonumber\\
&=\min\{ I_{W_p}(f_{1, \alpha,\beta}^z): z \in \{\gamma, 1-\gamma\}, \alpha, \beta \in [p,1], \,t(H, f^z_{1,\alpha, \beta})\geq t\},
\end{align}
by similar reasoning to the proof of \Cref{construct nonsym opt}, Statement \ref{ns0pp}.
Define the non-symmetric graphon $\chi_t$ as follows.
\begin{align*}
\chi_t(x,y) &= \begin{cases}
1 & (x,y) \in [0, t^{\frac{1}{v}}]^2\\
0 & \text{otherwise.}
\end{cases}
\end{align*}
In other words, the graphon $\chi_t$ is the union of a clique and isolated vertices that has the required subgraph density.
Note that since $t\geq t(H,f^{\gamma}_{1,0,0})$, $t^{1/v} \geq \gamma$ and so $\chi_t \in \W_\Omega$. By \eqref{simp1}, it suffices to show that
\begin{align*}
\lim_{p \to 0} \frac{I_{W_p}(\chi_t)}{\log \frac{1}{p}} < \lim_{p \to 0} \frac{I_{W_p}(f_{1,\alpha, \beta}^{\gamma})}{\log \frac{1}{p}}
\end{align*}
for each triple $z \in \{\gamma, 1-\gamma\}$, $\alpha, \beta \in [p,1]$ such that $t(H, f^z_{1,\alpha, \beta})\geq t$.

Let $E$ be the graph with two vertices and one edge. By \Cref{really-edge}, for $z\in \{ \gamma, 1-\gamma\}$
$$
\lim_{p \to 0} \frac{I_{W_p}(f_{1,\alpha, \beta}^{z})}{\log \frac{1}{p}}
= \frac{ t(E, f_{1,\alpha, \beta}^z)- \gamma^2}{2} \quad \text{ and } \quad \lim_{p \to 0} \frac{I_{W_p}(\chi_t)}{\log \frac{1}{p}}
=  \frac{t(E, \chi_t)- \gamma^2}{2}
= \frac{t^{\frac{2}{v}}-\gamma^2}{2}.
$$
Therefore, it suffices to show that
\begin{align*}
\frac{1}{2} \left(t^{\frac{2}{v}} - \gamma^2 \right) < \frac{1}{2}\left(t(E, f_{1,\alpha, \beta}^{z}) - \gamma^2\right)
\impliedby  t ^{\frac{2}{v}} \leq t(H, f_{1,\alpha, \beta}^{z})^{\frac{2}{v}} < t (E, f_{1,\alpha, \beta}^{z}).
\end{align*}
This holds by \eqref{genholder}, as long as one of $\alpha$ or $\beta$ is not equal to $1$. Finally, note that $f^{z}_{1,1,1}$ is clearly not an optimizer since $t(H, f^{z}_{1,1,1}) > t$ and so we can always produce $g \in \mathcal{B}^{(z,1-z)} \setminus \{f^{z}_{1,1,1}\}$ such that $t(H,g) \geq t$ and $I_{W_p}(g) < I_{W_p}(f^{z}_{1,1,1})$.
\end{proof}

\subsubsection{\ER graphs with a planted clique and independent set}
In this subsection, we prove the existence of a non-symmetric regime for $d$-regular subgraph counts in graphons of the form $f^\gamma_{1,p,0}$ when $p$ is sufficiently small.  This time, it will be  the union of a clique, a bipartite complete graph and isolated vertices which has lower relative entropy than the minimizer in $ \mathcal{B}^{(\gamma,1-\gamma)}$.

\begin{proof}[Proof of \Cref{construct nonsym opt}, Statement \ref{ns1p0}.] Let $W_p=f^\gamma_{1,p,0}$ and $ t(H, f^\gamma_{1,0,0})<t< t(H, f^\gamma_{1,1,0})$.
First note that
\begin{align}\label{simp2}
&\min\{ I_{W_p}(f): f \in \mathcal{B}^{(\gamma,1-\gamma)} , \, t(H,f) \geq t\}\nonumber\\
&=\min\{ I_{W_p}(f_{1, \alpha,0}^\gamma):  \alpha \in [0,1], \,t(H, f^z_{1,\alpha, 0})\geq t\}\nonumber\\
&=\min\{ I_{W_p}(f_{1, \alpha,0}^\gamma):  \alpha \in [0,1], \,t(H, f^z_{1,\alpha, 0})= t\}.
\end{align}
The first equality follows because $I_{W_p}(f_{\eta, \alpha,\beta}^z)= \infty$ when $\eta \not =1$ or $\beta \not =0$. The second equality follows by \Cref{proposition:equality-in-constraint}.

We construct a non-symmetric graphon $\chi_{\alpha}$ such that $t(H,\chi_\alpha)=t(H, f_{1,\alpha,0}^{\gamma})$. Let
\begin{align*}
\chi_{\alpha}(x,y) &= \begin{cases}
1 & (x,y) \in [0, \gamma+(1-\gamma) \alpha^d]^2 \setminus
(\gamma,1]^2\\
0 & \text{otherwise.}
\end{cases}
\end{align*}
Let $s_k$ be the number of labeled independent sets of size $k$ in $H$, and let $v=|V(H)|$ be the number of vertices of $H$. In any homomorphism of $H$ in $f_{1, \alpha,0}^{\gamma}$, the vertices of $H$ mapped to the interval $(\gamma,1]$ must form an independent set. Counting homomorphisms by the number of vertices that map to $(\gamma, 1]$, we obtain
$$t(H, f_{1,\alpha,0}^{\gamma})=\sum_{k=0}^v s_k \gamma^{v-k} (1-\gamma)^k \alpha^{dk} =\sum_{k=0}^v s_k \gamma^{v-k}  ((1-\gamma)\alpha^d)^k 1^{dk} =t(H,\chi_\alpha).$$
\noindent
To establish symmetry breaking, it suffices to show that $\chi_{\alpha}$ has lower entropy than the class of symmetric graphons $f^{\gamma}_{1,\alpha,0}$. Thus by \eqref{simp2}, it is enough to show that
\begin{align}\label{goal 3}
\lim_{p \to 0} \frac{I_{W_p}(\chi_\alpha)}{\log \frac{1}{p}} < \lim_{p \to 0} \frac{I_{W_p}(f_{1, \alpha, 0}^{\gamma})}{\log \frac{1}{p}}
\end{align}
for all $\alpha$ such that $t(H, f_{1,\alpha,0}^\gamma)=t$.
Since $ t(H, f^\gamma_{1,0,0})<t< t(H, f^\gamma_{1,1,0})$, $t(H, f_{1,\alpha,0}^z)\not= t$ when $\alpha\in \{0,1\}$. Thus, it suffices to establish \eqref{goal 3} when $\alpha \in (0,1)$.

\noindent
Observe using \eqref{old-fact} that
\begin{align*}
\lim_{p \to 0}\frac{I_{W_p}(\chi_\alpha)}{\log \frac{1}{p}} =\lim_{p \to 0} \frac{(1-\gamma) \alpha^d \gamma h_p(1) + (1 - \gamma-(1-\gamma) \alpha^d)\gamma h_p(0)}{\log \frac{1}{p}}= (1-\gamma) \gamma \alpha^d
\end{align*}
and
\begin{align*}
\lim_{p \to 0} \frac{I_{W_p}(f_{1,\alpha, 0}^{\gamma})}{\log \frac{1}{p}}
= \lim_{p \to 0} \frac{(1-\gamma)\gamma h_p(\alpha)}{\log \frac{1}{p}}
=(1-\gamma)\gamma \alpha.
\end{align*}
Noting that {$0 < \alpha < 1$} establishes \eqref{goal 3}, and completes the proof.
\end{proof}

\section{Bipartite \ER graphs}
\label{sec:bipartite}
In this section, we prove \Cref{theorem:variational-problem-bipartite,theorem:eigenvalue}, which precisely identify the symmetric and non-symmetric regimes for $d$-regular subgraph counts  and the operator norm in bipartite \ER graphs respectively. Throughout this section, we fix $p \in (0,1)$ and $\gamma \in (0,1) \cap \mathbb{Q}$.
%We let $\psi(x) = h_p (x^{1/d})$, and let $\hat{\psi}$ denote the convex minorant of $\psi$. 
We use the notation $f_p^\gamma$ to denote the bipartite graphon with density $p$ and blocks of size $\gamma$ and $1-\gamma$, as illustrated in \Cref{fig:bipartite-graphon}. 
\subsection{Density of $d$-regular subgraphs}
We will apply \Cref{lemma:symmetric,lemma:non-symmetric} to identify the symmetric and non-symmetric regimes respectively.
\begin{lemma}\label{lemma:symmetric} Let $p \in (0,1)$, $\gamma \in (0,1) \cap \mathbb{Q}$ and $W_0= f_p^\gamma$.
Let $H$ be a $d$-regular graph with $d \geq 1$. Let $0 < p \leq r \leq 1$ be such that $(r^d, h_p(r))$ is on the convex minorant of $\psi_p$. If $f \in \W_\Omega$ and $t(H,f) \geq t(H, f_r^\gamma)$, then $I_{W_0}(f) \geq I_{W_0}(f_r^\gamma)$ with equality if and only if $f=f_r^\gamma$ almost everywhere.
\end{lemma}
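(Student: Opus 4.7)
The plan is to reduce the inequality on $I_{W_0}(f)$ to Jensen's inequality for the convex minorant $\hat\psi$, exploiting the special bipartite structure of $W_0=f_p^\gamma$.

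First I will unpack what $f\in\W_\Omega$ means here: the set $\Omega$ is the union of the two off-diagonal bipartite blocks $[0,\gamma]\times(\gamma,1]\cup(\gamma,1]\times[0,\gamma]$ (of total measure $|\Omega|=2\gamma(1-\gamma)$), and on the diagonal blocks $f$ must equal $W_0=0$. Therefore
\[
I_{W_0}(f)=\tfrac12\int_\Omega h_p(f(x,y))\,dx\,dy=\tfrac12\int_\Omega \psi\bigl(f(x,y)^d\bigr)\,dx\,dy.
\]
Second, I will invoke the (already-stated) norm bound \Cref{lemma:norm}, which, using the generalized Hölder inequality and $d$-regularity of $H$, gives $\|f\|_d^d\ge 2\gamma(1-\gamma)r^d$ under the hypothesis $t(H,f)\ge t(H,f_r^\gamma)$. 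This translates the homomorphism constraint into the single scalar constraint $\frac{1}{|\Omega|}\int_\Omega f^d\,dx\,dy\ge r^d$.

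Third, I will chain three inequalities:
\begin{align*}
\tfrac12\int_\Omega \psi(f^d)\,dx\,dy
&\;\ge\; \tfrac12\int_\Omega \hat\psi(f^d)\,dx\,dy &&\text{(pointwise $\psi\ge\hat\psi$)}\\
&\;\ge\; \tfrac{|\Omega|}{2}\,\hat\psi\!\left(\tfrac{1}{|\Omega|}\int_\Omega f^d\right) &&\text{(Jensen, $\hat\psi$ convex)}\\
&\;\ge\; \tfrac{|\Omega|}{2}\,\hat\psi(r^d)=\gamma(1-\gamma)h_p(r). &&\text{(monotonicity of $\hat\psi$)}
\end{align*}
The last equality uses the hypothesis that $(r^d,h_p(r))$ lies on the convex minorant, so $\hat\psi(r^d)=\psi(r^d)=h_p(r)$. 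The right-hand side equals $I_{W_0}(f_r^\gamma)$, giving the inequality. For the monotonicity step I note that $\hat\psi\ge 0$ on $[0,1]$ (the zero function is a convex lower bound for $\psi\ge 0$) and $\hat\psi(p^d)=0$ because $\psi(p^d)=0$; a convex function with a global minimum at $p^d$ is nondecreasing on $[p^d,1]\supseteq[r^d,1]$.

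Finally, for the equality clause, I trace back what equality in each of the three inequalities requires. Equality in the first forces $\psi(f^d)=\hat\psi(f^d)$ a.e.\ on $\Omega$. Equality in Jensen for the convex function $\hat\psi$ forces $f^d$ to be a.e.\ constant on $\Omega$ unless $\hat\psi$ is affine on the essential range of $f^d$; in the latter case the constraint $\psi(f^d)=\hat\psi(f^d)$ a.e.\ still pins $f^d$ to the finite set of endpoints of linear pieces of $\hat\psi$. Combined with equality in the last inequality (which, using monotonicity of $\hat\psi$ together with the convex-minorant hypothesis at $r^d$, forces the common value of $f^d$ to equal $r^d$), and the strict convexity of $h_p$ at $r\ge p$, one concludes $f\equiv r$ a.e.\ on $\Omega$, i.e.\ $f=f_r^\gamma$ a.e. The main obstacle I anticipate is this last step: carefully ruling out the possibility that $f^d$ spreads over a linear segment of $\hat\psi$ at height $h_p(r)$; this will use that such a linear segment, if present, can only touch $\psi$ at its endpoints (by definition of $\hat\psi$ on the non-convex portion of $\psi$), together with the fact that $h_p$ is strictly monotone on $[p,1]$.
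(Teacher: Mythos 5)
Your argument reproduces the paper's proof in its essentials: invoke \Cref{lemma:norm} to convert the homomorphism constraint into the scalar constraint $\frac{1}{|\Omega|}\int_\Omega f^d\ge r^d$, and then chain the three inequalities $\psi\ge\hat\psi$ (pointwise), Jensen for the convex function $\hat\psi$, and monotonicity of $\hat\psi$ on $[r^d,1]$. The paper packages the second step as a standalone statement (\Cref{lemma:symmetric-general}, which takes the norm condition as hypothesis) and obtains \Cref{lemma:symmetric} by combining it with \Cref{lemma:norm}, but the computation is identical to yours.

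The one place where your plan names slightly the wrong tool is in closing the equality clause. Tracing equality through the three inequalities, you correctly reduce to the situation where $f^d$ takes values a.e.\ among the endpoints $\{r_1^d,r_2^d\}$ of the single linear segment of $\hat\psi$ (here $r_1,r_2$ are the inflection points from \Cref{all-about-psi}) while $\frac{1}{|\Omega|}\int_\Omega f^d=r^d$. To rule out $f^d$ being genuinely two-valued, you propose to use ``strict convexity of $h_p$'' and ``strict monotonicity of $h_p$ on $[p,1]$''---neither of which quite does the job. The relevant fact, already in \Cref{all-about-psi}, is that $\psi>\hat\psi$ \emph{strictly} on the open interval $(r_1^d,r_2^d)$. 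Hence if $f^d$ took both values $r_1^d$ and $r_2^d$ on positive-measure sets, then $r^d=\lambda r_1^d+(1-\lambda)r_2^d$ with $\lambda\in(0,1)$ would lie in $(r_1^d,r_2^d)$, where $\psi(r^d)>\hat\psi(r^d)$---contradicting the standing hypothesis that $(r^d,h_p(r))$ lies on the convex minorant. This is the precise obstruction you flagged, just resolved by the geometry of $\hat\psi$ rather than by a property of $h_p$ alone. Note that the paper's own proof of \Cref{lemma:symmetric-general} is equally terse on the equality clause (it attributes all strictness to Jensen, which as your analysis shows is not literally the only source of strictness), so this is an elided detail on both sides rather than a substantive flaw in your approach.
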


\begin{lemma}\label{lemma:non-symmetric} Let $p \in (0,1)$, $\gamma \in (0,1) \cap \mathbb{Q}$ and $W_0= f_p^\gamma$.
Let $H$ be a $d$-regular graph with $d \geq 1$. Let $0 < p < r <1$ be such that $(r^d, h_p(r))$ is not on the convex minorant of $\psi_p$. Then there exists $g \in \mathcal{W}$ such that
$t(H,g) >
 t(H, f_r^\gamma)$ and $I_{W_0}(g) < I_{W_0}(f_r^\gamma)$.
\end{lemma}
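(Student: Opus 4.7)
The plan is to construct an explicit non-symmetric graphon $g \in \mathcal{W}_\Omega$ that simultaneously has strictly smaller relative entropy and strictly larger homomorphism density than $f_r^\gamma$. The core gap comes from the convex-minorant hypothesis. Since $(r^d, h_p(r))$ lies strictly above $\hat\psi_p$, there exist $0 \le a < r^d < b \le 1$ (endpoints of a linear piece of $\hat\psi_p$ containing $r^d$) and $\mu_0 = (r^d - a)/(b-a) \in (0,1)$ such that $\mu_0 h_p(b^{1/d}) + (1-\mu_0) h_p(a^{1/d}) = \hat\psi_p(r^d) < h_p(r)$. Writing $s := a^{1/d}$ and $t := b^{1/d}$ (so that $0\le s < r < t\le 1$), continuity allows me to perturb $\mu_0$ slightly upward to some $\lambda \in (\mu_0, 1)$ so that
\[
\lambda t^d + (1-\lambda) s^d > r^d \quad \text{and} \quad \lambda h_p(t) + (1-\lambda) h_p(s) < h_p(r).
\]
This pair of inequalities drives both halves of the lemma.

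I would then build $g$ by splitting one side of the bipartition. Let $A = [0, \lambda\gamma] \subseteq [0,\gamma]$, and set $g$ equal to $t$ on $(A \times (\gamma,1]) \cup ((\gamma,1] \times A)$, equal to $s$ on the remainder of the bipartite strip, and equal to $0$ on the diagonal blocks (see \Cref{fig:bipartite-construction}). Then $g \in \mathcal{W}_\Omega$, and integrating $h_p(g)$ on the strip (of total area $2\gamma(1-\gamma)$) yields
\[
I_{W_0}(g) = \gamma(1-\gamma)\bigl[\lambda h_p(t) + (1-\lambda) h_p(s)\bigr] < \gamma(1-\gamma) h_p(r) = I_{W_0}(f_r^\gamma).
\]

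For the density inequality, the vanishing of $g$ on the diagonal blocks forces any positive-weight homomorphism from $H$ to split $H$'s vertex set across the two intervals according to the bipartition of each component. (If $H$ is not bipartite then $t(H,g)=0=t(H,f_r^\gamma)$ and the conclusion is vacuous, so I may assume $H$ is bipartite $d$-regular, hence $|V_1^{(i)}| = |V_2^{(i)}|$ for each component $H_i$.) For a fixed orientation with $V_1^{(i)} \to [0,\gamma]$ and $V_2^{(i)} \to (\gamma,1]$, conditioning on which vertices of $V_1^{(i)}$ land in $A$ versus $[0,\gamma] \setminus A$ and noting that the value of $g$ on the relevant edges depends only on the $[0,\gamma]$-endpoint, the integral over the $(\gamma,1]$-side is trivial; each vertex in $V_1^{(i)}$ contributes $d$ edges of weight $t^d$ or $s^d$ with weights $\lambda\gamma$ and $(1-\lambda)\gamma$, so the binomial theorem collapses the sum to a factor $\gamma^{|V_1^{(i)}|}(1-\gamma)^{|V_2^{(i)}|}(\lambda t^d + (1-\lambda) s^d)^{|V_1^{(i)}|}$. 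Summing over the two orientations per component and multiplying across $c$ components,
\[
t(H,g) = 2^c (\gamma(1-\gamma))^{v/2} \bigl(\lambda t^d + (1-\lambda) s^d\bigr)^{v/2},
\]
while the same computation applied to $f_r^\gamma$ yields $t(H, f_r^\gamma) = 2^c (\gamma(1-\gamma))^{v/2} (r^d)^{v/2}$. The inequality $\lambda t^d + (1-\lambda) s^d > r^d$ then gives $t(H, g) > t(H, f_r^\gamma)$.

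The main technical point is the homomorphism density bookkeeping: using the bipartite support of $g$ to restrict the admissible vertex placements, and then exploiting $d$-regularity together with the product structure of the integral to obtain the clean binomial factor $(\lambda t^d + (1-\lambda) s^d)^{|V_1^{(i)}|}$ per component. The perturbation argument producing $\lambda$ and the entropy computation are otherwise standard.
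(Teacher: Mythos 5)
Your proposal is correct, and it takes a genuinely different—and arguably cleaner—route than the paper's. The paper builds a perturbation $g^\epsilon$ of $f_r^\gamma$ that introduces the auxiliary values $r_1, r_2$ only on narrow strips of width $O(\epsilon^2)$ and $O(\epsilon^3)$ (as in Figure~\ref{fig:bipartite-construction}), with the widths tuned so that the first-order change in $t(H,\cdot)$ cancels while the entropy drops at order $\epsilon^2$; it then takes $\epsilon\to 0$ to extract both strict inequalities asymptotically. You instead replace $f_r^\gamma$ outright by a non-perturbative two-valued bipartite graphon: split $[0,\gamma]$ into $A=[0,\lambda\gamma]$ and its complement, set $g=t:=b^{1/d}$ on $A\times(\gamma,1]$ and $g=s:=a^{1/d}$ on the rest of the strip, where $[a,b]$ is the linear piece of $\hat\psi_p$ containing $r^d$ and $\lambda$ is a slight upward perturbation of $\mu_0=(r^d-a)/(b-a)$. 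Because $g$ depends only on the $[0,\gamma]$-coordinate on the strip and $H$ is bipartite $d$-regular, the integral factorizes exactly and yields the closed form $t(H,g)=2^c(\gamma(1-\gamma))^{v/2}(\lambda t^d+(1-\lambda)s^d)^{v/2}$, so the density inequality follows from $\lambda t^d+(1-\lambda)s^d>r^d$; the entropy is a plain convex combination $\gamma(1-\gamma)(\lambda h_p(t)+(1-\lambda)h_p(s))$, so the entropy inequality follows from the convex-minorant gap plus continuity in $\lambda$. This avoids all asymptotic bookkeeping, buys explicit closed forms for both quantities, and makes the role of the convex-minorant hypothesis transparent. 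Your calculations check out, including the factor $2^c$ (two orientations per component) and the use of $|V_1^{(i)}|=|V_2^{(i)}|$, which holds by $d$-regularity.

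One small imprecision, shared with the paper: the reduction to bipartite $H$. You write that for non-bipartite $H$ the conclusion is ``vacuous,'' but it is actually \emph{false} in that case, since $t(H,g)=0=t(H,f_r^\gamma)$ for every $g\in\W_\Omega$ while $I_{W_0}(g)=\infty$ for $g\notin\W_\Omega$; so no $g$ satisfies the strict density inequality. The lemma, as stated, therefore implicitly assumes $H$ bipartite. The paper's own proof has the identical implicit assumption (its alternating-integral computation of $t(H,g^\epsilon)$ silently vanishes for non-bipartite $H$), so this does not count against your argument; it is worth noting explicitly when writing the proof, as the paper does in the proof of Lemma~\ref{lemma:norm}. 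Also, your reference to Figure~\ref{fig:bipartite-construction} is slightly misleading since your $g$ is a genuinely different (and simpler) graphon than the one pictured; better to drop that reference or draw your own picture.
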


\noindent We now prove Theorem \ref{theorem:variational-problem-bipartite}, which completely characterizes the symmetric and non-symmetric regimes for $d$-regular homomorphism densities in bipartite \ER graphons.

\begin{proof}[Proof of Theorem \ref{theorem:variational-problem-bipartite}]
Suppose that the point $(r^d, h_p(r))$ lies on the convex minorant of $\psi_p$. We will show that $t_r^\gamma= t(H, f_r^\gamma)$ is in the symmetric regime for $t(H, \cdot)$.
Let $\ti{g} \in \tW_\Omega$ be such that $J_{W_0}(\ti{g})= \min \{J_{W_0}(\ti{f}): t(H,f) \geq t_r^\gamma\}$. We may assume that $g \in \W_\Omega.$ By \Cref{nice-i-min}, $J_{W_0}(\ti{g})= I_{W_0}(g)$. Since $t(H,g) \geq t_r^\gamma$ and $(r^d, h_p(r))$ lies on the convex minorant of $\psi_p$, \Cref{lemma:symmetric} implies that $I_{W_0}(g) \geq I_{W_0}(f_r^\gamma)$. Since $I_{W_0}(g)=J_{W_0}(\ti{g})= \min \{J_{W_0}(\ti{g}): t(H,g) \geq t_r^\gamma\} \leq I_{W_0}(f_r^\gamma)$, it follows that $I_{W_0}(f_r^\gamma)= I_{W_0}(g)$. \Cref{lemma:symmetric} implies that $g = f_r^\gamma$, meaning that $\ti{f_r^\gamma}$ is the unique symmetric solution.

Next, suppose that the point $(r^d, h_p(r))$ does not lie on the convex minorant of $\psi_p$. We will show that $t_r^\gamma= t(H, f_r^\gamma)$ is not in the symmetric regime for $t(H, \cdot)$.
 \Cref{lemma:non-symmetric} implies that there exists $g \in \W_\Omega$ such that $t(H,g) > t(H,f_r^\gamma)$ and $I_{W_0}(g) < I_{W_0}(f_r^\gamma)$.  By \Cref{nice-i-min}, $J_{W_0}(\ti{g})= I_{W_0}(g)$. We apply \Cref{nice-i-min} and obtain
\begin{align}\label{min comp}
    \min\{J_{W_0}(\ti{g}) :t(H,g) \geq t_r^\gamma, \ti{g} \in \ti{\mathcal{B}}^{(\gamma, 1-\gamma)}\}&=   \min\{I_{W_0}(g) :t(H,g) \geq t_r^\gamma, g \in \mathcal{B}^{(\gamma, 1-\gamma)} \cup \mathcal{B}^{( 1-\gamma, \gamma)} \}\nonumber \\
    &=   \min\{I_{W_0}(f_q^\gamma) : q \in [0,1], t(H,f_q^\gamma) \geq t_r^\gamma\}
    =I_{W_0}(f_r^\gamma)\nonumber\\
   & > I_{W_0}(g)\nonumber
    \geq \min \{ J_{W_0}(\ti{g}): t(H,g) \geq t_r^\gamma\}.
\end{align}
The second equality follows by noting that if $\gamma\not= 1/2$ and $g \in \mathcal{B}^{( 1-\gamma, \gamma)}$, then $I_{W_0}(g)=\infty$ or $g$ is the zero graphon. The third equality follows by noting that $I_{W_0}(f_q^\gamma)$ and $t(H, f_q^\gamma)$ are increasing functions of $q$.
\end{proof}

\subsubsection{Proof for the symmetric regime}
The following lemma describes a norm condition on $f$ that implies that the graphon $f_r^\gamma$ has lower entropy.
\begin{lemma}\label{lemma:symmetric-general}
Suppose that $d \geq 1$ and $p \leq r \leq1$ 
are such that the point $(r^d, h_p(r))$ lies on the convex minorant of $\psi_p$ and
\[
\Vert f \Vert_d^d \geq 2 \gamma (1-\gamma) r^d .
\]
Then $I_{W_0}(f) \geq I_{W_0}(f_r^\gamma)$, with equality occurring if and only if $f = f_r^\gamma$ almost everywhere.
\end{lemma}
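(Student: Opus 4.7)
The plan is to chain together three standard manipulations: rewriting $I_{W_0}(f)$ through the function $\psi(x)=h_p(x^{1/d})$, bounding $\psi$ below by $\hat\psi$, and then applying Jensen's inequality to $\hat\psi$ together with the monotonicity of $\hat\psi$.

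Since $W_0=f_p^\gamma$, the set $\Omega$ has measure $|\Omega|=2\gamma(1-\gamma)$, and every $f\in\mathcal{W}_\Omega$ vanishes off $\Omega$. Using $h_p(f)=\psi(f^d)$ on $\Omega$ and the fact that $h_p(0)=0$ off $\Omega$ (since $f=W_0=0$ there), I would first write
\begin{equation*}
I_{W_0}(f)=\tfrac{1}{2}\int_\Omega \psi(f(x,y)^d)\,dx\,dy
\ \geq\ \tfrac{1}{2}\int_\Omega \hat\psi(f(x,y)^d)\,dx\,dy,
\end{equation*}
using $\psi\geq\hat\psi$ pointwise. Next, applying Jensen's inequality to the convex function $\hat\psi$ with respect to the normalized Lebesgue measure on $\Omega$ gives
\begin{equation*}
\tfrac{1}{2}\int_\Omega \hat\psi(f^d)\,dx\,dy
\ \geq\ \gamma(1-\gamma)\,\hat\psi\!\left(\frac{\|f\|_d^d}{2\gamma(1-\gamma)}\right).
\end{equation*}
Finally, since $\psi\geq 0$ attains its minimum value $0$ at $x=p^d$, its convex minorant $\hat\psi$ is non-decreasing on $[p^d,1]$. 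Because $r\geq p$ we have $r^d\geq p^d$, and by hypothesis $\|f\|_d^d/(2\gamma(1-\gamma))\geq r^d$, so $\hat\psi(\|f\|_d^d/(2\gamma(1-\gamma)))\geq \hat\psi(r^d)$. The assumption that $(r^d,h_p(r))$ lies on the convex minorant then gives $\hat\psi(r^d)=\psi(r^d)=h_p(r)$, so
\begin{equation*}
I_{W_0}(f)\ \geq\ \gamma(1-\gamma)\,h_p(r)\ =\ I_{W_0}(f_r^\gamma),
\end{equation*}
where the last equality uses that $f_r^\gamma$ equals $r$ on a set of measure $2\gamma(1-\gamma)$.

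For the equality case, all three inequalities must saturate. Equality in $\psi\geq\hat\psi$ forces $f^d\in\{x:\psi(x)=\hat\psi(x)\}$ a.e. on $\Omega$; equality in Jensen's inequality requires that $\hat\psi$ be affine on the essential range of $f^d\!\!\restriction_\Omega$; and equality in the monotonicity step requires either $\|f\|_d^d=2\gamma(1-\gamma)r^d$ or that $\hat\psi$ is constant on $[r^d,\|f\|_d^d/(2\gamma(1-\gamma))]$. The structure of $\psi_p$ recorded in \Cref{all-about-psi} (which describes the flat/linear segments of $\hat\psi$ and the location of the points where $\psi=\hat\psi$) then implies that the only way the three constraints can hold simultaneously is $f^d\equiv r^d$ a.e.\ on $\Omega$, i.e.\ $f=f_r^\gamma$ almost everywhere. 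The main obstacle is precisely this uniqueness step: ruling out non-constant $f$ that happen to have $f^d$ supported on a linear piece of $\hat\psi$ through $(r^d,h_p(r))$. The key input is that $\psi(x)=h_p(x^{1/d})$ is real-analytic and non-affine, so any linear segment of $\hat\psi$ meeting $\psi$ does so only at its two endpoints, allowing one to pin down the essential range of $f^d$ and then combine with the norm constraint to conclude $f=f_r^\gamma$ a.e.
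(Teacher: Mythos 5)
Your proof follows the same three-step chain as the paper's: replace $h_p(f)$ by $\psi(f^d)\geq\hat\psi(f^d)$, apply Jensen's inequality to $\hat\psi$ over $\Omega$, and then use monotonicity of $\hat\psi$ on $[r^d,1]$ together with the norm hypothesis and $\hat\psi(r^d)=\psi(r^d)=h_p(r)$. Your treatment of the equality case is in fact more explicit than the paper's, which simply asserts that Jensen is strict whenever $f\neq f_r^\gamma$; you correctly observe that this requires combining all three saturation conditions with the structure of $\hat\psi$ from \Cref{all-about-psi} to rule out $f^d$ concentrating on a linear segment of $\hat\psi$.
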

\begin{proof}The statement is trivial if $f \not \in
\W_\Omega$. For $f\in
\W_\Omega$,
\begin{align}
I_{W_0}(f) &= \int_{0}^{\gamma} \int_{\gamma}^1 h_p(f(x,y)) dx dy
= \int_{0}^{\gamma} \int_{\gamma}^1 \psi_p\left(f^d(x,y)\right) dx dy \nonumber\\
&\geq \int_{0}^{\gamma} \int_{\gamma}^1 \hat{\psi}_p \left(f^d(x,y)\right) dx dy \nonumber\\
&\geq \gamma(1-\gamma) \hat{\psi}_p\left( \frac{1}{\gamma(1-\gamma)} \int_0^{\gamma} \int_{\gamma}^1 f^d(x,y) dx dy  \right) \label{eq:jensen}\\
& = \gamma(1-\gamma) \hat{\psi}_p\left( \frac{1}{\gamma(1-\gamma)} \frac{\Vert f \Vert_d^d }{2} \right)\nonumber\\
&\geq \gamma (1-\gamma) \hat{\psi}_p(r^d) \label{eq:increasing}\\
&= \gamma(1-\gamma) \psi_p(r^d)
= \gamma(1-\gamma) h_p(r)
= I_{W_0}(f_r^\gamma). \nonumber
\end{align}
Note that \eqref{eq:jensen} is an application of Jensen's inequality, and \eqref{eq:increasing} is due to $\hat{\psi}_p$ being an increasing function on $[r^d, 1]$. If $f \not = f_r^\gamma$, then the step using Jensen's inequality is a strict inequality. Therefore, $I_{W_0}(f) \geq I_{W_0}(f_r^\gamma)$, with equality occurring if and only if $f = f_r^\gamma$ almost everywhere.
\end{proof}

\noindent
The following lemma establishes a norm condition on graphons that satisfy the subgraph density requirement.
\begin{lemma}\label{lemma:norm} Let $p,r \in (0,1)$, $\gamma \in (0,1) \cap \mathbb{Q}$ and $W_0= f_p^\gamma$.
Let $H$ be a $d$-regular graph with $d \geq 1$. Let $f \in \W_\Omega$ be such that $t(H,f) \geq t(H, f_r^\gamma)$. Then $\Vert f \Vert_d^d \geq 2 \gamma (1-\gamma) r^d$.
\end{lemma}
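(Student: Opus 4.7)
The proof will combine the bipartite structure of graphons in $\W_\Omega$ with Finner's generalized Hölder inequality (Theorem~\ref{theorem:holder}) applied on the bipartite block of $f$ rather than on $[0,1]^2$.

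First, I note that $f\in\W_\Omega$ vanishes on $[0,\gamma]^2\cup(\gamma,1]^2$, so $f$ is a bipartite graphon with blocks $X=[0,\gamma]$ and $Y=(\gamma,1]$. Consequently $t(H,f)=t(H,f_r^\gamma)=0$ for non-bipartite $H$, and the substantive case is when $H$ is bipartite; I fix a bipartition $A,B$ of $H$. By $d$-regularity with $d\geq1$, each connected component of $H$ has balanced parts, so $|A|=|B|=v/2$ and $e(H)=dv/2$. Let $k$ denote the number of connected components of $H$.

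Second, I will expand both $t(H,f_r^\gamma)$ and $t(H,f)$ as sums over proper $2$-colorings of $H$ into $\{X,Y\}$, the only colorings that can contribute for a bipartite graphon. There are exactly $2^k$ such colorings, each placing $v/2$ vertices on each side by $d$-regularity. Writing $g(x,y)=f(x,y)$ for $(x,y)\in X\times Y$ and using the symmetry $f(x,y)=f(y,x)$ to identify the two orientations at each component, I obtain
\[
t(H,f_r^\gamma)=2^k\gamma^{v/2}(1-\gamma)^{v/2}r^{e(H)},\qquad
t(H,f)=2^k\int_{X^A\times Y^B}\prod_{(a,b)\in E(H)}g(x_a,y_b)\,dx_A\,dy_B.
\]
I then apply Finner's inequality to the integral on the right with weights $p_e=1/d$ at each edge; since $H$ is $d$-regular the weights sum to $1$ at every vertex, so Finner applies and yields
\[
\int_{X^A\times Y^B}\prod_{(a,b)\in E(H)}g(x_a,y_b)\,dx_A\,dy_B\leq\Vert g\Vert_d^{e(H)},\quad\text{where}\quad\Vert g\Vert_d^d=\int_{X\times Y}g(x,y)^d\,dx\,dy.
\]

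Combining with the hypothesis $t(H,f)\geq t(H,f_r^\gamma)$ and cancelling $2^k$ gives $\Vert g\Vert_d^{e(H)}\geq\gamma^{v/2}(1-\gamma)^{v/2}r^{e(H)}$. Raising both sides to the power $d/e(H)=2/v$ and using $(\gamma^{v/2}(1-\gamma)^{v/2})^{2/v}=\gamma(1-\gamma)$ and $(r^{e(H)})^{2/v}=r^d$ produces $\Vert g\Vert_d^d\geq\gamma(1-\gamma)r^d$, and the bipartite symmetry $\Vert f\Vert_d^d=2\Vert g\Vert_d^d$ finally yields $\Vert f\Vert_d^d\geq 2\gamma(1-\gamma)r^d$, as required.

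The main subtlety is that a naive application of Finner directly on $[0,1]^2$ to $f$ itself would give only $t(H,f)\leq\Vert f\Vert_d^{e(H)}$, which combined with the explicit values of $t(H,f_r^\gamma)$ and $\Vert f_r^\gamma\Vert_d^d=2\gamma(1-\gamma)r^d$ produces the weaker bound $\Vert f\Vert_d^d\geq 2^{2k/v}\gamma(1-\gamma)r^d$; this is strictly weaker than the target whenever $k<v/2$, e.g., for any connected $H$ with $v\geq3$. Extracting the $2^k$ component-swap factor by summing over proper $2$-colorings before invoking Finner on $X^A\times Y^B$ is precisely what removes this deficit, and the tightness of Finner under $d$-regularity produces the correct constant $2\gamma(1-\gamma)$.
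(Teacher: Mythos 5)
Your proof is correct and follows essentially the same route as the paper: restrict to bipartite $H$, note that $d$-regularity forces balanced parts ($|A|=|B|=v/2$), decompose $t(H,\cdot)$ over the $2^k$ component-wise orientations of the bipartition onto $\{[0,\gamma],(\gamma,1]\}$, and then apply Finner's inequality on the bipartite block with exponents $d$ at every edge. The paper's proof (Lemma~\ref{lemma:norm}) does exactly this, writing the integral over alternating intervals with the factor $2^c$ and invoking Theorem~\ref{theorem:holder} with the normalized measures $\mu_{2k+1}=\frac{1}{\gamma}\mathbf{1}_{[0,\gamma]}$, $\mu_{2k}=\frac{1}{1-\gamma}\mathbf{1}_{(\gamma,1]}$; your observation about why the naive Finner bound on $[0,1]^2$ loses a factor of $2^{2k/v}$ is a correct diagnosis of why the decomposition step is needed, and matches the reasoning implicit in the paper.
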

\begin{proof}
We may assume $H$ is bipartite. Since $H$ is $d$-regular, $H$ must have $m$ vertices in each partition class and $dm$ edges for some $m \in \mathbb{Z}^+$. Let $c$ be the number of connected components of $H$. Note that $t(H, f_r^\gamma)= 2^c (\gamma(1-\gamma))^ m r^{dm}$.
Let $f$ be any graphon such that $I_{W_0}(f) < \infty$ and $t(H,f) \geq t(H, f_r^\gamma)$. Then $t(H, f_r^\gamma) \leq t(H, f)$ implies
\begin{align}
t(H, f_r^\gamma) &= 2^c \gamma^m (1-\gamma)^m r^{dm} \leq \int_{[0,1]^{2m}} \prod_{(i,j) \in E(H)} f(x_i, x_j) dx_1, \dots, dx_{2m} \nonumber\\
&= 2^c \int_{0}^{\gamma} \int_{\gamma}^1 \cdots \int_{0}^{\gamma} \int_{\gamma}^1 \prod_{(i,j) \in E(H)} f(x_i, x_j) dx_1, \dots, dx_{2m} \label{eq:alternation}\\
&\leq 2^c \gamma^m (1-\gamma)^m \prod_{(i,j) \in E(H)} \left(\frac{1}{\gamma (1-\gamma)}\int_{x_i = 0}^{\gamma} \int_{x_j = \gamma}^1 f(x_i, x_j)^d dx_j\, dx_i \right)^{\frac{1}{d}} \label{eq:holder}\\
&= 2^c \gamma^m (1-\gamma)^m \left(\frac{1}{\gamma (1-\gamma)}\int_{x = 0}^{\gamma} \int_{y = \gamma}^1 f(x, y)^d dy \,dx \right)^{\frac{md}{d}} \nonumber \\
&= 2^c \left(\int_{x = 0}^{\gamma} \int_{y = \gamma}^1 f(x, y)^d dy\, dx \right)^m = 2^c\left(\frac{1}{2} \Vert f \Vert_d^d \right)^m. \nonumber
\end{align}
In \eqref{eq:alternation} we rewrite the density by ordering the vertices so that they alternate between the sides of the bipartition. The factor $2^c$ accounts for the fact that within each component, a partition class of vertices can map to either $[0, \gamma]$ or $(\gamma, 1]$,  and the other partition class will map to the other interval.  The generalized H\"older inequality from  \Cref{theorem:holder} implies \eqref{eq:holder}. In the application of  \Cref{theorem:holder}, we set $p_i = d$ for every $i \in [2m]$. The  Radon--Nikodym derivatives of the measures are given by
\begin{align*}
\frac{d\mu_{2k+1}}{dx} &= \begin{cases}
\frac{1}{\gamma} & 0 \leq x \leq \gamma\\
0 & \gamma < x \leq 1
\end{cases} \,\,\,\,\, \, \,\,\, \text{ and } \,\,\,\,\, \, \,\,\,
\frac{d\mu_{2k}}{dx} = \begin{cases}
0 & 0 \leq x \leq \gamma\\
\frac{1}{1-\gamma} & \gamma < x \leq 1
\end{cases}
\end{align*}
\noindent for $k \in \{0, 1, \dots, m\}$, i.e. $\mu_{2k+1}$ is uniform on $[0, \gamma]$ and $\mu_{2k}$ is uniform on $(\gamma,1]$. The sets $A_1, \dots, A_{e(H)}$ correspond to the set $E(H)$. We conclude that $\Vert f \Vert_d^d \geq 2 \gamma(1-\gamma) r^d$.
\end{proof}

\begin{proof}[Proof of \Cref{lemma:symmetric}.]
Let $f \in \W_\Omega$ be such that $t(H,f) \geq t(H, f_r^\gamma)$. \Cref{lemma:norm} implies that $\Vert f \Vert_d^d \geq 2 \gamma (1-\gamma) r^d$. It follows by  \Cref{lemma:symmetric-general} that $I_{W_0}(f) \geq I_{W_0}(f_r^\gamma)$ with equality if and only if $f=f_r^\gamma$.
\end{proof}

\subsubsection{Proof for the non-symmetric regime}

\begin{proof}[Proof of \Cref{lemma:non-symmetric}.]
Since $(r^d, h_p(r))$ is not on the convex minorant of $\psi_p(x)= h_p(r^{1/d})$,
we may use \Cref{all-about-psi} to conclude there exist $r_1, r_2, r$ such that $p< r_1 < r< r_2 \leq 1$ and $(r^d, h_p(r))$ lies strictly above the line segment joining $(r_1^d, h_p(r_1))$ and $(r_2^d, h_p(r_2))$.
Let $s \in (0,1)$ be such that $$r^d = s r_1^d + (1-s) r_2^d,$$ and thus
\begin{equation}\label{non-minorant}
s h_p(r_1) +(1-s) h_p(r_2) < h_p(r).
\end{equation}

We use the values $r_1, r_2,$ and $s$ to define a family of graphons $(g^\ve)_{\ve>0}$. We will prove that for $\epsilon >0$ sufficiently small (i) $t(H, g^{\epsilon}) > t(H, f)$ and (ii) $I_{W_0}(g^\ve) < I_{W_0}(f_r^\gamma)$.
Define
\begin{equation}\label{i_defns}
\begin{split}
       \alpha_1& = \gamma s \ve^2\\
    \alpha_2& = (1- \gamma) s \ve^2  \\
    \alpha_3&=(1-\gamma) \brac{(1-s) \ve^2 +\ve^3}\\
    \alpha_4 &= \gamma \brac{ (1-s) \ve^2 +\ve^3}
\end{split}
\quad \quad \quad \quad \quad \quad
 \begin{split}
    I_1&= [0, \alpha_1]\\
    I_2&= (\gamma, \gamma+ \alpha_2]\\
    I_3&=(1- \alpha_3,1]\\
    I_4&=(\gamma-\alpha_4 , \gamma].
    \end{split}
\end{equation}
Let $$I_{c14}= [0, \gamma] \setminus (I_1\cup I_4)\quad \text{ and } \quad I_{c23}= ( \gamma, 1] \setminus (I_2\cup I_3).$$

\noindent
Define
\begin{align*}
g^\ve(x,y) &= \begin{cases}
0 &  (x,y) \in ([0, \gamma)\times [0, \gamma)) \cup ((\gamma,1] \times (\gamma,1])\\
r_1 &  (x,y) \in (I_1 \times I_{c23}) \cup (I_{c23} \times I_1) \cup (I_2 \times I_{c14}) \cup (I_{c14} \times I_2) \\
r_2 &  (x,y) \in  (I_3 \times I_{c14}) \cup (I_{c14} \times I_3) \cup (I_4 \times I_{c23}) \cup (I_{c23} \times I_4)  \\
r & \text{otherwise}.
\end{cases}
\end{align*}
Figure \ref{fig:bipartite-construction} illustrates the construction of the graphon $g^{\epsilon}$.

\tikzset{blockr/.style= {rectangle, draw=black!50, fill=black!20, thick}}
\tikzset{blockr1/.style= {rectangle, draw=black!50, fill=black!5, thick}}
\tikzset{blockr2/.style= {rectangle, draw=black!50, fill=black!35, thick}}
\begin{figure}[h]
\centering
\begin{tikzpicture}
\draw [] (0,0) rectangle node {$0$} (2,-2);
\draw [] (2,-2) rectangle node {$0$} (5,-5);
%%%% top block
%top
\draw [blockr] (2,0) rectangle node {$r$} (2.5,-1/3);
\draw [blockr1] (2.5,0) rectangle node {$r_1$} (4.5,-1/3);
\draw [blockr] (4.5,0) rectangle node {$r$} (5,-1/3);
%middle
\draw [blockr1] (2,-1/3) rectangle node {$r_1$} (2.5,-5/3);
\draw [blockr] (2.5,-1/3) rectangle node {$r$} (4.5,-5/3);
\draw [blockr2] (4.5,-1/3) rectangle node {$r_2$} (5,-5/3);
%bottom
\draw [blockr] (2,-5/3) rectangle node {$r$} (2.5,-2);
\draw [blockr2] (2.5,-5/3) rectangle node {$r_2$} (4.5,-2);
\draw [blockr] (4.5,-5/3) rectangle node {$r$} (5,-2);
%%% bottom block
%top
\draw [blockr] (0,-2) rectangle node {$r$} (1/3, - 2.5);
\draw [blockr1] (1/3,-2) rectangle node {$r_1$} (5/3, - 2.5);
\draw [blockr] (5/3,-2) rectangle node {$r$} (2, - 2.5);
%middle
\draw [blockr1] (0,-2.5) rectangle node {$r_1$} (1/3, - 4.5);
\draw [blockr] (1/3,-2.5) rectangle node {$r$} (5/3, - 4.5);
\draw [blockr2] (5/3,-2.5) rectangle node {$r_2$} (2, - 4.5);
%bottom
\draw [blockr] (0,-4.5) rectangle node {$r$} (1/3, - 5);
\draw [blockr2] (1/3,-4.5) rectangle node {$r_2$} (5/3, - 5);
\draw [blockr] (5/3,-4.5) rectangle node {$r$} (2, - 5);
%%% length labels
\draw [|-|] (-0.2,0) -- (-0.2,-2) node[midway,left] {$\gamma$};
\draw [|-|] (-0.2,-2) -- (-0.2,-5) node[midway,left] {$1-\gamma$};
\draw [|-|] (2,0.2) -- (2.5,0.2) node[midway,above] {$\alpha_2$};
\draw [|-|] (4.5,0.2) -- (5,0.2) node[midway,above] {$\alpha_3$};
\draw [|-|] (5.2,0) -- (5.2,-1/3) node[midway,right] {$\alpha_1$};
\draw [|-|] (5.2,-5/3) -- (5.2,-2) node[midway,right] {$\alpha_4$};
\end{tikzpicture}
\caption{Construction of $g^{\epsilon}$.}
\label{fig:bipartite-construction}
\end{figure}
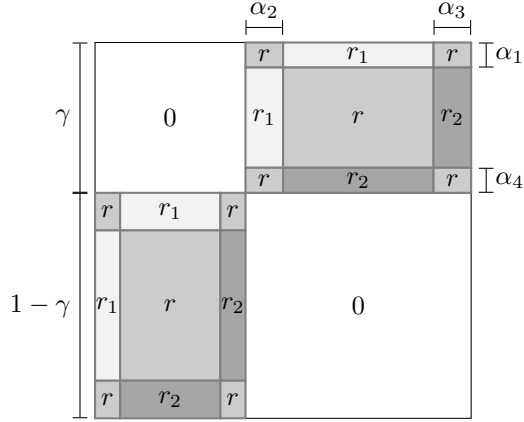

Next we claim that $t(H,g^\ve) > t(H,f_r^\gamma)$ for sufficiently small $\ve$.  Let $m$ be such that $H$ has $2m$ vertices  and $dm$ edges. Let $c$ be the number of connected components of $H$.
Note that the only embeddings of $H$ into $g^\ve$ which contribute a value other than $r^{e(H)}$ to the integral in $t(H, g^\ve)$ are such that at least one vertex of $H$ is mapped to $\bigcup_{j=1}^4 I_j$. Since each $\alpha_i$ is of order $\epsilon^2$, in order to compute $t(H,g^\ve) - t(H,f_r^\gamma)$ up to error $O\brac{\ve^4}$ it suffices to consider embeddings in which only one vertex is mapped to $\bigcup_{j=1}^4 I_j$. Observe

\begin{align*}
    t(H,g^\ve) - t(H,f_r^\gamma)&=2^c  \int_0^\gamma \int_\gamma^1 \cdots  \int_0^\gamma \int_\gamma^1 \brac{\prod_{(i,j) \in E(H)} g^\ve(x_i,y_j) -r^{e(H)}} dx_1 \dots dx_{2m}\\
    &= 2^c m \left[\alpha_1 \gamma^{m-1} (1-\gamma)^m (r_1^d -r^d) r^{e(H)-d} + \alpha_2(1-\gamma)^{m-1} \gamma^{m} (r_1^d - r^d) r^{e(H)-d} \right.\\
    &\quad \quad \left.+ \alpha_3 (1-\gamma)^{m-1} \gamma^m (r_2^d - r^d) r^{e(H)-d} +\alpha_4 \gamma^{m-1} (1-\gamma)^m (r_2^d -r^d) r^{e(H)-d}\right] +O\brac{\ve^4}\\
    &= 2^{c+1}m \gamma^m (1-\gamma)^m r^{e(H)-d} \brac{s\ve^2 (r_1^d -r^d) + \brac{(1-s)\ve^2+\ve^3} \brac{r_2^d-r^d} } + O \brac{\ve^4}\\
    &= 2^{c+1}m \gamma^m (1-\gamma)^m r^{e(H)-d}  \brac{r_2^d-r^d}\ve^3  +O\brac{\ve^4}.
\end{align*}
Since $r_2 > r$, the above computation implies that $ t(H,g^\ve) - t(H,f_r^\gamma) >0$ for $\ve$ sufficiently small.

Next we show that $I_{W_0}(g^\ve) < I_{W_0}(f_r^\gamma)$ for sufficiently small $\ve$. Observe
\begin{align*}
    I_{W_0}(g^\ve) - I_{W_0}(f_r^\gamma)&= \brac{\alpha_1 \brac{1-\gamma-\alpha_2 -\alpha_3} +\alpha_2 \brac{ \gamma -\alpha_1 -\alpha_4}} \brac{ h_p(r_1) -h_p(r)}\\
    &\quad + \brac{ \alpha_3 \brac{\gamma-\alpha_1 -\alpha_4} + \alpha_4 \brac{1-\gamma-\alpha_2-\alpha_3}}\brac{ h_p(r_2) -h_p(r)}\\
    &=2 \gamma (1-\gamma)\brac{1-\ve^2-\ve^3 }\left[ s\ve^2 \brac{ h_p(r_1) -h_p(r)} +\brac{(1-s)\ve^2+\ve^3} \brac{ h_p(r_2) -h_p(r)}\right]\\
    &=2 \gamma (1-\gamma)\brac{1-\ve^2-\ve^3 }\ve^2\left[ s h_p(r_1)+(1-s)h_p(r_2) -h_p(r)+\ve \brac{ h_p(r_2) -h_p(r)}\right].
    \end{align*}
Using the condition \eqref{non-minorant}, we conclude that there exists $\ve$ sufficiently small such that $I_{W_0}(g^\ve) - I_{W_0}(f_r^\gamma)<0$, as desired.
\end{proof}

\subsection{Largest eigenvalue}
In this subsection we prove \Cref{theorem:eigenvalue}, which characterizes the symmetric and non-symmetric regimes for the largest eigenvalue of the adjacency matrix of a bipartite graph. Recall that $\Vert \cdot \Vert_{\text{op}}$ is a continuous extension of the normalized graph spectral norm (Lemma \ref{lemma:continuous-extension}). Note that $\Vert f_r^{\gamma} \Vert_{\text{op}} = r \sqrt{\gamma(1-\gamma)}$.
We will use the following two lemmas to prove \Cref{theorem:eigenvalue}.

\begin{lemma}\label{lemma:norm-inequalities} Let $\gamma\in (0,1) \cap \mathbb{Q}$ and let $W_0 = f_p^{\gamma}$. For every $f$ such that $f \in \W_\Omega$, we have $\Vert f \Vert_1 \leq \Vert f \Vert_{\emph{op}} \leq \frac{1}{\sqrt{2}} \Vert f \Vert_2$.
\end{lemma}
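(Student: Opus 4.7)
The plan is to exploit that membership in $\W_\Omega$ when $W_0 = f_p^\gamma$ forces $f$ to agree with $W_0 = 0$ on $[0,\gamma]^2 \cup (\gamma,1]^2$ (up to a null set), so $f$ is genuinely a bipartite kernel supported on $R = [0,\gamma]\times(\gamma,1] \cup (\gamma,1]\times[0,\gamma]$. I would prove the two inequalities separately, using only the variational characterization of $\|\cdot\|_{\mathrm{op}}$ and Cauchy--Schwarz.

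For the lower bound $\|f\|_1 \leq \|f\|_{\mathrm{op}}$, I plan to test against the constant function. Because $T_f$ is self-adjoint and compact, we have the standard identity $\|f\|_{\mathrm{op}} = \sup_{\|u\|_2=\|v\|_2=1}|\langle T_f u,v\rangle|$. Taking $u(x) = v(x) \equiv 1$, which satisfy $\|u\|_2 = \|v\|_2 = 1$, and using that $f \geq 0$ pointwise,
\[
\langle T_f u,v\rangle = \int_{[0,1]^2} f(x,y)\, dx\, dy = \|f\|_1,
\]
and the bound follows.

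For the upper bound $\|f\|_{\mathrm{op}} \leq \tfrac{1}{\sqrt{2}}\|f\|_2$, I would decompose any unit vector $u \in L^2([0,1])$ as $u = u_1 + u_2$ with $u_1 = u\mathbf{1}_{[0,\gamma]}$ and $u_2 = u\mathbf{1}_{(\gamma,1]}$. Since $f$ vanishes off $R$ and is symmetric,
\[
\langle T_f u,u\rangle = 2\int_{[0,\gamma]\times(\gamma,1]} f(x,y)\, u_1(x)\, u_2(y)\, dx\, dy.
\]
Applying the Cauchy--Schwarz inequality to the pair $(f, u_1\otimes u_2)$ on $L^2([0,\gamma]\times(\gamma,1])$ gives
\[
\left|\int_{[0,\gamma]\times(\gamma,1]} f u_1 u_2\right| \leq \left(\int_{[0,\gamma]\times(\gamma,1]} f^2\right)^{1/2}\|u_1\|_2\|u_2\|_2 = \tfrac{1}{\sqrt{2}}\|f\|_2\,\|u_1\|_2\|u_2\|_2,
\]
where the equality uses that $\int_{[0,1]^2} f^2 = 2\int_{[0,\gamma]\times(\gamma,1]} f^2$. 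Combining this with the AM--GM bound $\|u_1\|_2\|u_2\|_2 \leq \tfrac{1}{2}(\|u_1\|_2^2 + \|u_2\|_2^2) = \tfrac{1}{2}$, I obtain $|\langle T_f u,u\rangle| \leq \tfrac{1}{\sqrt{2}}\|f\|_2$. Taking the supremum over $\|u\|_2 = 1$ and invoking the self-adjoint identity $\|f\|_{\mathrm{op}} = \sup_{\|u\|_2=1}|\langle T_f u,u\rangle|$ closes the argument.

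There is no serious obstacle here: both inequalities are immediate consequences of the bipartite support of $f$ combined with standard Hilbertian tools. The only point requiring a moment of care is setting up the Cauchy--Schwarz inequality on the restricted rectangle $[0,\gamma]\times(\gamma,1]$ so that the factor of $\tfrac{1}{\sqrt{2}}$ emerges cleanly from halving the $L^2$ mass of $f$.
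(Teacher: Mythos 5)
Your proof is correct and rests on the same core idea as the paper's: the bipartite support of $f$ (forced by $f\in\W_\Omega$) splits every relevant integral into two off-diagonal rectangles, each carrying half of $\|f\|_2^2$, and Cauchy--Schwarz then produces the $\tfrac{1}{\sqrt 2}$ factor. The implementation differs slightly: for the upper bound the paper bounds $\|T_f u\|_2^2$ directly, getting $\|T_f u\|_2^2 \le \tfrac12\|f\|_2^2(\|u_1\|_2^2+\|u_2\|_2^2)=\tfrac12\|f\|_2^2\|u\|_2^2$ without any further inequality, whereas you bound the Rayleigh quotient $|\langle T_f u,u\rangle|$ via the self-adjoint characterization of $\|f\|_{\mathrm{op}}$ and then need the extra AM--GM step $\|u_1\|_2\|u_2\|_2\le\tfrac12$ to recover the same constant; for the lower bound you test the bilinear form against constants while the paper uses $\|f\|_1 = \|T_f\mathbf 1\|_1\le\|T_f\mathbf 1\|_2\le\|f\|_{\mathrm{op}}$, which is the same computation packaged differently. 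Both routes are valid and of equal difficulty; the paper's upper-bound argument is marginally more economical (no AM--GM, no appeal to self-adjointness), while yours is arguably more transparent about where the bipartite structure enters.
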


\begin{lemma}\label{lemma:improving-eigenvalue}
Let $0 < p\leq r < 1$ be such that $(r^2, h_p(r))$ does not lie on the convex minorant of $x \mapsto h_p(\sqrt{x})$. Then there exists some $g \in \mathcal{W}_\Omega$ with $\Vert g \Vert_{\emph{op}} >  r \sqrt{\gamma (1-\gamma)}$ and $I_{W_0}(g) < I_{W_0}(f_r^\gamma)$.
\end{lemma}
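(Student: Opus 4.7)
The plan is to construct an explicit graphon $g_t$ which vanishes on $P\times P\cup Q\times Q$ (where $P=[0,\gamma]$ and $Q=(\gamma,1]$) and, on the off-diagonal block, depends only on the $Q$-coordinate. Such a graphon has ``rank-one'' off-diagonal block, so that its operator norm is computable exactly and is governed by the $L^2$-norm of the profile. In this sense the construction is conceptually simpler than that of \Cref{lemma:non-symmetric}, because $\Vert\cdot\Vert_{\mathrm{op}}^2$ is naturally sensitive to $sr_1^2+(1-s)r_2^2$, which matches the convex-combination identity $r^2=sr_1^2+(1-s)r_2^2$ exactly.

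Concretely, I would first invoke \Cref{all-about-psi} with $d=2$, exactly as in the proof of \Cref{lemma:non-symmetric}, to produce $p\leq r_1<r<r_2\leq 1$ and $s\in(0,1)$ with $r^2=sr_1^2+(1-s)r_2^2$ and $sh_p(r_1)+(1-s)h_p(r_2)<h_p(r)$. For a parameter $t\in(0,1)$ to be chosen, set $Q_1=(\gamma,\gamma+(1-\gamma)t]$, $Q_2=Q\setminus Q_1$, and define $\psi_t:Q\to[0,1]$ by $\psi_t=r_2$ on $Q_1$ and $\psi_t=r_1$ on $Q_2$. Let $g_t$ be the symmetric graphon with $g_t\equiv 0$ on $P\times P\cup Q\times Q$ and $g_t(x,y)=\psi_t(y)$ on $P\times Q$; clearly $g_t\in\mathcal{W}_{\Omega}$.

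The key step is computing $\Vert g_t\Vert_{\mathrm{op}}$. Since $T_{g_t}$ maps $L^2(P)$ into $\mathbb{R}\cdot\psi_t\mathbf{1}_Q$ and $L^2(Q)$ into $\mathbb{R}\cdot\mathbf{1}_P$, every eigenvector with nonzero eigenvalue lies in $\mathrm{span}(\mathbf{1}_P,\psi_t\mathbf{1}_Q)$. Writing $u=\alpha\mathbf{1}_P+\beta\psi_t\mathbf{1}_Q$ and solving $T_{g_t}u=\lambda u$ yields the $2\times 2$ system $\alpha\gamma=\lambda\beta$ and $\beta\Vert\psi_t\Vert_{L^2(Q)}^2=\lambda\alpha$, hence
\[
\Vert g_t\Vert_{\mathrm{op}}^2 \;=\; \gamma\,\Vert\psi_t\Vert_{L^2(Q)}^2 \;=\; \gamma(1-\gamma)\bigl[t r_2^2+(1-t) r_1^2\bigr].
\]
The entropy is a direct calculation: $I_{W_0}(g_t)=\gamma(1-\gamma)\bigl[th_p(r_2)+(1-t)h_p(r_1)\bigr]$. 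At $t=1-s$ the two defining identities then give $\Vert g_{1-s}\Vert_{\mathrm{op}}^2=\gamma(1-\gamma)r^2$ and $I_{W_0}(g_{1-s})<I_{W_0}(f_r^\gamma)$ strictly. Since $\tfrac{d}{dt}\Vert g_t\Vert_{\mathrm{op}}^2=\gamma(1-\gamma)(r_2^2-r_1^2)>0$ and the entropy depends continuously on $t$, choosing $t=1-s+\delta$ for a small $\delta>0$ yields $\Vert g_t\Vert_{\mathrm{op}}>r\sqrt{\gamma(1-\gamma)}$ together with $I_{W_0}(g_t)<I_{W_0}(f_r^\gamma)$.

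The main obstacle is that a direct adaptation of the $g^\varepsilon$ of \Cref{lemma:non-symmetric} does not work: a first-order perturbation analysis, testing the operator against the leading eigenvector $u_0$ of $T_{f_r^\gamma}$, produces a leading correction proportional to $sr_1+(1-s)r_2-r\leq 0$ (by Jensen applied to $x\mapsto x^2$), which has the wrong sign. The asymmetric ``rank-one profile'' design of $g_t$ above is tailored precisely so that the operator norm is controlled by the second moments $r_i^2$ rather than the first moments $r_i$, thereby exploiting the convex-combination identity $r^2=sr_1^2+(1-s)r_2^2$ in the direction that actually helps.
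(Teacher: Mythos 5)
Your proof is correct, and it is a genuinely different argument from the paper's. The paper re-uses the perturbation $g^{\varepsilon}$ of \Cref{lemma:non-symmetric} (so the same graphon witnesses both the homomorphism-density and the operator-norm symmetry breaking) and establishes $\Vert g^{\varepsilon}\Vert_{\mathrm{op}}>r\sqrt{\gamma(1-\gamma)}$ by exhibiting a nonnegative test function $u$ with $(T_{g^{\varepsilon}}u)(x)>r\sqrt{\gamma(1-\gamma)}\,u(x)$ pointwise, which it verifies by a six-case computation; the strict gap appears only at order $\varepsilon^{3}$ through $(r_2^2-r^2)\varepsilon^3$. You instead build a ``rank-one profile'' graphon $g_t$ for which the top eigenvalue is exactly computable, $\Vert g_t\Vert_{\mathrm{op}}^2=\gamma\Vert\psi_t\Vert_{L^2(Q)}^2=\gamma(1-\gamma)[tr_2^2+(1-t)r_1^2]$, so that at $t=1-s$ the convex-combination identity gives equality of operator norms while the strict inequality $sh_p(r_1)+(1-s)h_p(r_2)<h_p(r)$ gives a strict entropy gap, and the conclusion follows by continuity after perturbing $t$ slightly upward. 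This is cleaner for this particular lemma: no perturbative case analysis, no need to certify a candidate test function is admissible. The trade-off is that the paper's single construction $g^{\varepsilon}$ does double duty for \Cref{lemma:non-symmetric} and \Cref{lemma:improving-eigenvalue}, whereas yours is tailored to the operator norm.

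One small overstatement in your discussion: you write that ``a direct adaptation of the $g^{\varepsilon}$ of \Cref{lemma:non-symmetric} does not work'' because first-order perturbation against the leading eigenvector $u_0$ of $T_{f_r^\gamma}$ yields a correction proportional to $sr_1+(1-s)r_2-r\leq 0$. That computation is accurate, but the conclusion is too strong. Since $g^{\varepsilon}-f_r^{\gamma}$ is supported on a set of measure $\Theta(\varepsilon^2)$, one has $\Vert T_{g^{\varepsilon}}-T_{f_r^\gamma}\Vert_{\mathrm{op}}=O(\varepsilon)$, so the second-order (mixing) contribution to the top eigenvalue is also $O(\varepsilon^2)$ and \emph{positive}---it competes at the same order as the first-order term you computed, so testing against $u_0$ alone is simply inconclusive, not decisively negative. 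The paper's proof sidesteps this by using a test function $u$ adapted to $g^{\varepsilon}$ (whose values on the small strips already reflect the perturbation) rather than the unperturbed eigenvector $u_0$; with that choice the gap does materialize, at order $\varepsilon^3$. So $g^{\varepsilon}$ can be made to work; your construction is an alternative that avoids having to discover the right test function.
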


\begin{proof}[Proof of Theorem \ref{theorem:eigenvalue}]
Let $\psi_p(x)=h_p(\sqrt{x})$.
Suppose that the point $(r^2, h_p(r))$ lies on the convex minorant of $\psi_p$. We will show that $t_r^\gamma= \Vert f_r^\gamma\Vert_\op$ is in the symmetric regime for $t(H, \cdot)$.
Let $\ti{g} \in \tW_\Omega$ be such that $J_{W_0}(\ti{g})= \min \{J_{W_0}(\ti{g}): \Vert g \Vert_{\op} \geq t_r^\gamma\}$. We may assume that $g \in \W_\Omega.$ By \Cref{nice-i-min}, $J_{W_0}(\ti{g})= I_{W_0}(g)$. Since $\Vert g \Vert_{\op} \geq t_r^\gamma=r \sqrt{ \gamma (1-\gamma)}$, \Cref{lemma:norm-inequalities} implies that $\Vert g \Vert_{2} \geq r \sqrt{ 2\gamma (1-\gamma)}.$ Next, by
\Cref{lemma:symmetric-general}, we have $I_{W_0}(g) \geq I_{W_0}(f_r^\gamma)$ with equality if and only if $g = f_r^\gamma$. Since $I_{W_0}(g)=J_{W_0}(\ti{g})= \min \{J_{W_0}(\ti{g}): t(H,g) \geq t_r^\gamma\} \leq I_{W_0}(f_r^\gamma)$, it follows that $I_{W_0}(f_r^\gamma)= I_{W_0}(g)$.  By \Cref{lemma:symmetric-general}, we conclude that $g = f_r^\gamma$, meaning that $\ti{f_r^\gamma}$ is the unique symmetric solution.

Next, suppose that the point $(r^2, h_p(r))$ does not lie on the convex minorant of $\psi_p$. We will show that $t_r^\gamma= \Vert f_r^\gamma\Vert_\op$ is not in the symmetric regime for $t(H, \cdot)$. \Cref{lemma:improving-eigenvalue} implies that there exists $g \in \W_\Omega$ such that $\Vert g \Vert_{\op} >  r \sqrt{\gamma (1-\gamma)}$ and $I_{W_0}(g) < I_{W_0}(f_r^\gamma)$.  By \Cref{nice-i-min}, $J_{W_0}(\ti{g})= I_{W_0}(g)$. We apply \Cref{nice-i-min} and obtain
\begin{align}\label{min comp}
    \min\{J_{W_0}(\ti{g}) :\Vert g\Vert_\op \geq t_r^\gamma, \ti{g} \in \ti{\mathcal{B}}^\gamma\}&=   \min\{I_{W_0}(g) :\Vert g\Vert_\op \geq t_r^\gamma, g \in \mathcal{B}^{(\gamma, 1-\gamma)} \cup \mathcal{B}^{( 1-\gamma, \gamma)} \}\nonumber \\
    &=   \min\{I_{W_0}(f_q^\gamma) : q \in [0,1], \Vert f_q^\gamma\Vert_\op \geq t_r^\gamma\}=I_{W_0}(f_r^\gamma)\nonumber\\
    &> I_{W_0}(g)\geq \min \{ J_{W_0}(\ti{g}): \Vert g\Vert_\op \geq t_r^\gamma\}.
\end{align}
The second equality follows by noting that if $\gamma\not= 1/2$ and $g \in \mathcal{B}^{( 1-\gamma, \gamma)}$, then $I_{W_0}(g)=\infty$ or $g$ is the zero graphon. The third equality follows by noting that $I_{W_0}(f_q^\gamma)$ and $\Vert f_q^\gamma\Vert_\op$ are increasing functions of $q$.

\noindent
It follows by \eqref{min comp}  that any minimizer $\ti{g} \not \in \ti{\mathcal{B}}^{(\gamma, 1-\gamma)}$, and so the problem is not in the symmetric regime.
\end{proof}

\begin{proof}[Proof of \Cref{lemma:norm-inequalities}.]
As stated in \cite{Lubetzky2015}, the left inequality follows from the observation that
$$\Vert f\Vert_1 = \Vert T_f \boldsymbol{1} \Vert_1\leq \Vert T_f \boldsymbol{1} \Vert_2 \leq \Vert f \Vert_{\op}.$$
To derive the upper bound, we use the \CS inequality. Observe that for any $u: [0,1] \to \mathbb{R}$,
\begin{align*}
\Vert T_f u\Vert_2^2 &= \int_0^1 \brac{ \int_0^1 f(x,y) u(y) dy}^2 dx\\
&= \int_0^\gamma \brac{ \int_\gamma^1 f(x,y) u(y) dy}^2 dx+\int_\gamma^1 \brac{ \int_0^\gamma f(x,y) u(y) dy}^2 dx\\
&\leq \int_\gamma^1 u(y)^2 dy \brac{ \int_0^\gamma \int_\gamma^1 f(x,y)^2 dy dx}+\int_0^\gamma u(y)^2 dy \brac{ \int_\gamma^1 \int_0^\gamma f(x,y)^2 dy dx} \\
&= \frac{1}{2} \Vert u \Vert_2^2 \Vert f \Vert_2^2.
\end{align*}
It follows that $\Vert T_f u\Vert_2 \leq \frac{1}{\sqrt{2}}\Vert f \Vert_2 \Vert u \Vert_2$ for all $u$, and thus $\Vert f \Vert_{\op} \leq \frac{1}{\sqrt{2}} \Vert f \Vert_2$.
\end{proof}

\begin{proof}[Proof of \Cref{lemma:improving-eigenvalue}]
Let $g^{\epsilon}$ be as defined in the proof of Lemma \ref{lemma:non-symmetric}. We have already shown that
$I_{W_0}(g^{\epsilon}) < I_{W_0}(f_r^\gamma)$ for small enough $\epsilon > 0$. It remains to show that $\Vert g^{\epsilon} \Vert_{\text{op}} > r\sqrt{ \gamma (1-\gamma)}$.

 For this claim, it suffices to exhibit a function $u \in L^2([0,1])$ such that $(T_{g^{\epsilon}} u)(x) > r \sqrt{\gamma (1-\gamma)} u(x)$ for all $x \in [0,1]$. Recall the definitions given in \eqref{i_defns}. Let
$$
u(x)=
\begin{cases}
\frac{\sqrt{1-\gamma}}{\gamma}(\gamma- \alpha_1 - \alpha_4) r_1 & x \in I_1\\
r \sqrt{1-\gamma}  & x \in I_{c14}\\
\frac{\sqrt{1-\gamma}}{\gamma}(\gamma- \alpha_1 - \alpha_4) r_2 & x \in I_4\\
\frac{\sqrt{\gamma}}{1-\gamma}(1 - \gamma - \alpha_2 - \alpha_3) r_1 & x \in I_2\\
r \sqrt{\gamma}  & x \in I_{c23}\\
\frac{\sqrt{\gamma}}{1-\gamma}(1 - \gamma- \alpha_2 - \alpha_3) r_2 & x \in I_3.\end{cases}
$$
Recall that
\[1 - \epsilon^2 - \epsilon^3 = \frac{1}{\gamma}(\gamma - \alpha_1 - \alpha_4) = \frac{1}{1-\gamma}(1-\gamma - \alpha_2 - \alpha_3). \]

\noindent
We consider six cases, and assume $\ve$ is sufficiently small in each. For $x \in I_1$,
\begin{align*}
T_{g^{\epsilon}}u(x) &= \int_{0}^1 g^{\epsilon}(x,y) u(y) dy\\
&= \alpha_2 r \cdot \frac{\sqrt{\gamma}}{1-\gamma} (1-\gamma - \alpha_2 - \alpha_3) r_1 + (1-\gamma - \alpha_2 - \alpha_3) r_1 \cdot \sqrt{\gamma} r + \alpha_3 r \cdot \frac{\sqrt{\gamma}}{1-\gamma} (1-\gamma - \alpha_2 -\alpha_3) r_2\\
&> (1-\gamma - \alpha_2 - \alpha_3) r_1 \sqrt{\gamma} r
=\frac{1-\gamma}{\gamma} (\gamma - \alpha_1 - \alpha_4)\sqrt{\gamma} r_1 r\\
&= r \sqrt{\gamma(1-\gamma)} \frac{\sqrt{1-\gamma}}{\gamma} (\gamma - \alpha_1 - \alpha_4)r_1= r \sqrt{\gamma(1-\gamma)} u(x).
\end{align*}

For $x \in I_4$,
\begin{align*}
T_{g^{\epsilon}}u(x) &> (1-\gamma - \alpha_2 - \alpha_3) r_2 \sqrt{\gamma} r
=\frac{1-\gamma}{\gamma}(\gamma - \alpha_1 -\alpha_4) r_2 \sqrt{\gamma} r\\
&= r \sqrt{\gamma(1-\gamma)} \frac{\sqrt{1-\gamma}}{\gamma} (\gamma - \alpha_1 - \alpha_4) r_2
= r \sqrt{\gamma(1-\gamma)} u(x).
\end{align*}

For $x \in I_2$,
\begin{align*}
T_{g^{\epsilon}}u(x) &> (\gamma - \alpha_1 - \alpha_4) r_1\sqrt{1-\gamma} r
= \frac{\gamma}{1-\gamma} (1-\gamma - \alpha_2 - \alpha_3) r_1 \sqrt{1-\gamma}  r\\
&=r \sqrt{\gamma(1-\gamma)}\frac{\sqrt{\gamma}}{1-\gamma} (1-\gamma - \alpha_2 -\alpha_3)r_1
= r\sqrt{\gamma(1-\gamma)} u(x).
\end{align*}

For $x \in I_3$,
\begin{align*}
T_{g^{\epsilon}}u(x) &> (\gamma - \alpha_1 - \alpha_4) r_2 \sqrt{1-\gamma} r
= \frac{\gamma}{1-\gamma} (1-\gamma - \alpha_2 -\alpha_3) r_2\sqrt{1-\gamma}r \\
&= r \sqrt{\gamma(1-\gamma)} \frac{\sqrt{\gamma}}{1-\gamma}(1-\gamma - \alpha_2 -\alpha_3) r_2
= r \sqrt{\gamma(1-\gamma)} u(x).
\end{align*}

Next consider $x \in I_{c14}$. Using the fact that $r^2 = sr_1^2 + (1-s)r_2^2$, along with $\alpha_2 = (1-\gamma) s \epsilon^2$ and \\$\alpha_3 = (1-\gamma)\left((1-s)\epsilon^2 + \epsilon^3\right)$, we obtain
\begin{align*}
T_{g^{\epsilon}}u(x) &= \left(1 - \gamma - \alpha_2 - \alpha_3\right) \left(\alpha_2 \frac{\sqrt{\gamma}}{1-\gamma} r_1^2 + \sqrt{\gamma} r^2 + \alpha_3 \frac{\sqrt{\gamma}}{1-\gamma} r_2^2\right)\\
&= (1-\gamma)(1-\epsilon^2 - \epsilon^3) \left( \frac{\sqrt{\gamma}}{1-\gamma} \left(\alpha_2 r_1^2 + \alpha_3 r_2^2 \right) + \sqrt{\gamma} r^2\right)\\
&= (1-\gamma)(1-\epsilon^2 - \epsilon^3) \left( \frac{\sqrt{\gamma}}{1-\gamma} \left((1-\gamma) s \epsilon^2 r_1^2 + (1-\gamma) \left((1-s)\epsilon^2 + \epsilon^3\right) r_2^2 \right) + \sqrt{\gamma} r^2\right)\\
&= \sqrt{\gamma} (1-\gamma)(1-\epsilon^2 - \epsilon^3) \left( s \epsilon^2 r_1^2 + \left((1-s)\epsilon^2 + \epsilon^3\right) r_2^2 + r^2\right)\\
&= \sqrt{\gamma} (1-\gamma)(1-\epsilon^2 - \epsilon^3) \left(\epsilon^2 r^2 + \epsilon^3 r_2^2 + r^2\right)
= \sqrt{\gamma} (1-\gamma)\left(r^2 + \left(r_2^2 - r^2\right)\epsilon^3 + O(\epsilon^4) \right)\\
&> \sqrt{\gamma}(1-\gamma) r^2
= r \sqrt{\gamma(1-\gamma)} \sqrt{1-\gamma} r
= r \sqrt{\gamma(1-\gamma)} u(x),
\end{align*}
where the inequality holds for sufficiently small $\epsilon > 0$ since $r_2 > r$.

Similarly, for $x \in I_{c23}$,
\begin{align*}
T_{g^{\epsilon}}u(x) &= \left(\gamma - \alpha_1 - \alpha_4\right) \left(\alpha_1 \frac{\sqrt{1-\gamma}}{\gamma} r_1^2 + \sqrt{1-\gamma}r^2 + \alpha_4 \frac{\sqrt{1-\gamma}}{\gamma} r_2^2 \right)\\
&= \gamma (1-\epsilon^2 - \epsilon^3) \left(\frac{\sqrt{1-\gamma}}{\gamma} \left( \alpha_1 r_1^2 + \alpha_4 r_2^2\right) + \sqrt{1-\gamma} r^2 \right)\\
&= \gamma (1-\epsilon^2 - \epsilon^3) \left(\frac{\sqrt{1-\gamma}}{\gamma} \left( \gamma s \epsilon^2 r_1^2 + \gamma \left((1-s)\epsilon^2 + \epsilon^3\right) r_2^2\right) + \sqrt{1-\gamma} r^2 \right)\\
&= \gamma \sqrt{1-\gamma} (1-\epsilon^2 - \epsilon^3) \left(s \epsilon^2 r_1^2 + \left((1-s)\epsilon^2 + \epsilon^3\right) r_2^2 + r^2 \right)\\
&= \gamma \sqrt{1-\gamma} (1-\epsilon^2 - \epsilon^3) \left( \epsilon^2 r^2 + \epsilon^3 r_2^2 + r^2\right)
= \gamma \sqrt{1-\gamma} \left(r^2 + (r_2^2 - r^2) \epsilon^3 + O(\epsilon^4)\right)\\
&> \gamma \sqrt{1-\gamma} r^2
= r \sqrt{\gamma(1-\gamma)} \sqrt{\gamma} r
= r\sqrt{\gamma (1-\gamma)} u(x),
\end{align*}
where again the inequality holds for sufficiently small $\epsilon > 0$.

We have shown that $T_{g^{\epsilon}}u(x) > r \sqrt{\gamma(1-\gamma)} u(x)$ for all $x \in [0,1]$, for $\epsilon > 0$ sufficiently small. Therefore, there exists $\epsilon > 0$ such that $\Vert g^{\epsilon} \Vert_{\text{op}} > r \sqrt{\gamma(1-\gamma)}$.
\end{proof}

\section{Open questions}
\label{sec:open}
We collect here some questions arising naturally from our investigations.

\begin{enumerate}
    \item Our results establish a ``reentrant phase transition'' in upper tail large deviations for homomorphism densities in specific block model random graphs. Note that our results on the symmetric regime are quite general, and applicable for arbitrary block graphons. In contrast, our proof for the existence of a symmetry breaking regime is case-specific, and does not generalize directly. It is natural to believe that this reentrant phase transition phenomenon should hold for a much wider family of block graphons, and it would be  interesting to investigate this further.
    \item A natural follow up question concerns the precise boundary between the symmetric and non-symmetric regimes. So far, this boundary has been identified for very homogeneous graphs---the Erd\H{o}s-R\'{e}nyi random graph in \cite{Lubetzky2015} and the Erd\H{o}s-R\'{e}nyi bipartite graph in this article.  We expect the general case to be significantly more challenging and is beyond the scope of this paper.
    \item Theorems \ref{sym-regime} and \ref{sym-regime-near-one} together identify a symmetric regime for the homomorphism density of a regular subgraph. In this regime, is the solution to the variational problem unique?
    \item Another natural direction of inquiry concerns the behavior of the minimizer(s) in the symmetry breaking regime. In fact, we do not even know whether the upper tail variational problem \eqref{min_eq} has a unique minimizer in the symmetry breaking phase. Any tangible progress on this uniqueness question would be a promising start in this direction. Moreover, it would be of interest to identify the structure of the minimizer(s) in the non-symmetric regime. These questions remain open even for \ER graphs, and were already raised in \cite{Chatterjee2011} and \cite{Lubetzky2015}.
    \item Finally, we note that our analysis of the upper tail variational problem \eqref{min_eq} is restricted to regular subgraphs. Non-trivial extensions to non-regular graphs will likely require new ideas, and will provide new insights on the upper tail problem.
\end{enumerate}

\section{Appendix}
\subsection{Weak topology LDP upper bound}
To prove the upper bound LDP in the weak topology, \Cref{weak ldp},
we will use a general LDP upper bound given in \cite[Section 4.3]{Chatterjee2015} which we restate as
 \Cref{theorem:chaterjee} below.

We need some notation. Let $\mathscr{H}$ be a real topological vector space whose topology satisfies the Hausdorff property. Let $\mathscr{H}^*$ denote the dual space of continuous linear functionals on $\mathscr{H}$. Let $\mathscr{B}$ denote the Borel sigma-algebra of $\mathscr{H}$ and let $\{\mu_n\}_{n \geq 1}$ be a sequence of probability measures on $(\mathscr{H}, \mathscr{B})$.
Define the logarithmic moment generating function $\Lambda_n: \mathscr{H}^* \to (-\infty, \infty]$ of $\mu_n$ as
$$\Lambda_n(\lambda) = \log \int_{\mathscr{H}} \exp{ \lambda(x)} d \mu_n (x).$$
Given a ``rate''  $\{ \ve_n\}_{n\geq 1}$, i.e.,  a sequence of positive real numbers $\ve_n$ tending to $0$, we define
 $\bar{\Lambda} : \mathscr{H}^* \to [-\infty, \infty]$ and its Fenchel-Legendre transform $\bar{\Lambda}^* : \mathscr{H} \to [-\infty, \infty]$ as
 $$\bar{\Lambda}(\lambda)= \limsup_{n \to \infty} \ve_n \Lambda_n ( \lambda / \ve_n)$$
  $$\bar{\Lambda}^*(x)= \sup_{\lambda \in \mathscr{H}^*} ( \lambda(x)- \bar{\Lambda}(\lambda)).$$

  \begin{lemma}[Theorem 4.1 of \cite{Chatterjee2015}]\label{theorem:chaterjee}
  For any compact set $\Gamma \subseteq  \mathscr{H}$,
   $$\limsup_{n \to \infty} \ve_n \log \mu_n (\Gamma) \leq - \inf_{ x \in \Gamma} \bar{\Lambda}^*(x).$$
  \end{lemma}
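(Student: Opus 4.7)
The plan is to give a classical Chernoff-plus-compactness proof, in the spirit of the upper bound in the Gärtner–Ellis theorem. The first step is to extract a one-sided half-space estimate: for any $\lambda \in \mathscr{H}^*$ and $\alpha \in \mathbb{R}$, the exponential Markov inequality gives
$$\mu_n\brac{\{x \in \mathscr{H} : \lambda(x) \geq \alpha\}} \leq e^{-\alpha/\ve_n} \int_{\mathscr{H}} e^{\lambda(x)/\ve_n}\, d\mu_n(x) = \exp\brac{\Lambda_n(\lambda/\ve_n) - \alpha/\ve_n}.$$
Multiplying by $\ve_n$ and taking $\limsup$ yields $\limsup_{n\to\infty} \ve_n \log \mu_n(\{\lambda \geq \alpha\}) \leq \bar{\Lambda}(\lambda) - \alpha$.

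Next, I would localize this bound at each point of $\Gamma$. Fix $x \in \Gamma$ and pick any $M < \bar{\Lambda}^*(x)$ (allowing $M$ arbitrarily large when $\bar{\Lambda}^*(x) = \infty$). By the definition of $\bar{\Lambda}^*$ as a supremum, there exist $\lambda_x \in \mathscr{H}^*$ and $\delta_x > 0$ with $\lambda_x(x) - \bar{\Lambda}(\lambda_x) \geq M + 2\delta_x$. Set
$$V_x = \{y \in \mathscr{H} : \lambda_x(y) > \lambda_x(x) - \delta_x\},$$
which is an open neighborhood of $x$ because $\lambda_x$ is continuous. Applying the half-space bound with $\alpha = \lambda_x(x) - \delta_x$ gives
$$\limsup_{n\to\infty} \ve_n \log \mu_n(V_x) \leq \bar{\Lambda}(\lambda_x) - \lambda_x(x) + \delta_x \leq -M - \delta_x < -M.$$

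Finally, I would combine the local estimates via compactness. Since $\{V_x\}_{x \in \Gamma}$ is an open cover of the compact set $\Gamma$, there is a finite subcover $V_{x_1}, \dots, V_{x_k}$, and subadditivity gives $\mu_n(\Gamma) \leq k \max_i \mu_n(V_{x_i})$. Because $\ve_n \log k \to 0$, this yields $\limsup_n \ve_n \log \mu_n(\Gamma) \leq \max_i \limsup_n \ve_n \log \mu_n(V_{x_i}) \leq -M$. Letting $M \uparrow \inf_{x \in \Gamma} \bar{\Lambda}^*(x)$ completes the argument. The main subtlety is the construction of the tilting functional $\lambda_x$ at each $x$, where one must carefully use the supremum definition of $\bar{\Lambda}^*$ together with the continuity of elements of $\mathscr{H}^*$ to produce an open half-space whose exponential mass decays at the required rate; once these half-spaces are in hand, the compactness reduction is routine.
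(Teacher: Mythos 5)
Your proof is correct and is precisely the standard Chernoff-plus-compactness argument (the Cram\'er/G\"artner--Ellis upper bound for compact sets) that Chatterjee uses for Theorem 4.1 in \cite{Chatterjee2015}; the paper itself only cites this result without reproducing a proof. All the key steps are in order: the exponential Markov bound yields the half-space estimate $\limsup_n \ve_n \log \mu_n(\{\lambda \geq \alpha\}) \leq \bar{\Lambda}(\lambda) - \alpha$, the definition of $\bar{\Lambda}^*$ as a supremum lets you choose a tilting functional $\lambda_x$ and a slack $\delta_x$ producing an open half-space $V_x \ni x$ whose mass decays at rate strictly better than $-M$, and the finite-subcover reduction is handled correctly since $\ve_n \log k \to 0$ and the $\limsup$ of a finite maximum is the maximum of the $\limsup$s.
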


\begin{proof}[Proof of \Cref{weak ldp}]
We closely follow the proof of Theorem 5.1 in \cite{Chatterjee2015}. Let $\mathscr{H}$ be the vector space $L^2([0,1]^2)$ with the weak topology. For each $a,f \in \mathscr{H}$, define $\lambda_a$ in the dual space $\mathscr{H}^*$ as
$$ \lambda_a(f)= \int_{[0,1]^2} a(x,y) f(x,y) dx dy.$$ 
Setting $\mu_n = \mathbb{P}_{kn, W_0}$, define $\Lambda_n: \mathscr{H}^* \to \R$  as $$\Lambda_n(\lambda) = \log \int_{\mathscr{H}} \exp{ \lambda(f)} d \pr_{kn, W_0}(f) = \log \left(\mathbb{E}_{f \sim \mathbb{P}_{kn, W_0}}\left[ \exp{\lambda(f)} \right]\right).$$
Set $\epsilon_n = \frac{2}{(kn)^2}$, and let $$ \bar{\Lambda}(\lambda) =  \limsup_{n \to \infty} \frac{2 \Lambda_n ((kn)^2 \lambda/2)}{(kn)^2}.$$

Let $f^G$ be the empirical graphon on $kn$ vertices drawn from $W_0$. For $i,j \in [kn]$, let $X_{ij}$ be the indicator for the event that $\{i,j\}$ is an edge in $G$. Since $G$ is a simple undirected graph $X_{ij}=X_{ji}$ and $X_{ii}=0$. For ease of notation let $W_0^{ij} = W_0(i/(kn), j/(kn))$. Note $X_{ij} \sim Bern(W_0^{ij})$. Let
$I_{kn,1}=[0,1/kn]$, $I_{kn,i}=((i-1)/kn,i/kn]$ for $i=2,\dots, kn$ and let
 $B_{i,j, n}$ be the square $I_{kn,i}\times I_{kn,j}$.
 Let $S$ be the set of symmetric $L^2$ functions. For $a \in S$, let $\hat{a}_{n}$ denote the level $kn$ approximant, i.e.
$$\hat{a}_n(x,y)= (kn)^2 \int_{B}a(w,z) dw \, dz$$
where $B= B_{i,j,n}$ is such that $(x,y) \in B_{i,j,n}.$

Observe that for an empirical graphon $f^G$,
$$ \lambda_a(f^G) = \sum_{1 \leq i, j \leq kn, i \neq j}  X_{ij} \int_{B(i,j,n)} a(x,y) dx\, dy= \sum_{1 \leq i < j \leq kn} X_{ij} \int_{B(i,j,n) \cup B(j,i,n)}a(x,y) dx\, dy.$$
Recall $X_{ij}  \sim Bern(W_0^{ij})$, and so for any $\theta$
$$\mathbb{E}\left[ \exp{ \theta X_{ij}}\right]= W_0^{ij}\exp{\theta} + 1- W_0^{ij}.$$
 Since the events $\{X_{ij}\}_{i < j}$ are independent, it follows that for any $a \in S$
\begin{align}
    \Lambda_n((kn)^2 \lambda_a/2)&=\log \left(\mathbb{E}_{f \sim \mathbb{P}_{kn, W_0}}\left[\exp{ (kn)^2 \lambda_a(f)/2} \right]\right) \nonumber\\
    &=\log \left(\mathbb{E}_{f \sim \mathbb{P}_{kn, W_0}}\left[\exp{ \frac{(kn)^2}{2} \sum_{1 \leq i < j \leq kn} X_{ij} \int_{B(i,j,n) \cup B(j,i,n)}a(x,y) dx dy} \right]\right) \nonumber\\
    &= \log \prod_{1 \leq i < j \leq kn} \brac{W_0^{ij}\exp{ \frac{(kn)^2}{2} \int_{B(i,j,n) \cup B(j,i,n)} a(x,y) dx dy} +1 -  W_0^{ij}}\nonumber \\
    &= \sum_{1 \leq i < j \leq kn} \log  \brac{W_0^{ij}\exp{ \frac{(kn)^2}{2} \int_{B(i,j,n) \cup B(j,i,n)} a(x,y) dx dy} +1 -  W_0^{ij}}\nonumber \\
    &=(kn)^2 \sum_{1\leq i < j \leq kn} \int_{B(i,j,n) }\log\brac{W_0^{ij} \exp{\hat{a}_n(x,y)}+1- W_0^{ij}}dx dy\nonumber \\
    &= \frac{(kn)^2}{2} \int_{[0,1]^2 \setminus B_n} \log\brac{ W_0(x,y) \exp{\hat{a}_n (x,y)}+1- W_0(x,y)} dx dy \label{Lambda simple}
\end{align}
where $B_n \triangleq \bigcup_{i=1}^n B(i,i,n)$.

Next we consider $u_p(x)\triangleq \log \brac{ p e^x+1-p},$ in order to reason about the limit of the above integral as $n\to \infty$. For $p \in [0,1]$, $u_p'(x)= pe^x/(pe^x+1-p)$, and so $|u_p'(x)| \leq 1$ everywhere. Thus $|u_p(x)-u_p(y)| \leq |x-y|$ for all $x,y \in \R$. It follows that
\begin{align*}
|\log &\brac{ W_0(x,y) \exp{ \hat{a}_n(x,y)}+1-W_0(x,y)} -\log \brac{ W_0(x,y) \exp{a(x,y)}+1-W_0(x,y)}| \\
&\leq |\hat{a}_n(x,y) - a(x,y)|.
\end{align*}
By Proposition 2.6 of \cite{Chatterjee2015}, $\hat{a}_n \to a$ in $L^2$, and therefore using the above inequality and the \CS inequality, we obtain
\begin{align*}
&\int_{[0,1]^2} \log  \brac{ W_0(x,y) \exp{ \hat{a}_n(x,y)}+1-W_0(x,y)} - \left( \log \brac{ W_0(x,y) \exp{a(x,y)}+1-W_0(x,y)}\right) dx dy\\
&\leq \int_{[0,1]^2} |\hat{a}_n(x,y) - a(x,y)| dx dy\\
&\leq  \left(\int_{[0,1]^2}  \left(\hat{a}_n(x,y) - a(x,y)\right)^2 dx dy\right) ^\frac{1}{2} \to 0.
\end{align*}
Therefore,
\begin{align}
    &\lim_{n \to \infty} \int_{[0,1]^2} \log  \brac{ W_0(x,y) \exp{ \hat{a}_n(x,y)}+1-W_0(x,y)} dx \,dy \nonumber\\
    &=  \int_{[0,1]^2} \log \brac{ W_0(x,y) \exp{a(x,y)}+1-W_0(x,y)} dx \,dy. \label{ahat limit}
\end{align}

Next we consider the limit of the above integral over the set $B_n$.
Since $|u_p(x)-u_p(y)| \leq |x-y|$, taking $y=0$, we obtain $|u_p(x)| \leq |x|$. The \CS inequality then implies that
\begin{align*}
    &\bigg| \int_{B_n} \log \brac{ W_0(x,y) \exp{ \hat{a}_n(x,y)}+1-W_0(x,y)} dx dy \bigg|  \leq \bigg| \int_{B_n} |\hat{a}_n(x,y)| dx dy \bigg| \\
    &\quad \quad \leq \brac{ |B_n| \int \hat{a}_n(x,y)^2 dx dy }^{1/2} \leq \frac{\Vert\hat{a}_n\Vert_2 }{\sqrt{kn}}.
\end{align*}
Since $\hat{a}_n\to a$ in $L^2$,
\begin{align}
  \lim_{n \to \infty}\int_{B_n} \log \brac{ W_0(x,y) \exp{ \hat{a}_n(x,y)}+1-W_0(x,y)} dx \,dy=0.  \label{bn limit}
\end{align}
Finally we use \eqref{Lambda simple}, \eqref{bn limit}, and \eqref{ahat limit} to compute for $a \in S$
\begin{align*}
    \overline{\Lambda}(\lambda_a)&= \limsup_{n \to \infty} \frac{2\Lambda_n\brac{ (kn)^2 \lambda_a/2}}{(kn)^2}\\
    &=\limsup_{n \to \infty}\int_{[0,1]^2 \setminus B_n} \log\brac{ W_0(x,y) \exp{\hat{a}_n (x,y)}+1- W_0(x,y)} dx \,dy\\
    &=\limsup_{n \to \infty}\int_{[0,1]^2} \log\brac{ W_0(x,y) \exp{\hat{a}_n (x,y)}+1- W_0(x,y)} dx \,dy\\
    &=\int_{[0,1]^2} \log\brac{ W_0(x,y) \exp{a (x,y)}+1- W_0(x,y)} dx\, dy.
\end{align*}

For $f \in \mathscr{H}$, let   $$\bar{\Lambda}^*(x)\triangleq \sup_{\lambda \in \mathscr{H}^*} ( \lambda(f)- \bar{\Lambda}(\lambda)).$$
By \Cref{proposition:rate-function-equivalent},
 $$\bar{\Lambda}^*(f) \geq \sup_{a \in S} ( \lambda_a(f)- \bar{\Lambda}(\lambda_a))=2 I_{W_0}(f).$$
Combined with the compactness of the weak topology \cite[Proposition 2.8]{Chatterjee2015}, \cite[Theorem 4.1]{Chatterjee2015}, stated here as \Cref{theorem:chaterjee}, implies that
$$\limsup_{n \to \infty } \frac{2}{(kn)^2}  \log \pr_{kn, W_0}\brac{ F} \leq - \inf_{f \in F} 2I_{W_0}(f).$$
\end{proof}

\subsection{Other useful results}\label{other-useful-results}
The following theorem appears in \cite{Finner1992}. We include the notation given by \cite{Lubetzky2015}.

\begin{theorem}\label{theorem:holder}
Let $\mu_1, \dots, \mu_n$ be probability measures on $\Omega_1, \dots, \Omega_n$, respectively, and let $\mu = \prod_{i=1}^n \mu_i$ be the product measure on $\Omega = \prod_{i=1}^n \Omega_i$. Let $A_1, \dots, A_m$ be nonempty subsets of $[n] = \{1, \dots, n\}$ and write
$\Omega_A = \prod_{l \in A} \Omega_l$ and $\mu_A = \prod_{l \in A} \mu_l$. Let $f_i \in L^{p_i}(\Omega_{A_i}, \mu_{A_i})$ with $p_i \geq 1$ for each $i \in [m]$ and suppose in addition that $\sum_{i : l \in A_i} \frac{1}{p_i} \leq 1$ for each $l \in [n]$. Then
\[\int \prod_{i=1}^m |f_i| d \mu \leq \prod_{i=1}^m \left( \int |f_i|^{p_i} d \mu_{A_i} \right)^{\frac{1}{p_i}}. \]
Assume without loss of generality that the sets $\{k : i \in A_k\}$ are distinct for each $i \in [n]$, and $\int | f_j|^{p_j} d \mu_{A_j} >0$ for all $j \in [m]$. Then 
equality holds if and only if there exist functions $f_{ji}$ on $\Omega_i$ and constants $\alpha_{ji} >0$ such that
\begin{enumerate}
    \item for all $j \in [m]$, $|f_j| = \prod_{i \in A_j} |f_{ji}|$ almost everywhere with respect to $\mu_{A_j}$
    \item for all $i \in [n]$, $r,s \in \{k : i \in A_k\}$, and $r \not = s$,  $\alpha_{ri} |f_{ri}|^{p_r}= \alpha_{si} |f_{si}|^{p_s}$ almost everywhere with respect to $\mu_i$.
\end{enumerate}
If $p_i = d$ for every $i \in [m]$, the inequality reduces to 
\[\int \prod_{i=1}^m |f_i| d \mu \leq \prod_{i=1}^m \left( \int |f_i|^{d} d \mu_{A_i} \right)^{\frac{1}{d}}. \]
\end{theorem}

\begin{proof}[Proof of \Cref{in thm 2.3}]
	The proof is very similar to that of Theorem 2.3 \cite{Chatterjee2011}, and we only sketch it here.
	Since both $f_n$ and $g_n$ are block graphons, the distance $d_\square(f_n, g_n)$ can be written as a maximum
	over $4^{kn}$ pairs of sets $S,T\subset [0,1]$, that are unions of a subset of the intervals used in the definition of $g_n$.  But given
	$S$ and $T$, the expectation of $\int_{S\times T}f_n$ is  $\int_{S\times T}g_n$.  Azuma's inequality then shows that the probability
	that the difference is larger than $\ve$ is bounded by $e^{-c\ve^2 (kn)^2}$ for some universal constant $c>0$.  The union bound now
	implies the proposition.
\end{proof}

\begin{proposition}[Lemma A.1 of \cite{Lubetzky2015}]\label{all-about-psi}
 Let $d\geq 1$ and $p \in (0,1)$. Consider $\psi_p(x) = h_p\left(x^{1/d}\right)$  with domain
 $[0,1]$.
 \begin{enumerate}
     \item The function  $\psi_p(x)$ is decreasing on $[0,p^d]$ and increasing on $[p^d, 1]$.
     \item Let
     \[p_0= \frac{d-1}{d-1 + e^{\frac{d}{d-1}}}.\]
    \begin{enumerate}
        \item If $p>p_0$, then $\psi_p(x)$ is convex and $\psi_p''>0$ on $[0,1]$.
        \item If $p=p_0$, then $\psi_p(x)$ is convex and $\psi_p''=0$ at exactly one point in $(p^d,1)$.
     \item For $p <p_0$, the function  $\psi_p(x)$ has exactly two inflection points $r_1^d$ and $r_2^d$ with $p< r_1 < r_2\leq 1$.  The function $\psi_p$ is convex on $[0,r_1^d] \cup [r_2^d,1]$  and concave on $[r_1^d,r_2^d]$. Moreover $\psi_p''$ is strictly positive on $[0,r_1^d) \cup (r_2^d,1]$. 
    \end{enumerate}
 \end{enumerate}
\end{proposition}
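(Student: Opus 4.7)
The plan is to reduce everything to an explicit analysis of the first and second derivatives of $\psi_p$, performed via the substitution $u=x^{1/d}\in(0,1)$. A direct computation gives
\[
\psi_p'(x) \;=\; \tfrac{1}{d}\,x^{1/d-1}\,h_p'(u), \qquad
h_p'(u)=\log\frac{u(1-p)}{p(1-u)},
\]
so the sign of $\psi_p'$ matches that of $h_p'(u)$; since $h_p$ has its unique minimum at $u=p$, statement (1) follows immediately (decreasing on $[0,p^d]$, increasing on $[p^d,1]$). Differentiating once more and using $h_p''(u)=1/(u(1-u))$, a straightforward algebraic simplification yields
\[
\psi_p''(x)\;=\;\tfrac{1}{d^2}\,x^{1/d-2}\,F(u),
\qquad
F(u):=\frac{1}{1-u}-(d-1)\log\frac{u(1-p)}{p(1-u)}.
\]
So the sign analysis of $\psi_p''$ reduces to that of $F$ on $(0,1)$.

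The main step is to show that $F$ has a unique critical point on $(0,1)$. Computing
\[
F'(u)\;=\;\frac{du-(d-1)}{u(1-u)^{2}}
\]
shows that $F$ is strictly decreasing on $(0,u^*)$ and strictly increasing on $(u^*,1)$ where $u^*=(d-1)/d$, and one checks $F(u)\to +\infty$ as $u\to 0^+$ and as $u\to 1^-$. So the global minimum value is
\[
F(u^*) \;=\; d-(d-1)\log\!\frac{(d-1)(1-p)}{p},
\]
and an elementary rearrangement shows that $F(u^*)>0$, $=0$, or $<0$ according as $p>p_0$, $p=p_0$, or $p<p_0$. This gives cases (2a) and (2b) directly: $\psi_p''>0$ throughout $(0,1)$ when $p>p_0$, and $\psi_p''$ vanishes at the single point $x=(u^*)^d$ when $p=p_0$ (the inequality $u^*=(d-1)/d > p_0$ needed to ensure $(u^*)^d > p_0^d$ reduces to $e^{d/(d-1)}>1$). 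In the subcritical case $p<p_0$, the fact that $F$ is strictly decreasing-then-increasing with endpoint limits $+\infty$ and negative minimum forces exactly two zeros $r_1^d<r_2^d$ of $F$ in $(0,1)$, and these are precisely the inflection points of $\psi_p$; the inequality $r_1>p$ follows because $F(p)=1/(1-p)>0$ lies on the decreasing branch of $F$, putting $p$ to the left of the first zero.

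For the final assertion about the convex minorant in case (2c), the shape analysis above shows that $\psi_p$ is convex on $[0,r_1^d]\cup[r_2^d,1]$ and concave on $[r_1^d,r_2^d]$, with $\psi_p''>0$ strictly on the open convex pieces. A standard convex-analysis argument then says that the convex minorant of $\psi_p$ coincides with $\psi_p$ outside $[r_1^d,r_2^d]$ and equals the unique common lower tangent to $\psi_p$ at two points of tangency, which by the sign changes of $\psi_p''$ must be exactly $r_1^d$ and $r_2^d$. I expect the main (and essentially only) obstacle to be the computation of $F(u^*)$ and the verification that the resulting threshold coincides with the stated formula for $p_0$; the rest is a routine sign analysis of a one-variable function.
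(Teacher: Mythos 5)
The paper states this result only as a citation of Lemma~A.1 of Lubetzky--Zhao and does not reproduce a proof, so there is no in-paper argument to compare against. That said, your route --- reducing the sign of $\psi_p''$ to that of $F(u)=\frac{1}{1-u}-(d-1)\log\frac{u(1-p)}{p(1-u)}$, computing $F'(u)=\frac{du-(d-1)}{u(1-u)^2}$, locating the unique minimum at $u^*=(d-1)/d$, evaluating $F(u^*)=d-(d-1)\log\frac{(d-1)(1-p)}{p}$, and matching its sign to the threshold $p_0$ --- is exactly the computation used in Lubetzky--Zhao's proof of their Lemma~A.1. Parts (1), (2a), (2b), the count of inflection points in (2c), and the inequality $p<r_1$ (via $F(p)=\frac{1}{1-p}>0$ on the decreasing branch of $F$, using $p<p_0<u^*$) are all correct.

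The final sentence of your proof is wrong, and the error is substantive. You assert that the bitangent of the convex minorant touches $\psi_p$ at the inflection points $r_1^d$ and $r_2^d$, ``by the sign changes of $\psi_p''$.'' No such argument exists, and the claim itself is false for any convex--concave--convex function: since $\psi_p''>0$ on $[0,r_1^d)$, $\psi_p''<0$ on $(r_1^d,r_2^d)$, and $\psi_p''>0$ on $(r_2^d,1]$, the derivative $\psi_p'$ has a strict local maximum at $r_1^d$ and a strict local minimum at $r_2^d$, so $\psi_p'(r_1^d)>\psi_p'(r_2^d)$ and the two inflection points cannot lie on a common tangent line. The bitangent touches $\psi_p$ at two points $a<r_1^d<r_2^d<b$, strictly inside the convex pieces, determined by $\psi_p'(a)=\psi_p'(b)=\frac{\psi_p(b)-\psi_p(a)}{b-a}$; the convex minorant coincides with $\psi_p$ only on $[0,a]\cup[b,1]$ and is affine on $[a,b]$. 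To be fair, this is also a defect in the proposition as transcribed in the paper (the original Lubetzky--Zhao statement keeps separate names for the inflection points and the tangent points of the envelope), so you were led astray by the text in front of you --- but a proof cannot repair that by invoking ``sign changes of $\psi_p''$''; the correct version of the statement must name the tangent points as distinct from, and lying strictly outside, the inflection interval.
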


\subsection{Behavior at $t = t_{\max}$.}

\subsubsection{Proof of \Cref{thm:tmax_behavior}}
In this subsection we prove Theorem~\ref{thm:tmax_behavior}.
\begin{proof}[Proof of Theorem~\ref{thm:tmax_behavior}]
First, observe $t(H, \ti{f}_{\max}) = t_{\max}$. Consider the optimization problem
\begin{align}
\min\{J_{W_0}(\ti{f}): t(H, \ti{f} ) \geq t_{\max}\}.  
\label{eq:variational-problem-max}
\end{align}
Let $\ti{f}_*$ be a minimizer of \eqref{eq:variational-problem-max}. \Cref{nice-seq} states that there exists a sequence $f_n \in \mathcal{W}_{\Omega}$ such that $t(H, f_n) \to t_{\max}$, $f_n \geq W_0$ pointwise, $f_n = W_0$ on irrelevant blocks, $I_{W_0}(f_n) \to J_{W_0}(\ti{f}_*)$, and $\delta_{\square}(f_n, \ti{f}_*) \to 0$. For $n \geq n_0$, the $f_n$ satisfy the 
$\varepsilon'$-neighborhood minorant condition for some $\varepsilon' > 0$
(this follows from \Cref{big-delta-cmc}). It suffices to prove the claim for $\varepsilon$ satisfying $0 < \varepsilon \leq \varepsilon'$; we therefore assume this inequality.
\Cref{sym-condition} implies that $\ti{f}_* \in \ti{\mathcal{B}}^{\gamma}$, and $J_{W_0}(\ti{f}_*) = I_{W_0}(g) < \infty$ for some $g \in \mathcal{B}^{\gamma}$.
Therefore
\begin{align}
I_{W_0}(g) = \min\{J_{W_0}(\ti{f}): t(H, \ti{f} ) \geq t_{\max}\} \leq I_{W_0}(f_{\max})<\infty, \nonumber
\end{align}
and thus $g \in \mathcal{B}^{\gamma} \cap \mathcal{W}_{\Omega}$. Further, $\delta_{\square}(f_*, g) =0$ implies $t(H, g) = t_{\max}$. This implies $g =1$ on the relevant blocks. Moreover, $I_{W_0}(g) \leq I_{W_0}(f_{\max})$
which 
is possible iff $g = W_0$ on the irrelevant blocks.
We therefore conclude that $g = f_{\max}$. We then have $\ti{f}_* = \ti{f}_{\max}$ and $J_{W_0}(\ti{f}_*) = J_{W_0}(\ti{f}_{\max}) = I_{W_0}(f_{\max})$. We have thus established that $\ti{f}_{\max}$ is the unique minimizer to the upper tail variational problem for $t= t_{\max}$.

Next, we have, 
\begin{align}
\mathbb{P}_{kn, W_0} \left( \delta_{\square}(f^{G_{kn}} , \ti{f}_{\max} ) \geq \epsilon  \Big| t(H, G_{kn}) \geq t_{\max}   \right) = \frac{\mathbb{P}_{kn, W_0} \left(\delta_{\square}( f^{G_{kn}}, \ti{f}_{\max} ) \geq \epsilon, t(H, G_{kn}) \geq t_{\max}\right) }{\mathbb{P}_{kn, W_0} \left( t(H, G_{kn}) \geq t_{\max} \right) } . \label{eq:cond1}
\end{align}
Note that the set $\{ \ti{f} : \delta_{\square}(\ti{f}, \ti{f}_{\max} ) \geq \epsilon, t(H, \ti{f}) \geq t_{\max} \}$ is closed, and thus Theorem \ref{ldp} implies
\begin{align*}
\limsup_{n \to \infty}& \frac{1}{(kn)^2} \log \mathbb{P}_{kn, W_0} \left(\delta_{\square}(f^{G_{kn}} , \ti{f}_{\max} ) \geq \epsilon, t(H, G_{kn})
 \geq t_{\max}\right)
 \\
 &\leq - \inf\{J_{W_0}(\tilde{f}) : \delta_{\square}(\ti{f}, \ti{f}_{\max})  \geq \epsilon, t(H, \ti{f}) \geq t_{\max} \}
 =:-C'.
\end{align*}
We conclude that for every $\eta > 0$, there exists $N(\eta)$ such that if $n \geq N(\eta)$, then
\begin{align}
&\frac{1}{(kn)^2} \log \mathbb{P}_{kn, W_0} \left(\delta_{\square}(f^{G_{kn}} , \ti{f}_{\max} ) \geq \epsilon, t(H, G_{kn}) \geq t_{\max}\right) \leq - C' + \eta \nonumber,
\end{align}
or equivalently
\begin{align}
 \mathbb{P}_{kn, W_0} \left(\delta_{\square}(f^{G_{kn}} , \ti{f}_{\max} ) \geq \epsilon, t(H, G_{kn}) \geq t_{\max}\right) \leq \exp{-(kn)^2(C' - \eta) }. \label{eq:num_bound}
\end{align}

Next, we turn to the denominator. Let $(I_i\times I_j)_{i,j\in [k]}$ be the blocks of $W_0$, and let $R \subset [0,1]^2$ denote the union of the relevant blocks.  Recall the definition of $\vartheta(\cdot)$ from \eqref{eq:k-gamma}, let $A = (A_{ij})_{i,j\in [kn]}$ denote the adjacency matrix of $G_{kn}$ and let $S$ be the set of relevant edges:
\[S = \left\{(i,j) \in [kn]^2  : I_{k\left(\frac{i}{kn}\right)} \times  I_{k\left(\frac{j}{kn}\right)} \subset R\right\}.\]
Observe that
\begin{align*}
\mathbb{P}_{kn, W_0} (t(H, G_{kn}) = t_{\max}) &= \mathbb{P}\left(\cap_{(i,j) \in S} \{ A_{ij} = 1\}  \right) \\
&= \prod_{a: I_a \times I_a \subset R} p_{aa}^{\binom{n}{2}}  \prod_{a < b : I_a \times I_b \subset R} p_{ab}^{n^2}. \nonumber \\
&=  \exp{\binom{n}{2} \sum_{a:  I_a \times I_a \subset R} \log(p_{aa}) + n^2  \sum_{a < b : I_a \times I_b \subset R} \log(p_{ab}) }\\
&=  \exp{\frac{1}{2} n^2 \sum_{a,b :  I_a \times I_b \subset R} \log(p_{ab}) - \frac{n}{2} \sum_{a:  I_a \times I_a \subset R} \log(p_{aa})  }\\
&=  \exp{-\frac{1}{2} (kn)^2 \sum_{a,b :  I_a \times I_b \subset R}\frac{1}{k^2} \log\left(\frac{1}{p_{ab}} \right) - \frac{n}{2} \sum_{a:  I_a \times I_a \subset R} \log(p_{aa})  }\\
&=  \exp{- (kn)^2 I_{W_0}(f_{\max}) \left(1+ o(1)\right) }.
\end{align*}
Recall that $I_{W_0}(f_{\max}) = J_{W_0}(\ti{f}_{\text{max}})$, so that
\begin{align}
\mathbb{P}_{kn, W_0} (t(H, G_{kn}) \geq t_{\max})  = \exp{- (kn)^2 J_{W_0}(\ti{f}_{\max}) (1+o(1)) }. \label{eq:lower_bound}
\end{align}

\noindent
Applying \eqref{eq:num_bound} and \eqref{eq:lower_bound} to \eqref{eq:cond1}, we obtain for $n \geq N(\eta)$
\begin{align*}
\mathbb{P}_{kn, W_0} \left( \delta_{\square}(f^{G_{kn}} , \ti{f}_{\max} ) \geq \epsilon  \Big| t(H, G_{kn}) \geq t_{\max}   \right) &\leq \exp{-(kn)^2(C' - \eta) + (kn)^2 J_{W_0}(\ti{f}_{\max}) (1+o(1))}\\
&= \exp{-(kn)^2\left(C' - \eta - J_{W_0}(\ti{f}_{\max}) (1+o(1)) \right)}.
\end{align*}
Recalling that $\ti{f}_{\max}$ is the unique minimizer of \eqref{eq:variational-problem-max}, the proof is complete by observing
that
\begin{align*}
C'&= \inf\{J_{W_0}(\tilde{f}) : \delta_{\square}(\ti{f}, \ti{f}_{\max})  \geq \delta, t(H, \ti{f}) \geq t_{\max} \}
\\
&> \inf\{J_{W_0}(\tilde{f}) :  t(H, \ti{f}) \geq t_{\max} \}
=
J_{W_0}(\ti{f}_{\max}).
\end{align*}

\end{proof}

\subsubsection{Elaboration on \Cref{remark:tmax}}
Let $W_0$ be a uniform $k$-block graphon, and $\tau = t(H, \cdot)$, where $H$ is a finite $d$-regular graph. By \Cref{lemma:new- holder-general}, $t(H, f) \leq t(H, f^*) \leq t_{\max}$, with equality if and only if $f^* = \mathbf{1}_{W_0>0}$, as all non-trivial blocks of $W_0$ are relevant. Thus $f^* = \mathbf{1}_{W_0>0}$ is the unique solution to $t(H,f) = t_{\max}$ in this setting.

Note that if $W_0 = f_p^{\gamma}$, and $\tau(\ti{f}) = \| f \|_{\text{op}}$,  $t_{\max}= \sqrt{\gamma(1-\gamma)}$. Further, using \Cref{lemma:norm-inequalities}, we conclude that $\| \ti{f}^{G}\|_{\text{op}} \geq t_{\max} $ implies that $\|f^{G}\|_2^2 \geq 2 \gamma (1 - \gamma)$. This is possible if and only if $\ti{f}^{G} = \ti{f}_1^{\gamma}$.
This establishes the desired claim.

\bibliography{updated-references}
\bibliographystyle{plain}
\end{document}